\newcommand\w{\wedge}
\newcommand\Span{\operatorname{span}}
\newcommand\tot{\operatorname{tot}}
\newcommand\In{\operatorname{in}}
\newcommand\hone{h^{(1)}}
\newcommand\cT{\mathcal T}
\newcommand\cM{\mathcal M}
\newcommand\cN{\mathcal N}
\newcommand\cH{\mathcal H}
\newcommand\cI{\mathcal I}
\newcommand\cQ{\mathcal Q}
\newcommand\balpha{\boldsymbol \alpha}
\newcommand\bgamma{\boldsymbol \gamma}
\newcommand\bEta{\boldsymbol \eta}
\newcommand\tbEta{\widetilde{\boldsymbol \eta}}
\newcommand\bBeta{\boldsymbol \beta}
\newcommand\bj{\boldsymbol j}
\newcommand\tN{\widetilde N}
\newcommand\bm{\boldsymbol m}
\newcommand\bv{\boldsymbol v}
\newcommand\bn{\boldsymbol n}
\newcommand\br{\boldsymbol r}
\newcommand\bq{\boldsymbol q}
\newcommand\bx{\boldsymbol x}
\newcommand\by{\boldsymbol y}
\newcommand\hbx{\hat{\boldsymbol x}}
\newcommand\bA{\boldsymbol A}
\newcommand\bB{\boldsymbol B}
\newcommand\bD{\boldsymbol D}
\newcommand\bH{\boldsymbol H}
\newcommand\bE{\boldsymbol E}
\newcommand\bV{\boldsymbol V}
\newcommand\bJ{\boldsymbol J}
\newcommand\bN{\boldsymbol N}
\newcommand\bP{\boldsymbol P}
\newcommand\bR{\boldsymbol R}
\newcommand\cE{\mathcal E}
\newcommand\bxi{\boldsymbol \xi}
\newcommand\tbxi{\widetilde{\bxi}}
\newcommand\bXi{\boldsymbol \Xi}
\newcommand\Ker{\operatorname{ker}}
\newcommand\cC{\mathcal{C}}
\newcommand\cL{\mathcal{L}}
\newcommand\Zup{\overline{\bbC_+}}
\renewcommand\Re{\operatorname{Re}}
\renewcommand\Im{\operatorname{Im}}
\newcommand\bbC{\mathbb C}
\newcommand\bbN{\mathbb N}
\newcommand\bbR{\mathbb R}
\newcommand\pa{\partial}
\newcommand\restrictedto{\upharpoonright}
\newcommand\CI{{\mathcal C}^{\infty}}
\newcommand\Id{\operatorname{Id}}
\DeclareMathOperator{\Ind}{Ind}
\DeclareMathOperator{\dR}{dR}
\newtheorem{theorem}{Theorem}
\newtheorem{proposition}{Proposition}
\newtheorem{corollary}{Corollary}
\newtheorem{lemma}{Lemma}
\theoremstyle{definition}
\newtheorem{definition}{Definition}
\newtheorem{example}{Example}
\theoremstyle{remark}
\newtheorem{remark}{Remark}
\begin{document}

\title{Debye Sources and the Numerical Solution\\
of the Time Harmonic Maxwell Equations} 

\author{Charles L. Epstein\footnote{
    Depts. of Mathematics and Radiology, University of Pennsylvania,
    209 South 33rd Street, Philadelphia, PA 19104. E-mail:
    {cle@math.upenn.edu}.
    Research partially supported by NSF grant
    DMS06-03973 and NIH grants R21 HL088182 and R01AR053156.} \,\,
and Leslie Greengard\footnote{Courant Institute,
    New York University, 251 Mercer Street, New York, NY 10012.
    E-mail: {greengard@cims.nyu.edu}. 
    Research partially supported by the U.S. Department of Energy under
    contract DEFG0288ER25053 and by the AFOSR under MURI grant
    FA9550-06-1-0337. 
    \newline {\bf Keywords}:
    Maxwell's equations, integral equations of the second kind, normal components,
    uniqueness, perfect conductor, low frequency breakdown, 
    spurious resonances, generalized Debye sources, 
    $k$-harmonic fields.}}  \date{January 14, 2008}

\maketitle
\begin{abstract} In this paper, we develop a new representation for outgoing
  solutions to the time harmonic Maxwell equations in unbounded domains in
  $\bbR^3.$ This representation leads to a Fredholm integral equation of the
  second kind for solving the problem of scattering from a perfect conductor,
  which does not suffer from spurious resonances or low frequency breakdown,
  although it requires the inversion of the scalar surface Laplacian on the
  domain boundary. In the course of our analysis, we give a new proof of the
  existence of non-trivial families of time harmonic solutions with vanishing
  normal components that arise when the boundary of the domain is not simply
  connected.  We refer to these as $k$-Neumann fields, since they generalize,
  to non-zero wave numbers, the classical harmonic Neumann fields.  The
  existence of $k$-harmonic fields was established earlier by Kress.
\end{abstract}

\section*{Introduction}

Electromagnetic wave propagation in a uniform, nonconducting,
isotropic medium in $\bbR^3$
is described by the Maxwell equations
\begin{equation}
\nabla \times {\cal E}(\bx,t) = - \mu \frac{\partial {\cal H}}{\partial t} \, ,
\quad 
\nabla \times {\cal H}(\bx,t) =  \epsilon \frac{\partial {\cal E}}{\partial t} \, ,
\label{maxwelltd}
\end{equation}
\[
\nabla \cdot {\cal E}(\bx,t) = 0 \, ,
\quad 
\nabla \cdot {\cal H}(\bx,t) =  0 \, ,
\]
where ${\cal E}$ and ${\cal H}$ denote the electric and magnetic fields,
respectively and 
$\epsilon, \mu$ are the 
electrical permittivity and 
magnetic permeability of the medium.
We will restrict our attention to the time-harmonic case and 
write 
\begin{equation}
{\cal E}(\bx,t) = \Re \left\{\frac{\bE^{\tot}(\bx)}{\sqrt{\epsilon}} 
e^{-i \omega t} \right\}
\quad {\rm and}\quad
{\cal H}(\bx,t) = \Re \left\{ \frac{\bH^{\tot}(\bx)}{\sqrt{\mu}} e^{-i \omega t}  
\right\} \, . 
\label{maxwellscaling}
\end{equation}
The superscript is used to emphasize that
$\bE^{\tot}$ and $\bH^{\tot}$ define the {\em total} electric and magnetic
fields, respectively. 
In electromagnetic scattering, they are generally 
written as a sum 
\begin{equation}
\bE^{\tot}(\bx) = \bE^{\In}(\bx) + \bE(\bx), \qquad 
\bH^{\tot}(\bx) = \bH^{\In}(\bx) + \bH(\bx),
\end{equation}
where $\{ \bE^{\In}, \bH^{\In} \}$ describe a known incident field and 
$\{ \bE, \bH \}$ denote the scattered field of interest. 
With the scaling in \eqref{maxwellscaling}, the Maxwell equations
take the simpler form
\begin{eqnarray}
\label{maxwellfd}
\nabla \times \bH^{\tot} &=& - i k \bE^{\tot}  \\
\nabla \times \bE^{\tot} &=&  i k \bH^{\tot} \, . \nonumber
\end{eqnarray}
We are particularly interested in the problem of 
scattering from a perfect conductor in an 
exterior region, which we denote by $\Omega$.
For a perfect conductor \cite{jackson,Papas}, 
the conditions to be enforced on $\Gamma$, the 
boundary of $\Omega$, are
\begin{equation}
\bn \times \bE^{\tot} = 0, \qquad  \bn \cdot \bH^{\tot} = 0.
\label{EHbc}
\end{equation}
The scattered field is assumed to satisfy the 
Silver-M\"uller radiation condition:
\begin{equation}
\label{silvermuller}
 \lim_{r\rightarrow \infty}
 \left( \bH \times \frac{\br}{r} - 
\bE \right) = o \left( \frac{1}{r} \right).
\end{equation}

This problem has been studied rather intensively for 
many decades, and we do not seek to review the literature here, except 
to observe that there are two distinct approaches in widespread use.
When the scatterer is a sphere, a simple and elegant theory exists 
due to Lorenz, Debye and Mie \cite{BouwkampCasimir,Debye,Lorenz,Mie}.  
It is based on two scalar 
potentials (generally called Debye potentials), and the mathematical
machinery of vector spherical harmonics. 
In particular, one represents $\bE$, $\bH$ as

\begin{eqnarray} 
\bE &=& \nabla \times \nabla \times (\br v) 
\, + \, i k \nabla \times (\br u) \nonumber \\
\bH &=& \nabla \times \nabla \times (\br u) 
- i k \nabla \times (\br v) 
\label{debyerep}
\end{eqnarray}
where the Debye potentials $u,v$ satisfy the scalar Helmholtz equation 
\[ \Delta u + k^2 u = 0, \ \Delta v + k^2 v = 0\ , \]
with Helmholtz parameter (wave number) $k^2 = \omega^2 \epsilon \mu$. 
Salient features
of this approach are (a) that the boundary value problem 
\begin{eqnarray}
\bn \times \bE &=& - \bn \times \bE^{\In} 
\label{EHbcscat} \\
\bn \cdot \bH &=& -\bn \cdot \bH^{\In}
\label{EHbcscat2}
\end{eqnarray}
is uniquely solvable for any $k$ with non-negative imaginary part and (b) that,
as $k \rightarrow 0$ ($\omega \rightarrow 0$), the electric and magnetic fields
uncouple gracefully. In the static limit, $\bE$ is due to the scalar 
potential $v$ alone, which is, in turn, determined by the boundary 
data $-\bn \times
\bE^{\In}$.  Likewise, $\bH$ is due to the scalar potential $u$ 
alone, which is determined by the boundary data $-\bn \cdot \bH^{\In}$.

For regions of arbitrary shape, on the other hand,
integral formulations of the Maxwell equations 
are generally based on the classical vector and scalar potentials
(in the Lorenz gauge):
\begin{eqnarray}
\bE &=& i k \bA - \nabla \phi  \label{Epotrep} \\
\bH &=& \nabla \times \bA \label{Hpotrep} 
\end{eqnarray}
where
\[
\bA(\bx) = \int_\Gamma g_k(\bx-\by) \bJ(\by) dA_{\by}  
\]
\[ \phi(\bx) = \frac{1}{i k}
\int_\Gamma g_k(\bx-\by) (\nabla_{\Gamma} \cdot \bJ)(\by) dA_{\by}  \]
with
\[ g_k(\bx) = \frac{e^{ik |\bx|}}{ 4 \pi | \bx|} \, . \]
Here, $\bJ$ is a surface current (a tangential vector field) 
and $\nabla_{\Gamma} \cdot \bJ$ denotes its surface divergence.

Maue \cite{Maue} proposed the
electric field integral equation (EFIE) for the unknown current $\bJ$
by enforcing the condition (\ref{EHbcscat}) using the 
representation (\ref{Epotrep}). Because of the $\nabla \phi$ term, 
however, the result is a hypersingular equation.
Maue also proposed the magnetic field integral equation (MFIE),
based on (\ref{Hpotrep}).
The boundary condition for $\bH$
can be derived from
the Maxwell equations and an appropriate limiting process on the 
surface of a perfect conductor \cite{jackson,Papas}:
\begin{equation}
\bJ = \bn \times \bH^{\In} + \bn \times \bH \, ,
\label{Hbc2}
\end{equation}
where $\bn$ points into $\Omega$.  Enforcing this condition for the unknown
current $\bJ$ yields the MFIE, a second kind Fredholm equation.  Unfortunately,
both the MFIE and the EFIE have spurious resonances; that is, there exists a
countable set of frequencies $\{k_j\} \subset \bbR$ for which the integral
equations are not invertible. As the $\{k_j\}$ are the eigenvalues of a
self adjoint, elliptic boundary value problem on the bounded complement of
$\Omega,$ they are often referred to as \emph{interior resonances}. Below the
smallest such $k_j$, the MFIE is well-conditioned.  Spurious resonances,
however, are only one difficulty.  A second problem stems from the
representation of the electric field itself.  Unlike the Debye representation,
the electric field does not uncouple naturally from the magnetic field as
$k\rightarrow 0$.  Note that in (\ref{Epotrep}), $\bE$ involves one term
of order $k$ and one term of order $k^{-1}$.  This results in what is
referred to as ``low-frequency breakdown'' \cite{ZC}. While low frequency
breakdown is a more transparent problem in the context of the EFIE, the MFIE is
not immune \cite{ZCCZ}.  Knowing the current $\bJ$ is sufficient for computing
$\bH$, but not the electric field.
The normal component of $\bE$, for example, is determined by the electric
charge:
\begin{equation}
\bn \cdot \bE = \rho = \frac{\nabla_{\Gamma} \cdot \bJ}{i k} \, .
\label{escatnJ}
\end{equation}
As $k \rightarrow 0$, accuracy degrades dramatically - a phenomenon
called ``catastrophic cancellation" in numerical analysis.

This state of affairs is both odd and unsatisfactory.  For the
exterior of a sphere, there is a simple, clean representation of the solution
based on two scalar unknowns that results in a diagonal linear system. It has
no spurious resonances and, at zero frequency,  decouples naturally 
(with no loss of precision) into magnetostatic and electrostatic problems.
The standard integral equation approaches available for general geometries do
not reduce to a Debye-like formalism when restricted to a sphere. Instead, a
sequence of modifications have been introduced to address the three problems
discussed above: the existence of spurious resonances, the lack of a second
kind integral equation valid for all frequencies, and the loss of accuracy due
to low-frequency breakdown.

An important step in addressing the first problem
was the introduction in the 1970's 
of the combined field integral equation (CFIE)
\cite{MKM,PoggioMiller}.
The CFIE avoids spurious resonances by
taking a complex linear combination of the EFIE and the MFIE, 
both of which involve the surface current as the unknown.
It is not a Fredholm equation of the second kind, however,
and still suffers from low frequency breakdown.
One alternative approach, due to 
Yaghjian \cite{Yaghjian}, involves 
augmenting the MFIE with the condition (\ref{EHbcscat2}) or the EFIE
with the condition (\ref{escatnJ}). He showed that (for geometries other
than the sphere) 
the augmented equations yield unique solutions
at all frequencies. Of the many formulations that have been introduced
to overcome spurious resonances, variants of the CFIE have 
emerged as the most frequently used in practice.

For the second problem, the principal issue is that of overcoming 
the hypersingular behavior of the CFIE. 
One solution is to introduce electric charge $\rho$ 
as an additional variable \cite{TaskinenYla}. In this
approach, one defines
the scalar potential $\phi$ by 
\begin{equation}
 \phi(\bx) = \int_\Gamma g_k(\bx-\by) \rho(\by) dA_{\by} 
\label{phicharge}
\end{equation}
and imposes the continuity condition
\begin{equation}
\nabla \cdot \bJ = i k \rho. 
\label{contcondition}
\end{equation}
While the hypersingular term
is avoided, one must solve
a Fredholm integral equation
subject to a differential-algebraic constraint (\ref{contcondition}).
During the last several years, several promising approaches have
been developed based on the construction of preconditioners.
Christiansen and N\'ed\'elec  
\cite{ChristiansenNedelec} designed effective strategies for the EFIE
based on Calderon formulas and the Helmholtz decomposition.
Adams and  Contopanagos {\em et al.} \cite{Adams,CDEORVW} made use
of the fact that
the EFIE operator serves as its own preconditioner; more precisely,
the composition of the hypersingular operator with itself equals 
the sum of the identity operator and a compact operator. 
A combined field integral equation using
this preconditioned EFIE is both resonance-free and takes the
form of a Fredholm equation of the second kind.
Preconditioners have also been designed through the use of 
high frequency asymptotics \cite{ABL}.
Unfortunately, the implementation of these schemes 
can be rather involved on arbitrary surfaces and, like the MFIE,
they still suffers from
a form of low-frequency breakdown in the evaluation of $\bE$ 
once the integral equation has been solved.

Finally, the third problem - namely the 
low-frequency breakdown of the integral equations -
has generally been handled through the use of 
specialized basis functions in the discretization of the current, such as
the ``loop and tree'' method of \cite{WiltonGlisson,WGK}. This is
a kind of discrete surface Helmholtz decomposition of $\bJ$. 
As the frequency goes to zero, the irrotational and solenoidal 
discretization elements become uncoupled, avoiding the scaling problem
that causes loss of precision.

We have chosen to investigate a rather different line of thought,
motivated largely by the desire to extend the Debye potentials
to surfaces of arbitrary shape. In essence, the Lorenz-Debye-Mie approach
is based on expanding the potentials $u,v$ from (\ref{debyerep})
as
\begin{eqnarray*}
v(r,\theta,\phi) &=& \sum_{n,m} a_{n,m} \hone_n(kr) Y_n^m(\theta,\phi) \\
u(r,\theta,\phi) &=& \sum_{n,m} b_{n,m} \hone_n(kr) Y_n^m(\theta,\phi) 
\end{eqnarray*}
where $\hone_n(x)$ is the spherical Hankel function of order $n$, and
$Y_n^m(\theta,\phi)$ is the usual 
spherical harmonic of order $n$ and degree $m$. This separation of variables
approach is clearly not suitable in general. 
 From a mathematical viewpoint, it works because of the close connection
between the Laplacian in $\bbR^3$ and the surface Laplace-Beltrami operator
on the sphere.
It is also worth noting that the Lorenz-Debye-Mie approach is not equivalent to a 
Fredholm equation of the second kind. It is invertible, resonance free
and behaves properly at low frequencies, but it is hypersingular.
Numerical difficulties are avoided simply because it is in diagonal form.

The features of the Debye potentials that we wish to retain are their symmetry
and the fact that, at zero frequency, the system 
uncouples into separate electrostatic
and magnetostatic problems. For symmetry, we begin by using both potentials
($\bA, \phi$) and ``antipotentials''  $(\bA_m,\phi_m)$ as a formal 
representation of the electromagnetic fields \cite{Papas}:

\begin{eqnarray}\label{potrepre1}
\bE &=& i k \bA - \nabla \phi  - 
 \nabla \times \bA_m
\label{Epotrepsymm} \\
\bH &=& \nabla \times \bA 
+ i k \bA_m - \nabla \phi_m 
\label{Hpotrepsymm} 
\end{eqnarray}
where
\begin{eqnarray}
 \bA(\bx) &=& \int_\Gamma g_k(\bx-\by) \bj(\by) dA_{\by} \, , \nonumber  \\
\phi(\bx) &=& \int_\Gamma g_k(\bx-\by) r(\by) dA_{\by} \, ,
\label{Vecpotdef} \\
 \bA_m(\bx) &=& \int_\Gamma g_k(\bx-\by) \bm(\by) dA_{\by} \, , \nonumber  \\
\phi_m(\bx) &=&  \int_\Gamma g_k(\bx-\by) q(\by) dA_{\by} \, ,
\nonumber
\end{eqnarray}
together with the continuity conditions 
\begin{equation}
\nabla_{\Gamma} \cdot \bj = i k r\ ,\quad 
\nabla_{\Gamma} \cdot \bm = i k q. 
\label{contconditionm}
\end{equation}
Such a symmetric formulation is commonly used for scattering
from a dielectric. For the perfect conductor, 
it underlies the combined source integral equation
method (CSIE) \cite{MautzHarrington}, 
where $\bj$ and $\bm$ are both assumed to be derived from a
"parent" current distribution $\widetilde{\bj}$:
\[ \bj = \alpha \widetilde{\bj} , \quad 
 \bm = (1-\alpha) \bn \times \widetilde{\bj} \,  \]
for some parameter $\alpha$.
More precisely, the CSIE is derived using (\ref{Epotrepsymm}) 
with the vector unknown $\widetilde{\bj}$ and
enforcing the condition (\ref{EHbcscat}). 
Like the CFIE, it is a resonance-free but hypersingular equation.
It is important to recognize that, in this construction, the unknowns
$\bj, r, \bm, q$ are no longer physical quantities.
$\bj$ and $r$ correspond to 
{\em fictitious} electric current and electric charge, while
$\bm$ and $q$ correspond to {\em fictitious} magnetic current 
and magnetic charge. Perfect conductors do not support the latter.
If the ``physical'' current supported on the surface $\Gamma$ is desired,
it must be computed in a second step. From (\ref{Hbc2}), for example,
we have $\bJ = \bn \times (\bH^{\In} + \bH)$.
This will {\em not} be the unknown $\bj$. 

The second and critical feature of our method is that 
we will use $r$ and $q$ as unknowns and 
{\em construct} 
$\bj$ and $\bm$ from them in such a way that the continuity conditions
(\ref{contconditionm}) are automatically satisfied.
In particular, for simply connected domains, we will let
\begin{align}
\label{helmdecompsimple}
\bj &= \nabla_{\Gamma} \Psi + \bn \times \nabla_{\Gamma} \Psi_m \\
\bm &= \bn \times \bj \nonumber
\end{align}
where
\begin{eqnarray}
\label{surflapsolves}
\Delta_{\Gamma} \Psi \equiv \nabla_{\Gamma}^2 \Psi &=& ik r \\
\Delta_{\Gamma} \Psi \equiv \nabla_{\Gamma}^2 \Psi_m &=& -ik q \, . \nonumber 
\end{eqnarray}
We wil refer to $\Delta_{\Gamma}$ as the surface Laplacian
or Laplace-Beltrami operator. (In geometry, this name is usually
applied to $-\Delta_{\Gamma}$ so that it is a non-negative operator,
but we will use the the convention above consistently.)
In any case, we will obtain the Helmholtz decomposition of the 
currents on the surface by construction. 
This avoids the obvious cause of low-frequency breakdown, since we
never compute the $O(1)$ quantities $r,q$  from the $O(k)$
quantities $\bj, \bm$ with its attendant loss of accuracy.

An obvious drawback of our approach, of course, is that
it will require the inversion of a partial differential equation
on the surface of the scatterer to compute $\Psi$ and $\Psi_m$.
It is interesting to note that Scharstein proposed an 
investigation along these
lines some years ago \cite{Scharstein},
using only the electric current $\bJ$, but a detailed investigation
of the theory was not carried out.

We  show below
that our representation yields a second-kind integral equation for
$r$ and $q$ that, in the simply connected case, has a unique solution 
for all frequencies with 
non-negative imaginary part. Furthermore, it behaves gracefully
in the low frequency limit,
uncoupling into an electrostatic problem involving 
$r$ and a magnetostatic problem involving $q$. 
Because of the connection with the Debye theory,
we refer to $r$ and
$q$ as \emph{generalized Debye sources}.
We also present an analysis of the (more delicate) multiply-connected case.

\section{Geometric Preliminaries}
\label{sec:mcd}

\begin{definition}
Let $D$ denote a bounded (not necessarily connected) region in $\bbR^3$ and 
let $\Omega$ denote the unbounded component of $\bbR^3\setminus \overline{D}$.
We will refer to $\Omega$ as the {\em exterior region} and to its boundary as
$\Gamma$. We assume, without loss of generality, that $\bbR^3\setminus D$
has no bounded components (that is, holes within the interior of $D$).
\end{definition}

Using standard topological terminology, let us assume that
$D$ is multiply connected with genus $g$. Then there exist 
surfaces $S_1,\dots,S_g$ in $D$ such that 
$D\setminus \cup_{j=1}^g S_j$ is simply connected and 
surfaces $T_1,\dots,T_g$ in $\bbR^3\setminus D$ such that 
$\bbR^3\setminus D\setminus \cup_{j=1}^g T_j$ is simply connected.
We denote by 
$A_j$ the boundary of $S_j$ and by $B_j$ the 
boundary of $T_j$ (see Fig. \ref{fig1}). 

\begin{remark} \label{firsthomol}
We will refer to the curves $\{ A_j | j = 1,\dots, g \}$ as 
{\em A-cycles}. (They form a basis for the first homology
group of $\bbR^3\setminus D$.) We will refer to the 
curves $\{ B_j | j = 1,\dots, g \}$ as 
{\em B-cycles}. (They form a basis for the first homology
group of $D$.)
\end{remark}

\begin{figure}[hh]
\centering
{\epsfig{file=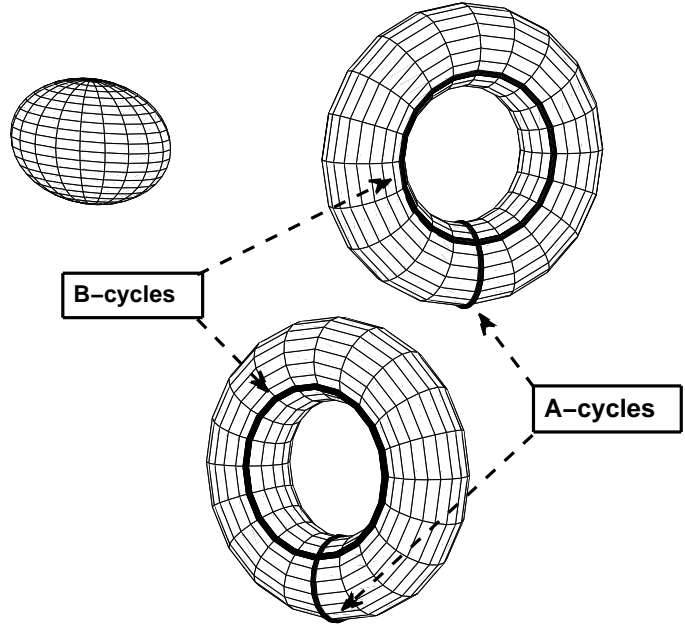,width=12cm}}
\caption{A multiply connected domain $D$, consisting of three components (two
  of genus 1 and one of genus 0), with exterior $\Omega$.  Cutting along the
  surfaces bounded by the ``A-cycle'' from $D$ makes it simply connected.
  Adding the surfaces $T_j$ bounded by the B-cycles makes $\Omega \setminus D$
  simply connected.  }\label{fig1}
\end{figure}

\begin{definition}
\label{meanzerodef}
Let $\Gamma_j$ denote a component of the boundary $\Gamma$.
If 
\[ \int_{\Gamma_j} f(\bx) dA(\bx) = 0 \]
we refer to it as having {\em mean zero} on that component.
We denote by $\cM_{\Gamma,0}$ the set of pairs of functions $(f,g)$ on
$\Gamma$ with mean zero on every component.
\end{definition}

\begin{lemma} [The mean zero condition]
\label{meanzerolemma}
Let $(r,q)$ be generalized Debye sources defined on a 
boundary $\Gamma$. Then $(r,q) \in \cM_{\Gamma,0}$.
\end{lemma}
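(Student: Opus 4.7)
The plan is to exploit the fact that each connected component $\Gamma_j$ of $\Gamma$ is a closed (boundaryless) compact surface, so that the surface divergence theorem forces the integral of $\Delta_\Gamma$ applied to any smooth function to vanish.

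First, I would recall that by definition, the generalized Debye sources $(r,q)$ come with scalar potentials $\Psi, \Psi_m$ on $\Gamma$ satisfying the surface Poisson equations
\[
\Delta_\Gamma \Psi = ik r, \qquad \Delta_\Gamma \Psi_m = -ik q,
\]
as in \eqref{surflapsolves}. Since $\Psi$ and $\Psi_m$ are globally defined smooth functions on $\Gamma$, their restrictions to each component $\Gamma_j$ are smooth functions on a closed surface.

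Next, I would apply the surface divergence theorem on $\Gamma_j$: for any smooth tangential vector field $\bV$ on $\Gamma_j$,
\[
\int_{\Gamma_j} \nabla_\Gamma \cdot \bV \, dA = 0,
\]
because $\Gamma_j$ has empty boundary. Taking $\bV = \nabla_\Gamma \Psi$ and then $\bV = \nabla_\Gamma \Psi_m$ yields
\[
\int_{\Gamma_j} \Delta_\Gamma \Psi \, dA = 0, \qquad \int_{\Gamma_j} \Delta_\Gamma \Psi_m \, dA = 0.
\]
Substituting the defining equations then gives $ik \int_{\Gamma_j} r \, dA = 0$ and $-ik \int_{\Gamma_j} q \, dA = 0$. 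For $k \neq 0$ we divide through to conclude that both $r$ and $q$ have mean zero on each component $\Gamma_j$, so $(r,q) \in \cM_{\Gamma,0}$.

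There is no real obstacle here; the statement is essentially the classical solvability condition for the Poisson equation on a closed manifold, read from right to left. The only subtlety worth flagging is the case $k = 0$, where the argument via $\Delta_\Gamma$ degenerates; in that limit the mean-zero condition must instead be imposed as a side constraint, consistent with the electrostatic/magnetostatic decoupling described in the introduction.
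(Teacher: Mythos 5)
Your proof is correct and is essentially the paper's own argument: the paper defines the Debye sources by $ik\, r\, dA = d_{\Gamma}\star_2\bj$ (eq.~(\ref{eqn53})) and applies Stokes' theorem on each closed component (eq.~(\ref{eqn96a})), which is exactly your surface divergence theorem applied to $\nabla_{\Gamma}\Psi$, since $\nabla_{\Gamma}\cdot\bj = \nabla_{\Gamma}\cdot\bj_R = \Delta_{\Gamma}\Psi$ and the harmonic part is divergence free. Your closing remark about the degenerate case $k=0$ is likewise consistent with the paper's treatment, where the mean-zero condition is built into the domain of the partial inverse $R_0$.
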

\noindent
This is proven in section
\ref{potbdrforms} (see eqs. (\ref{eqn53}) and (\ref{eqn96a})).

In multiply connected domains, the Helmholtz decomposition
(\ref{helmdecompsimple}) is incomplete. From Hodge theory,
however, we can write a surface vector field $\bj$ as an orthogonal
decomposition (in $L_2$), the Hodge-Helmholtz decomposition:
\begin{align}
\label{helmdecompmc}
\bj &= \bj_R + \bj_H \\
\bj_R &= \nabla_{\Gamma} \Psi + \bn \times \nabla_{\Gamma} \Psi_m  \nonumber 
\end{align}
for some $\Psi, \Psi_m$,
where $\bj_H$ satisfies 
\[ \nabla_{\Gamma} \cdot \bj_H = 0, \quad
 \nabla_{\Gamma} \cdot (\bn \times \bj_H) = 0. 
\]
See Appendix~\ref{adjintprt} and~\cite{Vick}.
Such vector fields are called {\em harmonic vector fields} (dual to harmonic
1-forms).  We let
\begin{eqnarray*}
\bm_R &=& \bn \times \bj_R \\
\bm_H &=& \bn \times \bj_H 
\end{eqnarray*}
and (as before)
\[ \bm = \bn \times \bj \, . \]
Harmonic vector fields arise, in essence, because the Laplace-Beltrami operator
on vector fields
\begin{equation}
  \Delta_{\Gamma}^1\bj \equiv \nabla_{\Gamma}\nabla_{\Gamma}\cdot\bj-
\bn\times\nabla_{\Gamma}\nabla_{\Gamma}\cdot(\bn\times\bj),
\end{equation}
has a non-trivial nullspace on multiply connected surfaces.  The dimension of
the nullspace of $\Delta_{\Gamma}^1$ is equal to twice the genus, $g,$ of the
surface. We may, therefore, choose harmonic vector fields $\{ \bj_H^l \, | \, l
= 1,\dots, 2g \},$ which form an orthogonal basis,w.r.t. $L^2(\Gamma),$ for
this nullspace.  In the multiply connected case, we can then define the
harmonic components of $\bj$ and $\bm$ by
\begin{align}
\label{harmonicfieldsjH}
\bj_H &= \sum_{l=1}^{2g} c_l \bj_H^l \\
\bm_H &= \bn \times \bj_H \nonumber \, .
\end{align}
The space of harmonic vector fields will be denoted $\cH^1(\Gamma)$.

Given that the Laplace-Beltrami operator is not invertible, 
one must be careful in defining $\Psi$ and $\Psi_m$.
From Hodge theory, however, we know that it is invertible as
a map from the space of mean zero functions $\cM_{\Gamma,0}$ to itself. 
We denote by $R_0$ the {\em partial inverse} of $\Delta_{\Gamma}$ acting on this
space.  Thus, we replace (\ref{surflapsolves}) with
\begin{eqnarray}
\label{surflapsolvesmc}
\Psi &=& ik R_0  r \\
\Psi_m &=& -ik R_0 q \, , \nonumber 
\end{eqnarray}
where $r,q$ are the generalized Debye sources.

\begin{example}
Consider a torus in $\bbR^3$, parametrized by 
\[ \bx(s,t) = [ (R+r \cos t) \cos s, (R+r \cos t) \sin s, 
r \sin t ] \, ,
\]
with the $z$-axis as the axis of symmetry.
A straightforward calculation shows that
\[ \bj^1_H = \frac{1}{(R+r \cos t)^2} \frac{\partial \bx(s,t)}{\partial s} 
\]
and 
\[ \bj^2_H = \bn \times \bj^1_H
\]
are both harmonic vector fields.
Since the genus of a torus is 1, they form a basis for the 
two-dimensional space of harmonic vector fields on the surface.
\end{example}

\begin{remark}
  Much of the formal analysis in this paper is simplified through the use of
  differential forms and homology theory. In order to be accessible to a
  broader audience, however, we state the main results using the notation of
  vector calculus and defer most proofs to Sections~\ref{extfrm1}
  and~\ref{MEfrms2}, where we do make use of the language and power of this
  theory. 
\end{remark}

\section{Uniqueness Theorems for Exterior Electromagnetic Fields}
\label{sec:uniq}

Let us denote by $\Zup$ the closed upper half plane:
\[ \Zup = \{z\in\bbC:\: \Im z\geq 0\}. \]

\begin{definition}
A solution to the time harmonic Maxwell equations in $\Omega$ that
satisfies the Silver-M\"{u}ller radiation condition will be referred to 
as an {\em outgoing solution}.
\end{definition}

That an outgoing solution to \THME[$k$] is determined by either the
tangential components of the electric or magnetic fields is classical
\cite{ColtonKress}:

\begin{theorem}\label{thm4class} 
Suppose that $(\bE,\bH)$ is an outgoing
solution to the \THME[$k$] in an exterior region $\Omega$ 
for nonzero  $k \in \Zup.$ If either
$\bn \times \bE$ or $\bn \times \bH$ vanishes on $\Gamma$, 
then the solution is identically zero in $\Omega.$
\end{theorem}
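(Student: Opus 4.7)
The plan is to adapt the classical argument from Colton--Kress. I would work on the bounded region $\Omega_R = \Omega \cap B_R$, where $B_R$ is the ball of radius $R$ centered at the origin, chosen large enough that $D\subset B_R$. Applying the divergence theorem to $\bE\times\bar\bH$ on $\Omega_R$ gives
\begin{equation}
\int_{\partial\Omega_R}\bn\cdot(\bE\times\bar\bH)\,dA=\int_{\Omega_R}\nabla\cdot(\bE\times\bar\bH)\,dV.
\end{equation}
Using $\nabla\times\bE=ik\bH$ and $\nabla\times\bar\bH=i\bar k\bar\bE$, the integrand on the right simplifies to $ik|\bH|^2-i\bar k|\bE|^2$. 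The boundary $\partial\Omega_R$ decomposes into $\Gamma$, with $\bn$ pointing into $\Omega$, and the sphere $|\bx|=R$, with outward normal $\hat\br$. Since $\bn\cdot(\bE\times\bar\bH)=-(\bn\times\bE)\cdot\bar\bH$, the hypothesis $\bn\times\bE=0$ kills the contribution from $\Gamma$, leaving an identity relating a sphere integral at $\partial B_R$ to a volume integral over $\Omega_R$.

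Next I would split into two cases according to whether $\Im k>0$ or $k$ is real and nonzero. In the case $\Im k>0$, taking real parts makes the volume integrand $-(\Im k)(|\bH|^2+|\bE|^2)$ negative-definite; the Silver--M\"uller condition together with $\Im k>0$ forces exponential decay of $\bE,\bH$ at infinity, so the sphere term vanishes as $R\to\infty$ and we conclude $\int_\Omega(|\bE|^2+|\bH|^2)\,dV=0$, giving $\bE=\bH\equiv 0$ directly. In the case $k>0$ real, the volume integrand is purely imaginary, so taking real parts yields $\Re\int_{\partial B_R}(\hat\br\times\bE)\cdot\bar\bH\,dA=0$. Expanding
\[
\int_{\partial B_R}|\bH\times\hat\br-\bE|^2\,dA=\int_{\partial B_R}\bigl(|\bH\times\hat\br|^2+|\bE|^2\bigr)\,dA-2\Re\!\int_{\partial B_R}(\bH\times\hat\br)\cdot\bar\bE\,dA,
\]
whose left side is $o(1)$ by Silver--M\"uller, would let me deduce $\int_{\partial B_R}|\bE_\tau|^2\,dA\to 0$ as $R\to\infty$, where $\bE_\tau$ is the tangential part of $\bE$.

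With this decay of the sphere $L^2$-norm in hand, I would invoke Rellich's lemma componentwise: each Cartesian component of $\bE$ satisfies $\Delta u+k^2u=0$ in the exterior of $B_R$ and has vanishing $L^2$ mean on large spheres, so $\bE\equiv 0$ in the unbounded component of $\bbR^3\setminus\overline{B_R}$. Weak unique continuation for the Helmholtz operator then extends $\bE\equiv 0$ to all of $\Omega$, and consequently $\bH=(ik)^{-1}\nabla\times\bE\equiv 0$. If instead $\bn\times\bH=0$ on $\Gamma$, the Maxwell system is symmetric under $\bE\mapsto\bH$, $\bH\mapsto-\bE$ (and $k\mapsto k$), so the same argument applies verbatim.

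The main obstacle is the real-$k$ case: the volume identity is purely imaginary and yields no direct $L^2$ control on the fields, so one genuinely needs the Rellich--lemma/unique-continuation route rather than an energy estimate. The $\Im k>0$ case is comparatively soft.
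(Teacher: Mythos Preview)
Your argument is correct and is essentially the classical Colton--Kress proof that the paper cites (the paper does not write out its own proof of this theorem; it states it as classical and later repackages the key energy identity as Lemma~\ref{lem1} in the form language, which is the same divergence-theorem computation you perform).

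One slip worth flagging: in the real-$k$ case your expansion actually yields
\[
\int_{\partial B_R}\bigl(|\bH\times\hat\br|^2+|\bE|^2\bigr)\,dA\longrightarrow 0,
\]
so you get the full $|\bE|^2$, not merely $|\bE_\tau|^2$. This matters, because Rellich's lemma applied componentwise needs $\int_{\partial B_R}|E_j|^2\,dA\to 0$ for each Cartesian component $E_j$, and control of only the tangential part would not suffice. Your computation delivers exactly what is needed; just correct the statement. A second minor point: in the $\Im k>0$ case you invoke exponential decay from Silver--M\"uller, which is true but requires a word of justification (e.g.\ via the Stratton--Chu representation with the exponentially decaying kernel $g_k$); alternatively, the same energy identity handles both cases uniformly, as in the paper's Lemma~\ref{lem1}, since the $\Gamma$-term vanishes and the left side is a sum of nonnegative terms.
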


Although we will eventually address the problem of 
scattering from a perfect conductor ($\bn \times \bE = 0$),
we turn our
attention for the moment to the Maxwell equations in exterior domains
with normal components specified on the boundary. While this
is not a standard physical boundary value problem, there is prior work
on uniqueness and it is a natural starting point for the analysis of 
symmetric representations of the fields.

\begin{theorem} \label{thmyee} [Yee, 1970].
Let $(\bE,\bH)$ be an outgoing
solution to the \THME[$k$] in an exterior region $\Omega$
for nonzero  $k \in \Zup.$ 
Suppose $\Gamma$ is simply connected, and that
\[ \bn \cdot \bE \restrictedto_{\Gamma} = 0 \ , \ 
 \bn \cdot \bH \restrictedto_{\Gamma} = 0 \, . \] 
Then
$\bE = \bH = 0 \quad \hbox{in $\Omega$}.$
\end{theorem}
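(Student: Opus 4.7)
The plan is to reduce the normal-component hypothesis to a closure statement about tangential traces via Maxwell's equations, then use simple connectivity through a Stokes-type argument to show that the Poynting flux through $\Gamma$ vanishes, after which a standard Rellich-type argument at infinity finishes the proof.

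First, from $\nabla\times\bE = ik\bH$ one has $\bn\cdot(\nabla\times\bE)|_\Gamma = ik(\bn\cdot\bH) = 0$, and analogously $\bn\cdot(\nabla\times\bH)|_\Gamma = 0$. Since $\bn\cdot(\nabla\times\bE)|_\Gamma$ is exactly the surface curl of the tangential trace of $\bE$---which here coincides with the full trace, because $\bn\cdot\bE = 0$---this says that $\bE|_\Gamma$ and $\bH|_\Gamma$, viewed as $1$-forms on $\Gamma$, are closed. Since each component of $\Gamma$ is simply connected, closure implies exactness, so one may write $\bE|_\Gamma = \nabla_\Gamma f$ and $\bH|_\Gamma = \nabla_\Gamma h$ for scalars $f,h$ on $\Gamma$. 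This is the only place where the topological hypothesis enters, and its failure on components of positive genus is precisely what produces the Kress $k$-harmonic (or $k$-Neumann) fields mentioned in the abstract.

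The gain is that the Poynting flux through $\Gamma$ vanishes. Using the identification of tangential vector fields with $1$-forms on $\Gamma$, the $2$-form $\bn\cdot(\bE\times\bar\bH)\,dA\big|_\Gamma$ equals $df\wedge d\bar h = d(f\,d\bar h)$, and Stokes applied to each (closed) component of $\Gamma$ gives
\[
\int_\Gamma \bn\cdot(\bE\times\bar\bH)\,dA = 0.
\]

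Finally, I apply the divergence theorem to the Poynting vector on $\Omega\cap B_R$. A direct Maxwell computation gives $\nabla\cdot(\bE\times\bar\bH) = ik|\bH|^2 - i\bar k|\bE|^2$, whose real part is $-(\Im k)(|\bE|^2+|\bH|^2)$. When $\Im k>0$, the outgoing solution decays exponentially, so the sphere-at-infinity flux vanishes in the limit; combined with the identity just proved, this forces $\int_\Omega(|\bE|^2+|\bH|^2)\,dV = 0$ and hence $\bE\equiv\bH\equiv 0$. When $k\in\bbR\setminus\{0\}$, the Silver-M\"uller condition instead gives $\lim_{R\to\infty}\Re\int_{|\bx|=R}(\bE\times\bar\bH)\cdot(\br/r)\,dA = \int_{S^2}|\bE_\infty|^2\,d\sigma$, where $\bE_\infty$ denotes the electric far-field pattern; the identity then forces $\bE_\infty\equiv 0$, whereupon Rellich's lemma followed by unique continuation for the componentwise Helmholtz equation yields $\bE\equiv 0$ throughout $\Omega$, and then $\bH = (ik)^{-1}\nabla\times\bE = 0$. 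The main obstacle is the second step, where closure must be converted to exactness; this is exactly where the simple-connectivity assumption is used, and everything else is a routine Poynting/Rellich computation.
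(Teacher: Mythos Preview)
Your proof is correct and follows essentially the same route as the paper's argument (given in the proof of Theorem~\ref{thm4.1} in forms language): the vanishing normal components force the tangential traces of $\bE$ and $\bH$ to be closed $1$-forms on $\Gamma$, simple connectivity upgrades this to exactness, Stokes kills the boundary Poynting term, and the radiation condition plus Rellich finishes. The only cosmetic difference is that the paper packages the final step via the integration-by-parts identity of Lemma~\ref{lem1} rather than a direct divergence-theorem computation on $\bE\times\bar\bH$, but these are the same calculation.
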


When the boundary has non-trivial topology, a rather subtle argument shows 
that, in general, this is not
true. In particular, if the sum of the genera of the boundary components of the
exterior domain is $g>0,$ then for all frequencies with non-negative imaginary
part, there is a $2g$-dimensional space of outgoing solutions to the THME with
vanishing normal components.
The existence of these fields was proven by Kress 
(Theorem \ref{thm4} below).

\begin{remark}
In the static (harmonic) case, this fact has been known for 
decades~\cite{Werner66}. More precisely, at 
$k=0$, the THME separate into the system
\[ \nabla \cdot \bE = 0,\ \nabla \times \bE = 0 \qquad
 \nabla \cdot \bH = 0,\ \nabla \times \bH = 0 \, , \]
solutions to which are called harmonic vector fields (if they decay 
at infinity). When their normal
components vanish on the boundary, they are called
harmonic Neumann fields. If their tangential components vanish, they are
called harmonic Dirichlet fields.
\end{remark}

\begin{theorem}\label{thm4} 
[Kress, 1986 \cite{Kress1}].
Suppose that $(\bE,\bH)$ is an outgoing solution to the
\THME[$k$] in the exterior region $\Omega,$ for nonzero $k \in \Zup .$ 
If every component of the boundary $\Gamma$ is simply connected, 
then the solution is determined by the normal components 
$\bn \cdot \bE \restrictedto_{\Gamma}$ and
$\bn \cdot \bH \restrictedto_{\Gamma}.$  
If the sum of the genera of the components of
$\Gamma$ equals $g> 0,$ then there is a subspace of outgoing solutions to
  \THME[$k$] with 
\begin{equation}\label{6.16.08.1}
\bn \cdot \bE \restrictedto_{\Gamma}=0\text{ and }
\bn \cdot \bH \restrictedto_{\Gamma}=0
\end{equation}
of dimension exactly $2g.$
\end{theorem}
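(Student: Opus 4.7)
The plan is to deduce Theorem~\ref{thm4} from the symmetric Debye source representation (\ref{Epotrepsymm})-(\ref{Hpotrepsymm}) together with the Hodge-Helmholtz decomposition (\ref{helmdecompmc}) on $\Gamma,$ by reducing the PDE problem to a Fredholm second-kind operator equation on $\Gamma.$ I would first establish that every outgoing solution to the Maxwell system in $\Omega$ admits a representation of the form (\ref{Epotrepsymm})-(\ref{Hpotrepsymm}) with $(\bj,\bm)=(\bj_R+\bj_H,\bn\times\bj)$, sources $(r,q)\in\cM_{\Gamma,0}$ (consistent with Lemma~\ref{meanzerolemma}), and a harmonic coefficient vector $\mathbf{c}=(c_1,\dots,c_{2g})\in\bbC^{2g}$ such that $\bj_H=\sum_l c_l\bj_H^l.$ Computing $\bn\cdot\bE|_\Gamma$ and $\bn\cdot\bH|_\Gamma$ via the jump relations for the layer potentials then yields an operator equation of the schematic form
\[ \left(\tfrac12 I + K\right)\!\begin{pmatrix} r\\ q\end{pmatrix}+L\,\mathbf{c}\;=\;\begin{pmatrix}\rho_E\\ \rho_H\end{pmatrix} \]
on $\cM_{\Gamma,0},$ where $K$ is compact (using the smoothing properties of $g_k$ and the two-derivative gain of the partial inverse $R_0$) and $L\colon\bbC^{2g}\to\cM_{\Gamma,0}$ has finite rank.

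For the uniqueness half (every component of $\Gamma$ simply connected, so $g=0$), the harmonic block $L\mathbf{c}$ is absent. I would show $\tfrac12 I + K$ is injective on $\cM_{\Gamma,0}$ for every nonzero $k\in\Zup.$ A non-trivial kernel element $(r,q)$ would yield, through the representation, an outgoing solution with vanishing normal traces; since on each simply-connected component every surface divergence-free tangent field is the surface curl of a scalar function expressible via $R_0,$ one can promote vanishing normal data to vanishing tangential data, whence Theorem~\ref{thm4class} forces the field, and then its sources, to vanish. Consequently, zero normal data forces $r=q=0$ and $\bE\equiv\bH\equiv 0$ in $\Omega,$ giving the claimed uniqueness.

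For the existence half ($g>0$), I would parametrize the $k$-Neumann fields by $\mathbf{c}\in\bbC^{2g}.$ For each $\mathbf{c},$ injectivity of $\tfrac12 I + K$ (which persists in the multiply-connected case) lets me solve $(\tfrac12 I+K)(r,q)^{T}=-L\mathbf{c}$ uniquely in $\cM_{\Gamma,0}.$ The outgoing solution built from $(r,q,\mathbf{c})$ via (\ref{Epotrepsymm})-(\ref{Hpotrepsymm}) has vanishing normal traces by construction, defining a linear map $\bbC^{2g}\to\{k\text{-Neumann fields}\}.$ Surjectivity follows from applying the representation step to any given $k$-Neumann field; injectivity is equivalent to the linear independence of the $2g$ constructed fields, which I would verify by a homotopy argument in $k.$ At $k=0$ the Maxwell system decouples into two independent Hodge problems, each producing a $g$-dimensional space of classical harmonic Neumann fields (total dimension $2g$); the Fredholm index of the boundary system is invariant under continuous variation of $k,$ and upper semicontinuity of the kernel dimension, combined with injectivity of $\tfrac12 I+K,$ pins the dimension at exactly $2g$ for every nonzero $k\in\Zup.$

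The main obstacle is this dimension count. The upper bound $\dim\le 2g$ follows cleanly from Fredholm theory once $\tfrac12 I + K$ is known to be injective, but the lower bound requires producing $2g$ genuinely independent $k$-Neumann fields. The cleanest route is via the $k\to 0$ limit coupled with a perturbation argument, but one must verify that the perturbation is continuous in a topology on outgoing solutions strong enough to preserve linear independence, and that no kernel vectors are lost or gained at intermediate $k.$ An alternative is to compute directly the $2g\times 2g$ \emph{period matrix} pairing $k$-Neumann fields against the $A$- and $B$-cycles of Remark~\ref{firsthomol}, but establishing non-singularity of this matrix is itself subtle and lies at the heart of Kress's original argument.
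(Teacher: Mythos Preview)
Your approach is quite different from the paper's, and it has a genuine circularity problem in the uniqueness half and an unproven assumption in the existence half.

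For uniqueness (simply connected $\Gamma$), the paper does \emph{not} go through the Fredholm operator $\cN^+(k)$ at all. It argues directly: if the normal traces vanish, Maxwell's equations force both $\bxi_t$ and $(\star_3\bEta)_t$ to be \emph{closed} 1-forms on $\Gamma$; on a simply connected surface they are therefore exact, and the energy identity of Lemma~\ref{lem1} (the Silver--M\"uller integration by parts) reduces to a boundary integral $\int_\Gamma \balpha\wedge\bar\bBeta$ which vanishes by Stokes. That kills the field. Your sentence ``promote vanishing normal data to vanishing tangential data'' is not a valid step: vanishing normal data only gives you closed tangential traces, not zero ones, and you still need the energy identity to finish. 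Worse, the paper's proof that $\cN^+(k)$ is injective on $\cM_{\Gamma,0}$ for $k\in\Zup$ (Corollary~\ref{cor4}) \emph{uses} Theorem~\ref{thm4.1} as input. So if you try to prove Theorem~\ref{thm4} by first establishing injectivity of $\tfrac12 I+K$, you are running the logic in a circle.

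For the lower bound when $g>0$, your scheme requires that $\tfrac12 I+K$ (i.e.\ $\cN^+(k)$ on $\cM_{\Gamma,0}$) be injective for every $k\in\Zup$. In the multiply connected case this is \emph{not} known: the paper only gets invertibility for $k\notin E_+$, and whether $E_+$ meets $\Zup$ is explicitly left open after Theorem~\ref{thm6.1}. The paper's proof of the exact dimension (Theorem~\ref{thm6.1}) therefore has to handle the possibility $k\in E_+\cap\Zup$ separately, combining a $d$-dimensional kernel of $\cN^+(k)$ with a complementary $(2g-d)$-dimensional family coming from harmonic data whose normal components can still be corrected. Your homotopy-from-$k=0$ argument does not supply this, and the upper-semicontinuity step alone cannot rule out a drop in dimension at some isolated $k$.

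The paper's upper bound $\dim\cH_k(\Omega)\le 2g$ is also obtained differently and more cheaply than you suggest: it comes from the nondegenerate skew pairing $W(\eta,\omega)=\int_\Gamma\eta\wedge\omega$ on $H^1_{\dR}(\Gamma)$, not from Fredholm index considerations.
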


\begin{lemma}\label{prop3}
[Kress, 1986 \cite{Kress1}].
  Let $(\bE,\bH)$ be a solution to the \THME[$k$] for nonzero $k\in \Zup,$ 
  in a region $\Omega$ with a smooth bounded boundary $\Gamma$. Then
  the normal components $(\bn \cdot \bE, \bn \cdot \bH)$ lie in 
 $\cM_{\Gamma,0}$.  
\end{lemma}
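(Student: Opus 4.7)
The plan is to use the Maxwell equations to write each normal component as a curl, and then exploit the fact that each component of $\Gamma$ is a closed surface without boundary. From \THME[$k$] we have $ik\bE = -\nabla\times\bH$ and $ik\bH = \nabla\times\bE$ in a neighborhood of $\Gamma$ in $\Omega$, and since $k\neq 0$ this gives, on any boundary component $\Gamma_j$,
\[
\bn\cdot\bE = -\frac{1}{ik}\,\bn\cdot(\nabla\times\bH),\qquad
\bn\cdot\bH = \frac{1}{ik}\,\bn\cdot(\nabla\times\bE).
\]
So it suffices to show that for any smooth vector field $\bF$ defined in a one-sided neighborhood of $\Gamma_j$,
\[
\int_{\Gamma_j}\bn\cdot(\nabla\times\bF)\,dA = 0.
\]

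Next I would reduce this to the surface divergence theorem. The pointwise identity
\[
\bn\cdot(\nabla\times\bF)\big|_{\Gamma_j} \;=\; \nabla_{\Gamma_j}\!\cdot(\bF\times\bn)
\]
holds for any smooth $\bF$ defined near $\Gamma_j$; one can check it in local coordinates adapted to the surface by noting that only tangential derivatives of $\bF$ appear on the left-hand side, so the formula makes intrinsic sense on $\Gamma_j$. Equivalently, if $\bF_T$ denotes the tangential part of $\bF$ and $\alpha$ the associated $1$-form on $\Gamma_j$, then $\bn\cdot(\nabla\times\bF)\,dA = d\alpha$. Because $\Gamma_j$ is a smooth, compact, closed oriented surface with $\partial\Gamma_j=\emptyset$, Stokes' theorem (equivalently, the divergence theorem for tangent fields on a boundaryless surface) gives
\[
\int_{\Gamma_j}\nabla_{\Gamma_j}\!\cdot(\bF\times\bn)\,dA
= \int_{\Gamma_j} d\alpha = 0.
\]
Applying this to $\bF=\bH$ and $\bF=\bE$ yields $\int_{\Gamma_j}\bn\cdot\bE\,dA=0=\int_{\Gamma_j}\bn\cdot\bH\,dA$ on each component, which is precisely the statement that $(\bn\cdot\bE,\bn\cdot\bH)\in\cM_{\Gamma,0}$.

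The only step requiring real care is the identity $\bn\cdot(\nabla\times\bF)=\nabla_{\Gamma_j}\!\cdot(\bF\times\bn)$, since $\bF$ is only defined on one side of $\Gamma_j$. This is not a serious obstacle: I would either verify the identity in local coordinates where $\bn=\hat z$, observing that $\bn\cdot(\nabla\times\bF)=\partial_x F_y-\partial_y F_x$ involves only tangential derivatives and hence is intrinsic to $\Gamma_j$; or, alternatively, smoothly extend $\bF$ across $\Gamma_j$ into $D_j$ and apply the classical divergence theorem to $\nabla\times\bF$ on $D_j$, using that $\nabla\cdot(\nabla\times\bF)\equiv 0$ and that, by the tangential-derivative observation, the boundary integrand is unchanged by the choice of extension. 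Either route converts the problem to the vanishing of a divergence integral and gives the lemma.
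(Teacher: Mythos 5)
Your proof is correct and is essentially the paper's own argument: the paper writes $i_{\bn}\bxi\,dA=\frac{-1}{ik}d_{\Gamma}[\star_3\bEta\restrictedto_{\Gamma}]$ and $\bEta\restrictedto_{T\Gamma}=\frac{1}{ik}d_{\Gamma}[\bxi\restrictedto_{T\Gamma}]$, i.e.\ exactly your identity $\bn\cdot(\nabla\times\bF)\,dA=d\alpha$, and then applies Stokes' theorem on each closed boundary component. The only difference is notational (vector calculus versus exterior forms).
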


\begin{corollary}\label{kresscor}
[Kress, 1986 \cite{Kress1}].
Suppose that $(\bE,\bH)$ is an outgoing solution to the
\THME[$k$] with $k \in \Zup$ and satisfying 
\[
\bn \cdot \bE \restrictedto_{\Gamma}=f \text{ and }
\bn \cdot \bH \restrictedto_{\Gamma}=h 
\]
where $f,h \in \cM_{\Gamma,0}$ are H\"{o}lder continuous. 
Then $(\bE,\bH)$ are uniquely determined
subject to the specification of their circulations on the 
A-cycles (see Fig. \ref{fig1}):
\begin{equation}\label{kress_thm1a}
\int_{A_j} {\bf \tau} \cdot \bE \, ds = p_j \text{ and }
\int_{A_j} {\bf \tau} \cdot \bH \, ds = q_j\, ,
\end{equation}
where $p_j, q_j \in \bbC$ are given numbers.
\end{corollary}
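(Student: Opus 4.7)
The strategy is to deduce this corollary from Theorem~\ref{thm4} by a dimension-counting argument on the $2g$-dimensional space of outgoing $k$-Neumann fields, with the only real content being the injectivity of an A-period map.

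Let $\cN_k$ denote the space of outgoing solutions to \THME[$k$] with vanishing normal components. By linearity, the claim splits into uniqueness and existence. For uniqueness, the difference of any two solutions with the same normal data $(f,h)$ and the same A-cycle circulations $(p_j,q_j)$ lies in $\cN_k$ and has vanishing A-periods; Theorem~\ref{thm4} gives $\dim_{\bbC}\cN_k=2g$. Consider the linear map
\[
\Phi:\cN_k\longrightarrow\bbC^{2g},\quad(\bE,\bH)\longmapsto\Bigl(\textstyle\int_{A_j}\bE\cdot\tau\,ds,\ \int_{A_j}\bH\cdot\tau\,ds\Bigr)_{j=1}^{g}.
\]
Since domain and codomain have the same complex dimension, injectivity of $\Phi$ coincides with surjectivity, and surjectivity yields existence: given any particular outgoing solution $(\bE_0,\bH_0)$ realising the (necessarily mean-zero, by Lemma~\ref{prop3}) normal data $(f,h)$, one matches any prescribed $(p_j,q_j)\in\bbC^{2g}$ by adding a suitable element of $\cN_k$. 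A particular solution is supplied by the generalised Debye-source construction developed later in this paper.

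To attack injectivity, suppose $(\bE,\bH)\in\cN_k$ has vanishing A-periods. From $\nabla\times\bE=ik\bH$ combined with $\bn\cdot\bH|_{\Gamma}=0$ one computes $d(\bE|_{\Gamma})=(\bn\cdot\nabla\times\bE)\,dA=0$, so $\bE|_{\Gamma}$ is a closed 1-form on $\Gamma$; symmetrically $\bH|_{\Gamma}$ is closed. By Hodge theory on the closed surface $\Gamma$, each closed 1-form is exact plus a harmonic representative characterised in $H^{1}(\Gamma;\bbC)$ by its $g$ A-periods and $g$ B-periods. Vanishing A-periods for both $\bE|_{\Gamma}$ and $\bH|_{\Gamma}$ leaves a candidate $2g$-dimensional family parametrised by their B-periods. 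The remaining freedom is controlled via Maxwell: for each $j$, Stokes applied over $T_j$ (with $\partial T_j=B_j$ and $T_j\subset\overline{\Omega}$) gives
\[
\int_{B_j}\bE\cdot\tau\,ds = ik\int_{T_j}\bH\cdot\bn_T\,dA,\quad \int_{B_j}\bH\cdot\tau\,ds = -ik\int_{T_j}\bE\cdot\bn_T\,dA,
\]
expressing each B-period as a flux of the opposite field, which one then shows must vanish using $\nabla\cdot\bE=\nabla\cdot\bH=0$, the radiation condition at infinity, and the vanishing of both normal traces on $\Gamma$. Once $\bE|_{\Gamma}=\bH|_{\Gamma}=0$ is established, Theorem~\ref{thm4class} forces $(\bE,\bH)\equiv 0$ in $\Omega$.

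The main obstacle is this last step: upgrading vanishing A-periods of both $\bE$ and $\bH$ to vanishing of the full solution is a genuinely global statement, since A-periods alone supply only $g$ of the $2g$ cohomological functionals on each tangential trace. The cleanest route is to appeal to Kress's construction of a basis for $\cN_k$ in~\cite{Kress1}, in which each basis element is produced from a preferred harmonic 1-form on $\Gamma$ via layer-potential operators, and to verify directly that the resulting $2g\times 2g$ matrix of A-periods is non-singular. Non-degeneracy should follow by continuity in $k$ from the classical $k=0$ Hodge--Weyl picture, where the electrostatic and magnetostatic subproblems decouple and the analogous matrix is block-diagonal and manifestly invertible, combined with analyticity of the construction for $k\in\Zup\setminus\{0\}$.
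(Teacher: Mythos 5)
There is a genuine gap, and you have in fact flagged it yourself: the injectivity of the A-period map $\Phi$ on the $2g$-dimensional space of $k$-Neumann fields is the entire content of the corollary, and neither of your two proposed routes closes it. The first route fails because the fluxes $\int_{T_j}\bH\cdot\bn_T\,dA$ do \emph{not} vanish merely because $\nabla\cdot\bH=0$ and $\bn\cdot\bH\restrictedto_{\Gamma}=0$; the surfaces $T_j$ are cuts, not boundaries of subregions of $\Omega$, and the non-vanishing of exactly these fluxes is what makes the harmonic Neumann fields non-trivial at $k=0$. Asserting they vanish is assuming the conclusion. The second route (analytic continuation of the non-degeneracy of a $2g\times 2g$ period matrix from $k=0$) only yields invertibility off a discrete set of $k$, not for all $k\in\Zup\setminus\{0\}$, so it cannot deliver the stated result. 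Note also that the paper does not reprove this corollary: it cites Kress and explicitly contrasts his data ($g$ A-circulations of each field) with its own, which is the full class $[\bxi_t]$ or $[(\star_3\bEta)_t]$ in $H^1_{\dR}(\Gamma)$ of a \emph{single} field (see the remark following the proof of Theorem~\ref{thm4.1}).

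The missing idea is available in the paper's own uniqueness machinery. In the proof of Theorem~\ref{thm4.1}, a $k$-Neumann field is shown to vanish as soon as the real part of the wedge pairing $W(\balpha,\bar{\bBeta})=\int_{\Gamma}\balpha\wedge\bar{\bBeta}$ vanishes, where $\balpha=\bxi\restrictedto_{T\Gamma}$ and $\bBeta=(\star_3\bEta)\restrictedto_{T\Gamma}$ are both closed (this feeds the energy identity of Lemma~\ref{lem1} and then Rellich). For closed $1$-forms on $\Gamma$ the Riemann bilinear relations give
\begin{equation*}
\int_{\Gamma}\balpha\wedge\bar{\bBeta}
=\sum_{j=1}^{g}\left(\int_{A_j}\balpha\int_{B_j}\bar{\bBeta}
-\int_{B_j}\balpha\int_{A_j}\bar{\bBeta}\right),
\end{equation*}
since the A- and B-cycles form a canonical homology basis of $\Gamma$ with $A_j\cdot B_k=\delta_{jk}$. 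If all $g$ A-periods of $\bE$ and all $g$ A-periods of $\bH$ vanish, every term on the right vanishes, so $W(\balpha,\bar{\bBeta})=0$ and the field is identically zero. This is the step your argument lacks; with it, your dimension count ($\dim\cH_k(\Omega)=2g$ from Theorems~\ref{thm4.1} and~\ref{thm6.1}, so injectivity of $\Phi$ implies surjectivity) completes the proof. You do not need the B-periods to vanish individually, only the symplectic pairing to vanish, which is exactly what restricting both forms to the Lagrangian subspace of vanishing A-periods achieves.
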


\begin{remark} We call solutions to \THME[$k$] that satisfy~\eqref{6.16.08.1}
\emph{$k$-Neumann fields,} and denote the space of such solutions by
$\cH_k(\Omega).$ 
The conditions in Corollary \ref{kresscor} are familiar from the 
static (zero frequency) case,
where $g$ conditions must be specified for each of $\bE$ and 
$\bH$ separately, since the equations are uncoupled.
For nonzero $k$, this symmetry is not required. We provide
a different proof of existence in Theorem \ref{thm6.1}
and a somewhat more general analysis of uniqueness in section 
\ref{unMEfrm}.

Our representation also provides
an effective means for numerically computing the $k$-Neumann fields. 
These solutions will be needed
in solving the problem of scattering from a mutiply-connected 
perfect conductor.
\end{remark}

First, however, we need to recall some classical facts about layer
potentials.

\section{Jump Relations and Boundary Values of the Potentials}\label{sec5}

In order to use the integral representations discussed above to solve boundary
value problems, we need to find expressions for the restrictions of $\bE$ and
$\bH$ to the boundary, in terms of the various potentials.  In this section we
collect the relevant results.  Recall that $\Gamma\hookrightarrow\bbR^3$ is a
smooth, bounded surface (possibly disconnected). The unbounded component of
$\bbR^3\setminus\Gamma$ which we have denoted by $\Omega,$ will be referred to
as the ``$+$'' side of the boundary. The domain $D$ (the union of the bounded
components) will be referred to as the ``$-$'' side. We use $\bn$ to denote the
unit normal vector field along $\Gamma,$ pointing into the unbounded component.

The relevant limits are given in the following lemma, proofs of which
can be found, for example, in~\cite{ColtonKress}.

\begin{lemma}\label{lemma2} 
  Let $\bA$ and $\phi$ denote the vector and scalar potentials in
  (\ref{Vecpotdef}) and let $\Gamma$ be a smooth bounded surface in $\bbR^3.$
  For $\bx_0\in\Gamma,$ let ${\bx\to \bx_0^{\pm}}$ indicate approach from
  $\Omega$ ($+$) or $D$ ($-$), respectively, and let $\bn_0$ denote the normal
  at $\bx_0$, with $\frac{\partial}{\partial n_0}$ the normal derivative
  at $\bx_0$..  Then, for the scalar potential, we have
\begin{align}
\label{eq:1.23.1}
    \lim_{\bx\to \bx_0^{\pm}} \bn \cdot \nabla \phi_{\pm}(\bx) &=
    \mp \frac{1}{2}r(\bx_0)+K_0[r](\bx_0) \\
    \lim_{\bx\to \bx_0^{\pm}} \bn \times \nabla \phi_{\pm}(\bx) &=
    K_1[r](\bx_0), \nonumber
\end{align}
where 
\begin{align*}
K_0[r](\bx_0) &= \int_\Gamma \frac{\partial g_k}{\partial n_0}(\bx_0 - \bx) \,
r(\bx) \, dA(\bx) \\
K_1[r](\bx_0) &= \bn_0 \times \nabla \int_\Gamma g_k(\bx_0 - \bx) \, 
r(\bx) \, dA(\bx) \, ,
\end{align*}
$K_0$ is an integral operator of order $-1$ and $K_1$ is an integral
operator of order $0$, which is defined in a principal value sense. 

For the vector potential we have
\begin{align}
    \label{eq:1.23.2}
    \lim_{\bx\to \bx_0^{\pm}} \bn \cdot \bA_{\pm}(\bx) &=
    K_{2,n}[\bj](\bx_0), \\
    \lim_{\bx\to \bx_0^{\pm}} \bn \times \bA_{\pm}(\bx) &=
    K_{2,t}[r](\bx_0), \nonumber
\end{align}
where 
\begin{align*}
K_{2,n} [\bj](\bx_0) &= \int_\Gamma  g_k(\bx_0 - \bx) \, 
(\bn_0 \cdot \bj(\bx)) \, dA(\bx) \, , \\
K_{2,t} [\bj](\bx_0) &= \int_\Gamma   g_k(\bx_0 - \bx) \, 
(\bn_0 \times \bj(\bx)) \, dA(\bx) \, .
\end{align*}
$K_{2,n}$ and $K_{2,t}$ are both 
integral operators of order $-1$.

The vector potential also satisfies
\begin{align}
    \lim_{\bx\to \bx_0^{\pm}} \bn \cdot \nabla \times \bA_{\pm}(\bx) &=
     K_3[\bj](\bx_0) \\
  \label{eq:1.23.3} 
    \lim_{\bx\to \bx_0^{\pm}} \bn \times \nabla \times \bA_{\pm}(\bx) &=
    \pm \frac{1}{2} \bj(\bx_0) + K_4[\bj](\bx_0) \, , \nonumber
\end{align}
where 
\begin{align*}
K_3[\bj](\bx_0) &= \int_\Gamma 
 \nabla g_k(\bx_0 - \bx) \cdot (\bj(\bx) \times \bn_0)   
\, dA(\bx)  \, , \\
K_4[\bj](\bx_0) &= \int_\Gamma 
\left[ \nabla g_k(\bx_0 - \bx) \, (\bj(\bx) \cdot \bn_0) -  
\frac{\partial g_k}{\partial n_0} (\bx_0 - \bx) \, \bj(\bx) \right] 
\, dA(\bx) \, .
\end{align*}
$K_3$ is an integral operator of order $0$.
$K_4$ is an integral operator of order $-1$.
\end{lemma}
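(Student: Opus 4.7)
The plan is to reduce everything to the classical (Laplace) layer potential theory and then invoke standard potential-theoretic jump formulas. The main tool is the decomposition
\[
g_k(\bx) = g_0(\bx) + s_k(\bx), \qquad g_0(\bx) = \frac{1}{4\pi|\bx|}, \qquad s_k(\bx) = \frac{e^{ik|\bx|}-1}{4\pi|\bx|},
\]
where $g_0$ is the free-space Green's function of $-\Delta$ and $s_k$ extends to a real-analytic function on $\bbR^3$. Any layer potential built from $s_k$ is, together with all its derivatives, continuous across $\Gamma$; hence it contributes only a smoothing term that may be absorbed into the operators $K_0,\ldots,K_4$ without altering any jump. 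This reduces the entire lemma to jump formulas for the Laplace kernel $g_0$.

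For the scalar potential I would invoke the classical formulas for the single-layer potential $\phi_0[r](\bx)=\int_\Gamma g_0(\bx-\by)r(\by)\,dA(\by)$. The tangential derivatives of $\phi_0$ are continuous across $\Gamma$, yielding $K_1[r]$ as a principal-value operator of order $0$, while the normal derivative satisfies the classical double-layer jump relation $\mp\tfrac12 r(\bx_0) + \mathrm{PV}\int_\Gamma (\partial g_0/\partial n_0)(\bx_0-\by)\,r(\by)\,dA(\by)$. The order $-1$ of $K_0$ follows from the fact that, because $\Gamma$ is smooth, the factor $\bn_0\cdot(\bx_0-\by)$ in the numerator of $\partial g_0/\partial n_0$ vanishes to first order as $\by\to\bx_0$, so the kernel is one order less singular than the Schwartz kernel of a standard single-layer.

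For the vector potential $\bA$, since componentwise $\bA$ is a vector of single-layer potentials, $\bA$ itself is continuous across $\Gamma$; this immediately yields both lines of \eqref{eq:1.23.2}, with $K_{2,n}$ and $K_{2,t}$ being integral operators of order $-1$ on $\Gamma$. For $\nabla\times\bA$ I would use the pointwise identity $\nabla\times(g_k(\cdot-\by)\bj(\by))=\nabla g_k(\cdot-\by)\times\bj(\by)$ and then take scalar/cross products with $\bn$. The normal component $\bn\cdot(\nabla g_k\times\bj)=\nabla g_k\cdot(\bj\times\bn_0)$ inherits continuity from the same cancellation argument that makes $K_0$ order $-1$, giving $K_3$ of order $0$. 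The tangential component is where the jump appears: expanding
\[
\bn_0\times(\nabla g_k(\bx_0-\by)\times\bj(\by))=\nabla g_k(\bx_0-\by)(\bn_0\cdot\bj(\by))-\bj(\by)\,\tfrac{\partial g_k}{\partial n_0}(\bx_0-\by)
\]
exhibits the integrand as the sum of a weakly singular term and the kernel of a double-layer type operator acting on $\bj$, which contributes the jump $\pm\tfrac12\bj(\bx_0)$, exactly as in \eqref{eq:1.23.3}. The remaining PV part constitutes $K_4[\bj]$, and the combination has order $-1$.

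The only real obstacle is bookkeeping: making sure that after writing $g_k=g_0+s_k$, extracting the jump from the $g_0$ piece, and reassembling, one recovers precisely the operators $K_j$ defined with the full kernel $g_k$ rather than with $g_0$. This is straightforward because each of the smooth remainders $s_k$-contributions is itself of at least the same order as, and is continuous across $\Gamma$ together with, the corresponding $g_0$-term. The classical jump relation for the double layer $\mp\tfrac12 r(\bx_0)$ is then the only non-continuity in the entire lemma, and the pseudodifferential orders recorded for each $K_j$ follow from the explicit order of the differentiated kernel after accounting for the one-order gain produced by tangency of $\bn_0\cdot(\bx_0-\by)$ on the smooth surface $\Gamma$.
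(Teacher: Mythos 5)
Your proposal is essentially the paper's own proof: the authors simply cite classical potential theory together with the two weak-singularity observations (that the kernels of $K_{2,n}$, $K_{2,t}$ are weakly singular, and that the first term of $K_4$ is weakly singular because $\bn_0\cdot\bj(\bx_0)=0$), and your write-up is the standard fleshing-out of that citation: split off the Laplace kernel, quote the single/double-layer jump relations, and use $\bn_0\times(\nabla g_k\times\bj)=\nabla g_k\,(\bn_0\cdot\bj)-\bj\,\partial_{n_0}g_k$ to locate the jump in the tangential trace of $\nabla\times\bA$. Two of your justifications need repair, though neither affects the conclusion. First, $s_k(\bx)=(e^{ik|\bx|}-1)/(4\pi|\bx|)$ is \emph{not} real-analytic on $\bbR^3$: its expansion contains odd powers of $|\bx|$ (e.g.\ the term $-k^2|\bx|/8\pi$), so it is only Lipschitz at the origin. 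What is true, and is all you need since only first derivatives of the potentials occur in the lemma, is that $s_k$ and $\nabla s_k$ are bounded near $0$, so the $s_k$-contributions to the potentials and their first derivatives are continuous across $\Gamma$. Second, the continuity of $\bn\cdot\nabla\times\bA$ is not due to ``the same cancellation that makes $K_0$ order $-1$'': that cancellation, $\bn_0\cdot(\bx_0-\by)=O(|\bx_0-\by|^2)$, would make $K_3$ weakly singular and of order $-1$, contradicting the order $0$ you (correctly) assign to it. The actual reason is that the jump of $\nabla\times\bA$ across $\Gamma$ equals $\mp\tfrac{1}{2}\,\bn_0\times\bj(\bx_0)$, a vector tangent to $\Gamma$, so its normal component has no jump; equivalently, $\nabla g_k(\bx_0-\by)\cdot(\bj\times\bn_0)$ is (up to a smooth, vanishing correction) a tangential directional derivative of a single layer, and tangential derivatives of single layers are continuous across $\Gamma$ --- i.e.\ $K_3$ is the analogue of $K_1$, not of $K_0$. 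With these two points corrected the argument is complete and coincides with the proof the paper intends.
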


\begin{proof}
These results follow from classical potential theory, the observation that
the kernels in $K_{2,n}$ and  
$K_{2,t}$ are weakly singular, and the fact that,
at the singular
point $\bx_0 = \bx$ in $K_4$, $\bn_0$ is orthogonal to $\bj(\bx)$).
\end{proof}

\begin{remark}
We have abused notation slightly in the preceding Lemma. The operators
are functions of the Helmholtz parameter $k$. When the explicit dependence
is relevant, we will occasionally write
$K_0(k), K_1(k), ...,K_4(k)$ instead.
\end{remark}

The limits of the anti-potentials $\bA_m$ and $\phi_m$ are
analogous.  Recall, however, that we have chosen not
to work with $\bj$ and $\bm$ as unknowns, but rather 
the generalized Debye sources $r,q$ complemented by the harmonic
vector fields. $\bj$ and $\bm$ 
are computed from
\[ \bj = \nabla_{\Gamma} \Psi + \nabla_{\Gamma} \times (\bn \Psi_m) + \bj_H \]
\[ \bm = \bn \times \bj \]
where $\Psi$ and $\Psi_m$ satisfy the Laplace-Beltrami equations
(\ref{surflapsolvesmc}) with $r,q$ viewed as source data.

\begin{lemma}\label{lemma2a} 
The integral operators $K_2, K_3$ and $K_4$ are all of
order $-1$ or $-2$, and hence compact, 
when viewed as operators acting on $r,q$.
\end{lemma}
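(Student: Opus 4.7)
The plan is to exploit the additional smoothing introduced by the partial inverse $R_0$ of the surface Laplacian when passing from $(r,q)$ to $(\bj,\bm)$, and then to apply the composition rule for orders of pseudodifferential operators on the compact surface $\Gamma$.

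First, I would observe that on the compact smooth surface $\Gamma$, the operator $-\Delta_{\Gamma}$ is elliptic of order $2$, so its partial inverse $R_0:\cM_{\Gamma,0}\to\cM_{\Gamma,0}$ is a pseudodifferential operator of order $-2$. Composition with the first-order surface gradient (or its $\bn$-rotated variant) therefore gives an operator of order $-1$. Consequently the map
\[
(r,q)\;\longmapsto\;\bj_R\;=\;ik\,\nabla_{\Gamma}R_0 r\;-\;ik\,\nabla_{\Gamma}\!\times\!(\bn\,R_0 q),
\qquad \bm_R=\bn\times\bj_R,
\]
i.e.\ the part of $(\bj,\bm)$ determined by the generalized Debye sources, is a pseudodifferential operator of order $-1$. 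The remaining harmonic piece $\bj_H\in\cH^1(\Gamma)$ is not a function of $(r,q)$ at all; when substituted into any of the $K_j$, it produces contributions with values in the finite-dimensional space $K_j\bigl(\cH^1(\Gamma)\bigr)$, so the harmonic contribution is automatically finite-rank (hence compact) on $(r,q)$.

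Next, I would invoke the orders recorded in Lemma~\ref{lemma2}: $K_{2,n}$, $K_{2,t}$, and $K_4$ are pseudodifferential operators of order $-1$, while $K_3$ is of order $0$. Composition with the order $-1$ map $(r,q)\mapsto \bj_R$ therefore yields operators of order $(-1)+(-1)=-2$ for $K_{2,n}$, $K_{2,t}$, and $K_4$, and of order $0+(-1)=-1$ for $K_3$. In every case the composed operator has strictly negative order, as claimed.

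Finally, negative-order pseudodifferential operators on the compact surface $\Gamma$ map continuously between Sobolev spaces $H^s(\Gamma)\to H^{s+\sigma}(\Gamma)$ (with $\sigma=1$ or $2$), or between Hölder spaces with the analogous gain of regularity, so compactness on the natural function spaces follows from the Rellich--Kondrachov embedding theorem. The main technical point—really a matter of bookkeeping rather than a genuine obstacle—is to confirm that the Hodge--Helmholtz decomposition is compatible with the mean-zero condition of Lemma~\ref{meanzerolemma}, so that $R_0$ is genuinely applicable to $r$ and $q$, and that the finite-dimensional harmonic correction is absorbed cleanly into a compact perturbation.
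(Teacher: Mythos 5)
Your proof is correct and follows essentially the same route as the paper: the paper's one-line argument is precisely that $(r,q)\mapsto(\bj,\bm)$ is of order $-1$ (via $R_0$ of order $-2$ composed with $d_{\Gamma}$), so composing with the operators of Lemma~\ref{lemma2} lowers each order by one, giving $-2$ for $K_{2,n},K_{2,t},K_4$ and $-1$ for $K_3$. Your additional remarks on the finite-rank harmonic contribution and the Rellich compactness step are correct elaborations of what the paper leaves implicit.
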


\begin{proof} 
This follows immediately from Lemma \ref{lemma2} and the fact that
$\bj$, $\bm$ are of order $-1$ in terms of $(r,q)$.
\end{proof} 

From Lemma \ref{lemma2}, we obtain the following jump relations:

\begin{corollary}\label{thm3} Suppose that the fields $\bE, \bH,$  
are defined in terms of potentials and anti-potentials. Then they satisfy
\begin{equation}\label{jmprels}
\begin{matrix} \bn \cdot (\bE_{+} - \bE_{-}) = r \\
\bn \cdot (\bH_{+} - \bH_{-}) = q \end{matrix}
\qquad
\begin{matrix}
\bn \times (\bE_{+} - \bE_{-}) = -\bm \\
\bn \times (\bH_{+} - \bH_{-}) =  \bj
\end{matrix} \, .
\end{equation}
\end{corollary}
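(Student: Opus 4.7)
The proof is essentially a termwise application of Lemma~\ref{lemma2} to the representations~(\ref{Epotrepsymm}) and~(\ref{Hpotrepsymm}), together with the analogous jump formulas for the anti-potentials $\bA_m, \phi_m$. I would organize it as follows.

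First, I would recall which of the four basic potential limits in Lemma~\ref{lemma2} actually carry a jump. For the scalar potential, only $\bn\cdot\nabla\phi$ jumps (by $-r$ from $-$ to $+$, since $\lim_\pm\bn\cdot\nabla\phi_\pm = \mp\tfrac12 r + K_0[r]$), while $\bn\times\nabla\phi = K_1[r]$ is continuous across $\Gamma$. For the vector potential, $\bn\cdot\bA$ and $\bn\times\bA$ are continuous (both $K_{2,n}$ and $K_{2,t}$ are weakly singular), and $\bn\cdot\nabla\times\bA = K_3[\bj]$ is continuous, whereas $\bn\times\nabla\times\bA$ jumps by $\bj$ (from $\lim_\pm = \pm\tfrac12\bj + K_4[\bj]$). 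By symmetry, the analogous statements hold for $\phi_m$ (with $q$ in place of $r$) and $\bA_m$ (with $\bm$ in place of $\bj$).

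Second, I would substitute into
\[
\bE = ik\bA - \nabla\phi - \nabla\times\bA_m, \qquad
\bH = \nabla\times\bA + ik\bA_m - \nabla\phi_m,
\]
and take the difference of the $+$ and $-$ limits term by term. For $\bn\cdot(\bE_+-\bE_-)$, the only surviving contribution comes from $-\bn\cdot\nabla\phi$, giving $-(-r) = r$. For $\bn\times(\bE_+-\bE_-)$, the only surviving contribution comes from $-\bn\times\nabla\times\bA_m$, giving $-\bm$. The analogous computation for $\bH$ gives the remaining two identities: the normal jump $q$ arises from $-\bn\cdot\nabla\phi_m$, and the tangential jump $\bj$ arises from $\bn\times\nabla\times\bA$.

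Third, I would note that no circularity is introduced by the fact that $\bj$ and $\bm$ are built from $(r,q)$ via $\bj = \nabla_\Gamma\Psi + \nabla_\Gamma\times(\bn\Psi_m) + \bj_H$ and $\bm = \bn\times\bj$; Lemma~\ref{lemma2a} guarantees that these substitutions are legitimate and that the continuous (non-jumping) pieces of $\bA$, $\bA_m$ remain well-defined compact operators on $(r,q)$.

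The only real subtlety is bookkeeping of signs and orientations — in particular, the convention that $\bn$ points from $D$ into $\Omega$ and hence the $+$ side corresponds to $\Omega$, together with the orientation-dependent signs in the jump formulas for $\bn\cdot\nabla\phi$ and $\bn\times\nabla\times\bA$. Everything else is a direct substitution, so I anticipate no analytic obstacle beyond verifying these signs.
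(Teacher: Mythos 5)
Your proof is correct and is exactly the route the paper takes: the corollary is stated there as an immediate consequence of Lemma~\ref{lemma2}, obtained by subtracting the $\pm$ limits term by term in the representations~(\ref{Epotrepsymm})--(\ref{Hpotrepsymm}), with only the $\mp\tfrac12 r$, $\mp\tfrac12 q$ and $\pm\tfrac12\bj$, $\pm\tfrac12\bm$ terms surviving. Your sign bookkeeping (e.g.\ $-\bn\cdot\nabla\phi$ contributing $+r$ and $-\bn\times\nabla\times\bA_m$ contributing $-\bm$) matches~(\ref{jmprels}).
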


\section{The Maxwell Equations with Normal Components Specified}
\label{normalcompsec}

We note that, for $k \in \Zup$, the fundamental
solution
\begin{equation}
 g_k(\bx) = \frac{e^{ik |\bx|}}{ 4 \pi | \bx|} 
\end{equation}
is outgoing; that is, it satisfies the Silver-M\"{u}ller radiation condition.
Thus, all of the corresponding potentials defined over bounded regions are 
outgoing as well. 

\begin{theorem}\label{thm1} 
Let $\bE$ and $\bH$ be outgoing fields represented in terms of 
Debye sources $(r,q)$ and currents $\bj,\bm$.
Then the limiting values of their normal components are given 
by the following integral representation for $\bx_0 \in \Gamma$.
\begin{multline}
\lim_{\bx\to \bx_0^{\pm}} \left(\begin{matrix} \bE \cdot \bn\\
  \bH \cdot \bn \end{matrix}\right)
=\\
\left(\begin{matrix} \pm \frac{1}{2}I -K_0&0\\
0& \mp \frac{1}{2}I +K_0\end{matrix}\right)
\left(\begin{matrix} r\\q\end{matrix}\right)+
\left(\begin{matrix} i k K_{2,n} & - K_3\\ 
 K_3 & ik K_{2,n} \end{matrix}\right)
 \left(\begin{matrix} \bj\\
  \bm\end{matrix}\right) \, ,
\label{eqn74}
\end{multline}
where $I$ denotes the identity operator.
If we assume that $\bj=\bj_R(r,q,k)$ and $\bm=\bn \times \bj,$ 
then these are Fredholm integral operators of the second kind in the  
generalized Debye sources
$(r,q)$. 
As $k$ tends to zero, this representation converges to
\begin{equation}
\lim_{\bx\to \bx_0^{\pm}} \left(\begin{matrix} \bE \cdot \bn \\
  \bH \cdot \bn \end{matrix}\right)
=\\
\left(\begin{matrix} \pm \frac{1}{2}I -K_0(0)&0\\
0&\mp \frac{1}{2}I+K_0(0)\end{matrix}\right)\left(\begin{matrix}
r\\q\end{matrix}\right)
\end{equation}
The equation $[-I+2K_0(0)]f=h$ is uniquely solvable for all $h,$ and
the  equation $[I+2K_0(0)]f=h$ for all $h$ of mean zero.
\end{theorem}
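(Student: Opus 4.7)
The plan is to combine the jump relations of Lemma~\ref{lemma2} with the symmetric potential representation, read off the operator structure in $(r,q)$ once $\bj$ and $\bm$ are expressed via the Hodge decomposition, and then pass to the low-frequency limit and invoke classical potential theory for the remaining scalar operator.

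First I would substitute (\ref{Epotrepsymm}) and (\ref{Hpotrepsymm}) into $\bn\cdot(\cdot)$ and take the boundary trace from the $\pm$ side term by term. The two gradients $-\nabla\phi$ and $-\nabla\phi_m$ each carry the jump $\pm\tfrac12 I - K_0$ (acting on $r$ and on $q$, with the diagonal signs determined by the direction of $\bn$ in the two equations). The vector-potential pieces $ik\bA\cdot\bn$ and $ik\bA_m\cdot\bn$ contribute $ikK_{2,n}[\bj]$ and $ikK_{2,n}[\bm]$ with no jump, while the curls $-\nabla\times\bA_m\cdot\bn$ and $\nabla\times\bA\cdot\bn$ contribute $-K_3[\bm]$ and $K_3[\bj]$, again continuously. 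Assembling the contributions into a $2{\times}2$ block on $(r,q)$ and a $2{\times}2$ block on $(\bj,\bm)$ produces the stated identity~\eqref{eqn74}.

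Next I would insert the constitutive relation $\bj=\bj_R(r,q,k)$ from (\ref{helmdecompsimple})--(\ref{surflapsolvesmc}) together with $\bm=\bn\times\bj$. Since $\Psi=ikR_0r$, $\Psi_m=-ikR_0q$, and $R_0$ is a partial inverse of $\Delta_\Gamma$ of order $-2$, the map $(r,q)\mapsto(\bj,\bm)$ is the composition of $R_0$ with first-order tangential operators, hence of order $-1$ and proportional to $k$. Lemma~\ref{lemma2a} then implies that $K_{2,n}$ and $K_3$ precomposed with this map are of order $\le -2$, so the entire current-coupling block in \eqref{eqn74} is compact on $L^2(\Gamma)$ (and on Hölder spaces). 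Thus \eqref{eqn74} is a compact perturbation of the diagonal $\pm\tfrac12 I$, i.e.\ a Fredholm operator of the second kind in $(r,q)$. For the limit $k\to 0$, the same order count gives that the current-coupling contribution is $O(k)$, and the kernels $g_k$ converge smoothly to $g_0=1/(4\pi|\bx-\by|)$, so $K_0(k)\to K_0(0)$; only the block-diagonal scalar operators survive.

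Finally, the invertibility statements are classical. The operator $-I+2K_0(0)$ is, up to the factor $2$, the boundary operator for the exterior Neumann problem for Laplace's equation: if $\phi$ is the single-layer potential with density $f$, then $\bn\cdot\nabla\phi_+ = (-\tfrac12 I + K_0(0))f$, and the exterior Neumann problem in $\bbR^3$ with decay at infinity is uniquely solvable for any continuous datum, since flux may radiate to infinity; hence $-I+2K_0(0)$ is invertible on all of $L^2(\Gamma)$. The operator $I+2K_0(0)$ corresponds to the interior Neumann problem $\bn\cdot\nabla\phi_- = (\tfrac12 I + K_0(0))f$, solvable iff $h$ has zero mean on each connected component of $\Gamma$: the nullspace of the transpose consists of the indicator functions of the components of $D$ (densities whose double-layer potentials vanish in $\Omega$), whose $L^2(\Gamma)$-orthogonal complement is precisely $\cM_{\Gamma,0}$. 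The main obstacle is the order-counting in step two, i.e.\ justifying uniform compactness of the current-coupling block in the appropriate function space so that the Fredholm second-kind structure is genuine and survives the $k\to 0$ limit; once this is in place, the solvability assertions are standard.
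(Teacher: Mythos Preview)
Your proposal is correct and follows essentially the same approach as the paper: the paper's own proof is a two-line argument stating that \eqref{eqn74} follows from the jump formul{\ae} of Lemma~\ref{lemma2} (together with Lemma~\ref{lemma2a} for the Fredholm second-kind structure) and that the solvability properties of $I\pm 2K_0(0)$ are classical, with a citation to Colton--Kress. You have simply supplied the details the paper omits, including the explicit $O(k)$ accounting for $\bj_R(r,q,k)$ in the low-frequency limit and the identification of $\pm I+2K_0(0)$ with the interior/exterior Neumann operators; the ``main obstacle'' you flag (uniform compactness of the current-coupling block) is exactly what Lemma~\ref{lemma2a} is there to provide, so it is not an open issue.
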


\begin{proof}
  The equations follow from the formul{\ae} in the previous section. The
  solvability properties of $I\pm 2K_0(0)$ are classical and can be found
  in~\cite{ColtonKress}.
\end{proof}

\begin{definition}
\label{cndef}
We let $\cN^{\pm}(k)$ denote the operator on the right hand side
of~\eqref{eqn74}, with $\cN^{\pm}_{\bE}(k)$ the first row and
$\cN^{\pm}_{\bH}(k)$ the second. 
\end{definition}

If we seek to impose the boundary conditions
$\bE \cdot \bn\restrictedto_{\Gamma} = f$, 
$\bH \cdot \bn\restrictedto_{\Gamma} = h$,
then
we obtain the following system of equations, 
which is analytic in $k.$ 

\begin{multline}
\left(\begin{matrix} \pm \frac{1}{2}I -K_0&0\\
0& \mp \frac{1}{2}I +K_0\end{matrix}\right)
\left(\begin{matrix} r\\q\end{matrix}\right)+
\left(\begin{matrix} i k K_{2,n} & - K_3\\ 
 K_3 & ik K_{2,n} \end{matrix}\right)
 \left(\begin{matrix} \bj\\
  \bm\end{matrix}\right) \, 
= \left(\begin{matrix} f\\ h  \end{matrix}\right)
  \label{eq:81n}
\end{multline}
When $\bj=\bj_R(r,q,k)$ and $\bm=\bn \times \bj$ are obtained from 
the Debye sources via (\ref{surflapsolvesmc}), 
we denote the left-hand side of
(\ref{eq:81n}) by $\cN^{\pm}(k)\left(r, q\right)$ and observe that 
it is a compact perturbation of the operator $J_{\pm}(0)$ where
\begin{equation}
  J_{\pm}(k)=\left(\begin{matrix} \pm \frac{1}{2}I-K_0(k)&0\\
0&\mp \frac{1}{2}I+K_0(k)\end{matrix}\right).
\end{equation}

\begin{theorem} 
\label{mcnormalthm}
Let $(f,h)\in\cM_{\Gamma,0}.$ For $k\notin E_{+},$
a discrete set in the complex plane, the equation
  \begin{equation}
  \label{frednormal}
    \cN^+(k)(r,q)=(f,h)
  \end{equation}
has a unique solution. 
The outgoing solution of the \THME[$k$] defined by this data
  satisfies
  \begin{equation}
    \label{eq:nrmbc2}
 \bE_{+} \cdot \bn\restrictedto_{\Gamma}=f\quad 
 \bH_{+}   \cdot \bn\restrictedto_{\Gamma}=h.
  \end{equation}
\end{theorem}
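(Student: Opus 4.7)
The strategy is to recognize \eqref{frednormal} as an analytic family of second-kind Fredholm equations on $\cM_{\Gamma,0}$ and invoke the analytic Fredholm theorem. The discussion preceding the statement already decomposes $\cN^+(k)$ as $J_+(k)+T(k)$, where $J_+(k)$ is block-diagonal with entries $\pm\tfrac12 I - K_0(k)$ and $T(k)$ collects the coupling terms $\pm K_3$ and $ikK_{2,n}$, evaluated on $\bj=\bj_R(r,q,k)$ and $\bm=\bn\times\bj$. Since $\bj_R$ is built from $\Psi=ikR_0 r$ and $\Psi_m=-ikR_0 q$, each carrying an explicit factor of $k$, the currents $\bj$ and $\bm$ are themselves $O(k)$; in particular $\cN^+(0)=J_+(0)$. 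Analyticity of $\cN^+(k)$ in $k$ is inherited from the entire dependence of $g_k$ on $k$, and Lemma~\ref{lemma2a} together with the smoothing character of $K_0(k)-K_0(0)$ make $\cN^+(k)-J_+(0)$ a compact, analytic family of operators on the working space.

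Next I would verify that $J_+(0)$ is an isomorphism of $\cM_{\Gamma,0}$. Under the ``$+$'' choice of sign, the two diagonal entries are $\tfrac12 I - K_0(0)=-\tfrac12[-I+2K_0(0)]$ and $-\tfrac12 I + K_0(0)=\tfrac12[-I+2K_0(0)]$, both uniquely invertible on the full H\"older space by Theorem~\ref{thm1}. To transfer invertibility to $\cM_{\Gamma,0}$, I would apply Green's theorem to the Newtonian single layer on each bounded component of $D$ to derive, for every $r$, the identity $\int_{\Gamma_j} K_0(0)r\,dA = -\tfrac12\int_{\Gamma_j} r\,dA$; this implies that $\pm\tfrac12 I \mp K_0(0)$ preserves the total integral on each component of $\Gamma$. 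Thus $J_+(0)$ maps $\cM_{\Gamma,0}$ into itself, and since it is invertible on the full space, its inverse preserves $\cM_{\Gamma,0}$ as well.

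With $J_+(0)$ an isomorphism of $\cM_{\Gamma,0}$ and $\cN^+(k)-J_+(0)$ a compact, analytic family, the analytic Fredholm theorem produces a discrete exceptional set $E_+\subset\bbC$ off which $\cN^+(k)$ is invertible on $\cM_{\Gamma,0}$. For $(f,h)\in\cM_{\Gamma,0}$ and $k\notin E_+$, the unique $(r,q)\in\cM_{\Gamma,0}$ solves \eqref{frednormal}. Feeding this pair into the representation of Section~\ref{sec5} produces an outgoing solution of THME($k$) (since $g_k$ is outgoing for $k\in\Zup$), and Theorem~\ref{thm1} identifies the boundary traces of its normal components with $\cN^+(k)(r,q)=(f,h)$, which is precisely \eqref{eq:nrmbc2}.

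The main obstacle is the second paragraph: confirming that the classical invertibility of $-I+2K_0(0)$ descends cleanly to $\cM_{\Gamma,0}$ when $\Gamma$ has several components of arbitrary topology, i.e., checking that the mean-zero condition is compatible with the diagonal operators on both source and target sides. Once this bookkeeping is settled, the rest is a routine application of the analytic Fredholm theorem.
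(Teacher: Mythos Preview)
Your proposal is correct and follows essentially the same route as the paper: invertibility of $J_+(0)$ from Theorem~\ref{thm1}, analyticity and compactness of the perturbation, then the analytic Fredholm theorem to produce the discrete exceptional set $E_+$.

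The one place where you diverge slightly is the ``main obstacle'' you flag in your second paragraph. You propose to verify directly, via a Green-type identity for $K_0(0)$, that $J_+(0)$ preserves $\cM_{\Gamma,0}$. The paper sidesteps this computation entirely: it observes that $\cN^+(k)(r,q)$ is, by construction, the pair of normal boundary traces of a solution to \THME[$k$], and Lemma~\ref{prop3} says such traces always lie in $\cM_{\Gamma,0}$; conversely, Lemma~\ref{meanzerolemma} guarantees the sources $(r,q)$ live there too. So $\cN^+(k):\cM_{\Gamma,0}\to\cM_{\Gamma,0}$ holds for all $k$ without examining $K_0(0)$ by hand. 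Your approach would also work, but the paper's is cleaner and handles all $k$ at once rather than bootstrapping from $k=0$.
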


\begin{proof}
We know from Theorem \ref{thm1} that the operator 
$J_{+}(0)$ is invertible. By analytic Fredholm theory, therefore,
we know that there is a discrete set $E_+ \in \bbC$ so that for 
$k \notin E_+$, 
$\cN^+(k)$ is also invertible. The result now follows from the fact that
$(r,q)\in\cM_{\Gamma,0}$ (Lemma \ref{meanzerolemma}) and that 
$(f,h)\in\cM_{\Gamma,0}$ (Lemma \ref{prop3}), so that
\[ 
\cN^+(k): \cM_{\Gamma,0} \rightarrow \cM_{\Gamma,0}.
\]
\end{proof}

\begin{corollary} 
\label{scnormalthm}
When $\Gamma$ is simply connected,
the Fredholm equation (\ref{frednormal}) provides a unique solution
to the \THME[$k$] for any $k \in \Zup$.
\end{corollary}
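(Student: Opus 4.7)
The plan is to invoke analytic Fredholm theory, as in the proof of Theorem \ref{mcnormalthm}, and show that when every component of $\Gamma$ is simply connected, the exceptional set $E_+$ is disjoint from $\Zup$. Since $\cN^+(k) - J_+(k)$ is compact on $\cM_{\Gamma,0}$ (by Lemma \ref{lemma2a}) and $J_+(0)$ is invertible on $\cM_{\Gamma,0}$ (by Theorem \ref{thm1}), $\cN^+(k)$ is Fredholm of index zero on $\cM_{\Gamma,0}$, so injectivity implies invertibility. It therefore suffices to prove that $\cN^+(k)$ is injective for every nonzero $k \in \Zup$; the case $k=0$ is already covered by Theorem \ref{thm1}. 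Once injectivity is established, the conclusion of the corollary follows from Theorem \ref{mcnormalthm}.

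To prove injectivity, suppose $\cN^+(k)(r,q) = 0$ and let $(\bE, \bH)$ denote the outgoing field in $\Omega$ built from $(r,q)$ by the representation of Section \ref{normalcompsec}. Its normal components vanish on $\Gamma,$ so Kress's Theorem \ref{thm4}, applicable because every component of $\Gamma$ has genus zero, forces $\bE_+ \equiv \bH_+ \equiv 0$ throughout $\Omega$. The jump relations of Corollary \ref{thm3} then provide full Cauchy data for the interior fields on $\Gamma$:
\[
\bn \cdot \bE_- = -r,\quad \bn \cdot \bH_- = -q,\quad \bn \times \bE_- = \bm,\quad \bn \times \bH_- = -\bj.
\]
Applying the divergence theorem to $\bE_- \times \overline{\bH_-}$ on $D$, together with the interior Maxwell equations $\nabla \times \bE_- = ik\bH_-$ and $\nabla \times \bH_- = -ik\bE_-$, one obtains
\[
\int_D \bigl( ik |\bH_-|^2 - i\bar k |\bE_-|^2 \bigr)\, dV = \int_\Gamma (\bn \times \bE_-) \cdot \overline{\bH_-}\, dA.
\]
A short vector-identity calculation, using $\bn \times \bE_- = \bn \times \bj$ and $\bn \times \overline{\bH_-} = -\overline{\bj}$, reduces the boundary integrand to $|\bj|^2$. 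Taking real parts yields
\[
\int_\Gamma |\bj|^2\, dA = -\Im(k) \int_D \bigl( |\bE_-|^2 + |\bH_-|^2 \bigr) dV,
\]
and since $\Im k \geq 0$, both sides must vanish. Hence $\bj \equiv 0$ on $\Gamma,$ and the continuity relations $\nabla_\Gamma \cdot \bj = ikr$ and $\nabla_\Gamma \cdot \bm = ikq,$ built into the construction of $\bj, \bm$ from $(r,q)$ via $R_0,$ give $r = q = 0$ because $k \neq 0$.

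The main obstacle is the case $k \in \bbR \setminus \{0\}$: when $\Im k = 0$ the Poynting identity no longer forces the interior fields $\bE_-, \bH_-$ to vanish, and the interior Maxwell boundary value problem generically possesses real eigenvalues. What circumvents this obstacle is precisely that our representation encodes $(\bj, \bm)$ directly from the generalized Debye sources $(r, q)$: the surface vanishing of $\bj$ on $\Gamma,$ together with the algebraic constraints $\Delta_\Gamma \Psi = ikr$ and $\Delta_\Gamma \Psi_m = -ikq,$ suffices to conclude $(r,q) = 0$ without having to resolve any interior resonance.
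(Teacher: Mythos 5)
Your proof is correct and follows essentially the same route as the paper: the paper proves this corollary by citing Corollary~\ref{cor4}, whose proof is exactly your chain of reasoning --- kill the exterior field with Kress's theorem, read off the interior tangential data $\bn\times\bE_-=\bn\times\bj$, $\bn\times\bH_-=-\bj$ from the jump relations, and exploit the resulting non-self-adjoint boundary condition in an energy identity to force $\bj\equiv 0$ and hence $(r,q)=(0,0)$. The only difference is cosmetic: you phrase the energy step as the Poynting identity for $\bE_-\times\overline{\bH_-}$ and take real parts, while the paper writes the equivalent integration by parts for the 1-form $\bxi_-$ (equation~\eqref{eq:intprts5}, the relation $-aik=bk^2-c$) and invokes the quadratic formula.
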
 

\begin{proof}
This follows immediately from Theorem
\ref{mcnormalthm} and the fact that
$E_{+}$ lies in the complex lower half-plane, as follows from Corollary~\ref{cor4} in 
Section \ref{MEfrms2}. 
\end{proof}

\begin{remark}
\label{cfkress}
The problem of solving the THME with specified normal components was
previously analyzed by G\"{u}lzow \cite{Gulzow}, who constructed
a hypersingular integral equation method.
Using generalized Debye sources instead leads to a well-conditioned
integral equation of the second kind. 
In the multiply connected case,
Theorem \ref{mcnormalthm} shows that the problem of scattering with normal
components specified is invertible, if the solution is sought
with zero projection onto the harmonic vector fields. The extra
conditions in Kress' result (Corollary \ref{kresscor}) force uniqueness 
on $\bj_H$ by specifying $g$ conditions on 
the circulation of $\bE$ and $g$ conditions on the circulation of 
$\bH$, where $g$ is the genus of $\Gamma$.
In fact, any set of $2g$ conditions that uniquely determines the 
harmonic component of the current, $\bj_H$, will suffice.
(see Theorems \ref{thm4.1} and \ref{thm6.1}).
\end{remark}

\section{The Perfect Conductor}

We turn now to the problem of scattering from a perfect conductor, 
which requires an analysis of the 
tangential components of $\bE$ and $\bH$. These are easily
expressed in terms of potentials using 
Lemma \ref{lemma2}. 

\begin{theorem} 
\label{tanlimits}
Let $\bx_0 \in \Gamma$.
The limiting values of the tangential components of $\bE,\bH$, in terms
of potentials and antipotentials, are given by
\begin{multline}
\lim_{\bx\to \bx_0^{\pm}} \left(
\begin{matrix} \bn \times \bE \\ \bn \times \bH \end{matrix}\right)=\\
\frac{1}{2}\left(\begin{matrix}\pm \bm\\\mp\bj\end{matrix}\right)+
\left(\begin{matrix} -K_1 & 0 & i k K_{2,t} & 
- K_4\\
0&-K_1 &  K_4 & i k K_{2,t} \end{matrix}\right)
\left(\begin{matrix} r\\ q\\ \bj \\
  \bm\end{matrix}\right).
\label{eqn88}
\end{multline}
where $K_1, K_{2,t}, K_4$ are defined in Lemma \ref{lemma2}. 
\end{theorem}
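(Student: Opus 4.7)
The plan is to substitute the symmetric potential representations (\ref{Epotrepsymm})--(\ref{Hpotrepsymm}) into $\bn\times\bE$ and $\bn\times\bH$ and then apply the jump relations collected in Lemma~\ref{lemma2} term by term.

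First I would expand $\bn\times\bE = ik(\bn\times\bA) - \bn\times\nabla\phi - \bn\times(\nabla\times\bA_m)$. The three pieces are governed by (\ref{eq:1.23.2}), (\ref{eq:1.23.1}), and (\ref{eq:1.23.3}) respectively: the first contributes $ikK_{2,t}[\bj]$ and has no jump; the second contributes $-K_1[r]$ and also has no jump (it is a principal-value limit); the third picks up the $\pm\frac{1}{2}$ jump in (\ref{eq:1.23.3}), applied with density $\bm$, together with the compact piece $-K_4[\bm]$. Summing these three contributions gives the first row of (\ref{eqn88}).

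The computation for $\bn\times\bH$ is entirely parallel. Expanding $\bn\times\bH = \bn\times(\nabla\times\bA) + ik(\bn\times\bA_m) - \bn\times\nabla\phi_m$ and applying the same three formul\ae, now with the roles of the pairs $(\bj,r)$ and $(\bm,q)$ essentially swapped, yields the second row. The sign pattern displayed in the $\frac{1}{2}\bigl(\begin{smallmatrix}\pm\bm\\\mp\bj\end{smallmatrix}\bigr)$ term reflects precisely the antisymmetry between $\bE$ and $\bH$ built into the symmetric representation: in $\bE$ it is $\bA_m$ that appears under a curl (and hence carries the jump), whereas in $\bH$ it is $\bA$, with the opposite overall sign.

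Nothing in the argument is genuinely delicate; the entire proof is bookkeeping. The one place where care is needed is in tracking signs, since two separate conventions are compounded: the orientation of $\bn$ (pointing into $\Omega$, so ``$+$'' means approach from the exterior) and the minus sign attached to the curl terms in the symmetric representation. As a sanity check, I would form the difference of the $+$ and $-$ limits and verify that it reproduces the jump relations $\bn\times(\bE_+-\bE_-)=-\bm$ and $\bn\times(\bH_+-\bH_-)=\bj$ of Corollary~\ref{thm3}; this pins down the signs unambiguously.
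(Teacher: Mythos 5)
Your approach is exactly the paper's: Theorem~\ref{tanlimits} is given no proof beyond the remark that the tangential limits ``are easily expressed in terms of potentials using Lemma~\ref{lemma2},'' and the term-by-term substitution you describe is all that is intended.

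There is, however, a sign point you should not gloss over. Carrying out your own computation literally, the term $-\nabla\times\bA_m$ in the representation of $\bE$ contributes $-\bigl(\pm\tfrac{1}{2}\bm+K_4[\bm]\bigr)=\mp\tfrac{1}{2}\bm-K_4[\bm]$, and the term $+\nabla\times\bA$ in $\bH$ contributes $\pm\tfrac{1}{2}\bj+K_4[\bj]$. So the jump term you actually derive is $\tfrac{1}{2}\bigl(\mp\bm,\ \pm\bj\bigr)$, the \emph{negative} of the $\tfrac{1}{2}\bigl(\pm\bm,\ \mp\bj\bigr)$ displayed in~\eqref{eqn88} (the compact parts $-K_1r+ikK_{2,t}\bj-K_4\bm$ and $-K_1q+K_4\bj+ikK_{2,t}\bm$ do come out as printed). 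Your own proposed sanity check adjudicates this: differencing the formula you derive gives $\bn\times(\bE_+-\bE_-)=-\bm$ and $\bn\times(\bH_+-\bH_-)=\bj$, in agreement with Corollary~\ref{thm3} and with the form-language version~\eqref{eqn888.1}, which indeed carries $\mp\bm/2$; differencing~\eqref{eqn88} as printed gives $+\bm$ and $-\bj$. So the statement as displayed has its jump signs reversed, and your computation is the correct one. The flaw in your write-up is that you assert the printed sign pattern ``reflects precisely the antisymmetry'' of the representation — i.e., you endorse signs that your own calculation (and your own sanity check) contradicts. State explicitly that the derivation yields $\mp\tfrac{1}{2}\bm$, $\pm\tfrac{1}{2}\bj$ and that this, not the displayed pattern, is what is consistent with Corollary~\ref{thm3}.
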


\begin{definition} 
\label{ctdef}
In the sequel we denote the operator on the right hand side
of~\eqref{eqn88} as $\cT^{\pm}(k).$ We use $\cT^{\pm}_{\bE}(k)$ to denote the
first row, and $\cT^{\pm}_{\bH}(k)$ to denote the second. 
\end{definition}

For scattering from a perfect conductor, let us recall that
both (\ref{EHbcscat}) and 
(\ref{EHbcscat2}) must hold on $\Gamma$. 
As noted in the introduction, 
the EFIE approach involves imposing (\ref{EHbcscat})
using only the classical vector and scalar potentials, $\bA$ and $\phi$,
with the physical current $\bJ$ as the unknown. This
leads to an integral equation
on $\Gamma$ with a hypersingular kernel that has interior resonances
and suffers from low-frequency breakdown.
To avoid these difficulties, we turn again to the Debye sources.
A nonstandard feature of our approach is that we will
extract only one scalar equation
from the tangential conditions on the $\bE$ field and couple it to the 
normal condition (\ref{EHbcscat2}) satisfied by $\bH$.

\subsection{The hybrid system}\label{sec7.1}

The operator defining the tangential components of $\bE_{\pm}$ is given by
\begin{equation}
  \cT^{\pm}_{\bE}(k)\left(\begin{matrix} r\\q\\\bj\\ \bm\end{matrix}\right)
=
\pm \frac{1}{2} \bm +
\left(\begin{matrix} -K_1 & 0 & ik K_{2,t} & -K_4\end{matrix}\right)
\left(\begin{matrix} r\\q\\ \bj \\
  \bm\end{matrix}\right).
\label{eqn888}
\end{equation}
If we restrict to $\bj=\bj_R(r,q,k),$ and $\bm=\bn\times\bj,$ then, acting on
$(r,q)\in\cM_{\Gamma,0},$ the only term of non-negative order is $-K_1 r.$ We
use $\cT^{\pm}_{\bE}(k)(r,q)$ to denote this operator restricted to this
subspace of data. In order to recast $K_1$ in (\ref{eqn888}) as a Fredholm
operator of the second kind, it is convenient to multiply it by a left
parametrix, based on the following standard result.

\begin{lemma} Let $\Gamma\subset \bbR^3$ be a smooth, connected 
closed surface, let $\bx_0 \in \Gamma$, and let $G_0$ denote the 
single layer potential operator based on the Green's function for the 
Laplace equation: 
  \begin{equation}
  \label{slplap}
   G_0[f](\bx_0) = \int_{\Gamma} g_0( \bx - \bx_0)
   f(\bx) \, dA(\bx) \, ,
  \end{equation}
and let $\phi$ denote the usual scalar potential with density $r$. 
Then 
\begin{equation} 
\lim_{\bx\to \bx_0^{\pm}} 
  G_0 [ \Delta_{\Gamma} \phi ](\bx) =
\frac{1}{4} r(\bx_0) + \widetilde{K_1}(\bx_0) \, 
\end{equation} 
where $\widetilde{K_1}$ is a compact operator.
\end{lemma}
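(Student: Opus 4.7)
The plan is to realize the operator $r \mapsto \lim_{\bx\to \bx_0^\pm} G_0[\Delta_\Gamma \phi](\bx)$ as a classical pseudodifferential operator of order $0$ on the compact surface $\Gamma$ whose principal symbol is a nonzero constant. The statement then follows at once, because any $\Psi$DO of order $0$ with constant principal symbol differs from that constant times the identity by an operator of order $-1$, hence compact on $L^2(\Gamma)$ (and on the H\"older spaces used later in the paper).

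First I would reduce everything to surface operators. Since the single-layer potential is continuous across $\Gamma$, we have $\phi|_\Gamma = S_k\, r$, where $S_k$ is the boundary restriction of the layer potential with kernel $g_k$; the Laplace analogue $G_0 = S_0$ is likewise continuous, so the two-sided limit in the statement simply equals $S_0\,\Delta_\Gamma\, S_k\, r$, with no jump contributions. Next I would replace $S_k$ by $S_0$: the difference $S_k - S_0$ has kernel $(e^{ik|\bx-\by|}-1)/(4\pi|\bx-\by|)$, which is smooth on $\Gamma\times\Gamma$, so $S_k - S_0$ is a smoothing operator and $S_0\,\Delta_\Gamma(S_k - S_0)$ is compact; this can be absorbed into $\widetilde{K_1}$. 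It thus suffices to analyze $S_0\,\Delta_\Gamma\, S_0$ modulo compact operators.

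The heart of the argument is then symbolic. Standard results identify $S_0$ as a classical $\Psi$DO of order $-1$ on $\Gamma$ with principal symbol $1/(2|\xi|)$ (in the induced metric), and $\Delta_\Gamma$ as order $2$ with principal symbol $-|\xi|^2$ (in the paper's sign convention). Their composition is therefore order $0$ with \emph{constant} principal symbol
\[
\sigma_0\bigl(S_0\,\Delta_\Gamma\, S_0\bigr) \;=\; \frac{1}{2|\xi|}\cdot\bigl(-|\xi|^2\bigr)\cdot \frac{1}{2|\xi|} \;=\; -\tfrac{1}{4}.
\]
Up to a sign (governed by conventions on $\Delta_\Gamma$ and on the orientation of $\bn$), this yields the $\tfrac14 r$ term in the lemma, and the order $-1$ remainder supplies the compact $\widetilde{K_1}$.

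The main obstacle is the symbol calculation on a curved surface, i.e.\ verifying that $S_0$ is a classical $\Psi$DO of order $-1$ with the claimed principal symbol. On a plane this is immediate from a direct Fourier transform of $1/(4\pi|\bx|)$ in the two tangent variables, giving $1/(2|\xi|)$; in general one works in local coordinates around $\bx_0$, expands the Euclidean distance between points on $\Gamma$ as a Taylor series in the tangent coordinates, and checks that the curvature corrections contribute only to lower-order terms in the symbol expansion. Everything else — continuity of the layer potentials, smoothness of $S_k - S_0$, and compactness of $\Psi$DOs of negative order on a compact manifold — is standard and requires no further work.
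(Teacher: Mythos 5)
Your argument is correct, but it is a genuinely different route from the paper's. The paper proves this lemma by purely classical means: it writes the surface Laplacian of the boundary trace as $\Delta_{\Gamma}\phi=\Delta\phi-2H\,\partial_n\phi-\partial_n^2\phi$, uses $\Delta\phi=-k^2\phi$ to kill the first term up to an order $-2$ operator, observes that $G_0$ composed with $2H\,\partial_n\phi$ is of order $-1$, and then quotes the Calderon relation asserting that $G_0$ composed with $\partial_n^2\phi$ is $\tfrac14 I$ plus the square of the double layer operator, which is of order $-1$. You instead compose symbols: $G_0$ restricted to $\Gamma$ is a classical pseudodifferential operator of order $-1$ with principal symbol $1/(2|\xi|)$, $\Delta_{\Gamma}$ has symbol $-|\xi|^2$, so the full composition is of order $0$ with constant principal symbol $-\tfrac14$, and the order $-1$ remainder is compact. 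Your approach buys uniformity (one symbol computation replaces both the curvature identity and the Calderon relation, which is itself normally proved by exactly such a flat-space computation plus perturbation), at the cost of having to verify that the boundary restriction of the Newtonian single layer on a curved surface really is a classical $\Psi$DO with that symbol --- the step you correctly identify as the only real work. Two small remarks. First, your sign $-\tfrac14$ is in fact the one consistent with the rest of the paper: tracking signs through the paper's own proof also yields $-\tfrac14 r$, and the downstream Lemma~\ref{Tprojeq} and equation~\eqref{hybreqn1} both carry $-\tfrac14$; the $+\tfrac14$ in the lemma statement appears to be a slip. Second, the kernel of $S_k-G_0$, namely $(e^{ik|\bx-\by|}-1)/(4\pi|\bx-\by|)$, is not actually smooth on $\Gamma\times\Gamma$ (because $|\bx-\by|$ is not smooth at the diagonal), so that operator is not smoothing; it is, however, of order $-3$, which is all you need for $G_0\,\Delta_{\Gamma}(S_k-G_0)$ to be of negative order and hence compact, so the conclusion stands.
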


\begin{proof} 
To see this, we observe  that the surface Laplacian satisfies the 
identity 
\[ \Delta_{\Gamma} \phi = \Delta \phi  - 2H \frac{\partial \phi}{\partial n} - 
\frac{\partial^2 \phi}{\partial n^2} \, , \] 
where $H$ denotes mean curvature
(see, for example, \cite{Nedelec}).
Since $\Delta \phi = - k^2 \phi$ (by construction), we have
\begin{equation}
\label{calderon1}
\Delta_{\Gamma} \phi = - k^2 \phi  - 2H \frac{\partial \phi}{\partial n} - 
\frac{\partial^2 \phi}{\partial n^2} \, .  
\end{equation}
The composition of $G_0$ with the first two terms on the right-hand side of 
(\ref{calderon1}) are of order $-2$ and $-1$, respectively, hence compact.
It is a classical result (a {\em Calderon relation}) that the composition
of a single layer potential with the second normal derivative of $\phi$
yields a compact perturbation of $\frac{1}{4}I$ \cite{Nedelec}:
\[ 
\lim_{\bx\to \bx_0^{\pm}} 
G_0 [ \frac{\partial^2 \phi}{\partial n^2} ] (\bx) = 
\frac{1}{4} r(\bx_0) + D^2[r](\bx_0) \, 
\]
where 
\[
D[r](\bx_0) = \int_\Gamma \frac{\partial g_k}{\partial n_x}(\bx_0 - \bx) \,
r(\bx) \, dA(\bx) \, .
\]
Here 
$\bn_{\bx}$ denotes the normal at $\bx$,
and $\frac{\partial}{\partial n_x}$ denotes the normal derivative at $\bx$.
$D$ is the usual double layer potential, which is the adjoint of $K_0$.
Since $D$ is an operator of order $-1$, the result follows.
\end{proof}

\begin{lemma} 
\label{Tprojeq} Let $\Gamma\subset \bbR^3$ be a smooth, connected 
closed surface, let $\bx_0 \in \Gamma$, let $G_0$ denote the 
single layer potential operator (\ref{slplap}). 
Then 
\begin{equation} 
\lim_{\bx\to \bx_0^{\pm}} 
  G_0 \nabla_{\Gamma} \cdot [ \bn \times \cT^{\pm}_{\bE}(k)]
\left(\begin{matrix} r\\ q \end{matrix}\right) \, 
   =
 -\frac{1}{4} r(\bx_0) + N_1(k) \,
\left(\begin{matrix} r\\ q \end{matrix}\right) \, ,
\end{equation} 
where $N_1(k)$ is an analytic family of operators of order -1.
\end{lemma}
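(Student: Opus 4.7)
The plan is to identify the principal part of the composition $G_0\,\nabla_\Gamma\cdot(\bn\times\,\cdot\,)\,\cT^{\pm}_{\bE}(k)$ by a symbol/order count, and then to treat the remainder by combining the previous lemma with the construction of $\bj_R,\bm.$ Recall that when $\bj=\bj_R(r,q,k)=ik\nabla_\Gamma R_0 r - ik\,\bn\times\nabla_\Gamma R_0 q$ and $\bm=\bn\times\bj,$ both $\bj$ and $\bm$ are pseudodifferential operators of order $-1$ in $(r,q),$ each carrying an explicit factor of $k.$ Expanding
\begin{equation*}
\cT^{\pm}_{\bE}(k)=\pm\tfrac12\bm - K_1 r + ik K_{2,t}\bj - K_4\bm,
\end{equation*}
only $-K_1 r$ is of order $0$ in $(r,q);$ the other three summands are of order $-1$ or lower (and all carry a factor of $k$). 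Since $\bn\times$ preserves order, $\nabla_\Gamma\cdot$ raises order by $1,$ and $G_0$ lowers it by $1,$ the composition is order preserving, so the three subleading summands will produce operators of order at most $-1,$ which we collect into $N_1(k).$

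For the principal piece $-K_1 r,$ use $K_1 r(\bx_0)=\bn_0\times\nabla\phi(\bx_0)$ and the vector identity $\bn\times(\bn\times\bv)=(\bn\cdot\bv)\bn-\bv$ to compute
\begin{equation*}
\bn\times(-K_1 r)=\nabla\phi-(\bn\cdot\nabla\phi)\bn=\nabla_\Gamma\phi,
\end{equation*}
where $\phi$ is the scalar potential with density $r.$ Since $\nabla_\Gamma\phi$ is tangential and continuous across $\Gamma,$ the $\bx\to\bx_0^\pm$ limit is unambiguous. Taking the surface divergence gives $\nabla_\Gamma\cdot\nabla_\Gamma\phi=\Delta_\Gamma\phi,$ and invoking the preceding lemma yields
\begin{equation*}
\lim_{\bx\to\bx_0^\pm}G_0[\Delta_\Gamma\phi](\bx) = -\tfrac14\, r(\bx_0) + \widetilde K_1 r(\bx_0),
\end{equation*}
with $\widetilde K_1$ of order $-1.$ Analyticity of $N_1(k)$ in $k$ is inherited from the analyticity of $g_k$ and the $k$-independence of $R_0;$ no inverse powers of $k$ appear anywhere.

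The main obstacle is the order bookkeeping for the three subleading summands. Here one must use $\nabla_\Gamma\cdot\bj_R=ikr,$ $\nabla_\Gamma\cdot(\bn\times\bj_R)=ikq$ (both immediate from the construction via $R_0$), and the identity $\nabla_\Gamma\cdot(\bn\times\nabla_\Gamma\psi)=0$ to verify that no hidden cancellation inflates any of these three contributions to order $0.$ For example, $\bn\times(\pm\tfrac12\bm)=\mp\tfrac12\bj,$ and $\nabla_\Gamma\cdot\bigl(\mp\tfrac12\bj\bigr)=\mp\tfrac{ik}{2}r,$ so $G_0$ of this is an order $-1$ operator with an extra factor of $k;$ analogous checks for $ikK_{2,t}\bj$ and $-K_4\bm$ confirm each lives in $N_1(k).$
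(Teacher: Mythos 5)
Your proof is correct and follows essentially the same route as the paper's (one-line) proof: isolate $-K_1 r$ as the only order-zero summand of $\cT^{\pm}_{\bE}(k)$ once $\bj=\bj_R(r,q,k)$, $\bm=\bn\times\bj$, use the identity $\nabla_{\Gamma}\cdot\,\bn\times K_1[r]=-\Delta_{\Gamma}\phi[r]$, and invoke the preceding Calderon-type lemma, with the remaining terms absorbed into $N_1(k)$ by order counting. One remark: you quote the preceding lemma as giving $-\tfrac14 r$ where its printed statement reads $+\tfrac14 r$; your sign is the one actually produced by that lemma's own proof (since $\Delta_{\Gamma}\phi$ equals $-\frac{\partial^2\phi}{\partial n^2}$ plus lower-order terms, so $G_0[\Delta_{\Gamma}\phi]=-\tfrac14 r+\text{compact}$) and is the one consistent with the $-\tfrac14$ in the conclusion here, so the discrepancy is a typo in the paper's statement rather than an error in your argument.
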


\begin{proof}
The result follows from Definition \ref{ctdef}, the fact that
\[ \nabla_{\Gamma} \cdot \bn \times K_1[r] = - \Delta_{\Gamma} \phi[r] \, ,
\]
and the preceding Lemma.
\end{proof}

Recalling that $$\bj_R(r,q,0)=0,$$ we see that
\begin{equation}
  G_0 \nabla_{\Gamma} \cdot [\bn \times \cT^{\pm}_{\bE}](0)\left(\begin{matrix}
  r\\q
\end{matrix}\right)= G_0 [\Delta_{\Gamma} G_0 \, r].
\end{equation}
If $\Gamma$ has $M$ components, then the nullspace of this operator is
$M$-dimensional. It is generated by functions $r$ such that $G_0r$ is constant
on each component of $\Gamma.$ As a consequence of Theorem 5.7
in~\cite{ColtonKress}, it follows that this nullspace only intersects
$\cM_{\Gamma,0}$ at $0.$ 

\begin{definition}
Taking the integral operator in Lemma \ref{Tprojeq} and
the integral operator $\cN_{\bH}$ from Definition \ref{cndef},
we define  $\cQ^{\pm}(k)$ as the following hybrid system of integral
operators:
\begin{equation}
\label{hybridopQ}
  \cQ^{\pm}(k)\left(\begin{matrix} r\\q\end{matrix}\right)=
\left(\begin{matrix}
   G_0 \nabla_{\Gamma} \cdot [\bn \times \cT^{\pm}_{\bE}](k)\\\cN_{\bH}^{\pm}(k)
\end{matrix}\right) \left(\begin{matrix} r\\q\end{matrix}\right).
\end{equation}
As divergences, the range of $\nabla_{\Gamma} \cdot [\bn \times \cT^{\pm}_{\bE}](k)$ 
consists of functions of mean zero. 
\end{definition}

\begin{proposition} 
\label{propFplus}
The family of operators $\cQ^{\pm}(k)$ is analytic in $k$
  and Fredholm of the second kind. There is a discrete subset $F_+\subset \bbC,$
  such that $\cQ^{+}(k):\cM_{\Gamma,0}\to\cM_{\Gamma,1}$ 
  is invertible for $k\notin F_+,$ where
$\cM_{\Gamma,1}$ is the $L^2$-closure of
\begin{equation}
  \{(G_0r,q):\:(r,q)\in\cM_{\Gamma,0}\}.
\end{equation}
\end{proposition}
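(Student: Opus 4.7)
The plan is to verify the three asserted properties in turn: analyticity in $k$, Fredholmness of the second kind, and invertibility away from a discrete set $F_+$. Analyticity is immediate. Every kernel appearing in $\cQ^{\pm}(k)$ is built from $g_k(\bx-\by)$ (or its derivatives, or the convolution operator $G_0$, which is independent of $k$), together with the solution operator $R_0$ on mean-zero functions. Since $k\mapsto g_k$ is entire and $R_0$ is $k$-independent, each entry of the matrix defining $\cQ^{\pm}(k)$ is analytic in $k$ in the appropriate operator norm.

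For the second kind/Fredholm claim I would examine each row separately. By Lemma~\ref{Tprojeq}, the first row acting on $(r,q)$ equals $-\tfrac14 r + N_1(k)(r,q)$ where $N_1(k)$ has order $-1$, hence is compact on $L^2(\Gamma)$. For the second row, Theorem~\ref{thm1} together with Lemma~\ref{lemma2a} shows that $\cN^{+}_{\bH}(k)(r,q)=(-\tfrac12 I + K_0(k))q + (\text{compact in }(r,q))$ because the coupling terms involving $\bj=\bj_R(r,q,k)$ and $\bm=\bn\times\bj$ are compact perturbations (they involve $K_{2,n}$ and $K_3$ acting on quantities that are one order smoother than $(r,q)$). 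Thus $\cQ^{\pm}(k)$ has the block form
\begin{equation}
\cQ^{\pm}(k) = \begin{pmatrix} -\tfrac14 I & 0 \\ 0 & -\tfrac12 I + K_0(k) \end{pmatrix} + \text{compact},
\end{equation}
and is therefore Fredholm of index $0$ on $\cM_{\Gamma,0}$.

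To locate $F_+$ I would first prove invertibility at a single convenient value of $k$, namely $k=0$, and then invoke analytic Fredholm theory. At $k=0$ we have $\bj_R(r,q,0)=0$ and $\bm=0$, so the off-diagonal blocks vanish and
\begin{equation}
\cQ^{+}(0)(r,q) = \bigl( G_0 \Delta_{\Gamma} G_0\, r,\ (-\tfrac12 I + K_0(0))q \bigr).
\end{equation}
The second component is the classical exterior Dirichlet operator for the Laplace equation, which is invertible on $L^2(\Gamma)$ (cf.~\cite{ColtonKress}). For the first component, Lemma~\ref{Tprojeq} gives it as $-\tfrac14 I + \text{compact}$, so it is Fredholm of index $0$; its nullspace consists of those $r$ with $G_0 r$ constant on each component of $\Gamma$, and by the paragraph preceding the proposition this nullspace meets $\cM_{\Gamma,0}$ only at $0$. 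Hence the first block is injective on $\cM_{\Gamma,0}$, and therefore bijective onto its range, which by construction is dense in the $L^2$-closure of $\{G_0 r : r \in \cM_{\Gamma,0}\}$.

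Having established that $\cQ^{+}(0):\cM_{\Gamma,0}\to\cM_{\Gamma,1}$ is an isomorphism, the map $k\mapsto \cQ^{+}(k)^{-1}$ extends meromorphically to $\bbC$ by the analytic Fredholm theorem, with poles on a discrete set $F_{+}\subset\bbC$; for $k\notin F_{+}$ the operator $\cQ^{+}(k)$ is invertible on the stated pair of spaces. The main technical point, and the only non-routine step, is the assertion that $G_0\Delta_{\Gamma}G_0$ is injective on $\cM_{\Gamma,0}$; this relies on the identification of its nullspace with functions whose single-layer potential is locally constant on $\Gamma$, together with the classical fact (Theorem~5.7 of~\cite{ColtonKress}) that no such nonzero function has mean zero on every component. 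Everything else is bookkeeping with the jump relations of Lemma~\ref{lemma2} and the compactness of the lower-order remainders.
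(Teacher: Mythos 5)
Your proof is correct and follows essentially the same route as the paper's terse argument: the decomposition of $\cQ^{\pm}(k)$ as an invertible constant diagonal matrix plus an analytic family of order $-1$ (hence compact) operators, invertibility at $k=0$ via the decoupling $\bj_R(r,q,0)=0$, the identification of the nullspace of $G_0\Delta_{\Gamma}G_0$ with densities whose single layer potential is locally constant (which meets $\cM_{\Gamma,0}$ only at $0$ by Theorem 5.7 of Colton--Kress), and analytic Fredholm theory. The only quibble is terminological: $-\tfrac12 I+K_0(0)$, with $K_0$ the adjoint double layer, is the operator of the exterior \emph{Neumann} rather than Dirichlet problem, but its invertibility is exactly the classical fact recorded in Theorem~\ref{thm1}, so nothing in your argument is affected.
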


\begin{proof} The analyticity statement is immediate from the
  formula. Examining $\cQ^{\pm}(k)$ we see that
  \begin{equation}\label{hybreqn1}
     \cQ^{\pm}(k)\left(\begin{matrix} r\\q\end{matrix}\right)=
\left(\begin{matrix} \frac{-1}{4}&0\\0&\frac{\mp1}{2}\end{matrix}\right)
\left(\begin{matrix} r\\q\end{matrix}\right)+\tN^{\pm}_1(k)\left(\begin{matrix}
    r\\q\end{matrix}\right),
  \end{equation}
where $\tN_1^{\pm}$ is an analytic family of operators of order $-1.$ Thus,
$\cQ^{\pm}(k)$ is Fredholm of the second kind. The last statement
follows from the facts that 
$\cQ^{+}(0):\cM_{\Gamma,0}\to\cM_{\Gamma,1}$ is
invertible and $\cQ^{+}(k)\cM_{\Gamma,0}\subset\cM_{\Gamma,1}.$ 
\end{proof}

We may now state our principal result in the simply connected case.

\begin{theorem}\label{skie_simpcconn}
If $\Gamma$ is simply connected, then
  $F_+$ is disjoint from the closed upper half plane.
Thus, the integral equation 
\begin{equation}
  \cQ^{+}(k)
  \left(\begin{matrix} r\\q\end{matrix}\right) =
  \left(\begin{matrix} f\\h\end{matrix}\right)
\end{equation}
provides a unique solution to the scattering problem from a perfect
conductor for any $k$ in the closed upper half plane. Here,
\begin{equation}\label{set_data}
f=G_0 \nabla_{\Gamma} \cdot [\bn \times \bn \times \bE^{\In}], \qquad
h= \bn \cdot \bH^{\In} \restrictedto_{T\Gamma}.
\end{equation}
\end{theorem}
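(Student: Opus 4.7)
The plan is to prove injectivity of $\cQ^+(k)\colon\cM_{\Gamma,0}\to\cM_{\Gamma,1}$ for every nonzero $k\in\Zup.$ Since Proposition~\ref{propFplus} already exhibits $\cQ^+(k)$ as a Fredholm family of the second kind, analytic in $k,$ with $\cQ^+(0)$ invertible, the analytic Fredholm theorem then upgrades injectivity on $\Zup\setminus\{0\}$ to invertibility for every $k\in\Zup,$ forcing $F_+\cap\Zup=\emptyset.$ Existence and uniqueness of the scattering solution follow by verifying that the prescribed data $(f,h)$ lies in $\cM_{\Gamma,1}$ and that the generalized Debye sources produced by the inverse yield an outgoing field satisfying the perfect conductor boundary conditions.

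For injectivity, fix nonzero $k\in\Zup$ and suppose $\cQ^+(k)(r,q)=0$ with $(r,q)\in\cM_{\Gamma,0}.$ Let $(\bE,\bH)$ be the outgoing field produced from $(r,q)$ by the generalized Debye representation. The vanishing of the second component gives $\bn\cdot\bH_+|_{\Gamma}=0.$ The vanishing of the first component says that $G_0$ applied to the density $v=\nabla_{\Gamma}\cdot(\bn\times(\bn\times\bE_+))$ vanishes on $\Gamma.$ Since $v$ is a surface divergence of a tangential field, it lies in $\cM_{\Gamma,0};$ moreover $G_0[v]$ is harmonic in $\bbR^3\setminus\Gamma,$ continuous across $\Gamma,$ and decays at infinity, so Dirichlet uniqueness in both $\Omega$ and $D$ forces $G_0[v]\equiv 0$ in $\bbR^3\setminus\Gamma,$ and the jump of its normal derivative yields $v=0.$ Using $\bn\times(\bn\times\bE_+)=-\bE_{+,\tan},$ I conclude $\nabla_{\Gamma}\cdot\bE_{+,\tan}=0.$ Taking the normal component of the Maxwell identity $\nabla\times\bE=ik\bH$ at the boundary, together with $\bn\cdot\bH_+=0,$ yields
\begin{equation*}
\nabla_{\Gamma}\cdot(\bE_+\times\bn)=\bn\cdot(\nabla\times\bE_+)=ik(\bn\cdot\bH_+)=0,
\end{equation*}
so $\bE_{+,\tan}$ is simultaneously surface-divergence-free and surface-curl-free. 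By Hodge theory and the simple connectivity of $\Gamma$ ($\cH^1(\Gamma)=\{0\}$), this forces $\bE_{+,\tan}=0,$ i.e., $\bn\times\bE_+=0$ on $\Gamma.$ Theorem~\ref{thm4class} now forces $(\bE,\bH)\equiv 0$ in $\Omega,$ so in particular $\cN^+(k)(r,q)=0$ as well, and Corollary~\ref{scnormalthm} yields $(r,q)=0.$

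For the scattering statement, $f=G_0\nabla_{\Gamma}\cdot[\bn\times(\bn\times\bE^{\In})]$ is $G_0$ applied to a surface divergence and hence lies in $G_0(\cM_{\Gamma,0}),$ and $h=\bn\cdot\bH^{\In}$ lies in $\cM_{\Gamma,0}$ by Lemma~\ref{prop3} applied to the incident Maxwell field. Thus $(f,h)\in\cM_{\Gamma,1},$ and the invertibility just established produces a unique $(r,q).$ The second row gives the normal condition on $\bH^{\tot},$ and the first row, combined with the same closed-plus-coclosed Hodge argument now applied to $\bE^{\tot}_{\tan},$ delivers $\bn\times\bE^{\tot}=0$ on $\Gamma.$

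The main technical obstacle is that only a single scalar equation is extracted from the tangential electric condition, and it is not a priori clear that this, together with the normal magnetic condition, should annihilate all of $\bE_{+,\tan}.$ What makes the argument work is the Hodge-theoretic coincidence that the normal component of $\nabla\times\bE=ik\bH$ converts $\bn\cdot\bH_+=0$ into the closedness of $\bE_{+,\tan}$ as a 1-form on $\Gamma,$ while the $G_0$-projection of the tangential condition yields its coclosedness; the simple connectivity of $\Gamma$ then forces the remaining harmonic component to vanish. A secondary subtlety is ensuring that the injectivity of $G_0$ extracted from the first row is applied on the correct function space, which is handled cleanly because its argument is manifestly a surface divergence.
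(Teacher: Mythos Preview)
Your argument is correct and follows essentially the same route as the paper's proof (given in Section~\ref{sec7.1.1} as Theorem~\ref{skie_simpcconn.1}): from $\cQ^+(k)(r,q)=0$ one extracts that $\bE_{+,\tan}$ is both surface-divergence-free and surface-curl-free, invokes $\cH^1(\Gamma)=0$ to kill it, and then uses Theorem~\ref{thm4class} to annihilate the exterior field. The only minor differences are that you spell out the injectivity of $G_0$ via harmonic extension and jump relations (the paper simply asserts $d_\Gamma^*\bxi_{+t}=0$), and you close the loop from ``field zero in $\Omega$'' to ``$(r,q)=0$'' via $\cN^+(k)$ and Corollary~\ref{scnormalthm}, whereas the paper implicitly appeals to the triviality of $\ker\cT^+_{\bxi}(k)$ from Theorem~\ref{thm66}; both routes rest on the same interior estimate.
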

\noindent
This is proved as Theorem~\ref{skie_simpcconn.1} in Section~\ref{sec7.1.1}. We
leave the discussion of applying our method in the non-simply connected case to
Section~\ref{sec7.1.1}.

\subsection{Low Frequency Behavior in the Simply Connected Case}
The representation of solutions to the \THME[$k$], using data from
$\cM_{\Gamma,0}\oplus\cH^1(\Gamma),$ afforded by~\eqref{eqn29} behaves well as
the frequency tends to zero.  In the simply connected case we only have data
from $\cM_{\Gamma,0}.$ As $k$ tends to zero, this space of solutions tends to the
orthogonal complement of the harmonic Dirichlet fields, that is outgoing
harmonic fields, with vanishing tangential components on $b\Omega$, 
so that $\bE$ and $\bH$ are recovered from $-\nabla \phi$ and 
$-\nabla \phi_M$ alone.
This is
proven, along with the multiply connected case in Section~\ref{lowfrq2}. It is
therefore apparent that $\cQ^+(k)$ provides a means for finding and
representing solutions to the perfect conductor problem, which has neither
interior resonances, nor suffers from low frequency breakdown.

\section{The Exterior Form Representation}\label{extfrm1}
For the remainder of this paper we represent the electric field $\bE$ as a
1-form $\bxi,$ and the magnetic field $\bH$ as a 2-form, $\bEta.$ This choice
is explained in Appendix~\ref{app1}. If
\begin{equation}
  \bE=e_1\pa_{x_1}+e_2\pa_{x_2}+e_3\pa_{x_3}\text{ and }
\bH=h_1\pa_{x_1}+h_2\pa_{x_2}+h_3\pa_{x_3},
\end{equation}
then
\begin{equation}
\begin{split}
  &\bxi=e_1dx_1+e_2dx_2+e_3dx_3\quad\text{ and }\\
&\bEta=h_1dx_2\wedge dx_3+h_2dx_3\wedge dx_1+h_3dx_1\wedge dx_2.
\end{split}
\end{equation}
If $\langle\cdot,\cdot\rangle$ denotes the Euclidean inner product, i.e. the
metric on $\bbR^3,$ then $\bxi$ is defined by the condition that, for every
vector field $\bV$ we have:
\begin{equation}
  \langle\bV,\bE\rangle=\bxi(\bV).
\end{equation}
That is, $\bxi$ is the metric dual of $\bE,$ and $\star\bEta$ (with $\star$ the
Hodge star-operator, see Remark~\ref{adptfrm} and Appendix~\ref{A2}) is the
metric dual of $\bH.$

The curl-part of time harmonic Maxwell's
equations takes the form:
\begin{equation}
  \label{eq:ME1}
  d\bxi=ik\bEta\quad d^*\bEta=-ik\bxi.
\end{equation}
For $k\neq 0,$ these equations imply the divergence equations, which take the
form:
\begin{equation}
  \label{eq:ME2}
  d^*\bxi=0\quad d\bEta=0.
\end{equation}
An outgoing solution to the Helmholtz equation on 1-forms satisfies the
analog of the Silver-M\"uller radiation conditions:
\begin{equation}
  \label{eq:radcond}
  i_{\hbx}d\bxi-d^*\bxi\,\hbx\cdot d\bx-ik\bxi=o\left(\frac{1}{|x|}\right),
\end{equation}
where $\hbx=\frac{\bx}{\|\bx\|}.$  A magnetic field,
$\bEta$ is outgoing if $\star\bEta$ satisfies~\eqref{eq:radcond}.

The standard integration by parts formula in electromagnetic theory
is derived by considering
the $L^2$-norm of the quantity in the radiation condition. We let $D_R$ denote
the ball of radius $R,$ centered at $0,$ $S_R=bD_R,$ and
$\Omega_R=D_R\cap\Omega.$ Since the quantity in~\eqref{eq:radcond} is
$o(|x|^{-1})$ it follows easily that
\begin{equation}
  \lim_{R\to\infty}\int\limits_{S_R}\|i_{\hbx}d\bxi-d^*\bxi\,\hbx\cdot d\bx-ik\bxi\|^2dA=0.
\end{equation}
We expand the integrand to obtain that
\begin{equation}
\begin{split}
   \lim_{R\to\infty}\Bigg[
\int\limits_{S_R}\left[\|i_{\hbx}d\bxi-d^*\bxi\,\hbx\cdot
  d\bx\|^2+|k|^2\|\bxi\|^2\right]dA+\\
-2\Re \left(ik \int\limits_{S_R}\langle\bxi,
  i_{\hbx}d\bar{\bxi}\rangle-\langle i_{\hbx}\bxi,d^*\bar{\bxi}\rangle dA\right)\Bigg] =0.
\end{split}
\label{eq:radcond2}
\end{equation}
Using Green's formula, we can replace the second integral with
\begin{multline}
  \label{eq:grnsfrm2}
  2\Im(k)\int\limits_{\Omega_R}\left[\|d\bxi\|^2+\|d^*\bxi\|^2+|k|^2\|\bxi\|^2\right]dV+
\\
2\Im\left[k\int\limits_{\Gamma}\left[\langle\bxi,i_{\bn}d\bar{\bxi}\rangle-
\langle i_{\bn}\bxi,d^*\bar{\bxi}\rangle\right]dA\right].
\end{multline}
Here  we use $\bn$ to denote the \emph{inward} pointing normal
field along $\Gamma.$

Combining~\eqref{eq:radcond2} and~\eqref{eq:grnsfrm2} we obtain the standard
integration by parts formula for outgoing solutions to the vector Helmholtz equation:
\begin{lemma}\label{lem1} If $\bxi$ is a 1-form defined in $\Omega$ satisfying
  $\Delta\bxi+k^2\bxi=0$ and the radiation condition~\eqref{eq:radcond}, with
  $\Im(k)\geq 0,$ then
  \begin{equation}
    \begin{split}
    \lim_{R\to\infty}
    \Bigg(&2\Im(k)\int\limits_{\Omega_R}\left[\|d\bxi\|^2+\|d^*\bxi\|^2+
|k|^2\|\bxi\|^2\right]dV+\\
\int\limits_{S_R}\big[\|i_{\hbx}d\bxi-d^*\bxi\,\hbx\cdot
  d\bx\|^2&+|k|^2\|\bxi\|^2\big]dA\Bigg)=\\
-&2\Im\left[k\int\limits_{\Gamma}\left[\langle\bxi,i_{\bn}d\bar{\bxi}\rangle-
\langle i_{\bn}\bxi,d^*\bar{\bxi}\rangle\right]dA\right]
\end{split}
 \label{eq:radcond4}
  \end{equation}
\end{lemma}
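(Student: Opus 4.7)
The plan is to start from the radiation condition~\eqref{eq:radcond}. Since the quantity $i_{\hbx}d\bxi - d^*\bxi\,\hbx\cdot d\bx - ik\bxi$ is pointwise $o(|\bx|^{-1})$ uniformly in direction, its squared $L^2$-norm over $S_R$ tends to zero as $R\to\infty.$ Expanding the pointwise squared modulus and separating diagonal from cross terms (using that $\hbx\cdot d\bx$ is metric-dual to the unit normal $\hbx,$ so that pairing $\bxi$ against $d^*\bxi\,\hbx\cdot d\bx$ produces $d^*\bxi\cdot i_{\hbx}\bxi$) yields exactly~\eqref{eq:radcond2}. This step is purely algebraic, with no analytic content beyond the radiation condition itself.

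The second step is to apply Green's formula for differential forms on $\Omega_R=\Omega\cap D_R,$ whose boundary is $S_R$ (with outward normal $\hbx$) together with $\Gamma$ (with outward normal $-\bn,$ since $\bn$ is chosen to point into $\Omega$). Integration by parts rewrites the surface cross-term over $S_R$ as a volume integral over $\Omega_R$ of $\|d\bxi\|^2+\|d^*\bxi\|^2-\langle\bxi,(dd^*+d^*d)\bar{\bxi}\rangle,$ plus a boundary integral over $\Gamma$ of the analogous Green's expression in $\bn.$ The vector Helmholtz equation gives $(dd^*+d^*d)\bxi=-\Delta\bxi=k^2\bxi,$ hence $(dd^*+d^*d)\bar{\bxi}=\bar{k}^2\bar{\bxi},$ so the volume integrand simplifies to $\|d\bxi\|^2+\|d^*\bxi\|^2-\bar{k}^2\|\bxi\|^2.$ Multiplying by $ik$ and taking $-2\Re$ produces~\eqref{eq:grnsfrm2}: the elementary identities $-2\Re(ik)=2\Im(k)$ and $\Re(ik\bar{k}^2)=|k|^2\Im(k)$ collapse the mix of real and imaginary contributions into the stated coefficients.

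Combining~\eqref{eq:radcond2} with~\eqref{eq:grnsfrm2} and passing to the limit $R\to\infty$ yields the claimed identity~\eqref{eq:radcond4}. The main obstacle is purely bookkeeping: tracking sign conventions for the orientation of $\Gamma$ (where $\bn$ is the \emph{inward} normal to $\Omega,$ which flips the sign of the $\Gamma$-contribution in Green's formula) and the placement of complex conjugates when expanding the sesquilinear pairing. Once these conventions are fixed consistently, all manipulations are algebraic, and no analytic estimate beyond the integrability inherent in the radiation condition is required.
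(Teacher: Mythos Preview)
Your proposal is correct and follows essentially the same route as the paper: expand the $L^2(S_R)$-norm of the quantity in the radiation condition to obtain~\eqref{eq:radcond2}, then apply the Green's formula for $d$ and $d^*$ on $\Omega_R$ together with the Helmholtz equation to convert the $S_R$ cross-term into the volume-plus-$\Gamma$ expression~\eqref{eq:grnsfrm2}, and combine. The only content you add beyond the paper's sketch is making the coefficient identities $-2\Re(ik)=2\Im(k)$ and $\Re(ik\bar{k}^2)=|k|^2\Im(k)$ explicit, which is helpful bookkeeping but not a different idea.
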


\begin{remark} If $(\bxi,\bEta)$ is a solution to the \THME[$k$] in $\Omega,$
  then the outgoing radiation condition can be rewritten as
  \begin{equation}
    i_{\hbx}\bEta-\bxi=o\left(\frac{1}{|x|}\right),
  \end{equation}
in agreement with~\eqref{silvermuller}.
\end{remark}

\subsection{Uniqueness for Maxwell's Equations}\label{unMEfrm}
If $(\bxi,\bEta)$ is a solution to the Maxwell system, then $d^*\bxi=0$ and the
boundary term in~\eqref{eq:radcond4} reduces to
\begin{equation}
  \label{eq:radcond5}
  -2\Im\left(k\int\limits_{\Gamma}\langle\bxi,i_{\bn}d\bar{\bxi}\rangle dA
\right).
\end{equation}
Let $\nu$ denote a 1-form defined along $\Gamma,$ which restricts to zero on
$T\Gamma$ and is normalized by $\nu(\bn)=1.$ 

\begin{remark}\label{adptfrm}
We use $\star$ to denote a Hodge star-operator, see Appendix~\ref{A2}.  In much
of the paper we need to distinguish between the Hodge star-operator acting on
forms defined on $\bbR^3$ and that acting on forms defined on surfaces in
$\bbR^3.$ We denote the $\bbR^3$-operator by $\star_3,$ and a surface
operator by $\star_2.$ Which surface is intended should be clear from the
context.

If $\Gamma$ is a smooth closed surface in $\bbR^3,$ which bounds a region $D,$
then it obtains an orientation from its embedding into $\bbR^3:$ let $\bn$ be
the outward pointing unit normal vector, and $X_1,X_2$ a local oriented
orthonormal frame for $T\Gamma.$ We let $\omega_1,\omega_2,\nu,$ be the local
co-frame for $T^*\bbR^3\restrictedto_{\Gamma},$ dual to $X_1,X_2,\bn.$ Note, in
particular that, $\omega_1(\bn)=\omega_2(\bn)=0.$ We say that the frame
$(X_1,X_2,\bn)$ (or co-frame $(\omega_1,\omega_2,\nu)$) is \emph{adapted to
  $\Gamma.$} The 1-form that is the metric dual of the vector field $aX_1+bX_2$
is $a\omega_1+b\omega_2.$

The volume form on $\bbR^3$ and area form on
  $\Gamma$ are given locally by
\begin{equation}
dV=\omega_1\wedge\omega_2\wedge\nu\quad dA=i_{\bn}dV=\omega_1\wedge\omega_2.
\end{equation}
In terms of the adapted frame, the Hodge star-operator on $\bbR^3$ is
\begin{equation}
\begin{split}
\star_3 1=\omega_1\wedge\omega_2\wedge\nu&\quad \star_3\omega_1\wedge\omega_2\wedge\nu=1\\
\star_3\omega_1=\omega_2\wedge\nu\quad\star_3\omega_2&=-\omega_1\wedge\nu\quad
\star_3\nu=\omega_1\wedge\omega_2\\
\star_3\omega_1\wedge\nu=-\omega_2\quad\star_3\omega_2\wedge\nu&=\omega_1\quad
\star_3\omega_1\wedge\omega_2=\nu.
\end{split}
\end{equation}
The Hodge star-operator on the surface (oriented as the boundary of $D$) is
given by
\begin{equation}
\begin{split}
\star_2 1=\omega_1\wedge\omega_2&\quad \star_2\omega_1\wedge\omega_2=1\\
\star_2\omega_1=\omega_2&\quad\star_2\omega_2=-\omega_1.
\end{split}
\end{equation}
It is useful to note that if $\bq$ is a 1-form defined on $\Gamma,$ then
$\star_2^2\bq=-\bq.$ If $\bv$ is a vector field tangent to $\Gamma$ and
$\omega,$ its metric dual, then $\star_2\omega$ is the metric dual of
$\bn\times\bv.$

  To emphasize the distinction between an exterior form acting on $T\bbR^3$
  restricted to a surface $\Gamma\subset\bbR^3$ and the restriction of this
  form to directions tangent to $\Gamma,$ we sometimes use
  $\balpha\restrictedto_{\Gamma}$ to denote the former notion of restriction,
  and $\balpha\restrictedto_{T\Gamma},$ the latter. For a 1-form,
  $\balpha,$ represented along $\Gamma$ in the adapted co-frame by
  $\balpha\restrictedto_{\Gamma}=a\omega_1+b\omega_2+c\nu,$ we have
\begin{equation}
  \balpha\restrictedto_{T\Gamma}\leftrightarrow a\omega_1+b\omega_2.
\end{equation}
We denote this latter restriction by $\balpha_t.$ We use the notation
$d_{\Gamma}$ to denote the exterior differential acting on forms on $\Gamma.$

For a 2-form
$\bBeta\restrictedto_{\Gamma}=a\omega_1\wedge\nu+b\omega_2\wedge\nu+
c\omega_1\wedge\omega_2,$ we have
\begin{equation}
  \bBeta\restrictedto_{T\Gamma}\leftrightarrow c\omega_1\wedge\omega_2.
\end{equation}
For a 3-form $\bgamma,$ dimensional considerations imply that
\begin{equation}
  \bgamma\restrictedto_{T\Gamma}\equiv 0.
\end{equation}
If $f$ is a 0-form, or scalar function, then
\begin{equation}
  f\restrictedto_{T\Gamma}=f\restrictedto_{\Gamma}.
\end{equation}
\end{remark}

The definition of the inner product on forms, the fact that $\langle
\nu,i_{\bn}d\bar{\bxi}\rangle=0,$ and the definition of $\star_2$ on $\Gamma,$
imply the identity
\begin{equation}
  \label{eq:bndtrm1}
   -2\Im\left(k\int\limits_{\Gamma}\langle\bxi,i_{\bn}d\bar{\bxi}\rangle dA
\right)=
 -2\Im\left(k\int\limits_{\Gamma}[\bxi\restrictedto_{\Gamma}]\wedge\star_2
[i_{\bn}d\bar{\bxi}\restrictedto_{\Gamma}]\right).
\end{equation}
Using this identity and Lemma~\ref{lem1} we can prove the basic uniqueness
theorems.  Note that $\bxi$ is a 1-form so $\bxi_t=\bxi\restrictedto_{T\Gamma}$
corresponds to the tangential components of $\bE,$ and
$i_{\bn}\bxi\restrictedto_{\Gamma},$ the normal component. The magnetic field
$\bEta$ is a 2-form and therefore $\bEta\restrictedto_{T\Gamma}$ gives the
normal component, and $[i_{\bn}\bEta]_t=i_{\bn}\bEta\restrictedto_{T\Gamma},$ gives
the data in the tangential components, corresponding to $n\times\bH.$ 

We restate the classical result (Theorem \ref{thm4class}) that 
an outgoing solution to \THME[$k$] is determined by either the tangential
components of the electric or magnetic fields in the language of forms:

\begin{theorem}\label{thm4class.1} 
Suppose that $(\bxi,\bEta)$ is an outgoing
  solution to the \THME[$k$], in $\Omega,$ for a $k\neq 0,$ in $\Zup.$ If either
  $\bxi_t,$ or $[i_{\bn}\bEta]_t$ vanish, then the solution is identically zero
  in $\Omega.$
\end{theorem}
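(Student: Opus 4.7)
The plan is to apply Lemma~\ref{lem1} to $\bxi$ and reduce the problem to showing that under either hypothesis the boundary integral on the right-hand side of~\eqref{eq:radcond4} vanishes. Since $(\bxi,\bEta)$ solves the Maxwell system~\eqref{eq:ME1}, we have $d^*\bxi=0$ and $d\bxi=ik\bEta$, from which $(dd^*+d^*d)\bxi=d^*(ik\bEta)=k^2\bxi$; componentwise in Cartesian coordinates this is exactly the scalar Helmholtz equation $\Delta\bxi+k^2\bxi=0$, so Lemma~\ref{lem1} applies. Using $d^*\bxi=0$ together with identity~\eqref{eq:bndtrm1}, the boundary term becomes
\[
-2\Im\!\left(k\int_{\Gamma}\bxi\restrictedto_{\Gamma}\wedge\star_2
\bigl[i_{\bn}d\bar{\bxi}\restrictedto_{\Gamma}\bigr]\right).
\]

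The next step is to observe that $i_{\bn}d\bar\bxi$, being already contracted with $\bn$, has no $\nu$-component, so $i_{\bn}d\bar\bxi\restrictedto_\Gamma=[i_{\bn}d\bar\bxi]_t$ is purely tangential; consequently $\star_2[i_{\bn}d\bar\bxi\restrictedto_\Gamma]$ is also tangential. Writing $\bxi\restrictedto_\Gamma=\bxi_t+c\nu$ in an adapted coframe and using that $\nu$ pulls back to $0$ on $\Gamma$, only $\bxi_t$ contributes to the wedge product. Thus if $\bxi_t\equiv 0$ on $\Gamma$, the integrand vanishes pointwise. If instead $[i_{\bn}\bEta]_t\equiv 0$, then from $d\bar\bxi=\overline{ik\bEta}=-i\bar k\,\bar\bEta$ we get
\[
[i_{\bn}d\bar\bxi]_t=-i\bar k\,\overline{[i_{\bn}\bEta]_t}=0,
\]
so once again the integrand vanishes.

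With the right-hand side of~\eqref{eq:radcond4} equal to zero, the three non-negative contributions on its left must each vanish in the limit. When $\Im(k)>0$, the term $2\Im(k)\int_{\Omega_R}|k|^2\|\bxi\|^2\,dV$ alone forces $\bxi\equiv 0$ on $\Omega$, and then $\bEta=d\bxi/(ik)=0$. When $k\in\bbR\setminus\{0\}$, we obtain $\lim_{R\to\infty}\int_{S_R}\|\bxi\|^2\,dA=0$; applying Rellich's lemma componentwise (each component is an outgoing scalar Helmholtz solution) shows that $\bxi$ vanishes outside some large ball, and unique continuation for $\Delta+k^2$ extends this to all of $\Omega$.

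The main bookkeeping obstacle I anticipate is the justification that only $\bxi_t$ and $[i_{\bn}d\bar\bxi]_t$ appear in the wedge-product form of the boundary term; everything else is either direct substitution or a standard invocation of Rellich and unique continuation. The elegance of the forms-based formulation is that, once the two notions of restriction ($\restrictedto_\Gamma$ versus $\restrictedto_{T\Gamma}$) are kept straight, the vanishing of either tangential datum produces the vanishing of the entire boundary integral in an essentially identical fashion, reflecting the symmetry between $\bxi$ and $\star_3\bEta$ in Maxwell's equations.
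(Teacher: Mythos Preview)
Your proof is correct. The paper does not actually give its own proof of Theorem~\ref{thm4class.1}; it is stated as a restatement in the language of forms of the classical result Theorem~\ref{thm4class}, for which the paper cites~\cite{ColtonKress}. Your argument uses precisely the machinery (Lemma~\ref{lem1} and identity~\eqref{eq:bndtrm1}) that the paper develops in order to prove the companion result Theorem~\ref{thm4.1} on normal components, so your approach is entirely in the spirit of the paper even though no proof appears there.

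One minor remark: your discussion of why only $\bxi_t$ contributes to the boundary integrand is exactly the content of the sentence preceding~\eqref{eq:bndtrm1} in the paper (``the fact that $\langle\nu,i_{\bn}d\bar\bxi\rangle=0$''), so that step is already justified by the paper's own derivation of the identity.
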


\noindent
Kress' result (Theorem \ref{thm4}) on the normal components of $(\bxi,\bEta)$ 
is restated as
\begin{theorem}\label{thm4.1} Suppose that $(\bxi,\bEta)$ is an outgoing solution to the
  \THME[$k$], in $\Omega,$ for a $k\neq 0,$ in $\Zup.$ If every
  component of the boundary of $\Omega$ is simply connected, then the solution
  is determined by the normal components  $i_{\bn}\bxi\restrictedto_{\Gamma}$ and
  $\bEta\restrictedto_{T\Gamma}.$  If the sum of the genera of the components of
  $\Gamma$ equals $g> 0,$ then there is a subspace of outgoing solutions to
  \THME[$k$] with 
\begin{equation}\label{6.16.08.1.1}
i_{\bn}\bxi\restrictedto_{\Gamma}=0\text{ and }
  \bEta\restrictedto_{T\Gamma}=0
\end{equation}
of dimension $2g.$
\end{theorem}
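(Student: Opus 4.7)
The theorem splits into a uniqueness half and an existence/dimension half. For uniqueness, suppose $(\bxi,\bEta)$ is an outgoing solution to the time-harmonic Maxwell equations with $i_{\bn}\bxi\restrictedto_{\Gamma}=0$ and $\bEta\restrictedto_{T\Gamma}=0$. The plan is to apply Lemma~\ref{lem1} to $\bxi$ (which, as one component of a Maxwell solution, satisfies the vector Helmholtz equation and the Silver--M\"uller condition) and show that the boundary term on the right-hand side of~\eqref{eq:radcond4} vanishes. Once this is established, when $\Im k>0$ the nonnegativity of the bulk integrals forces $\bxi\equiv 0$, and when $k\in\bbR$ the $L^2$-vanishing on the large spheres $S_R$ combined with the radiation condition implies, via Rellich's lemma, that $\bxi\equiv 0$ in $\Omega$. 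In either case $\bEta=(ik)^{-1}d\bxi=0$ follows.

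To see that the boundary term vanishes, use $d\bxi=ik\bEta$ to rewrite $i_{\bn}d\bar{\bxi}\restrictedto_{\Gamma}=-i\bar k\, i_{\bn}\bar{\bEta}\restrictedto_{\Gamma}$, and note that $\bxi\restrictedto_{\Gamma}=\bxi_t$ since $i_{\bn}\bxi=0$. Then~\eqref{eq:bndtrm1} reduces the boundary term to a real multiple of $\int_\Gamma\langle\bxi_t,i_{\bn}\bar{\bEta}\restrictedto_{\Gamma}\rangle\,dA$. Pulling back the identity $d\bxi=ik\bEta$ to $T\Gamma$ gives $d_\Gamma\bxi_t=ik\,\bEta\restrictedto_{T\Gamma}=0$, so $\bxi_t$ is a closed $1$-form on $\Gamma$. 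Under the simple-connectivity hypothesis, $H^1$ of each component of $\Gamma$ is trivial, hence $\bxi_t=d_\Gamma\psi$ for some scalar $\psi$. Integration by parts on the closed surface $\Gamma$ yields
\[
\int_\Gamma\langle d_\Gamma\psi,i_{\bn}\bar{\bEta}\restrictedto_{\Gamma}\rangle\,dA=\int_\Gamma\psi\cdot d^*_\Gamma\bigl(i_{\bn}\bar{\bEta}\restrictedto_{\Gamma}\bigr)\,dA.
\]
A surface consequence of the Maxwell identity $d^*\bEta=-ik\bxi$ (equivalently, of $\nabla_\Gamma\cdot(\bn\times\bH)=ik\,\bn\cdot\bE$) gives $d^*_\Gamma(i_{\bn}\bar{\bEta}\restrictedto_{\Gamma})=\pm ik\, i_{\bn}\bar{\bxi}\restrictedto_{\Gamma}$, which vanishes by hypothesis. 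Hence the boundary integral is zero and the uniqueness argument closes.

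For the existence of the $2g$-dimensional space of $k$-Neumann fields I would defer to the explicit construction carried out in Theorem~\ref{thm6.1}. The idea is that, given data $(r,q,\bj_H)\in\cM_{\Gamma,0}\oplus\cH^1(\Gamma)$, the (anti-)potentials in~\eqref{Vecpotdef}, enriched by a harmonic-field contribution built from $\bj_H$, assemble an outgoing pair $(\bxi,\bEta)$ whose normal components are given by a compact perturbation of the Fredholm-second-kind operator $\cN^+(k)$ from Definition~\ref{cndef}. For each $\bj_H\in\cH^1(\Gamma)$, invert this system (valid off a discrete set of $k$) to produce compensating $(r,q)\in\cM_{\Gamma,0}$ that annihilate both normal components. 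This yields an injective linear map $\cH^1(\Gamma)\to\cH_k(\Omega)$, so $\dim\cH_k(\Omega)\geq 2g$. The matching upper bound comes from Corollary~\ref{kresscor}: a $k$-Neumann field is determined by its $2g$ circulations along the $A$-cycles, so any $k$-Neumann field with all $A$-circulations zero must vanish identically, giving $\dim\cH_k(\Omega)\leq 2g$.

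The principal obstacle is the surface identity relating $d^*_\Gamma(i_{\bn}\bEta\restrictedto_{\Gamma})$ to $i_{\bn}\bxi\restrictedto_{\Gamma}$: in the exterior-form language one must carefully separate the two notions of restriction ($\restrictedto_{\Gamma}$ versus $\restrictedto_{T\Gamma}$) and commute $d^*$ with $i_{\bn}$ along $\Gamma$, a step that is transparent in vector calculus but requires care here. A secondary subtlety is the \emph{exact} dimension count in the existence half; the lower bound $2g$ is immediate from the harmonic-field construction, but the matching upper bound leans on pairing the Fredholm analysis of $\cN^+(k)$ with the circulation-based uniqueness of Corollary~\ref{kresscor} to rule out any excess nullspace.
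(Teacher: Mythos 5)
Your uniqueness argument in the simply connected case is essentially the paper's own proof: you show that $\bxi_t$ is closed (hence exact) because $\bEta\restrictedto_{T\Gamma}=0$, that the partner data is co-closed via the surface consequence of $d^*\bEta=-ik\bxi$ together with $i_{\bn}\bxi=0$, and you then kill the boundary term in Lemma~\ref{lem1} by Stokes' theorem. The paper performs the identical computation, phrased as $d_{\Gamma}\balpha=0$, $d_{\Gamma}\bBeta=0$ and the wedge integral $\int_{\Gamma}\balpha\wedge\bar{\bBeta}$ in place of your $d^*_{\Gamma}$ formulation; your surface identity $d^*_{\Gamma}(i_{\bn}\bar{\bEta}\restrictedto_{\Gamma})=\mp i\bar{k}\,i_{\bn}\bar{\bxi}$ is correct and is exactly the paper's statement $d_{\Gamma}\bBeta=0$. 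Deferring the existence of a full $2g$-dimensional family to the construction in Theorem~\ref{thm6.1} also matches the paper, which does the same (see the remark preceding its proof).

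The genuine problem is your upper bound $\dim\cH_k(\Omega)\leq 2g$. You derive it from Corollary~\ref{kresscor}, but that corollary is a consequence of Kress's Theorem~\ref{thm4}, which is precisely the statement being proved here (Theorem~\ref{thm4.1} is its restatement in the form language, and the section's declared purpose is to give an independent proof); invoking it is circular within the paper's logic. The paper avoids this by observing that the boundary term equals $2|k|^2\Re\int_{\Gamma}\balpha\wedge\bar{\bBeta}$, i.e.\ the value of the wedge pairing $W$ on the closed forms $\balpha,\bar{\bBeta}$; since $W$ descends to a non-degenerate skew pairing on $H^1_{\dR}(\Gamma)\cong\bbC^{2g}$ and vanishes whenever one argument is exact, the vanishing of the class $[\bxi_t]_{\Gamma}$ forces the boundary term to vanish and hence the solution to be zero. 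This yields injectivity of the period map $(\bxi,\bEta)\mapsto[\bxi_t]_{\Gamma}\in H^1_{\dR}(\Gamma)$ on $\cH_k(\Omega)$, and therefore the bound $2g$, with no appeal to Kress. Note also that this cohomological injectivity — not merely the dimension count — is what the proofs of Theorem~\ref{thm6.1}, Proposition~\ref{prop6} and Corollary~\ref{cor4.1} use downstream, so substituting the circulation-based statement would also undercut those arguments. Given what you have already established about $\balpha$ and $\bBeta$, the wedge-pairing argument is only one additional line, and you should use it in place of Corollary~\ref{kresscor}.
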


\begin{remark} As noted above, we call solutions to \THME[$k$] that satisfy~\eqref{6.16.08.1.1}
  \emph{$k$-Neumann fields,} and denote the space of such solutions by
  $\cH_k(\Omega).$ Here we give a bound on $\dim\cH_k(\Omega),$ and a novel
  description of the additional data needed to specify the projection into this
  space. Later in the paper we give a new proof that $\dim\cH_k(\Omega)=2g,$
  for $k$ in the closed upper half plane, $\Zup.$ In this regard, the case
  $k=0$ is classical. As noted above, this result was proved in~\cite{Kress1}.
\end{remark}

\begin{proof} 
  Suppose that $i_{\bn}\bxi$ and $\bEta\restrictedto_{\Gamma}$ both vanish.
  Let $\balpha=\bxi\restrictedto_{\Gamma}$ and
  $\bBeta=\star_3\bEta\restrictedto_{\Gamma}.$ The usual properties of the
  exterior derivative, the hypothesis $\bEta\restrictedto_{\Gamma}=0,$ and the
  equation $d\bxi=ik\bEta$ imply that
\begin{equation}
  \label{eq:bndeq1}
  d_{\Gamma}\balpha=0.
\end{equation}
We can rewrite $d^*\bEta=-ik\bxi$ as
$d\star_3\bEta=-ik\star_3\bxi.$ The hypothesis, $i_{\bn}\bxi=0$
now implies that
\begin{equation}
  \label{eq:bndeq2}
  d_{\Gamma}\bBeta=0.
\end{equation}
A calculation using a co-frame adapted to $\Gamma$ shows that
\begin{equation}
  \label{eq:srfrel}
  \star_2\bBeta=-[i_{\bn}\bEta]\restrictedto_{\Gamma},
\end{equation}
see~\eqref{tangHfld}.  The equation $d\bxi=ik\bEta$ implies that
\begin{equation} 
\label{eq:srfrel2}
 [ i_{\bn}d\bxi]\restrictedto_{\Gamma}=-ik\star_2\bBeta.
\end{equation}
We can therefore express  the right hand side of~\eqref{eq:bndtrm1} as
\begin{equation}
  \label{eq:brdtrm2}
  2|k|^2
  \Re\left(\int\limits_{\Gamma}\balpha\wedge \bar{\bBeta}\right).
\end{equation}
If $\Gamma$ is simply connected then the equation $d_{\Gamma}\balpha=0$ implies
that $\balpha=d_{\Gamma}u.$ As $d_{\Gamma}\bar{\bBeta}=0$ as well, a simple
application of Stokes formula shows that
\begin{equation}
  \label{eq:brdtrm3}
  \int\limits_{\Gamma}d_{\Gamma}u\wedge \bar{\bBeta}=0.
\end{equation}
This completes the proof of the theorem when $\Gamma$ is simply connected. 

For the general case, let
$H^1_{\dR}(\Gamma)$ denote the De Rham cohomology group with
$\dim H^1_{\dR}(\Gamma)=2g\neq 0$. 
We show that the space of
solutions with vanishing normal components, for which the integral
in~\eqref{eq:brdtrm2} is non-vanishing, depends on at most $2g$ parameters.
Using the wedge product, we define a pairing, $W,$ on closed 1-forms:
\begin{equation}
  W(\eta,\omega)=\int\limits_{\Gamma}\eta\wedge\omega.
\end{equation}
If $d_{\Gamma}\eta=0$ and $\omega=d_{\Gamma}u,$ then, as noted above, Stokes
theorem implies that
\begin{equation}
  W(\eta,\omega)=0.
\end{equation}
Hence $W$ defines a skew-symmetric form on $H^1_{\dR}(\Gamma),$ which is well
known to be non-degenerate. As the $\dim H^1_{\dR}(\Gamma)=2g,$ this
observation completes the proof of the fact that $\dim\cH_k(\Omega)\leq 2g.$ If
the image of either $\balpha$ or $\bBeta$ in $H^1_{\dR}(\Gamma)$ vanishes, then
$W(\balpha,\bar{\bBeta})=0,$ which implies, as above, that the solution in
$\Omega$ is zero.
\end{proof}
\noindent
From this proof we see that the image of either $\balpha$ or $\bBeta$ in
$H^1(\Gamma)$ provides data specifying a $k$-Neumann field. This should be
contrasted with the data used by Kress, given in
equation~\eqref{kress_thm1a}. As $\dim \cH_k(\Omega)=\dim H^1(\Gamma),$ the
maps from $\cH_k(\Omega)$ to $H^1(\Gamma)$ defined by $(\bxi,\bEta)\mapsto
\balpha$ and $(\bxi,\bEta)\mapsto \bBeta$ are both isomorphisms, when $k\neq 0.$

We complete this section by proving Lemma~\ref{prop3}, which shows that the
normal components of $(\bxi,\bEta),$ a solution to \THME[$k$] with $k\neq 0,$
have vanishing mean value over every component of the boundary. While
superficially this might appear analogous to the fact that the normal
derivative of a harmonic function in a bounded domain has mean value over the
boundary, it is actually an elementary consequence of the equations themselves
and Stokes' theorem on a closed surface.

If $k\neq 0,$ then the Maxwell equations,~\eqref{eq:ME1}, imply that
\begin{equation}
  \label{eq:nrmmn0}
  \star_3\bxi\restrictedto_{\Gamma}=
\frac{-1}{ik}d_{\Gamma}[\star_3\bEta\restrictedto_{\Gamma}]\quad
\bEta\restrictedto_{\Gamma}=\frac{1}{ik}d_{\Gamma}[\bxi\restrictedto_{\Gamma}].
\end{equation}
 As 
 \begin{equation}
 \star_3\bxi\restrictedto_{\Gamma}=i_{\bn}\bxi dA
 \end{equation}
 the relations in~\eqref{eq:nrmmn0} imply that these forms are exact and
 therefore Stokes' theorem implies that a solution of \THME[$k$], with
 $k\neq 0$ satisfies:
\begin{equation}
  \int\limits_{\Gamma_m}i_{\bn}\bxi dA=\int\limits_{\Gamma_m}\bEta =0
\end{equation}
for $m=1,\dots,M.$ Note that this is true whether the limit is taken from
$\Omega$ or $D.$ This completes the proof of 
Lemma~\ref{prop3}, restated as
\begin{proposition}\label{prop3frm}
  Let $(\bxi,\bEta)$ be a solution to the \THME[$k$] for a $k\neq 0,$ in a
  region $G\subset\bbR^3,$ with a smooth bounded boundary. The normal
  components $(i_{\bn}\bxi,i_n\star_3\bEta)$ have mean zero over every
  component of $bG.$
\end{proposition}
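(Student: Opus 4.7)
The plan is to rewrite, for each boundary component $\Gamma_m$, the two normal-component 2-forms as \emph{exact} forms on the closed surface $\Gamma_m$, after which Stokes' theorem immediately yields vanishing integrals. The hypothesis $k\neq 0$ will enter in an essential way: without it, Maxwell's equations decouple the divergence information from the curl information and no such exactness can be extracted.

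First, using the co-frame adapted to $\Gamma$ from Remark~\ref{adptfrm} one identifies the normal components of $\bxi$ and $\star_3\bEta$ with pure top forms on $\Gamma$:
\[ (i_{\bn}\bxi)\,dA=\star_3\bxi\restrictedto_{T\Gamma},\qquad (i_{\bn}\star_3\bEta)\,dA=\bEta\restrictedto_{T\Gamma}, \]
so the statement reduces to showing that the two right-hand sides integrate to zero over each $\Gamma_m$. Next, since pullback under the inclusion $\Gamma\hookrightarrow\bbR^3$ commutes with the exterior derivative, the Maxwell equation $d\bxi=ik\bEta$ and its equivalent form $d\star_3\bEta=-ik\star_3\bxi$ pull back to
\[ \bEta\restrictedto_{T\Gamma}=\frac{1}{ik}\,d_{\Gamma}(\bxi\restrictedto_{T\Gamma}),\qquad \star_3\bxi\restrictedto_{T\Gamma}=\frac{-1}{ik}\,d_{\Gamma}(\star_3\bEta\restrictedto_{T\Gamma}). \]
Division by $k$ is legal precisely because $k\neq 0$, and this is the one place where the nonzero-frequency hypothesis is used; both 2-forms are now exhibited as $d_{\Gamma}$ applied to a smooth 1-form on $\Gamma$.

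Finally, each component $\Gamma_m$ is a closed, orientable 2-surface, so Stokes' theorem gives $\int_{\Gamma_m}d_{\Gamma}\omega=0$ for every smooth 1-form $\omega$ on $\Gamma_m$. Applied to the two identities above this yields
\[ \int_{\Gamma_m}(i_{\bn}\bxi)\,dA=0\quad\text{and}\quad \int_{\Gamma_m}(i_{\bn}\star_3\bEta)\,dA=0, \]
which is the claim. I do not anticipate a real obstacle: the argument is purely formal once the calculus of forms is in place. In particular, no integration by parts, no interior regularity, and no appeal to the radiation condition~\eqref{eq:radcond} is needed, which also explains the parenthetical remark in the excerpt that the conclusion is insensitive to whether the boundary limit is taken from $\Omega$ or from $D$. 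The superficial resemblance to the mean-zero identity for normal derivatives of harmonic functions on a bounded domain is in fact misleading, since there one argues via an interior volume integral, whereas here the statement is a purely local consequence of the Maxwell equations packaged by Stokes' theorem on a closed surface.
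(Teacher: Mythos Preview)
Your proof is correct and follows essentially the same route as the paper: use the Maxwell equations to write $\bEta\restrictedto_{T\Gamma}$ and $\star_3\bxi\restrictedto_{T\Gamma}$ as $d_{\Gamma}$ of smooth 1-forms on $\Gamma$ (dividing by $ik$, which is where $k\neq 0$ enters), then apply Stokes' theorem on each closed boundary component. Your side remarks about the irrelevance of the radiation condition, the insensitivity to the side from which the limit is taken, and the misleading analogy with normal derivatives of harmonic functions also mirror the paper's own commentary.
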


\subsection{Potentials and Boundary Integral Equations} \label{potbdrforms}

We now re-express~\eqref{potrepre1} in terms of exterior forms. Assuming, as
before, that the time dependence is $e^{-i\omega t},$ the permittivity is
$\epsilon,$ the permeabilty is $\mu,$ and $k=\omega\sqrt{\epsilon\mu};$ we set
\begin{equation}
\bxi=(ik\balpha-d\phi-d^*\balpha_m)\quad
\bEta=(d\balpha+ik\balpha_m+d^*\Phi_m),
\label{eqn29}
\end{equation}
where $\phi$ is a scalar function, $\balpha$ a one form, $\balpha_m$ a two form,
and $\Phi_m=\phi_m dV,$ a three form.  This representation is quite similar to what
one obtains using the fundamental solution for the Dirac operator $d+d^*,$
see~\cite{Axelsson}.

In order for $(\bxi, \bEta)$ to satisfy the \THME[$k$], the potentials must
satisfy:
\begin{equation}
d^*\balpha=-ik \phi\quad d\balpha_m=ik\Phi_m.
\label{pteq3}
\end{equation}

As before $g_k(x-y)$ denotes the outgoing fundamental solution for the scalar Helmholtz
equation, with frequency $k.$
As discussed in the introduction, 
\emph{all} of the potentials are expressed in
terms of a pair of 1-forms $\bj, \bm$ defined on $\Gamma,$ though in the end,
we do \emph{not} use $\bj$ and $\bm$ as the ``fundamental'' parameters. When we
express these 1-forms in terms of the ambient basis from $\bbR^3,$ e.g.,
\begin{equation}
\bj=j_1(x)dx_1+j_2(x)dx_2+j_3(x)dx_3,
\end{equation}
we normalize with the requirement 
\begin{equation}
i_{\bn}\bj=\bj(\bn)\equiv 0.
\label{1frmnrm}
\end{equation}
These 1-forms are the metric duals of the vector fields, tangent to $\Gamma,$`
previously denoted by $\bj$ and $\bm.$

The ``vector'' potentials are given in terms of surface integrals by setting
\begin{equation}
\begin{split}
\balpha&=\int\limits_{\Gamma}g_k(x-y)[j_1(y)dx_1+j_2(y)dx_2+j_3(y)dx_3]dA(y)\\
\balpha_m&=\star_3\left[\int\limits_{\Gamma}g_k(x-y)
[m_1(y)dx_1+m_2(y)dx_2+m_3(y)dx_3]dA(y)\right]. 
\end{split}
\label{srfint1}
\end{equation}
Using the equations in~\eqref{pteq3} we obtain the form of the potentials
defining $\phi$ and $\Phi_m=\phi_mdV,$ letting
\begin{equation}
  \begin{split}
    \phi(x)&=\int\limits_{\Gamma}g_k(x-y)r(y)dA(y)\\
\phi_m(x)&=\int\limits_{\Gamma}g_k(x-y)q(y)dA(y),
  \end{split}
\end{equation}
where we let
\begin{equation}
\frac{1}{ik}d_{\Gamma}\star_2\bj=r dA\quad
\frac{1}{ik}d_{\Gamma}\star_2\bm=qdA.
\label{eqn53}
\end{equation}
The scalar functions, $(r,q)$ are, as before, the Debye sources.  From this
definition, and Stokes' theorem we see that the mean values of $r$ and $q$
vanish on every connected component, $\Gamma_j,$ of $\Gamma,$
\begin{equation}
\int\limits_{\Gamma_j}rdA=\int\limits_{\Gamma_j}qdA=0
\label{eqn96a}
\end{equation}
This proves Lemma \ref{meanzerolemma}. It is
{\bf necessary} for the conditions in~\eqref{eqn53} to hold, and thus, for
$(\bxi,\bEta)$ to satisfy the Maxwell equations. 

As before, we let $\cM_{\Gamma,0}$ denote pairs of functions $(r,q)$ defined on $\Gamma$
with mean zero on every component of $\Gamma.$
If we assume that $\bm=\star_2\bj,$ (as we usually do), then, taking account of the fact that the
(negative) Laplace operator on 1-forms is given by
$-\Delta_1=d_{\Gamma}^*d_{\Gamma}+d_{\Gamma}d_{\Gamma}^*,$ we obtain the
relation:
\begin{equation}
\Delta_1\bj=ik[d_{\Gamma}r-\star_2d_{\Gamma}q].
\label{srflap}
\end{equation}
If the components of $\Gamma$ are all of genus zero, then
equation~\eqref{srflap} always has a unique solution. If $\Gamma$ has positive
genus components, then one needs to deal with the null space of $\Delta_1.$

If $H^1_{\dR}(\Gamma)\neq 0,$ then the nullspace of $\Delta_1,$
$\cH^1(\Gamma),$ which agrees with the space of solutions to
 \begin{equation}
    d_{\Gamma}\balpha=0\quad d^*_{\Gamma}\balpha=0
  \end{equation}
  is isomorphic to $H^1_{\dR}(\Gamma).$ These are the harmonic 1-forms. The right
  hand side in~\eqref{srflap} is orthogonal to $\cH^1(\Gamma),$ and hence lies
  in the range of $\Delta_1.$ Let $R_1$ denote the partial inverse of the
  Laplacian on 1-forms, with range orthogonal to $\cH^1(\Gamma),$  and set
\begin{equation}
\bj_R(r,q,k)=ik R_1[d_{\Gamma}r-\star_2d_{\Gamma}q],
\label{eqn51}
\end{equation}
Because the ranges of $d_{\Gamma}$ and $d^*_{\Gamma}$ are orthogonal to the
null space of $\Delta_1,$ this equation is solvable whether or not $r$ and
$q$ satisfy the mean value condition. Note that the solution
to~\eqref{eqn51} tends to zero as $k\to 0.$ 

Using the relations $\Delta_1d_{\Gamma}=d_{\Gamma}\Delta_0,$ and
$\Delta_2=\star_2\Delta_0\star_2,$ we can re-express $\bj_{R}$ in the form:
\begin{equation}
\bj_R(r,q,k)=ik[d_{\Gamma}R_0r-\star_2d_{\Gamma}R_0q].
\label{eqn51.1}
\end{equation}
Here $R_0$ is the partial inverse of $\Delta_0,$ which annihilates functions
constant on each component of $\Gamma$ and has range equal to the set of
functions with mean zero on each component of $\Gamma.$
Equation~\eqref{eqn51.1} shows that this approach to representing solutions of
Maxwell's equations in terms of the pair $(r,q),$ only requires an inverse for
the \emph{scalar} Laplacian on $\Gamma.$

For any
  $(r,q),$ the solution space to~\eqref{srflap} is isomorphic to
  $\cH^1(\Gamma).$ Adding a harmonic 1-form to $\bj_R$ does not change $r$ and
  $q,$ though it changes the fields $\bxi$ and $\bEta,$ and plays a central
  role in the discussion of $\cH_k(\Omega).$ If $g\neq 0,$ then the space of
  outgoing solutions to \THME[$k$] is parameterized by
  $\cM_{\Gamma,0}\oplus\cH^1(\Gamma).$ So given data $(r,q,\bj_H)$ we often speak
  of the solution to the \THME[$k$] ``defined'' by this data. If $\bj_H$ is
  missing, then it should be understood to be zero, i.e. the solution ``defined
  by $(r,q)$,'' is the solution defined by $(r,q,0)$ in the sense above.

\subsection{Boundary Equations and Jump Relations}
As with the vector field representation, we can take the limits of $\bxi$ and
$\bEta$ in~\eqref{eqn29} as the point of evaluation approaches $\Gamma,$ and
obtain boundary integral equations for the normal and tangential components of
these forms. Indeed there is no necessity to rewrite these equations, we simply
use $\cT_{\bxi}^{\pm}(k)(r,q,\bj,\bm),$ $\cT_{\bEta}^{\pm}(k)(r,q,\bj,\bm)$ to
denote the limiting tangential components and the limiting normal components
are $\cN_{\bxi}^{\pm}(k)(r,q,\bj,\bm),$ $\cN_{\bEta}^{\pm}(k)(r,q,\bj,\bm).$
Keep in mind that, in the vector field representation, the tangential
components are represented as the limits of $\bn\times\bE$ and $\bn\times\bH,$
which correspond to $\star_2\bxi_t$ and $\star_2([\star_3\bEta]_t),$
respectively.  For consistency, we use $\cT_{\bxi}^{\pm},\cT_{\bEta}^{\pm}$ to
denote the boundary values of these quantities. As before, $+$ indicates the
limit taken from $\Omega$ and $-$ the limit taken from $D.$ If
$\bj=\bj_R(r,q,k)$ and $\bm=\star_2\bj,$ then we omit them from the argument
list, e.g., we use the abbreviated notation $\cT_{\bxi}^{\pm}(k)(r,q)$ to
denote $\cT_{\bxi}^{\pm}(k)(r,q,\bj_R(r,q,k),\star_2\bj_R(r,q,k)),$ etc.

Below we use the jump relations to prove a uniqueness theorem. So it is useful
to reformulate them in the form language. The magnetic field is represented by
the 2-form, $\bEta=h_1dx_2\wedge dx_3+h_2dx_3\wedge dx_1+h_3dx_1\wedge dx_2,$
so that $\star_3\bEta=h_1dx_1+h_2dx_2+h_3dx_3.$ The most direct way to define
the normal and tangential components of $\bEta$ along $\Gamma$ is as
$i_{\bn}\star_3\bEta_{\pm} $ and $(\star_3\bEta_{\pm})_t.$ In this formulation
the jump relations then take the form
\begin{equation}
  \begin{split}\label{jmprels2}
    i_{\bn}(\bxi_+-\bxi_-)=r &\quad (\bxi_+-\bxi_-)_t=\star_2\bm\\
i_{\bn}(\star_3\bEta_+-\star_3\bEta_-)=q &\quad (\star_3\bEta_+-\star_3\bEta_-)_t=-\star_2\bj
  \end{split}
\end{equation}
It is also useful to calculate the relationship between $i_{\bn}\bEta$ and
$(\star_3\bEta)_t.$ In an adapted frame $(\omega_1,\omega_2,\nu),$ We have
\begin{equation}
  \bEta=a\omega_1\wedge\omega_2+b\nu\wedge\omega_1+c\nu\wedge\omega_2
\end{equation}
and therefore
\begin{equation}\label{tangHfld}
  \begin{cases}
    i_{\bn} \bEta&=b\omega_1+c\omega_2\\
(\star_3\bEta)_t&=b\omega_2-c\omega_1,
  \end{cases}\text{ which implies that}  \star_2 [i_{\bn} \bEta]=(\star_3\bEta)_t.
\end{equation}

\section{Uniqueness for the Tangential Equations}\label{MEfrms2}
Suppose that there is a
$k\in\Zup\setminus\{0\},$ and non-trivial data $(r,q)\in\cM_{\Gamma,0}$ and $\bj,$
satisfying~\eqref{eqn53}, with $\bm=\star_2\bj,$  so that
\begin{equation}
  \label{eq:nlspc1}
  \cT^{+}_{\bxi}(k)(r, q, \bj, \bm)=0.
\end{equation}
Let $(\bxi_{\pm},\bEta_{\pm})$ be the solutions to the Maxwell equations
defined by this data in the complement of $\Gamma.$ By their definition it is
clear that the tangential components of $\bxi_+$ vanish along $\Gamma.$ As
$\Im(k)\geq 0,$ the solution in $\Omega$ is outgoing and
Theorem~\ref{thm4class.1} shows that $(\bxi_+,\bEta_+)\equiv (0,0).$ The jump
relations,~\eqref{jmprels2}, and the fact that $\bm=\star_2\bj,$ allow us to
determine the tangential boundary data for $(\bxi_-,\bEta_-):$
\begin{equation}
  \label{eq:tngbdr1}
  \bxi_-\restrictedto_{T\Gamma}=\bj\quad
i_{\bn}\bEta_-\restrictedto_{T\Gamma}=\bj.
\end{equation}
It is not difficult to see that the boundary condition on the Maxwell system in
$D,$ implied by these relations,
\begin{equation}
\bxi_-\restrictedto_{T\Gamma}=i_{\bn}\bEta_-\restrictedto_{T\Gamma},
\label{eqn:6.20.2}
\end{equation}
is not formally self adjoint!

Observe that $d^*d\bxi_-=k^2\bxi_-$ and
$i_{\bn}d\bxi_-\restrictedto_{\Gamma}=ik
i_{\bn}\bEta_-\restrictedto_{\Gamma}.$ Using a standard integration by parts
formula, we obtain:
\begin{equation}
  \label{eq:intprts4}
\begin{split}
  \int\limits_{D}(d\bxi_-,d\bxi_-)dV&= \int\limits_{D}(d^*d\bxi_-,\bxi_-)dV+
 \int\limits_{bD}(i_{\bn}d\bxi_-,\bxi_-)dA\\
&= k^2\int\limits_{D}(\bxi_-,\bxi_-)dV+ik
 \int\limits_{bD}(i_{\bn}\bEta_-\restrictedto_{\Gamma},\bxi_-\restrictedto_{\Gamma})dA.
\end{split}
\end{equation}
Combining this with~\eqref{eq:tngbdr1} gives:
\begin{equation}
  \label{eq:intprts5}
  -ik\int\limits_{bD}(\bj,\bj)dA=k^2\int\limits_{D}(\bxi_-,\bxi_-)dV-
  \int\limits_{D}(d\bxi_-,d\bxi_-)dV. 
\end{equation}
We can rewrite this relation as
\begin{equation}
  \label{eq:krel}
  -aik=bk^2-c,
\end{equation}
where $a,b,c$ are non-negative real numbers. If $b$ or $c$ vanishes, then it is
clear that $\bxi_-\equiv 0.$ If $a=0,$ then $\bj\equiv 0.$ For a countable set of
real numbers $\{k_j\},$ there exist non-trivial solutions to the equations
\begin{equation}
  d^*d\bxi_-=k_j^2\bxi_-\quad d^*\bxi_-=0\quad \bxi_-\restrictedto_{TbD}=0.
\end{equation}
In the present circumstance, however, $(r,q)$ are  generalized Debye sources
and therefore
\begin{equation}
  rdA=\frac{1}{ik}d_{\Gamma}\star_2\bj\text{ and }qdA
=\frac{1}{ik}d_{\Gamma}\bj.
\end{equation}
If $a=0,$ then all the boundary potentials vanish, and therefore $\bxi_-\equiv
0$ as well.

Using the quadratic formula we see that
\begin{equation}
k_{\pm}=\frac{-ia\pm\sqrt{4bc-a^2}}{2b}.
\label{eq:127n}
\end{equation}
As $a,b,c$ are all positive,~\eqref{eq:127n} shows  that $\Im k_{\pm}<0.$
This argument applies, \emph{mutatis mutandis} to $\cT^{+}_{\bEta}(k).$ 
Formula~\eqref{eq:127n} and the discussion above complete the proof of the following theorem:
\begin{theorem}\label{thm66} Assuming that $\bm=\star_2\bj,$ and $(r,q)$
  satisfy~\eqref{eqn53}, then, for $\Im k\geq 0,$ $k\neq 0,$ the nullspaces of
  both $\cT^{+}_{\bxi}(k)$ are $\cT^{+}_{\bEta}(k)$ are trivial.
\end{theorem}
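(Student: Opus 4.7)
The plan is to exhibit any element of the nullspace as boundary data for a Maxwell field that vanishes in $\Omega$ by classical uniqueness, and then deduce via a one-sided integration-by-parts identity in $D$ that the corresponding interior field forces $k$ into the open lower half plane. Concretely, suppose $(r,q)\in\cM_{\Gamma,0}$ is nontrivial with $\bj=\bj_R(r,q,k)$, $\bm=\star_2\bj$, and $\cT^+_{\bxi}(k)(r,q,\bj,\bm)=0$. Form the fields $(\bxi_\pm,\bEta_\pm)$ defined by the representation \eqref{eqn29} on either side of $\Gamma$. Since the tangential part of $\bxi_+$ vanishes on $\Gamma$ and $\bxi_+$ is outgoing with $\Im k\geq 0$, Theorem~\ref{thm4class.1} forces $(\bxi_+,\bEta_+)\equiv 0$ in $\Omega$.

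Next I would feed this vanishing into the jump relations \eqref{jmprels2}. With $\bm=\star_2\bj$, the jumps of the tangential components of $\bxi$ and of $i_{\bn}\bEta$ both equal $\bj$ (up to sign conventions folded into $\star_2$), so the interior solution inherits the boundary identity
\begin{equation}
\bxi_-\restrictedto_{T\Gamma}=i_{\bn}\bEta_-\restrictedto_{T\Gamma}=\bj.
\end{equation}
Since $(\bxi_-,\bEta_-)$ solves Maxwell's equations in $D$, we have $d^*d\bxi_-=k^2\bxi_-$, $d^*\bxi_-=0$, and $i_{\bn}d\bxi_-\restrictedto_{\Gamma}=ik\,i_{\bn}\bEta_-\restrictedto_{\Gamma}$. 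Applying the standard Green's identity for $d^*d$ on $D$ then gives
\begin{equation}
\int_D\|d\bxi_-\|^2\,dV=k^2\int_D\|\bxi_-\|^2\,dV+ik\int_{\Gamma}\|\bj\|^2\,dA,
\end{equation}
which I will rewrite as $-aik=bk^2-c$ with $a=\int_{\Gamma}\|\bj\|^2\,dA\geq 0$, $b=\int_D\|\bxi_-\|^2\,dV\geq 0$, $c=\int_D\|d\bxi_-\|^2\,dV\geq 0$.

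With this identity in hand the rest is algebraic. Solving the quadratic for $k$ gives $k_{\pm}=(-ia\pm\sqrt{4bc-a^2})/(2b)$, and one checks directly that for $a,b,c>0$ both roots satisfy $\Im k<0$, contradicting $\Im k\geq 0$. The remaining work is the degenerate cases: if $b=0$ then $\bxi_-\equiv 0$ in $D$, hence $\bj=\bxi_-\restrictedto_{T\Gamma}=0$, and then \eqref{eqn53} immediately yields $r=q=0$; the case $c=0$ is similar via Poincaré-type reasoning together with $d^*\bxi_-=0$ and the boundary datum; and if $a=0$ then $\bj=0$ on $\Gamma$, and again \eqref{eqn53} gives $r=q=0$. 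The identical argument applied to $\cT^+_{\bEta}(k)$, using that $\star_2\bj$ plays the role of $\bj$ and that $(\bxi_-,\bEta_-)$ satisfies the symmetric counterpart of the above boundary identity, handles the magnetic tangential operator.

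The one subtle point, and the place where I expect the main friction, is the identification of boundary data on the minus side through the jump relations in a manner consistent with the normalization $\bm=\star_2\bj$; the sign conventions in \eqref{jmprels2} and the interchange between $i_{\bn}\bEta$ and $(\star_3\bEta)_t$ recorded in \eqref{tangHfld} must be threaded carefully so that the resulting interior boundary condition \eqref{eqn:6.20.2} is exactly the non-self-adjoint condition that produces the purely lower-half-plane spectrum, rather than a genuine self-adjoint Maxwell eigenvalue problem which would admit real $k_j$.
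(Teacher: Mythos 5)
Your proposal is correct and follows essentially the same route as the paper's own proof: exterior vanishing via Theorem~\ref{thm4class.1}, transfer to the interior through the jump relations to obtain $\bxi_-\restrictedto_{T\Gamma}=i_{\bn}\bEta_-\restrictedto_{T\Gamma}=\bj$, the Green's identity $-aik=bk^2-c$, the quadratic formula placing the roots in the open lower half plane, and the use of~\eqref{eqn53} to dispose of the degenerate case $a=0$ (which is exactly how the paper rules out the real interior resonances). The boundary identity you flag as the subtle point is precisely~\eqref{eq:tngbdr1} in the text, so no further friction arises.
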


In the case that $\Gamma$ is simply connected this implies that $E_+,$ the
exceptional set for $\cN^{+}(k),$ is disjoint from the closed upper half plane. 
\begin{corollary}\label{cor4} If every component of $\Gamma$ is simply connected, then, for
  $k$ with $\Im k\geq 0,$ the Fredholm operator $\cN^+(k)$ is an isomorphism
  from $\cM_{\Gamma,0}$ to itself. For such $k,$ the rows of $\cT^{+}(k)$ are
  also surjective and hence isomorphisms.
\end{corollary}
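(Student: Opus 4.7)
The plan is to deduce that $E_+$, the exceptional set from Theorem~\ref{mcnormalthm}, is disjoint from $\Zup$, using the tangential uniqueness result Theorem~\ref{thm66} together with Kress's theorem on normal components; surjectivity of the rows of $\cT^+(k)$ will then follow from the invertibility of $\cN^+(k)$ by a classical uniqueness argument.

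For the first statement, at $k=0$ the off-diagonal blocks of \eqref{eqn74} vanish because $\bj_R(r,q,0)=0$, so $\cN^+(0)=J_+(0)$, which is invertible on $\cM_{\Gamma,0}$ by the final assertion of Theorem~\ref{thm1}. For $k\in\Zup$ with $k\neq 0$, suppose $(r,q)\in\cM_{\Gamma,0}$ satisfies $\cN^+(k)(r,q)=0$. Then the outgoing fields $(\bxi_+,\bEta_+)$ defined by this data have vanishing normal components on $\Gamma$. Because every component of $\Gamma$ is simply connected, Theorem~\ref{thm4.1} forces $(\bxi_+,\bEta_+)\equiv 0$ in $\Omega$, so in particular $\cT^+_{\bxi}(k)(r,q)=0$. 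Theorem~\ref{thm66} then yields $(r,q)=0$, so $\cN^+(k)$ is injective, and since it is Fredholm of index zero (this is the structural content already used in the proof of Theorem~\ref{mcnormalthm}) it is an isomorphism. Hence $k\notin E_+$.

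For the second statement, view $\cT^+_{\bxi}(k):\cM_{\Gamma,0}\to\cF$, where $\cF$ is the natural function space of tangential 1-forms on $\Gamma$; injectivity is Theorem~\ref{thm66}. For surjectivity, fix an admissible $\bomega\in\cF$. Classical existence for the perfect-conductor scattering problem, combined with the uniqueness assertion in Theorem~\ref{thm4class.1}, produces a unique outgoing solution $(\bxi,\bEta)$ with $\bxi_t=\bomega$. By Proposition~\ref{prop3frm} its normal data lies in $\cM_{\Gamma,0}$, so the just-established isomorphism $\cN^+(k)$ produces $(r,q)\in\cM_{\Gamma,0}$ generating the same normal data. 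The outgoing field defined by $(r,q)$ must then equal $(\bxi,\bEta)$ by Theorem~\ref{thm4.1}, giving $\cT^+_{\bxi}(k)(r,q)=\bomega$. The argument for $\cT^+_{\bEta}(k)$ is identical, with $[\star_3\bEta]_t$ in place of $\bxi_t$ and the other half of Theorem~\ref{thm4class.1}. The substantive input throughout is Theorem~\ref{thm66}, whose proof turns on the integration-by-parts identity \eqref{eq:127n} forcing any spurious frequency into the lower half-plane; the simply-connected hypothesis enters through Theorem~\ref{thm4.1} and through the vanishing of $\cH^1(\Gamma)$, which ensures that $(r,q)$ alone parameterizes all outgoing solutions with mean-zero normal data.
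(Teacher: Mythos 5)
Your proof is correct and follows essentially the same route as the paper: a null vector of $\cN^+(k)$ yields an outgoing field with vanishing normal components, which Theorem~\ref{thm4.1} kills in the simply connected case, after which the tangential uniqueness machinery of Theorem~\ref{thm66} (the interior integration-by-parts argument pushing resonances into the lower half plane) forces $(r,q)=0$, and the Fredholm alternative gives surjectivity. Your explicit treatment of $k=0$ and your spelled-out argument for the surjectivity of the rows of $\cT^+(k)$ (classical existence plus uniqueness from normal data) merely make precise what the paper dismisses as ``immediate.''
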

\begin{proof} If $\Gamma$ is simply connected, then any 1-form $\bj$
  on $\Gamma$ has a unique representation as $\bj=\bj_R(r,q,k).$ We can therefore
  regard $\cN^{+}(k)$ as a Fredholm system of second kind for the normal
  components of $(\bxi_+,\bEta_+)$ in terms of $(r,q).$ In this case,
  Theorem~\ref{thm4.1} implies that a solution $(\bxi_{+},\bEta_{+})$ of
  \THME[$k$], with vanishing normal components is identically zero in $\Omega.$
  Hence $(\bxi_-,\bEta_-)$ satisfy~\eqref{eq:tngbdr1}, and we can therefore
  apply the argument leading up to Theorem~\ref{thm66} to prove that $E_+$ is
  disjoint from the closed upper half plane. The Fredholm alternative then
  implies that $\cN^+(k)$ is also surjective. The surjectivity of the rows of
  $\cT^{+}(k)$ is now immediate.
\end{proof}
\begin{remark} The poles of the scattering operator for the Maxwell system,
  defined by a self adjoint boundary condition on $\Gamma$, lie in the lower
  half plane. Nonetheless, it appears that the eigenvalues for the non-self
  adjoint boundary value problem defined by~\eqref{eqn:6.20.2} are unrelated to
  these poles, but are simply interior resonances, familiar from more
  traditional representations of solutions to Maxwell's equations (see the
  Introduction and Remark~\ref{bad_poles}). The non-self adjointness of this
  BVP places the interior resonances in the lower, non-physical, half
  plane. This leads, in the simply connected case, to numerically effective
  algorithms for solving the \THME[$k$], which do not suffer from the
  instabilities caused by interior resonances in the physical half plane.
\end{remark}

In the non-simply connected case we have the following theorem assuring the
existence of $k$-Neumann fields.
\begin{theorem}\label{thm6.1} For $k\in\Zup,$ the space of $k$-Neumann fields has
  dimension exactly $2g,$ and the rows of $\cT^{+}(k)$ are surjective.
\end{theorem}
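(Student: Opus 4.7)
The plan is to extend the normal and tangential boundary integral operators to act on the augmented data space $\cM_{\Gamma,0}\oplus\cH^1(\Gamma)$ that parametrizes all outgoing solutions in the multiply connected case, and then apply Fredholm theory after computing the index. Concretely, define
\[
\widetilde{\cN}^{+}(k):\cM_{\Gamma,0}\oplus\cH^1(\Gamma)\longrightarrow\cM_{\Gamma,0}
\]
by taking data $(r,q,\bj_H)$, forming $\bj=\bj_R(r,q,k)+\bj_H$ and $\bm=\star_2\bj$, and plugging into the representation~\eqref{eqn74}; Proposition~\ref{prop3frm} guarantees the image is mean zero. Extend $\cT^{+}_{\bxi}(k)$ and $\cT^{+}_{\bEta}(k)$ to $\widetilde{\cT}^{+}_{\bxi}(k)$ and $\widetilde{\cT}^{+}_{\bEta}(k)$ on the same enlarged domain.

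Next I would compute the Fredholm index. On $\cM_{\Gamma,0}$, the operator $\cN^{+}(k)$ is a compact perturbation of the invertible operator $J_{+}(0)$ (by Theorem~\ref{thm1}) and is therefore Fredholm of index zero. Adjoining the $2g$-dimensional space $\cH^1(\Gamma)$ on the domain side (and not on the target side) raises the index of $\widetilde{\cN}^{+}(k)$ to exactly $2g$. The kernel consists of precisely those data producing an outgoing solution with vanishing normal components, i.e., of $k$-Neumann fields. By Theorem~\ref{thm4.1},
\[
\dim\ker\widetilde{\cN}^{+}(k)=\dim\cH_k(\Omega)\le 2g,
\]
and the Fredholm identity
\[
\dim\ker\widetilde{\cN}^{+}(k)-\dim\coker\widetilde{\cN}^{+}(k)=2g
\]
then forces $\dim\cH_k(\Omega)=2g$ and surjectivity of $\widetilde{\cN}^{+}(k)$ onto $\cM_{\Gamma,0}$.

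For the second claim, I would run the same scheme on $\widetilde{\cT}^{+}_{\bxi}(k)$ and $\widetilde{\cT}^{+}_{\bEta}(k)$. The natural target of either row is the space of tangential 1-forms on $\Gamma$, whose Hodge decomposition has two mean-zero scalar pieces plus a $2g$-dimensional harmonic piece, matching the two scalar $(r,q)$ factors and $\cH^1(\Gamma)$ in the domain; hence each extended row is Fredholm of index zero. Theorem~\ref{thm66} rules out kernel elements lying in the $\cM_{\Gamma,0}$ factor alone. A nontrivial element of $\ker\widetilde{\cT}^{+}_{\bxi}(k)$ would produce an outgoing solution with $\bxi_t\equiv 0$ on $\Gamma$, contradicting Theorem~\ref{thm4class.1}, and similarly for $\widetilde{\cT}^{+}_{\bEta}(k)$. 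Trivial kernel together with index zero yields surjectivity of the extended operators, which restricts to surjectivity of the original rows of $\cT^{+}(k)$ once the harmonic data is permitted.

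The hardest step is the index computation, in particular verifying that the contribution of $\bj_H$ enters $\widetilde{\cN}^{+}(k)$ and $\widetilde{\cT}^{+}(k)$ only through compact (lower-order) terms, apart from the finite-rank boost coming from the $2g$-dimensional extension of the domain, and identifying the correct target Banach spaces for the tangential operators so that the index is in fact zero. The representation~\eqref{eqn51.1}, which expresses $\bj_R$ via the scalar Laplace--Beltrami partial inverse $R_0$, is what makes this tractable: it exhibits $\bj_R$ as a smoothing operator of order $-1$ applied to $(r,q)$, so that all terms in which $\bj_R$ appears are automatically compact perturbations of the principal parts already computed in Theorem~\ref{thm1} and Theorem~\ref{tanlimits}, while the finite-rank modification by $\bj_H$ is manifestly compact.
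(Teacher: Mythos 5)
Your treatment of the dimension count is correct and is a genuinely different route from the paper's. You package the construction as a single Fredholm-index computation: $\cN^{+}(k)$ on $\cM_{\Gamma,0}$ is a compact perturbation of the invertible $J_{+}(0)$, hence index zero, and the finite-dimensional enlargement of the domain by $\cH^1(\Gamma)$ (entering only through a finite-rank, hence compact, term) raises the index to $2g$; combined with the upper bound $\dim\cH_k(\Omega)\le 2g$ from Theorem~\ref{thm4.1} and the injectivity of the data-to-field map on the kernel (which is exactly Theorem~\ref{thm66}, and which you should cite explicitly to justify writing $\dim\ker\widetilde{\cN}^{+}(k)=\dim\cH_k(\Omega)$ rather than merely $\le$), the index identity forces $\dim\cH_k(\Omega)=2g$ and surjectivity of $\widetilde{\cN}^{+}(k)$ simultaneously. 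The paper instead argues by hand in two cases: for $k\notin E_+$ it builds $k$-Neumann fields by subtracting off the normal components of the solutions generated by $\cC_H$, and for $k\in E_+\cap\Zup$ it runs a separate dimension count using the Fredholm alternative, splitting $\cH_{k}(\Omega)$ into a $d$-dimensional piece coming from $\Ker\cN^{+}(k)$ and a $(2g-d)$-dimensional piece from $\cC_H$. Your version handles both cases uniformly and is cleaner on this point.

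The second half, however, has a genuine gap. Your surjectivity argument for the rows of $\cT^{+}(k)$ rests on the claim that each extended row is Fredholm of index zero onto the tangential 1-forms, justified by matching Hodge components of domain and target. Dimension matching of the Hodge pieces is not enough: what matters is the order of the operator in each block, and here the principal symbol is degenerate. The only term of non-negative order in $\cT^{+}_{\bxi}(k)(r,q,\bj_H)$ is $-K_1r=-d_{\Gamma}G_kr$ (plus the harmonic term $\tfrac12\star_2\bj_H$), whose range sits in the exact-plus-harmonic part of $\Lambda^1(\Gamma)$; the coexact component of the output is produced only through $ikK_{2,t}\bj_R-K_4\star_2\bj_R$ and similar terms, and since $\bj_R(r,q,k)$ is itself of order $-1$ in $(r,q)$, the map from $q$ to the coexact output is of order $-2$. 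Between fixed Sobolev spaces the operator is therefore a compact perturbation of one that misses the entire coexact subspace, and it is not Fredholm of index zero there; making the index-zero claim true requires anisotropically graded target spaces and a verification that the order-$(-2)$ block is elliptic, which is precisely the computation you defer and never perform. Note that you do not actually need this: once your first half gives $\dim\cH_k(\Omega)=2g$ and surjectivity of $\widetilde{\cN}^{+}(k)$, the paper's own argument applies verbatim --- every outgoing solution is representable (match its normal components, then absorb the remaining $k$-Neumann field, all of which you have already captured), and surjectivity of the rows of $\cT^{+}(k)$ follows from Theorem~\ref{thm4.1} without any Fredholm claim about the tangential operators.
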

\begin{proof}
   For $k\in\Zup,$ the solutions to \THME[$k$], defined by
  $$\cC_{H}=\{(0,0,\bj_H,\star_2\bj_H):\:\bj_H\in\cH^1(\Gamma)\}$$ 
  have a trivial intersection with those defined by data in
$$\cC_{R}=\{(r,q,\bj_R(r,q,k),\star_2\bj_R(r,q,k)):\: (r,q)\in\cM_{\Gamma,0}\}.$$
The solutions defined by data in $\cC_H$ may not themselves be $k$-Neumann
fields. To find solutions in $\cH_k(\Omega),$ we first use an element of $\cC_{H}$
to construct a solution $(\bxi_{0+},\bEta_{0+}).$ If
$k\notin E_+,$ then we can solve
\begin{equation}
  \cN^+(k)\left(\begin{matrix} r\\ q\end{matrix}\right)=
\left(\begin{matrix} i_{\bn}\bxi_{0+}\\i_{\bn}\star_3\bEta_{0+} \end{matrix}\right),
\end{equation}
and denote the solution of the \THME[$k$] defined by this data in $\Omega$ by
$(\bxi_{1+},\bEta_{1+}).$ By Theorem~\ref{thm66}, the difference
\begin{equation}
 (\bxi_{N+},\bEta_{N+})=(\bxi_{0+},\bEta_{0+})-(\bxi_{1+},\bEta_{1+})
\end{equation}
is a non-trivial $k$-Neumann field. These solutions depend analytically on
$k\in\Zup\setminus E_+.$ As $(\bxi_{N+},\bEta_{N+})$ is a non-zero solution to
the \THME[$k$] with vanishing normal components, Theorem~\ref{thm4.1} shows that
the cohomology class of $\bxi_{N+t}$ must be non-trivial. Thus for $k\notin
E_+,$ $\dim\cH_k(\Omega)$ is at least $2g.$ On the other hand, the proof of
Theorem~\ref{thm4.1} gives the upper bound $\dim\cH_k(\Omega)\leq 2g.$ Proving
the theorem in this case.

  Now suppose that $k_j\in E_+\cap\Zup.$ This means that there is a
  non-trivial, finite dimensional space of data $V_{k_j}\subset \cM_{\Gamma,0},$
  which defines $\Ker\cN^+(k_j)(r,q).$ Let $(\bxi_{\pm},\bEta_{\pm})$ denote the
  solution of the \THME[$k$] defined by a non-zero pair $(r,q)\in V_{k_j}.$ The
  restriction of $\bxi_+$  to $T\Gamma$ defines a cohomology class in
  $H^1_{\dR}(\Gamma).$ If this class is trivial, then Theorem~\ref{thm4.1}
  implies that the pair $(\bxi_+,\bEta_+)$ are identically zero. In this case,
  Theorem~\ref{thm66} implies that $\Im k_j<0,$ contradicting the assumption
  that it lies in $\Zup.$ This establishes that each non-trivial pair in
  $V_{k_j}$ defines a non-trivial $k_j$-Neumann field, thus a subspace of
  $\cH_{k_j}(\Omega)$ of dimension $d=\dim V_{k_j}.$

  The Fredholm alternative implies that the equations for the normal
  components: $\cN^+(k_j)(r,q)=(f,g)$ are solvable for pairs
  $(f,g)\in\cM_{\Gamma,0}$ satisfying exactly $d$ linear conditions. This means
  that within $\cC_{H}$ there is a subspace of dimension at least $2g-d,$
  for which the normal components can be removed, as above. We therefore get
  another subspace, $U_{k_j}\subset \cH_{k_j}(\Omega),$ of dimension at least
  $2g-d.$ As $V_{k_j}$ has a trivial intersection with the data defining
  $U_{k_j},$ Theorem~\ref{thm66} implies that these two subspaces of
  $\cH_{k_j}(\Omega)$ have a trivial intersection. The lower bound on the
  dimension of $U_{k_j}$ and the upper bound on $\dim \cH_{k_j}(\Omega)$ imply
  that $\dim U_{k_j}=2g-d;$ this completes the proof that
  $\dim\cH_{k}(\Omega)=2g,$ for all $k\in\Zup.$

  For $k\notin E_+,$ Theorem~\ref{thm4.1} combined with the fact that
  $\dim\cH_{k}(\Omega)=2g$ shows that the rows of $\cT^+(k)$ are surjective. If
  $k\in E_+\cap\Zup,$ then the range of $\cN^+(k)$ has codimension exactly
  $d=\dim V_k.$ On the other hand, there is a $d$-dimensional space of data
  in $\cH^1(\Gamma)$ for which the normal components span a complement to that
  in $\Im\cN^+(k).$ Once again we can find an outgoing solution to the
  \THME[$k$] with any specified normal components. Combined with the fact
  $\dim\cH_k(\Omega)=2g,$ Theorem~\ref{thm4.1} again shows that the rows of
  $\cT^+(k)$ are surjective.
\end{proof}

In the course of this argument we established:
\begin{corollary}\label{cor4.1} For $k\in\Zup\setminus\{0\},$ the map from $\cH_k(\Omega)$ to
  $H^1_{\dR}(\Gamma)$ defined by 
$$(\bxi_{N+},\bEta_{N+})\mapsto [\bxi_{N+t}]_{\Gamma}$$ 
is an isomorphism.
\end{corollary}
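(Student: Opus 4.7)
The plan is to reduce the claim to a dimension count. Theorem~\ref{thm6.1} gives $\dim\cH_k(\Omega)=2g$, and the de Rham theorem (applied component-wise and summed) gives $\dim H^1_{\dR}(\Gamma)=2g$. Hence the stated linear map is an isomorphism as soon as it is injective, and all the work goes into proving injectivity.

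For injectivity, suppose $(\bxi_{N+},\bEta_{N+})\in\cH_k(\Omega)$ satisfies $[\bxi_{N+t}]_\Gamma=0$ in $H^1_{\dR}(\Gamma)$. Write $\balpha=\bxi_{N+}\restrictedto_\Gamma$ and $\bBeta=\star_3\bEta_{N+}\restrictedto_\Gamma$. Because $(\bxi_{N+},\bEta_{N+})$ is a $k$-Neumann field, the computation carried out in the proof of Theorem~\ref{thm4.1} applies verbatim: it yields $d_\Gamma\balpha=0=d_\Gamma\bBeta$, the relation $i_{\bn}d\bxi_{N+}\restrictedto_\Gamma=-ik\star_2\bBeta$, and the simplification of the boundary term in Lemma~\ref{lem1} to
\begin{equation*}
  2|k|^2\,\Re\left(\int_\Gamma\balpha\wedge\bar{\bBeta}\right).
\end{equation*}
Our hypothesis now says $\balpha=d_\Gamma u$ for some scalar $u$ on $\Gamma$, and $\bar{\bBeta}$ is already closed, so Stokes' theorem on the closed 2-manifold $\Gamma$ collapses the integral to zero. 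Thus the entire boundary term of Lemma~\ref{lem1} vanishes.

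With the boundary term equal to zero, the identity~\eqref{eq:radcond4} expresses a sum of two non-negative quantities as zero. For $\Im k>0$ the volume integral $\int_{\Omega_R}[\|d\bxi_{N+}\|^2+\|d^*\bxi_{N+}\|^2+|k|^2\|\bxi_{N+}\|^2]\,dV$ immediately forces $\bxi_{N+}\equiv 0$ on $\Omega$; for real nonzero $k$, the vanishing of the $S_R$-integral combined with Rellich's lemma and unique continuation, exactly the mechanism behind Theorem~\ref{thm4class.1}, gives the same conclusion. Finally, the Maxwell equation $d\bxi=ik\bEta$ with $k\neq 0$ propagates the vanishing to $\bEta_{N+}$, establishing injectivity and hence the corollary.

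The only delicate point is the borderline case $k\in\bbR\setminus\{0\}$, where the interior term of~\eqref{eq:radcond4} contributes nothing and one must extract $\bxi_{N+}\equiv 0$ from the radiation integral alone; but this is the standard far-field vanishing argument already invoked in Theorem~\ref{thm4class.1}, so no fresh analytic input is needed here. Everything else is just the bookkeeping of reading off the Stokes-plus-Lemma~\ref{lem1} mechanism from Theorem~\ref{thm4.1} in the presence of the stronger hypothesis $[\balpha]=0$.
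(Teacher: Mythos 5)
Your proof is correct and follows essentially the same route as the paper: injectivity comes from the Stokes/energy argument in the proof of Theorem~\ref{thm4.1} (if $[\bxi_{N+t}]_\Gamma=0$ then the boundary pairing $W(\balpha,\bar{\bBeta})$ vanishes and the field is zero), and surjectivity follows from the dimension count $\dim\cH_k(\Omega)=2g=\dim H^1_{\dR}(\Gamma)$ established in Theorem~\ref{thm6.1}. The paper records the corollary as a byproduct of exactly these two arguments, so no further comment is needed.
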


\begin{remark} It was shown by Picard that for each $k$ with non-negative real
  part there are families of interior $k$-Neumann fields, that is non-trivial
  solutions to the \THME[$k$] in $D$ with vanishing normal components,
  see~\cite{Picard1,Picard2}. For most values of $k$ there is a
  $2g$-dimensional family. In~\cite{Kress1} Kress showed that there is a countable
  set of positive real numbers $\{k_j\},$ with $k_j\to\infty,$ for which there are
  non-trivial interior $k$-Neumann fields with vanishing circulations.
\end{remark}
This theorem really asks more questions than it answers:
\begin{enumerate}
\item If $k=0,$ then Hodge theory essentially implies the existence of the
  $0$-Neumann fields. For $k\neq 0,$ what is the
  reason for the existence of the $k$-Neumann fields?  A possible explanation
  might go along the following lines: In Appendix~\ref{A4}, we express the
  \THME[$k$] in the form:
  \begin{equation}\label{eq7.10.1}
    L_k(\bxi+\bEta)=(d+d^*-ik\Lambda)(\bxi+\bEta)=0.
  \end{equation}
  Suppose that $L_k,$ acting on divergence free, outgoing data, which satisfies
  $i_{\bn}\bxi=\bEta\restrictedto_{\Gamma}=0$ is in some sense a Fredholm
  family. The boundary conditions defining the formal adjoint, $[L_k]^*,$ are
  $\bxi_t=0,(i_{\bn}\bEta)_t=0.$ Theorem~\ref{thm4class.1} implies that the
  nullspace of $[L_k]^*$ is trivial for $k\in\Zup.$ The nullspace of $L_k$ is
  exactly $\cH_k(\Omega).$  \emph{If} these operators are
  a Fredholm family, then the constancy of the Fredholm index would imply that 
  \begin{equation}
    \Ind(L_k)=\dim\cH_k(\Omega)=2g.
  \end{equation}
It is not  obvious, however, on what space the range of $L_k$ is closed.
\item Does $E_+$ have a non-trivial intersection with $\Im k\geq 0?$ If so,
  what is the physical significance of these numbers? As noted above, Kress
  proved that there is a countable set of positive real numbers for which there
  exist interior $k$-Neumann fields, with vanishing normal components and
  circulations. Are these numbers in any way related to $E_+?$
\end{enumerate}

\subsection{The hybrid system using forms}\label{sec7.1.1}
The operators defining tangential component of $\bxi_{\pm}$ are given by
\begin{equation}
  \cT^{\pm}_{\bxi}(k)\left(\begin{matrix} r\\q\\\bj\\ \bm\end{matrix}\right)
=
\frac{\mp \bm}{2}+
\left(\begin{matrix} -K_1 & ik K_{2,t} & -K_4\end{matrix}\right)
\left(\begin{matrix} r\\ \bj \\
  \bm\end{matrix}\right).
\label{eqn888.1}
\end{equation}
If we restrict to $\bj=\bj_R(r,q,k),$ and $\bm=\star_2\bj,$ then, acting on
$(r,q)\in\cM_{\Gamma,0},$ the only term of non-negative order is $-K_1,$ which
can be expressed as
$K_1r=d_{\Gamma}G_kr.$ Here
\begin{equation}
  G_kr(\bx)=\int\limits_{\Gamma}g_k(\bx-\by)r(\by)dA(\by),
\end{equation}
is an operator of order $-1.$
 We use $\cT^{\pm}_{\bxi}(k)(r,q)$ to denote this operator
restricted to this subspace of data. 

The hybrid system of integral
operators \ref{hybridopQ} is
\begin{equation}
  \cQ^{\pm}(k)\left(\begin{matrix} r\\q\end{matrix}\right)=
\left(\begin{matrix}
   -G_0\star_2d_{\Gamma}\cT^{\pm}_{\bxi}(k)\\\cN_{\bEta}^{\pm}(k)
\end{matrix}\right) \left(\begin{matrix} r\\q\end{matrix}\right).
\end{equation}
The range of $\star_2d_{\Gamma}\cT^{\pm}_{\bxi}(k)$ is contained in the space of
functions on $\Gamma$ with mean zero on every component. 
Proposition \ref{propFplus} holds in the form version as well.

Suppose now that $k\in F_+,$ and $\cQ^{+}(k)(r,q)=0,$ with
$(r,q)\in\cM_{\Gamma,0}\setminus \{0\},$ and let $(\bxi_+,\bEta_+)$ be the
solution to the \THME[$k$] defined by this data. The fact that
$\cQ^{+}(k)(r,q)=0,$ implies that
\begin{equation}
  d_{\Gamma}^*\bxi_{+t}=0\text{ and }\bEta_{+}\restrictedto_{T\Gamma}=0;
\end{equation}
the second condition implies that $d_{\Gamma}\bxi_{+t}=0,$ as well. If the
cohomology class $[\bxi_{+t}]_{\Gamma}=0,$ then $(\bxi_{+})_t$ vanishes and
Theorem~\ref{thm4class.1} implies that $(\bxi_+,\bEta_+)$ is identically
zero. When $\Gamma$ is simply connected, $H^1_{\dR}(\Gamma)=0,$ and this
proves Theorem \ref{skie_simpcconn}, written in terms of forms.

\begin{theorem}\label{skie_simpcconn.1}
If $\Gamma$ is simply connected, then
  $F_+$ is disjoint from the closed upper half plane.
Thus, the integral equation 
\begin{equation}
  \cQ^{+}(k)
  \left(\begin{matrix} r\\q\end{matrix}\right) =
  \left(\begin{matrix} f\\h\end{matrix}\right)
\end{equation}
provides a unique solution to the scattering problem from a perfect
conductor for any $k$ in the closed upper half plane. Here,
\begin{equation}\label{set_data.1}
f=G_0(d^*_{\Gamma}\bxi^{\In}_{t}), \qquad
ik hdA=d_{\Gamma}\bxi^{\In}_{t}=ik\bEta^{\In}\restrictedto_{T\Gamma},
\end{equation}
where $\bxi^{\In}_{t}$ is the tangential component of an incoming
electric field, and $\bEta^{\In}\restrictedto_{T\Gamma},$ the normal component
of the incoming magnetic field.
\end{theorem}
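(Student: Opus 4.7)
My plan is to reduce the claim $F_+\cap\Zup=\emptyset$ to the uniqueness results already established in Theorems~\ref{thm4class.1} and~\ref{thm66}. By Proposition~\ref{propFplus}, $\cQ^+(k)$ is an analytic Fredholm family of the second kind and $\cQ^+(0)\colon\cM_{\Gamma,0}\to\cM_{\Gamma,1}$ is invertible, so $0\notin F_+$ automatically. It therefore suffices to rule out any hypothetical $k\in F_+\cap(\Zup\setminus\{0\})$ together with a non-zero $(r,q)\in\cM_{\Gamma,0}$ satisfying $\cQ^+(k)(r,q)=0$, and I will argue by contradiction. Let $(\bxi_\pm,\bEta_\pm)$ denote the outgoing/interior forms generated by this $(r,q)$, with $\bj=\bj_R(r,q,k)$, $\bm=\star_2\bj$, via the representation~\eqref{eqn29}.

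Next I unpack the two rows of $\cQ^+(k)(r,q)=0$. The second row, $\cN^+_{\bEta}(k)(r,q)=0$, states that the normal magnetic field vanishes, i.e.\ $\bEta_+\restrictedto_{T\Gamma}=0$; coupled with the Maxwell identity $d\bxi_+=ik\bEta_+$ and $k\neq 0$, this gives $d_\Gamma\bxi_{+t}=0$. The first row is $-G_0\star_2 d_\Gamma\cT^+_\bxi(k)(r,q)=0$; since $\cT^+_\bxi(k)(r,q)=\star_2\bxi_{+t}$ and on a 2-manifold the identity $\star_2 d_\Gamma\star_2=-d^*_\Gamma$ holds on 1-forms, it reads $G_0(d^*_\Gamma\bxi_{+t})=0$. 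A one-line Stokes argument, $\int_{\Gamma_j}d^*_\Gamma\bxi_{+t}\,dA=\langle\bxi_{+t},d_\Gamma 1\rangle_{\Gamma_j}=0$, places $d^*_\Gamma\bxi_{+t}$ in $\cM_{\Gamma,0}$; the injectivity of $G_0$ on $\cM_{\Gamma,0}$ (recorded just before Proposition~\ref{propFplus}, from Theorem~5.7 of~\cite{ColtonKress}) then forces $d^*_\Gamma\bxi_{+t}=0$. Thus $\bxi_{+t}$ is a harmonic 1-form on $\Gamma$.

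Here the simply-connected hypothesis enters: $H^1_{\dR}(\Gamma)=0$, so $\cH^1(\Gamma)=\{0\}$ and $\bxi_{+t}=0$. Theorem~\ref{thm4class.1} then yields $(\bxi_+,\bEta_+)\equiv 0$ in $\Omega$. Feeding this into the jump relations~\eqref{jmprels2}, together with $\bm=\star_2\bj$ and $\star_2^2=-\operatorname{Id}$ on 1-forms, we get
\[
\bxi_{-t}=-\star_2\bm=\bj,\qquad i_{\bn}\bEta_{-}\restrictedto_{T\Gamma}=\bj,
\]
so $(\bxi_-,\bEta_-)$ satisfies the non-self-adjoint interior boundary condition~\eqref{eqn:6.20.2}. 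Applying the integration-by-parts identity~\eqref{eq:127n}, $-aik=bk^2-c$ with $a,b,c\ge 0$, the only possibility compatible with $\Im k\ge 0$ is $a=b=c=0$, hence $\bj\equiv 0$. But then~\eqref{eqn53} gives $r=q=0$, contradicting $(r,q)\neq 0$. The right-hand side $(f,h)$ in~\eqref{set_data.1} is obtained by applying the two rows of $\cQ^+$ to the perfect-conductor boundary conditions $\bxi^{\tot}_t=0$, $\bEta^{\tot}\restrictedto_{T\Gamma}=0$; existence and uniqueness of $(r,q)$ solving $\cQ^+(k)(r,q)=(f,h)$ now follow from the invertibility of $\cQ^+(k)\colon\cM_{\Gamma,0}\to\cM_{\Gamma,1}$.

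The main technical obstacle is the translation of the first row of $\cQ^+$ into the clean statement $d^*_\Gamma\bxi_{+t}=0$: this requires the pointwise identity $\star_2 d_\Gamma\star_2=-d^*_\Gamma$ on 1-forms in dimension two, the mean-zero property of $d^*_\Gamma\bxi_{+t}$ needed to invoke $G_0$-injectivity on $\cM_{\Gamma,0}$, and the correct identification $\cT^+_\bxi(k)(r,q)=\star_2\bxi_{+t}$ built into the form conventions. Once this step is in hand, the remainder is a direct assembly: harmonicity of $\bxi_{+t}$, the vanishing of $H^1_{\dR}(\Gamma)$ in the simply-connected case, exterior uniqueness (Theorem~\ref{thm4class.1}), and the interior non-self-adjoint argument of Theorem~\ref{thm66}.
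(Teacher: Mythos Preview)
Your proof is correct and follows essentially the same route as the paper's: both show that $\cQ^+(k)(r,q)=0$ forces $\bxi_{+t}$ to be harmonic, use $H^1_{\dR}(\Gamma)=0$ to conclude $\bxi_{+t}=0$, invoke Theorem~\ref{thm4class.1} to kill $(\bxi_+,\bEta_+)$, and then finish via the interior argument of Theorem~\ref{thm66}. You spell out two steps the paper leaves implicit --- the injectivity of $G_0$ on mean-zero data to pass from $G_0(d^*_\Gamma\bxi_{+t})=0$ to $d^*_\Gamma\bxi_{+t}=0$, and the jump-relation/integration-by-parts reduction (where the paper simply cites Theorem~\ref{thm66}); one small imprecision is the claim that $\Im k\ge 0$ forces $a=b=c=0$ in~\eqref{eq:krel}, whereas the correct case split (as in the paper) is that $b=0$ or $c=0$ forces $\bxi_-\equiv 0$, and otherwise $a=0$ forces $\bj\equiv 0$ --- but your conclusion $(r,q)=0$ is unaffected.
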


When applying our method in the non-simply connected case, the following result
is useful.
\begin{proposition}\label{prop6} Suppose that $k\notin E_+\cup F_+,$ and let
  $\psi\in\cH^1(\Gamma).$ The unique outgoing solution to the \THME[$k$] with
  $\bxi_{+t}=\psi$ is defined by data $(r,q,\bj_H)$ with $\bj_H\neq 0.$
\end{proposition}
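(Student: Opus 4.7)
The plan is a proof by contradiction, with the caveat that $\psi=0$ produces the zero solution (for which $\bj_H=0$) so the conclusion implicitly requires $\psi\neq 0;$ I will assume this. The existence of an outgoing solution $(\bxi_+,\bEta_+)$ with $\bxi_{+t}=\psi$ follows from Theorem~\ref{thm6.1} (surjectivity of the rows of $\cT^+(k)$) and its uniqueness from Theorem~\ref{thm4class.1}. Represent this solution by data $(r,q,\bj_H)\in\cM_{\Gamma,0}\oplus\cH^1(\Gamma)$ in the manner described at the end of Section~\ref{potbdrforms}; the hypothesis $k\notin E_+$ guarantees that this parametrization is well-posed in the sense used in the proof of Theorem~\ref{thm6.1}. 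Now assume toward contradiction that $\bj_H=0,$ so the data reduces to $(r,q)\in\cM_{\Gamma,0}.$

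The next step is to exploit the harmonicity of $\psi$ to show that $(r,q)$ lies in the kernel of $\cQ^+(k).$ Since $\psi\in\cH^1(\Gamma)$ I have $d_{\Gamma}\psi=0$ and $d^*_{\Gamma}\psi=0.$ Restricting the Maxwell relation $d\bxi=ik\bEta$ to $T\Gamma$ gives $d_{\Gamma}\bxi_{+t}=ik\bEta_+\restrictedto_{T\Gamma},$ so $d_{\Gamma}\psi=0$ forces $\bEta_+\restrictedto_{T\Gamma}=0.$ By the adapted-frame identity~\eqref{tangHfld} in Remark~\ref{adptfrm}, vanishing of $\bEta_+\restrictedto_{T\Gamma}$ is equivalent to vanishing of $i_{\bn}\star_3\bEta_+\restrictedto_{\Gamma},$ so
\[
\cN^+_{\bEta}(k)(r,q)=i_{\bn}\star_3\bEta_+\restrictedto_{\Gamma}=0.
\]
For the first row of $\cQ^+(k),$ I use $\cT^+_{\bxi}(k)(r,q)=\bxi_{+t}=\psi$ together with $d_{\Gamma}\psi=0$ to conclude
\[
-G_0\star_2 d_{\Gamma}\cT^+_{\bxi}(k)(r,q)=-G_0\star_2 d_{\Gamma}\psi=0.
\]
Therefore $\cQ^+(k)(r,q)=(0,0).$

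Finally, because $k\notin F_+,$ Proposition~\ref{propFplus} asserts that $\cQ^+(k)\colon\cM_{\Gamma,0}\to\cM_{\Gamma,1}$ is an isomorphism, and hence $(r,q)=(0,0).$ All of the data now vanishes, so the outgoing solution defined by it is identically zero, in contradiction with $\bxi_{+t}=\psi\neq 0.$ Thus $\bj_H\neq 0.$

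The critical step is the simultaneous vanishing of both components of $\cQ^+(k)(r,q);$ the interesting mechanism is that harmonicity of $\psi$ on $\Gamma$ is exactly strong enough, through the single Maxwell identity $d_{\Gamma}\bxi_{+t}=ik\bEta_+\restrictedto_{T\Gamma},$ to kill the divergence-type first row and the normal-component second row at the same time. The hypotheses $k\notin E_+$ and $k\notin F_+$ play complementary roles: the former legitimizes the representation of outgoing solutions by $(r,q,\bj_H),$ while the latter is what lets us conclude $(r,q)=0$ from $\cQ^+(k)(r,q)=0.$
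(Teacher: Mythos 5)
Your proof is correct, but it takes a genuinely different route from the paper's. The paper argues constructively: starting from a nonzero $\bj_H\in\cH^1(\Gamma)$ it builds, via the $k$-Neumann field produced in the proof of Theorem~\ref{thm6.1} and a correction obtained by solving $\cQ^+(k)(r_1,q_1)=(0,G_0d_{\Gamma}^*d_{\Gamma}f)$ (this is where $k\notin F_+$ enters there), a solution whose tangential trace lies in $\cH^1(\Gamma)$ and has nonzero cohomology class, and then concludes that $\bj_H\mapsto\bxi_{H+t}$ is an injective, hence bijective, linear map of $\cH^1(\Gamma)$ to itself. You instead argue by contradiction: if $\bj_H=0,$ the harmonicity of $\psi$ forces $\cQ^+(k)(r,q)=0$ --- which is precisely the converse of the computation the paper itself uses in Section~\ref{sec7.1.1} to analyze $\Ker\cQ^+(k)$ --- and then injectivity of $\cQ^+(k)$ on $\cM_{\Gamma,0}$ for $k\notin F_+$ kills $(r,q).$ Your route is shorter and establishes the literal statement cleanly; what it does not deliver, and what the paper's construction does, is the isomorphism $\bj_H\mapsto\bxi_{H+t}$ of $\cH^1(\Gamma),$ which is the form of the result actually invoked in the remark following the proposition to finish the perfect-conductor problem in the multiply connected case. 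Two small points. First, with the paper's convention $\cT^{+}_{\bxi}(k)(r,q)=\star_2\bxi_{+t},$ the first row of $\cQ^+(k)$ equals $G_0d^*_{\Gamma}\bxi_{+t},$ so it vanishes because $d^*_{\Gamma}\psi=0$ rather than $d_{\Gamma}\psi=0$; since $\psi$ is harmonic both conditions hold, so nothing is lost. Second, your caveat that $\psi\neq 0$ must be assumed is right and matches the paper's implicit reading of the statement.
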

\begin{proof}  As $k\notin E_+,$ the proof of Theorem~\ref{thm6.1} 
  produces a solution,$(\bxi_{N+},\bEta_{N+}),$ to the \THME[$k$] with
  vanishing normal components and $[\bxi_{N+t}]_{\Gamma}=[\psi]_{\Gamma}.$ The
  potentials corresponding to this solution take the form $(r_0,q_0,\bj_H)$
  with $\bj_H\neq 0.$ The condition $[\bxi_{N+t}]_{\Gamma}=[\psi]_{\Gamma}$
  shows that there is a function $f,$ of mean zero on every component of
  $\Gamma,$ satisfying
 \begin{equation}
   \bxi_{N+t}=\psi+d_{\Gamma}f.
 \end{equation}
Since $k\notin F_+$ we can therefore solve the equation
\begin{equation}
  \cQ^+(k)(r_1,q_1)=(0,G_0d_{\Gamma}^*d_{\Gamma}f).
\end{equation}
With $(\bxi_+,\bEta_+)$ the solution to the \THME[$k$] defined in $\Omega$ by
this data, we see that 
\begin{equation}
  (\bxi_{H+},\bEta_{H+})=(\bxi_{N+},\bEta_{N+})-(\bxi_+,\bEta_+),
\end{equation}
satisfies 
\begin{equation}\label{eqn6.19.1}
  d_{\Gamma}\bxi_{H+t}=d_{\Gamma}^*\bxi_{H+t}=0,
\end{equation}
and therefore $\bxi_{H+t}\in\cH^1(\Gamma).$ This solution corresponds to the
sources $(r_0-r_1,q_0-q_1,\bj_H),$ with $\bj_H\neq 0.$ Theorem~\ref{thm66}
then implies that $\bxi_{H+}\neq 0.$ While it is not clear that
$[\bxi_{H+t}]_{\Gamma}=[\psi]_{\Gamma},$ it follows from~\eqref{eqn6.19.1} and
Theorem~\ref{thm4class} that $[\bxi_{H+t}]_{\Gamma}\neq 0.$ Thus $\bj_H\mapsto
\bxi_{H+t}$ is an injective linear mapping from $\cH^1(\Gamma)$ to itself, and
therefore an isomorphism.
\end{proof}

\begin{remark} Let $\{\psi^1,\dots,\psi^{2g}\}$ be a basis for $\cH^1(\Gamma).$
  The proof of this proposition, along with that of Theorem~\ref{thm6.1} show
  that, for $k\notin E_+\cup F_+,$ we can effectively construct solutions
  $\{(\bxi_{+H}^1,\bEta_{+H}^1)\dots,(\bxi_{+H}^{2g},\bEta_{+H}^{2g})\},$ in
  $\Omega$ to the \THME[$k$], which satisfy
  \begin{equation}
    \bxi_{+Ht}^l=\psi^{l}\text{ for }l=1,\dots,2g.
  \end{equation}

Using Proposition~\ref{prop6}, the system of equations, $\cQ^{+}(k),$ can 
again be used to solve the perfect conductor problem, at least for $k\notin
E_+\cup F_+.$ Let $\bxi^{\In}_{t}$ be the tangential component of an incoming
solution and set $(f,h)$ as in (\ref{set_data.1}).
For any $k\notin F_+,$ with non-negative
imaginary part, there is a unique solution to
\begin{equation}
  \cQ^{+}(k)(r,q)=(f,h).
\end{equation}
We let $(\bxi_+,\bEta_+)$ be the solution to the \THME[$k$], defined in
$\Omega$ by this data. Let $(\tbxi_+,\tbEta_+)$ be the unique outgoing solution
with
\begin{equation}
  \tbxi_{+t}=\bxi^{\In}_{t}.
\end{equation}
The tangential component of the difference $\psi=\tbxi_{+t}-\bxi_{+t}$ belongs to
$\cH^1(\Gamma).$ In the simply connected case this is zero, and therefore in
this case we are done. In general, we
can use Proposition~\ref{prop6} to find the unique solution
$(\hat\bxi_+,\hat\bEta_+),$ to the \THME[$k$] with $\hat\bxi_{+t}=\psi.$ By
Theorem~\ref{thm4class}, the sum satisfies
\begin{equation}
(\tbxi_+,\tbEta_+)=(\bxi_+,\bEta_+)+(\hat\bxi_+,\hat\bEta_+),
\end{equation}
and therefore solves the original boundary value problem.
\end{remark}

\begin{remark} In our modification of $\cT^{+}_{\bxi}(k),$ we use
  $-G_0\star_2d_{\Gamma}$ as a ``preconditioner,'' and to obtain a scalar
  equation. Other choices are possible, for example $-G_{l}(\star_2d_{\Gamma}+\cE)$
  where $l$ is another complex number and $\cE$ is an order zero operator
  mapping 1-forms to functions. For numerical applications it may be important
  to find a good choice here.
\end{remark}

  A question of considerable interest is to characterize the sets $E_+$
  and $F_+.$ One might hope that $E_+$ is disjoint from the closed upper half
  plane. The set $F_+$ depends, to some extent on the choice of preconditioner.
  Indeed we can modify the preconditioner so that $F_+=E_+.$ We let
  $W$ be the $L^2$-closure of $\{d_{\Gamma}u+d_{\Gamma}^*(vdA):\:
  u,v\in\CI(\Gamma)\},$ then, 
  Theorem~\ref{thm4} shows that for $k\notin E_+,$ $W$ is a complement to the
  tangential $\bxi$-boundary values of the $k$-Neumann fields:
\begin{equation}
\cH_k(\Omega)_t=\{\bxi_{+t}:\:
  (\bxi_+,\bEta_+)\in\cH_k(\Omega)\}.
\end{equation}
 To define the ``optimal'' preconditioner, we use the following lemma
\begin{lemma} There is an analytic family of projection operators $\{P_k:\:
  k\in\Zup\setminus E_+\cup\{0\}\}$ satisfying
  \begin{itemize}
  \item $P_k\restrictedto_{\cH_k(\Omega)_t}=\Id$
\item $P_k\restrictedto_{W}=0$
  \end{itemize}
\end{lemma}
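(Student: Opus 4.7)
The plan is to construct $P_k$ as the projection onto $\cH_k(\Omega)_t$ along the complementary subspace $W,$ and then to extract analyticity from the construction used in Theorem~\ref{thm6.1} and Proposition~\ref{prop6}.

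First I would set up the algebraic framework. The classical Hodge decomposition on $\Gamma$ gives an $L^2$-orthogonal splitting
\begin{equation*}
L^2(\Lambda^1\Gamma)=W\oplus\cH^1(\Gamma),
\end{equation*}
so the projection $\Pi_H\colon L^2(\Lambda^1\Gamma)\to\cH^1(\Gamma)$ along $W$ is a fixed (i.e.\ $k$-independent) bounded operator. Next, by Corollary~\ref{cor4.1}, for $k\in\Zup\setminus(E_+\cup\{0\})$ the cohomology map $c_k\colon\cH_k(\Omega)_t\to H^1_{\dR}(\Gamma)$ is an isomorphism. Composing with the canonical identification of $\cH^1(\Gamma)$ with $H^1_{\dR}(\Gamma)$, I would define
\begin{equation*}
P_k\;=\;c_k^{-1}\circ[\,\cdot\,]\circ\Pi_H,
\end{equation*}
where $[\,\cdot\,]$ sends a harmonic representative to its cohomology class. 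Verification of the two defining properties is immediate: if $\omega\in W$ then $\Pi_H\omega=0$, so $P_k\omega=0$; if $\omega\in\cH_k(\Omega)_t$ then $\Pi_H\omega$ is a harmonic representative of $[\omega]_{\Gamma}$, so $c_k^{-1}[\Pi_H\omega]=\omega$ by the defining property of $c_k^{-1}$. In particular $P_k^2=P_k$.

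The substantive step is analyticity, and this is where I would rely on the construction from Theorem~\ref{thm6.1} and Proposition~\ref{prop6}. Fix a basis $\{\psi^1,\dots,\psi^{2g}\}$ of $\cH^1(\Gamma)$. For $k\notin E_+\cup F_+$, the remark after Proposition~\ref{prop6} yields $k$-Neumann fields $(\bxi_H^l(k),\bEta_H^l(k))$ with $\bxi_{Ht}^l(k)=\psi^l$; in particular the map $c_k^{-1}$ is literally $\psi^l\mapsto\bxi_{Ht}^l(k)$, which is analytic in $k$ because it is obtained by solving $\cN^+(k)$ and $\cQ^+(k)$, both analytic Fredholm families that are invertible (on the appropriate subspaces) away from $E_+\cup F_+$. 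To extend analyticity across points of $F_+\setminus E_+$, I would argue slightly more carefully: for each $k_0\in\Zup\setminus(E_+\cup\{0\})$ one can still construct the analytic family $(\bxi_H^l(k),\bEta_H^l(k))$ near $k_0$ by solving $\cN^+(k)$ directly for $k$-Neumann fields with prescribed cohomology class, as in the first part of the proof of Theorem~\ref{thm6.1}; the fact that $\cN^+(k)$ is an invertible Fredholm operator on $\cM_{\Gamma,0}$ near $k_0$ suffices, and $F_+$ plays no role.

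The main obstacle is ensuring that the analytic family $c_k^{-1}$ extends across all of $\Zup\setminus(E_+\cup\{0\})$ rather than only across $\Zup\setminus(E_+\cup F_+)$. I would handle this by noting that $c_k$ itself is intrinsically defined on $\cH_k(\Omega)_t$ and by Theorem~\ref{thm6.1} is an isomorphism throughout $\Zup\setminus E_+$; the use of $F_+$ in Proposition~\ref{prop6} is only one particular analytic recipe for $c_k^{-1}$, and replacing $\cQ^+(k)$ by direct use of $\cN^+(k)$ to lift a prescribed cohomology class to a $k$-Neumann field (as done in the proof of Theorem~\ref{thm6.1}) produces an analytic inverse on the larger set $\Zup\setminus(E_+\cup\{0\})$. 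Once $c_k^{-1}$ is analytic there, the formula $P_k=c_k^{-1}\circ[\,\cdot\,]\circ\Pi_H$ displays $P_k$ as an analytic family of bounded operators, completing the proof.
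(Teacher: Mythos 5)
Your construction is essentially the paper's own: both factor $P_k$ through the $k$-independent orthogonal projection onto $\cH^1(\Gamma)$ (which kills $W$) and then invert the restriction of that projection to $\cH_k(\Omega)_t$, using the analytic basis of $k$-Neumann fields from Theorem~\ref{thm6.1}; the paper just writes the inverse concretely as the inverse of the matrix $A^{lm}(k)=\langle \bxi_{N+t}^l(k),\psi^m\rangle$. Your observation that only $\cN^+(k)$ (not $\cQ^+(k)$, hence not $F_+$) is needed for analyticity on all of $\Zup\setminus(E_+\cup\{0\})$ is correct and matches the paper's reliance on the Theorem~\ref{thm6.1} construction.
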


\begin{proof} The proof of Theorem~\ref{thm6.1} gives an algorithm to construct
  a basis,
$$\{(\bxi_{N+}^l(k),\bEta_{N+}^l(k)):\: l=1,\dots,2g\}$$ 
of $\cH_k(\Omega),$ which depends analytically on $k\in\bbC\setminus E_+\cup\{0\}.$
Let $Q_0$ denote the orthogonal projection onto $\cH^1(\Gamma).$ Let
$\{\psi^m:\: m=1,\dots,2g\}$ be a fixed orthonormal basis for $\cH^1(\Gamma).$
The matrix of the restriction $Q_0\restrictedto_{\cH_k(\Omega)_t},$ with
respect to these bases, is given by
  \begin{equation}
    A^{lm}(k)=\langle \bxi_{N+t}^l(k),\psi^m\rangle_{L^2(\Gamma)}.
  \end{equation}
This matrix is analytic and invertible in $\Zup\setminus E_+\cup\{0\}.$ The inverse
transformation $R_k:\cH^1(\Gamma)\to \cH_k(\Omega)_t$ is therefore also
analytic. We define $P_k$ to be
  \begin{equation}
    P_k(\balpha)=R_kQ_0(\balpha)
  \end{equation}
  As $Q_0$ annihilates $W,$ it is immediate that $P_k$ satisfies the conditions
  above.
\end{proof}

We can modify our hybrid system by letting
\begin{equation}
  \cQ^{\pm}_1(k)\left(\begin{matrix} r\\q\end{matrix}\right)=
\left(\begin{matrix}
  - G_0\star_2d_{\Gamma}[\Id+\star_2P_k\star_2]\cT^{\pm}_{\bxi}(k)\\\cN_{\bEta}^{\pm}(k)
\end{matrix}\right) \left(\begin{matrix} r\\q\end{matrix}\right).
\end{equation}
As $d_{\Gamma}^*P_k$ is a bounded, finite rank operator, the family
$\cQ^{\pm}_1(k)$ is again Fredholm of second kind. Suppose that
$\cQ^+_1(k)(r,q)=0$ for $(r,q)\neq (0,0),$ and let $(\bxi_+,\bEta_+)$ be the
solution of the \THME[$k$] defined by this data. We see that
\begin{equation}
  \label{eq:6.20.1}
  d_{\Gamma}\bxi_{+t}=0\text{ and }[d_{\Gamma}^*-d_{\Gamma}^*P_k]\bxi_{+t}=0.
\end{equation}
If $k\notin E_+\cup\{0\},$ then the boundary data of solutions in $\cH_k(\Omega)$
solve this system of equations. If there were another solution, then in fact we
could find a 1-form $\balpha$ defined on $\Gamma,$ which solves this equation,
and $P_k\balpha=0.$ This means that $\balpha=d_{\Gamma}u+d_{\Gamma}^*vdA,$ and
\begin{equation}
  d_{\Gamma}^*d_{\Gamma}u=d_{\Gamma}d_{\Gamma}^*vdA=0,
\end{equation}
which easily implies that $\balpha=0.$ Thus for $k\in\Zup\setminus E_+\cup\{0\},$
$\cH_k(\Omega)_t$ is the complete set of solutions to the system of equations
in~\eqref{eq:6.20.1} and therefore the nullspace of $\cQ^+_1(k)$ is trivial for
$k\in\Zup\setminus E_+\cup\{0\}.$ Hence for this choice of preconditioner
$F_+\subset E_+.$

Additional care is required near $k=0,$ as the rank of the map
$\bxi_+\mapsto[\bxi_{+t}]_{\Gamma}$ drops at $k=0$ from $2g$ to $g.$

\subsection{Low Frequency Behavior in the Non-simply Connected Case}\label{lowfrq2}

If $\Gamma$ is not simply connected, then the space of solutions defined by
data in $\cM_{\Gamma,0}$ converges, as in the simply connected case, to the
orthogonal complement of the span of the harmonic Dirichlet and Neumann 
fields. In this case we also need to consider what happens to the solutions
defined by data from $\cH^1(\Gamma).$ Using this data we also obtain the
harmonic Neumann fields. We consider the two types of data separately,
beginning with that from $\cM_{\Gamma,0}.$

Theorem~\ref{thm66} demonstrates that, for any
$k\notin E_+$ with $\Im k\geq 0,$ we can solve the boundary value problem:
\begin{equation}
  \label{eq:bvpn}
\begin{split}
  d\bxi_+=ik\bEta_+&\quad d^*\bEta_+=-ik\bxi_+\\
i_{\bn}\bxi_+=f&\quad i_{\bn}(\star_3\bEta_+)=h,
\end{split}
\end{equation}
for arbitrary $(f,h)$ in $\cM_{\Gamma,0}.$ Indeed as $0\notin E_+,$ and the
integral equations on $\Gamma$ are of the second kind and analytic in $k,$ it
follows that we can actually solve~\eqref{eq:bvpn} for $k$ in an open
neighborhood, $V,$ of $0.$ We now discuss what happens
to our solutions as $k$ tends to $0,$ within a relatively compact subset of
$V.$ In particular, we would like to characterize exactly which harmonic fields
arise as limits of fields of the form given in~\eqref{eqn29}, where $(r,q)$ are
obtained by solving~\eqref{eqn74}, $\bj=\bj_R(r,q,k)$ and
$\bm=\star_2\bj.$

To avoid confusion, we let $(\bxi_k,\bEta_k)$ denote the unique solution
to~\eqref{eq:bvpn}, for a fixed $(f,h)\in\cM_{\Gamma,0}.$ As $\bj_R(r,q,k)$ is
$O(k),$ it follows easily that, as $k$ tends to 0, $(\bxi_k,\bEta_k)$ converges
to
\begin{equation}
  \label{eq:k0lmt}
  \bxi_0=d\phi\quad \bEta_0=d^*\Phi_m.
\end{equation}
\begin{theorem}\label{thm77}
  The set of limits $(\bxi_0,\bEta_0)$ for $(f,h)\in\cM_{\Gamma,0}$ is the
  orthogonal complement to the span of both the harmonic Dirichlet and Neumann 
  fields. 
\end{theorem}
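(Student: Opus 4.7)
The plan is to prove both inclusions: every limit $(\bxi_0, \bEta_0) = (d\phi, d^*\Phi_m)$ from~\eqref{eq:k0lmt} is $L^2$-orthogonal to the span of harmonic Dirichlet and Neumann fields, and conversely every element in this orthogonal complement arises as such a limit. Since the $k=0$ Maxwell system decouples, I treat $\bxi_0 = d\phi$ and $\bEta_0 = d^*\Phi_m$ separately; the magnetic side reduces to the electric via $\star_3$, and I work with $\phi$ the single-layer potential of density $r\in\cM_{\Gamma,0}$.

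For orthogonality to a harmonic Neumann field $\psi$ (with $d\psi = d^*\psi = 0$, $i_{\bn}\psi\restrictedto_\Gamma = 0$, decaying at infinity), Stokes applied to $d\phi\wedge\star_3\bar\psi$ on $\Omega_R = \Omega\cap B_R$ produces three terms: the interior integral vanishes since $d\star_3\bar\psi = \pm\star_3 d^*\bar\psi = 0$; the $S_R$-contribution vanishes as $R\to\infty$ using the enhanced decay $\phi = O(|\bx|^{-2})$ afforded by $\int_\Gamma r\,dA = 0$ (a consequence of $r\in\cM_{\Gamma,0}$); and the $\Gamma$-contribution reduces, via the adapted-frame identity $(\star_3\bar\psi)\restrictedto_{T\Gamma} = \star_2 i_{\bn}\bar\psi$, to $\int_\Gamma\phi\,\star_2 i_{\bn}\bar\psi\,dA = 0$. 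For a harmonic Dirichlet field $\psi$ with $\psi_t\restrictedto_\Gamma = 0$, the analogous Stokes calculation reduces the pairing to $\int_\Gamma\phi\,i_{\bn}\bar\psi\,dA$, which is not manifestly zero. I decompose $\psi = d\chi + \psi'$, where $\chi$ is harmonic in $\Omega$ with $\chi\restrictedto_{\Gamma_j} = c_j$ constant and $\chi\to 0$ at infinity, and $\psi'$ represents the cohomology class of $\psi$ in $H^1_{\dR}(\Omega)$. For the exact part, Green's second identity applied to the harmonic pair $(\phi,\chi)$ in $\Omega$ yields $\int_\Gamma\phi\,\partial_{\bn}\chi\,dA = \sum_j c_j\int_{\Gamma_j}\partial_{\bn}\phi\,dA$; the divergence theorem applied to each bounded component $D_j$ (where $\phi$ is harmonic) gives $\int_{\Gamma_j}\partial_{\bn}\phi_-\,dA = 0$, and the single-layer jump relation together with $\int_{\Gamma_j}r\,dA = 0$ then yields $\int_{\Gamma_j}\partial_{\bn}\phi\,dA = 0$ for every $j$. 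For the cohomological piece $\psi'$, I cut $\Omega$ along the surfaces $T_j$ bounded by the B-cycles from Section~\ref{sec:mcd}; since $d\phi$ is globally exact in $\Omega$ its periods on all A-cycles vanish, and integration by parts on the cut domain, combined with the skew-nondegeneracy of the intersection form on $H^1_{\dR}(\Gamma)$, produces the orthogonality.

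The main obstacle is the reverse inclusion. The plan is to establish a Hodge-type direct-sum decomposition
\begin{equation*}
V_0 = \{(d\phi,d^*\Phi_m):(r,q)\in\cM_{\Gamma,0}\}\;\oplus\;\cH_N\;\oplus\;\cH_D
\end{equation*}
of the space $V_0$ of outgoing $k=0$ Maxwell solutions. The $k=0$ surjectivity of the normal-trace from Theorem~\ref{mcnormalthm} parameterizes the first summand by $\cM_{\Gamma,0}$; Theorem~\ref{thm6.1} at $k=0$ gives $\dim\cH_N = 2g$; and Alexander duality provides the dimension of $\cH_D$. Combined with the orthogonality established above, this decomposition identifies the first summand with the orthogonal complement of $\cH_N\oplus\cH_D$. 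The technical hurdle is rigorously controlling decay rates and $L^2$-closedness in the unbounded exterior $\Omega$, where the classical compact-manifold Hodge theorem does not apply directly and must be replaced by its exterior-domain analogue formulated in appropriate decay-class weighted Sobolev spaces; once the decomposition is established, orthogonality and dimension matching immediately close the proof.
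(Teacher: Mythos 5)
Your forward direction (orthogonality of each limit $(d\phi,d^*\Phi_m)$ to the Dirichlet and Neumann fields) is correct and is essentially the paper's argument: one integration by parts per case, with the Neumann case killed by $i_{\bn}\bxi_n\restrictedto_{\Gamma}=0$ and the Dirichlet case killed by the mean-zero property of $i_{\bn}\bxi_0$ on each boundary component against a potential that is constant on each component. Your detour through a ``cohomological piece'' $\psi'$ of the Dirichlet field is unnecessary: a harmonic Dirichlet 1-form in the exterior domain has vanishing periods over the A-cycles (they lie on $\Gamma$, where $\psi_t=0$), and since the A-cycles generate $H_1(\Omega)$ its class in $H^1_{\dR}(\Omega)$ is automatically trivial; every exterior Dirichlet field is exactly of the form $du$ with $u$ locally constant on $\Gamma$, which is how the paper sets it up from the start. (Also note that for a 1-form $\psi$ the correct identity is $\star_3\psi\restrictedto_{T\Gamma}=i_{\bn}\psi\,dA$; the identity $\star_2 i_{\bn}\bEta=(\star_3\bEta)_t$ you quote is the 2-form version. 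The conclusion is unaffected.)

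The genuine gap is in the reverse inclusion. You reduce it to a direct-sum decomposition of the space of outgoing harmonic fields, but then explicitly defer the decomposition itself to an ``exterior-domain analogue of the Hodge theorem in weighted Sobolev spaces,'' which you neither state nor prove; ``dimension matching'' cannot close the argument either, since the summand parameterized by $\cM_{\Gamma,0}$ is infinite-dimensional. No such machinery is needed. The paper's argument is constructive and uses only facts already in hand: given an arbitrary outgoing harmonic 1-form $\bxi$, first subtract a Dirichlet field $\bxi_d$ (a combination of the $du_j$ with $u_j=\delta_{ij}$ on $\Gamma_i$) so that $i_{\bn}(\bxi-\bxi_d)$ has mean zero on every component of $\Gamma$; then invoke the invertibility of $J_+(0)$, i.e.\ the solvability of $[I+2K_0(0)]f=h$ for mean-zero $h$ from Theorem~\ref{thm1}, to produce a limit field $\bxi_0=d\phi$ with exactly these normal components; the remainder $\bxi-\bxi_d-\bxi_0$ then has vanishing normal components and is therefore a Neumann field. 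This yields $\bxi=\bxi_0+\bxi_d+\bxi_n$ directly, and together with the orthogonality you have already established it identifies the set of limits with the stated orthogonal complement. Your proposal is missing precisely this elementary construction, and as written the second half does not constitute a proof.
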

\begin{proof} As the components decouple at $k=0$ it suffices to check
  each separately. Since the Hodge star-operator interchanges the solutions, as
  well as, the 1- and 2-form Dirichlet/Neumann fields, we need only check the
  1-form case.

  First we show that $\bxi_0$ is orthogonal to the harmonic Dirichlet fields.
  These fields are of the form $\bxi_d=du,$ where $u$ is a harmonic function,
  constant on each component of $\Gamma.$ We observe that $u=(|\bx|^{-1})$ and
  $\bxi_0=O(|\bx|^{-2}),$ and this justifies the following integration by
  parts:
  \begin{equation}
    \label{eq:dirperp}
\begin{split}
    \langle\bxi_0,\bxi_d\rangle_{\Omega} &=\langle
    \bxi_0,dr\wedge u\rangle_{\Gamma}+
\langle d^*\bxi_0,u\rangle_{\Omega}\\
&=\langle
    i_{\bn}\bxi_0,u\rangle_{\Gamma}=0
\end{split}
  \end{equation}
The last equation follows as $i_{\bn}\bxi_0$ has mean zero over every component of
$\Gamma,$ and $u$ is constant on each component.

We now turn to the Neumann fields. A Neumann field, $\bxi_n,$ satisfies
$\bxi_n=O(|x|^{-2}).$ In this instance we use the equation $\bxi_0=d\phi$ to
conclude that,
 \begin{equation}
    \label{eq:neuperp}
\begin{split}
    \langle\bxi_0,\bxi_n\rangle_{\Omega} &=\langle
    dr\wedge \phi , \bxi_n\rangle_{\Gamma}+
\langle \phi,d^*\bxi_n\rangle_{\Omega}\\
&=\langle\phi,
    i_{\bn}\bxi_n\rangle_{\Gamma}=0
\end{split}
  \end{equation}
The last equality follows as $i_{\bn}\bxi_n\restrictedto_{\Gamma}\equiv 0,$ by
definition. 

An outgoing harmonic 1-form is determined by it normal components along $\Gamma,$ up to
the addition of an arbitrary Neumann field. As the limit $\bxi_0$ is required
to have mean zero on every component of $\Gamma,$ but is otherwise
unrestricted, it follows that every outgoing harmonic 1-form, $\bxi,$ has a unique
orthogonal decomposition as:
\begin{equation}
  \bxi=\bxi_0+\bxi_d+\bxi_n
\end{equation}
One simply chooses $\bxi_d$ so that $i_{\bn}(\bxi-\bxi_d)$ has mean zero on
every component of $\Gamma.$ This then uniquely determines $\bxi_0.$ Recalling
that the Dirichlet and Neumann harmonic 1-forms are themselves orthogonal, the
Neumann component is then determined by orthogonally projecting
$(\bxi-\bxi_d-\bxi_0)$ onto the Neumann fields. This completes the proof of the
theorem. 
\end{proof}

We now turn to data from $\cH^1(\Gamma).$ The first cohomology group of
$\Gamma$ splits into two disjoint subspaces, one is the image of the
restriction map $H^1_{\dR}(D)\to \cH^1(\Gamma),$ the other the image of
$H^1_{\dR}(\Omega)\to \cH^1(\Gamma).$ By the Mayer-Vietoris sequence, these
restriction maps are injective, and so, by a small abuse of terminology, we may
speak of $H^1_{\dR}(\Omega)$ and $H^1_{\dR}(D)$ as subspaces of
$H^1_{\dR}(\Gamma),$ and write
\begin{equation}
\label{h1splt}
H^1_{\dR}(\Gamma)=H^1_{\dR}(\Omega)\oplus H^1_{\dR}(D),
\end{equation}
see~\cite{Vick}.  With this notation, $H^1_{\dR}(\Omega)$ is dual to the
``A-cycles,'' shown in Figure~\ref{fig1}, while $H^1_{\dR}(D)$ is dual
to the ``B-cycles.''

The solutions to \THME[$0$] are harmonic fields, which satisfy the
\emph{decoupled} equations:
\begin{equation}
  d\bxi_{\pm}=d^*\bxi_{\pm}=0\text{ and }d\bEta_{\pm}=d^*\bEta_{\pm}=0.
\end{equation}
From these equations it is clear that $\bxi_{\pm}$ and $\star_3\bEta_{\pm}$ are
closed 1-forms and therefore define classes in their respective
$H^1_{\dR}$-groups. We see that
\begin{equation}\label{1frmbv}
[\bxi_{+t}]_{\Gamma}, [(\star_3\bEta_{+})_t]_{\Gamma}\in H^1_{\dR}(\Omega),
\text{ and }[\bxi_{-t}]_{\Gamma}, [(\star_3\bEta_{-})_t]_{\Gamma}\in
H^1_{\dR}(D).
\end{equation}
The jump relations show that the solution of the \THME[$0$] defined by the data
$(0,0,\bj_H),$ satisfies
\begin{equation}\label{jmprel}
[\bxi_{+t}-\bxi_{-t}]=-\bj_H\text{ and }
[(\star_3\bEta_{+})_t-(\star_3\bEta_{-})_t]=\star_2\bj_H.
\end{equation}

Choose a basis of harmonic 1-forms, $\{\psi^l:\: l=1,\dots,g\}$ for
$H^1_{\dR}(\Omega)\subset H^1_{\dR}(\Gamma).$ Their Hodge duals
$\{\psi^{g+l}=\star_2\psi^l:\: l=1,\dots,g\}$ are also harmonic and are a basis
for $H^1_{\dR}(D)\subset H^1_{\dR}(\Gamma).$ We say that such a basis is
adapted to the splitting of $H^1 _{\dR}(\Gamma)$ in~\eqref{h1splt}. We let
$\{(\bxi_{\pm}^l,\bEta_{\pm}^l):\: l=1,\dots,2g\},$ denote the solutions to the
\THME[$0$] defined by this data, with $(r,q)=(0,0).$ The relations
in~\eqref{1frmbv} imply that image of each of the restriction maps
$\bxi_+\mapsto [\bxi_{+t}]_{\Gamma},$ $\bxi_-\mapsto [\bxi_{-t}]_{\Gamma},$
spans a subspace of $H^1_{\dR}(\Gamma)$ of dimension at most $g.$ On the other
hand, the jump relations show that the differences,
\begin{equation}
  [\bxi^l_{+t}]_{\Gamma}-[\bxi^l_{-t}]_{\Gamma}=[\psi^l]_{\Gamma},
\end{equation}
span all of $H^1_{\dR}(\Gamma).$ These relations, and analogous ones for the
$\bEta$-components, along with~\eqref{jmprel}, easily imply the following
result:
\begin{proposition}\label{thm111} The solutions $(\bxi^l_{\pm},\bEta^l_{\pm}),$
  satisfy
\begin{enumerate}
\item For $l=1,\dots,g,$ the restrictions $[\bxi^l_{+t}]_{\Gamma}$ span
$H^1_{\dR}(\Omega)\subset H^1_{\dR}(\Gamma),$ while the restrictions
$[(\star_3\bEta_{+}^l)_t]_{\Gamma}=0.$ 
\item For $l=g+1,\dots,2g,$ the restrictions $[(\star_3\bEta^l_{+})_t]_{\Gamma}$ span
$H^1_{\dR}(\Omega)\subset H^1_{\dR}(\Gamma),$ while the restrictions
$[\bxi_{+t}^l]_{\Gamma}=0.$ 
\item For $l=g+1,\dots,2g,$ the restrictions $[\bxi^l_{-t}]_{\Gamma}$ span
$H^1_{\dR}(D)\subset H^1_{\dR}(\Gamma),$ while the restrictions
$[(\star_3\bEta_{-}^l)_t]_{\Gamma}=0.$ 
\item For $l=1,\dots,g,$ the restrictions $[(\star_3\bEta^l_{-})_t]_{\Gamma}$ span
$H^1_{\dR}(D)\subset H^1_{\dR}(\Gamma),$ while the restrictions
$[\bxi_{-t}^l]_{\Gamma}=0.$ 
\end{enumerate}
\end{proposition}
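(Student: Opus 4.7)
The plan is to extract the four assertions from the direct sum decomposition $H^1_{\dR}(\Gamma)=H^1_{\dR}(\Omega)\oplus H^1_{\dR}(D)$, combined with the jump relations \eqref{jmprel} and the membership constraints \eqref{1frmbv}. Since all the heavy lifting (cohomological setup, jump relations, decoupling at $k=0$) is already in place, the proof should amount to writing down the two jump relations, decomposing both sides according to the splitting of $H^1_{\dR}(\Gamma)$, and reading off the components.

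Concretely, I would proceed as follows. For each $l$ between $1$ and $2g$, the solution $(\bxi^l_{\pm},\bEta^l_{\pm})$ is defined by the data $(0,0,\psi^l),$ so $\bj_H=\psi^l$ and $\bm_H=\star_2\psi^l.$ By \eqref{1frmbv}, the cohomology class $[\bxi_{+t}^l]_{\Gamma}$ lies in the subspace $H^1_{\dR}(\Omega),$ while $[\bxi_{-t}^l]_{\Gamma}$ lies in $H^1_{\dR}(D).$ The jump relation \eqref{jmprel} gives
\begin{equation}
  [\bxi_{+t}^l]_{\Gamma}-[\bxi_{-t}^l]_{\Gamma}=-[\psi^l]_{\Gamma}.
\end{equation}
For $l=1,\dots,g$ we have $[\psi^l]_{\Gamma}\in H^1_{\dR}(\Omega),$ and since the decomposition is direct the $H^1_{\dR}(D)$-component of the right-hand side vanishes, forcing $[\bxi_{-t}^l]_{\Gamma}=0$ and $[\bxi_{+t}^l]_{\Gamma}=-[\psi^l]_{\Gamma};$ as $l$ ranges over $1,\dots,g,$ these span $H^1_{\dR}(\Omega)$ by the choice of basis. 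For $l=g+1,\dots,2g,$ we have $[\psi^l]_{\Gamma}\in H^1_{\dR}(D),$ so the $H^1_{\dR}(\Omega)$-component vanishes, forcing $[\bxi_{+t}^l]_{\Gamma}=0$ and $[\bxi_{-t}^l]_{\Gamma}=[\psi^l]_{\Gamma},$ spanning $H^1_{\dR}(D).$ This gives assertions (1) and (3).

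For the magnetic assertions (2) and (4) I repeat the argument using the second jump relation,
\begin{equation}
  [(\star_3\bEta_+^l)_t]_{\Gamma}-[(\star_3\bEta_-^l)_t]_{\Gamma}=[\star_2\psi^l]_{\Gamma},
\end{equation}
together with the fact that $\star_2$ interchanges $H^1_{\dR}(\Omega)$ and $H^1_{\dR}(D)$ inside $\cH^1(\Gamma).$ For $l=1,\dots,g,$ $\star_2\psi^l=\psi^{g+l}\in H^1_{\dR}(D),$ so the $H^1_{\dR}(\Omega)$-summand of the right-hand side vanishes, giving $[(\star_3\bEta_+^l)_t]_{\Gamma}=0$ and $[(\star_3\bEta_-^l)_t]_{\Gamma}=-[\psi^{g+l}]_{\Gamma},$ which spans $H^1_{\dR}(D)$ as $l$ varies. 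For $l=g+1,\dots,2g,$ $\star_2\psi^l=-\psi^{l-g}\in H^1_{\dR}(\Omega),$ so $[(\star_3\bEta_-^l)_t]_{\Gamma}=0$ and $[(\star_3\bEta_+^l)_t]_{\Gamma}$ spans $H^1_{\dR}(\Omega).$

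The only point requiring care is the verification of \eqref{1frmbv}, namely that harmonic fields in $\Omega$ restrict to classes in $H^1_{\dR}(\Omega)\subset H^1_{\dR}(\Gamma)$ (and similarly for $D$); this is where the decay of $\bxi_+$ at infinity, the closedness and co-closedness at $k=0,$ and the Mayer--Vietoris identification of $H^1_{\dR}(\Omega)$ and $H^1_{\dR}(D)$ as complementary subspaces of $H^1_{\dR}(\Gamma)$ all come into play. I would expect this to be the main obstacle, but the excerpt has already carried out the identification in the paragraph surrounding \eqref{h1splt}, so the proposition follows by the direct, one-line application of the jump relations described above.
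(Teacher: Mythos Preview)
Your proposal is correct and follows essentially the same approach as the paper. The paper's argument, contained in the paragraph immediately preceding the proposition, invokes exactly the same three ingredients---the membership constraints \eqref{1frmbv}, the jump relations \eqref{jmprel}, and the direct sum decomposition \eqref{h1splt}---and concludes that ``these relations, and analogous ones for the $\bEta$-components, along with~\eqref{jmprel}, easily imply the following result''; your writeup simply spells out the component-by-component reading that the paper leaves implicit.
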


We now recall the basis of $k$-Neumann fields,
$N_k=\{(\bxi_{N+}^l(k),\bEta_{N+}^l(k)):\: l=1,\dots,2g\},$ constructed in the
proof of Theorem~\ref{thm6.1}. This is an analytic family in a neighborhood of
$0,$ as it only requires the solvability of the normal equations. We now assume
that we define these fields, using a basis, $\{\psi^l:\: l=1,\dots,2g\},$ of
$\cH^1(\Gamma),$ which is adapted to the splitting in~\eqref{h1splt}.
Proposition~\ref{thm111} shows that at $k=0$ the fields in $N_k$ continue to
span a $2g$-dimensional vector space of solutions to the \THME[$0$], and that
$\{\bxi_{N+}^l(0):\: l=1,\dots,g\}$ are a basis for the space of outgoing,
harmonic 1-forms, with vanishing normal component along $b\Omega.$ Note that
for $l=1,\dots, g$ we have
\begin{equation}\label{eq6.24.1}
  i_{\bn}\bxi_{N+}^{l+g}(0)=0\text{ and }[\bxi_{N+t}^{l+g}(0)]_{\Gamma}=0.
\end{equation}
These fields are outgoing, harmonic 1-forms, with vanishing normal components
along $b\Omega,$ hence there must be constants $\{a_1,\dots,a_g\}$ so that
\begin{equation}
  \bxi_{N+}^{l+g}(0)=\sum_{m=1}^ga_m\bxi_{N+}^{m}(0).
\end{equation}
The second equation in~\eqref{eq6.24.1} implies that all the coefficients are
zero. This proves that the basis $N_k$ reduces at $k=0,$ to a basis of the
form:
\begin{equation}\label{h10}
  \{(\bxi_{N+}^{l}(0),0), (0,\bEta_{N+}^{l+g}(0)):\: l=1,\dots,g\}.
\end{equation}
We summarize these results in a theorem.
\begin{theorem}\label{thm88} There is an open neighborhood $U$ of $0\in\bbC,$ and $2g$
  analytic families of outgoing solutions
  $\{(\bxi_{N+}^l(k),\bEta_{N+}^l(k)):\: l=1,\dots,2g\}$ to the \THME[$k$],
  which, for each $k\in U$ are a basis for the $k$-Neumann fields
  $\cH_k(\Omega).$ At $k=0$ the $\bxi$- and $\bEta$-components decouple and
  satisfy~\eqref{h10}.
\end{theorem}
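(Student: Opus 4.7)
The plan is to collect the analytic construction of $k$-Neumann fields from the proof of Theorem~\ref{thm6.1} together with the discussion of Proposition~\ref{thm111} and equation~\eqref{eq6.24.1} into a single statement valid uniformly in a neighborhood $U$ of the origin. The three main ingredients are: (i)~invertibility of $\cN^+(0)$, so that $U$ may be chosen to avoid $E_+$; (ii)~analyticity of the recipe that subtracts a normal-component correction from a field defined by purely harmonic data; (iii)~a rigidity argument at $k=0$ that isolates the decoupled form~\eqref{h10}.

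For (i), I would observe that the last display of Theorem~\ref{thm1} identifies $\cN^+(0)$ with $J_+(0)$, whose diagonal blocks are invertible on $\cM_{\Gamma,0}$ by the classical Colton--Kress theory. Since $\cN^+(k)$ is an analytic Fredholm family of index zero, analytic Fredholm theory furnishes an open neighborhood $U$ of $0$ throughout which $\cN^+(k)^{-1}$ exists and is analytic. For (ii), I would choose a basis $\{\psi^l : l=1,\dots,2g\}$ of $\cH^1(\Gamma)$ adapted to the splitting $H^1_{\dR}(\Gamma)=H^1_{\dR}(\Omega)\oplus H^1_{\dR}(D)$ as in Proposition~\ref{thm111}, and apply the construction from the proof of Theorem~\ref{thm6.1} with input $(0,0,\psi^l)$: form the outgoing solution $(\bxi_{0+}^l(k),\bEta_{0+}^l(k))$, then subtract the outgoing solution $(\bxi_{1+}^l(k),\bEta_{1+}^l(k))$ determined by the sources $(r^l_k,q^l_k)=\cN^+(k)^{-1}(i_{\bn}\bxi_{0+}^l,i_{\bn}\star_3\bEta_{0+}^l)$. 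On $U$, this produces $2g$ analytic families of $k$-Neumann fields; by Theorem~\ref{thm66} they are non-zero, and combined with the bound $\dim\cH_k(\Omega)\le 2g$ from Theorem~\ref{thm4.1} it suffices to establish linear independence in order to conclude that they form a basis.

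For (iii), at $k=0$ the system decouples, so each $\bxi_{N+}^l(0)$ is an outgoing harmonic 1-form with vanishing normal component, and similarly for $\bEta_{N+}^l(0)$. Because $\bj_R(r,q,0)=0$, the correction $(\bxi_{1+}^l,\bEta_{1+}^l)$ at $k=0$ reduces to $(d\phi,d^*\Phi_m)$ and is therefore exact, so the tangential cohomology classes of $\bxi^l_{N+}(0)$ and $\star_3\bEta^l_{N+}(0)$ coincide with those of $\bxi^l_{0+}(0)$ and $\star_3\bEta^l_{0+}(0)$ described in Proposition~\ref{thm111}. The argument given immediately before the theorem then shows that for $l=1,\dots,g$ the field $\bxi_{N+}^{l+g}(0)$, having both vanishing normal component and vanishing tangential cohomology class, must be a linear combination of $\{\bxi_{N+}^m(0):m=1,\dots,g\}$; the fact that the classes $[\bxi_{N+,t}^m(0)]_\Gamma$ are linearly independent in $H^1_{\dR}(\Omega)$ forces all coefficients to vanish, so $\bxi_{N+}^{l+g}(0)=0$, and symmetrically $\bEta_{N+}^l(0)=0$ for $l=1,\dots,g$. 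This yields~\eqref{h10}; linear independence of the family at $k=0$ is then immediate, and persists on some possibly smaller neighborhood of $0$. The main obstacle is step (iii): it hinges on the fact that $\bj_R(r,q,0)=0$ so that the normal-component correction does not perturb the relevant cohomology classes at $k=0$, and on Corollary~\ref{cor4.1} (or rather its $k=0$ analogue furnished by Proposition~\ref{thm111}) to guarantee that the tangential restriction map is injective on the subspace of outgoing harmonic Neumann fields.
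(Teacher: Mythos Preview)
Your proposal is correct and follows essentially the same approach as the paper: the paper's proof is precisely the discussion preceding the theorem statement, which invokes the analytic construction from Theorem~\ref{thm6.1} near $k=0$ (using $0\notin E_+$), appeals to Proposition~\ref{thm111} for the cohomological information at $k=0$, and then runs the rigidity argument~\eqref{eq6.24.1} to force the decoupled form~\eqref{h10}. You have made explicit the one step the paper leaves implicit, namely that $\bj_R(r,q,0)=0$ forces the normal-component correction $(\bxi_{1+}^l(0),\bEta_{1+}^l(0))$ to be exact, so that the tangential cohomology classes of $(\bxi_{N+}^l(0),\star_3\bEta_{N+}^l(0))$ agree with those computed in Proposition~\ref{thm111}.
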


Theorems~\ref{thm77} and~\ref{thm88} give a clear picture of the behavior of
the space of solutions to the \THME[$k$] in a neighborhood of zero, defined by
the representation~\eqref{eqn29}. They show that, in a reasonable sense, this
representation does not suffer from low frequency breakdown. Let $U\subset\bbC$
be a neighborhood of zero, and $\{\balpha(k):\: k\in U\}$ be a continuous
family of 1-forms defined on $\Gamma.$ If $\balpha$ is orthogonal to
$\cH^1(\Gamma)$ and $d_{\Gamma}\balpha(k)/k$ has a limit as $k$ tends to zero,
then it is clear that the hybrid system provides a continuous family of
solutions, in a neighborhood of zero, to the \THME[$k$] with
$\bxi_{+t}(k)=\balpha(k).$ In a subsequent publication we will consider
conditions on the projection of $\balpha(k)$ into $\cH^1(\Gamma),$ which are
needed to conclude the existence of such a continuous family of solutions.

\section{The Normal Component Equations on the Unit Sphere}\label{thesphere}
In this and the following section we determine the exact form of the systems of
Fredholm equations derived above for the special case of the unit sphere in
$\bbR^3.$ We make extensive usage of spherical harmonics and ``vector''
spherical harmonics, in the exterior form representation. As this is not
standard, these formul{\ae} are derived in Appendix~\ref{appvsh}. The equations
decouple, and very nicely illustrate the general properties described above. As
the equations for the normal components are a bit simpler, we begin with them.

The integral equations for the normal components of $\bxi$ and $\bEta,$ can be
solved simply and explicitly when $\Gamma$ is 
the unit sphere centered at $0.$ This reveals the close
connection between our equations and the Mie-Debye solution.  
We are representing
$\bxi$ and $\bEta$ in terms of the potentials $\balpha,\balpha_m, \phi,$ and
$\Phi_m,$ with $\bj$ a 1-form on $S^2_1$ and $\bm=\star_2\bj.$ The Debye
sources $r, q$ satisfy:
  \begin{equation}
    \label{eq:3.11.8.1}
    ik rdA= d_{S^2_1}\star_2\bj\text{ and }
ikqdA= d_{S^2_1}\bj.
  \end{equation}
If we assume that
\begin{equation}
  \label{eq:3.11.8.2}
  \begin{split}
    r=\sum_{lm}a_{lm}Y_l^m &\quad  q=\sum_{lm}b_{lm}Y_l^m\\
\bj=\sum_{lm}[ \alpha_{lm}d_{S^2_1}Y_l^m & +\beta_{lm}\star_2 d_{S^2_1}Y_l^m ],
  \end{split}
\end{equation}
then~\eqref{eq:3.11.8.1} implies that
\begin{equation}\label{eq:3.11.8.11}
  -l(l+1)\alpha_{lm}=ik a_{lm}\text{ and
   }-l(l+1)\beta_{lm}=ik b_{lm}.
\end{equation}
Suppose that the normal components of $\bxi$ and $\bEta$ are represented in
terms of spherical harmonics by
\begin{equation}
  i_{\bn}\bxi=\sum_{lm} c_{lm}Y_l^m\text{ and }i_{\bn}\star_3\bEta=
\sum_{lm} d_{lm}Y_l^m.
\end{equation}

Using the results of Propositions~\ref{propB8} and~\ref{propB9}, in the
appendix, we see that the integral equations in~\eqref{eqn74} for the different
spherical harmonic components decouple. The equation for the coefficient of the
$lm$-component of $i_{\bn}\bxi$ becomes:
\begin{multline}
  c_{lm}Y_l^m=\\
i_{\pa_r}\Bigg[
ik \left(\alpha_{lm}G_k\left[(d_{S^2_1}Y_l^m)\cdot
  d\bx\right]+ \beta_{lm}G_k\left[(\star_2d_{S^2_1}Y_l^m)\cdot
  d\bx\right]\right)-
\\d G_k( a_{lm}Y_l^m)\\-\star_3d\left(\alpha_{lm}G_k\left[(\star_2 d_{S^2_1}Y_l^m)\cdot
  d\bx\right]-\beta_{lm}G_k\left[(d_{S^2_1}Y_l^m)\cdot
  d\bx\right]\right)
\Bigg]
\end{multline}
Using the results of these propositions we see that this equation reduces to:
\begin{multline}
  \label{3.11.8.4}
  c_{lm}=ik\alpha_{lm}\left(\frac{ikl(l+1)}{2l+1}\right)\left[j_{l-1}(k)\hone_{l-1}(k)-
j_{l+1}(k)\hone_{l+1}(k)\right]-\\ik^2a_{lm}j_l(k)\pa_k\hone_l(k)+
  ikl(l+1)\alpha_{lm}j_l(k)\hone_l(k).
\end{multline}
Standard recurrence relations for the spherical Bessel functions imply that
\begin{multline}\label{3.11.8.5}
  \frac{k\left(j_{l-1}(k)\hone_{l-1}(k)-j_{l+1}(k)\hone_{l+1}(k)\right)}{2l+1}=\\
\frac{j_l(k)\hone_l(k)}{k}+j_l(k)\pa_k\hone_l(k)+\pa_kj_l(k)\hone_l(k),
\end{multline}
see~\cite{jackson}.  Using this identity and the relations
in~\eqref{eq:3.11.8.11}, we obtain
\begin{equation}
  c_{lm}=a_{lm}k\hone_l(k)\left(ij_l(k)+ikj_l'(k)+
kj_l(k)\right).
\end{equation}
We define the function
\begin{equation}
\label{eq:3.19.4}
  m_n(k,l)=k\hone_l(k)\left((i+k)j_l(k)+ikj_l'(k)\right).
\end{equation}
An essentially identical sequence of steps leads to the relations:
\begin{equation}\label{nrmlHeqn}
  d_{lm}=-m_n(k,l)b_{lm}.
\end{equation}
The diagonal entries of the block diagonal matrix we need to invert to solve
the normal component problem for the unit sphere, at frequency $k,$ are simply
$\{m_n(k,l):\: l=1,2,\dots\}.$

\begin{remark}
The connection to classical Debye theory is now easy to establish.
If we expand the Debye potentials $u,v$ in (\ref{debyerep}) as 

\begin{eqnarray*}
v(r,\theta,\phi) &=& \sum_{l,m} a_{lm} \hone_l(kr) Y_l^m  \\
u(r,\theta,\phi) &=& \sum_{l,m} b_{lm} \hone_l(kr) Y_l^m \, , 
\end{eqnarray*}
then a straightforward calculation \cite{Papas} 
shows that (in terms of the normal components)
\begin{eqnarray*}
a_{lm} &=& \frac{1}{l(l+1)} c_{lm} \\
b_{lm} &=& -\frac{1}{l(l+1)} d_{lm} \, .
\end{eqnarray*}
Thus, our generalized Debye sources, defined only on the surface,
are analogous (but not equivalent) to the
restrictions of the Debye potentials to the sphere. 
The classical Debye approach
requires that the potentials themselves (defined in $\bbR^3\setminus\Gamma$) 
be expanded in surface harmonics, preventing
the approach from being extensible to arbitrary geometry. 
\end{remark}

We apply the Wronskian identity,
\begin{equation}
  j_l(k)\pa\hone_l(k)-\pa_k j_l(k)\hone_l(k)=\frac{i}{k^2},
\end{equation}
to find that:
\begin{equation}
  m_n(k,l)=\left(1+ikj_l(k)\hone_l(k)+ik^2j_l(k)\pa_k
  \hone_l(k)+ k^2j_l(k)\hone_l(k)\right).
\end{equation}
Applying standard asymptotic formul{\ae} for $j_l$ and $\hone_l$ to this
representation shows that, for a fixed $k$ with non-negative real part, we
have:
\begin{equation}
  m_n(k,l)\sim \frac{1}{2}-\frac{i}{2l+1}+O(l^{-2}).
\end{equation}
This agrees with the fact that the integral equation for $i_{\bn}\bxi$ is of the
form $\frac 12 +K(k),$ where $K$ is compact. It is also the case that
\begin{equation}
  m_n(0,l)= \frac{1}{2}-\frac{i}{2l+1},
\end{equation}
which shows that these equations do not exhibit low frequency breakdown.
 For integral $l,$ and $k$ along the real axis we have
\begin{equation}
  m_n(k,l)\sim 1+O(k^{-1}),
\end{equation}
as $\Im k$ tends to infinity we have:
\begin{equation}
  m_n(k,l)\sim 1+O(k^{-1}).
\end{equation}
Figure~\ref{fig111}(a) shows plots of $\{|m_n(k,l)|:\: k=1,10,100\}.$ The
condition number increases with the frequency, $k.$

\begin{figure}[H]
  \centering
  {\epsfig{file=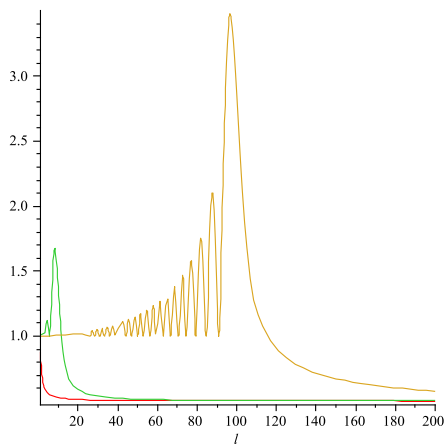, width=6cm}}
 \caption{Plots of $|m_n(1,l)|,$ $|m_n(10,l)|,$ $|m_n(100,l)|.$}
\label{fig111}
\end{figure}

For fixed $l$  the solutions of $m_n(k,l)=0$ have $\Im k<0.$ If $l$ is
fixed  then the imaginary parts of the roots of $m_n(k,l)=0,$
decrease in proportion to minus the log of the real part,
\begin{equation}
  \Im k\propto -\frac 12\log\Re k.
\end{equation}
The first 50 zeros of $m_n(k,1),$ $m_n(k,5),$ and $m_n(k,7)$ are shown in
Figure~\ref{fig112}.

\begin{figure}[H]
  \centering 
\subfigure[Zeros of $m_n(k,1).$]
  {\epsfig{file=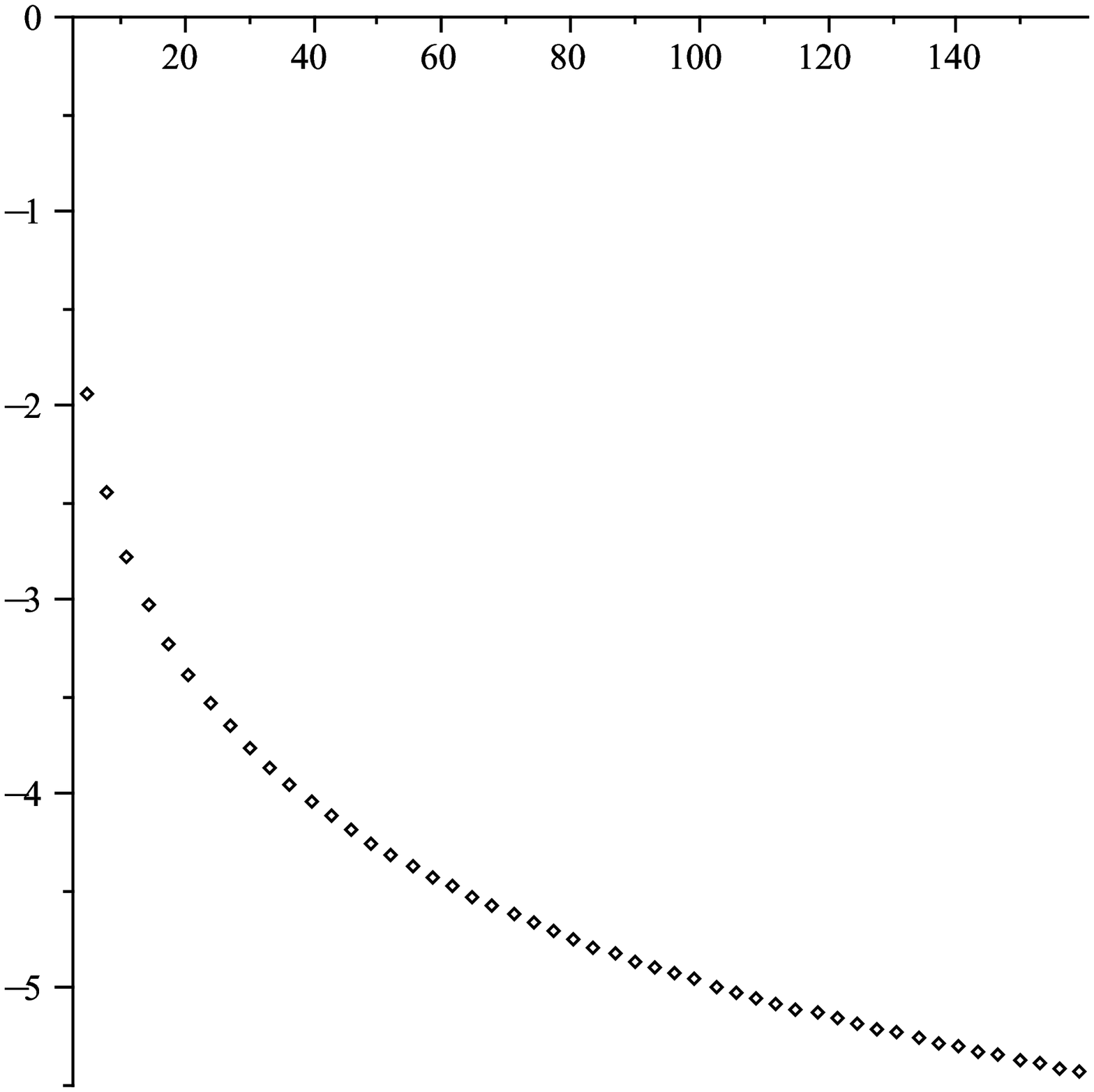, width=3.75cm}}
\subfigure[Zeros of $m_n(k,5).$]
  {\epsfig{file=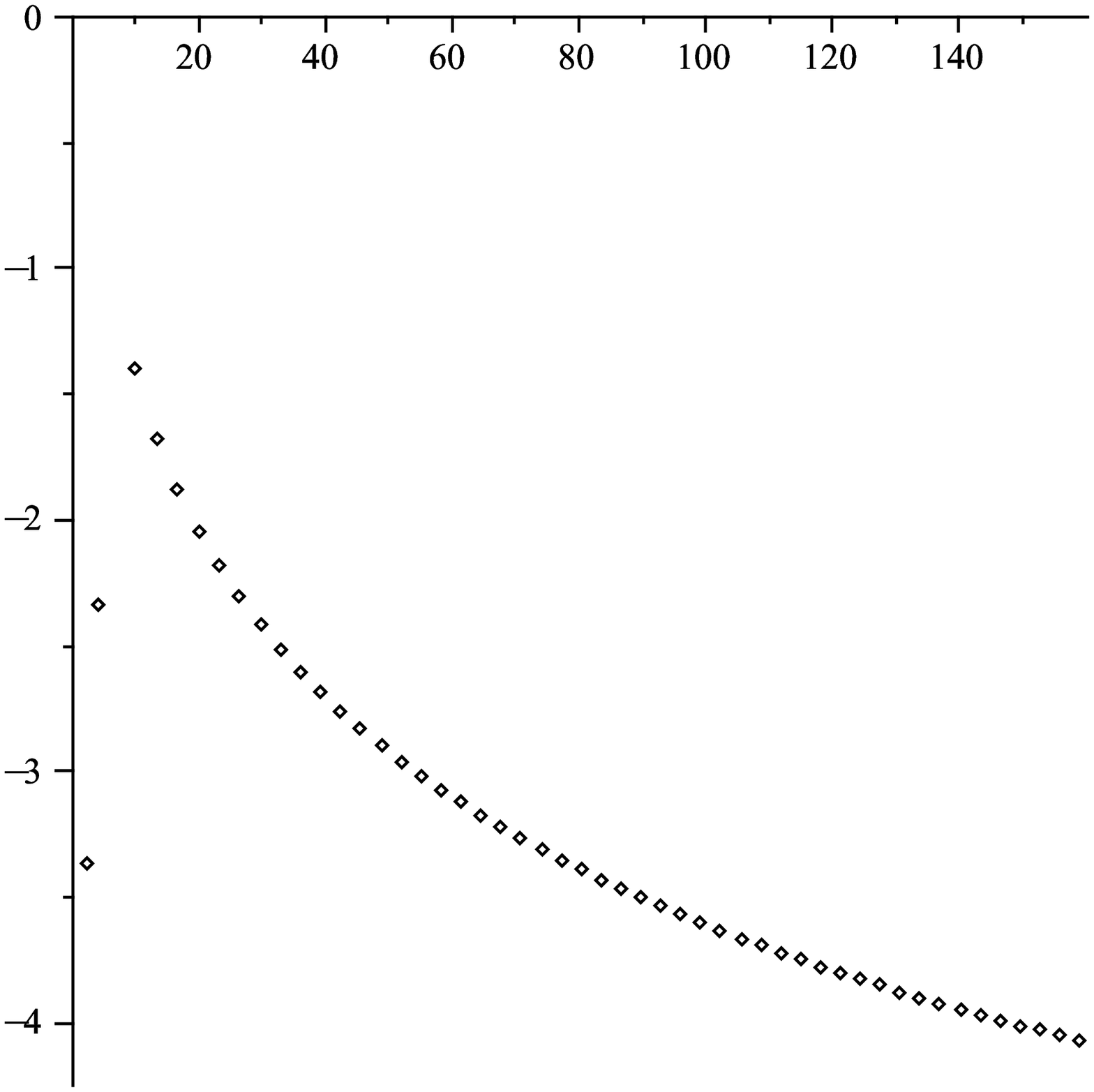, width=3.75cm}}
\subfigure[Zeros of $m_n(k,7).$]
  {\epsfig{file=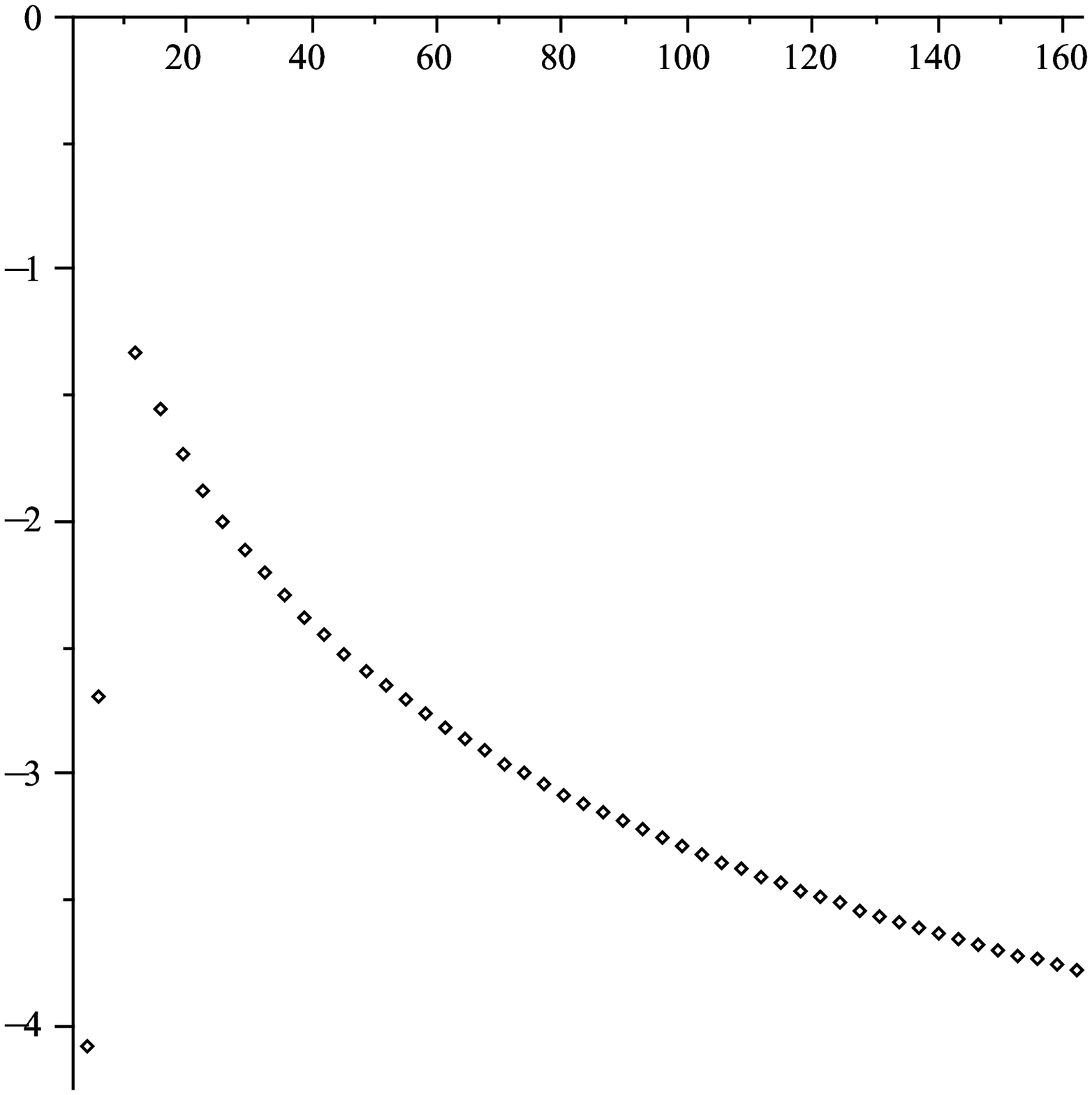, width=3.75cm}}
\caption{Graphs of the first 50 zeros of the multipliers $m_n(k,l)$ for
  $l=1,5,7.$ }
\label{fig112}
\end{figure}

Using the three term recurrence relations for spherical Bessel functions,
$\{z_l\},$ we easily obtain the functional equation:
\begin{equation}
  z_l(-\bar{k})=(-1)^l\overline{z_l(k)};
\end{equation}
using this identity and~\eqref{eq:3.19.4} it is not difficult to show that
\begin{equation}
  \label{eq:3.19.6}
  m_n(-\bar{k},l)=\overline{m_n(k,l)}.
\end{equation}
The function $\hone_l$ can be factored as
\begin{equation}
  \label{eq:3.19.1}
  \hone_l(k)=p_l(k)\frac{e^{ik}}{k^{l+1}},
\end{equation}
where $p_l$ is a polynomial of degree $l.$ Thus the multiplier takes the form
\begin{equation}
  \label{eq:3.19.2}
\begin{split}
  m_n(k,l) &=p_l(k)e^{ik}\left(\frac{(i+k)j_l(k)+ikj_l'(k)}{k^l}\right)\\
&=p_l(k)e^{ik}\left(\frac{(k-il)j_l(k)+ikj_{l-1}(k)}{k^l}\right)
\end{split}
\end{equation}
It is easy to see that the numerator has a zero of order $l$ at $k=0,$ and
therefore the quotient is regular and non-vanishing there.  The polynomial
contributes $l$ roots; the symmetry,~\eqref{eq:3.19.6}, shows that, when $l$ is
odd, one root lies on the negative imaginary axis.   A plot showing the
roots with smallest imaginary part, and positive real part is shown in
Figure~\ref{fig11a}.
\begin{figure}[H]
  \centering 
  {\epsfig{file=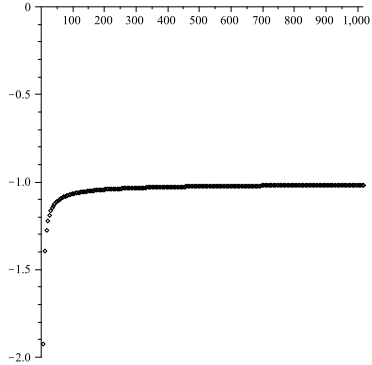, width=5cm}}
  \caption{The solutions of $m_n(k,l)=0$ with smallest modulus, and positive
    real part, for $l=1$ to $1000.$ The real part increases monotonely with
    $l.$}
\label{fig11a}
\end{figure}

\begin{remark}\label{bad_poles}
  Theorem~\ref{thm66} shows that the frequencies $k,$ with $\Im k\geq 0$, for
  which equation~\eqref{eqn74} has a non-trivial null-space coincides with the
  eigenvalues of the \emph{interior} boundary value problem for Maxwell's
  equations defined by
\begin{equation}
  \label{eq:tngbdr10}
  \bxi_-\restrictedto_{\Gamma}=
i_{\bn}\bEta_-\restrictedto_{\Gamma}.
\end{equation}
Of course there are no eigenvalues, or resonances with $\Im k\geq 0.$
Calculations like those above, though simpler, show that the boundary condition
holds for the vector spherical harmonics of order $l$ provided:
\begin{equation}
  \label{eq:bcj}
  (i+k)j_l(k)+ikj_l'(k)=0\text{ with }k\neq 0.
\end{equation}
From~\eqref{eq:3.19.2}, we see that the left hand side of~\eqref{eq:bcj} is a
factor of $m_n(k,l).$ Thus the eigenvalues of the interior problem are a subset
of the resonances of the exterior problem. These eigenvalues are the
``non-physical'' interior resonances connected with our
representation~\eqref{eqn29} of $\bxi$ and $\bEta$ in terms of potentials. They
are familiar from the EFIE and MFIE representation, but shifted to the lower
half plane, where they do no serious harm. The roots of $\hone_l(k)$ are known
to be related to scattering resonances for scattering off of a conducting sphere.
\end{remark}

\section{The Hybrid Equations on the Unit Sphere}
To find the precise form of the hybrid operator, $\cQ^{+}(k),$ on the unit
sphere, we only need to work out $-G_0d^*_{S^2_1}\star_2\cT^{+}_{\bxi}(k).$ The normal
equation is given by~\eqref{nrmlHeqn}. 
As in the previous section, we represent $\bxi$ and $\bEta$ in terms of the
potentials $\balpha,\balpha_m, \phi,$ and $\Phi_m,$ with $\bj=\bj_R(r,q,k)$ a
1-form on $S^2_1$ and $\bm=\star_2\bj.$ The generalized Debye sources $r, q$
satisfy~\eqref{eq:3.11.8.2} and~\eqref{eq:3.11.8.11}. The $\bxi$-field is
given, in terms of the potentials by
\begin{equation}
  \bxi=[ikG_k\bj\cdot d\bx-dG_kr-\star_3dG_k\star_2\bj\cdot d\bx].
\end{equation}
Using the expressions for $r$ and $\bj$ in terms of spherical, resp. vector
spherical harmonics, we see that the tangential components of $\bxi_+$ modulo
$\ker d_{S^2_1}^*,$ are given by
\begin{equation}
  \begin{split}
    &\bxi_{+t}\text{mod}\ker d_{S^2_1}^* =\\
    &\sum_{l,m}d_{S^2_1}Y_l^m\Bigg[\left(\frac{-k^2\alpha_{lm}}{2l+1}\right)
[(l+1)j_{l-1}(k)\hone_{l-1}(k)+lj_{l+1}\hone_{l+1}(k)]\\
-&ika_{lm}j_l(k)\hone_l(k)+
ik\alpha_{lm}j_l(k)[\hone_l(k)+k\pa_k\hone_l(k)]\Bigg]
  \end{split}
\end{equation}

Using~\eqref{eq:3.11.8.11} and the identity, 
\begin{equation}
G_0d_{S^2_1}^*[d_{S^2_1}Y_l^m]=\frac{l(l+1)}{2l+1}Y^l_m,
\end{equation}
we easily obtain that
\begin{equation}\label{eqn6.29.1}
\begin{split}
  G_0d^*_{S^2_1}&\cT^+_{\bxi}(k)\left(\begin{matrix} r\\ q\end{matrix}\right) =
\sum_{l,m}\left(\frac{a_{lm}Y_l^m}{2l+1}\right)\times\\
&\Bigg[-ikl(l+1)j_l(k)\hone_l(k)+
k^2j_l(k)[\hone_l(k)+k\pa_k\hone_l(k)]\\
&+\frac{ik^3[(l+1)j_{l-1}(k)\hone_{l-1}(k)+lj_{l+1}\hone_{l+1}(k)]}{2l+1}\Bigg].
\end{split}
\end{equation}
 As with the system of normal equations, the hybrid equations are decoupled,
 providing one equation for the coefficients of $r$ and one for the
 coefficients of $q.$ The only term on the right hand side of~\eqref{eqn6.29.1}
 that is not $O(l^{-1})$ is
 \begin{equation}
   \frac{-ikl(l+1)j_l(k)\hone_l(k)}{2l+1}=\frac{-1}{4}+O(l^{-1}),
 \end{equation}
in agreement with~\eqref{hybreqn1}. We use the identity
\begin{equation}
  \pa_{k}(k\hone_l(k))=k\hone_{l-1}(k)-l\hone_l(k)
\end{equation}
to remove the derivative from~\eqref{eqn6.29.1}, and define
the multiplier for the tangential equation:
\begin{equation}
\begin{split}
  m_t(k,l)=&\left(\frac{-k}{2l+1}\right)\Bigg[il(l+1)j_l(k)\hone_l(k)-
kj_l(k)[k\hone_{l-1}(k)-l\hone_l(k)]\\
&-\frac{ik^2[(l+1)j_{l-1}(k)\hone_{l-1}(k)+lj_{l+1}(k)\hone_{l+1}(k)]}{2l+1}\Bigg].
\end{split}
\end{equation}

This multiplier behaves much like the multiplier $m_n(k,l)$ found for the
normal equations. For fixed $l$ its roots, as a function of $k,$ lie in the
lower half plane. The plots in Figure~\ref{fig333} show contours of
$\log|m_t(k,l)|$ for $l=1,10,20.$ The $x$-axis is shown as a black horizontal
line. They clearly show that the zeros lie in the lower half plane, and
indicate the moderate behavior of the multiplier in the upper half plane. The
plots in Figure~\ref{fig334} show the $|m_t(k,l)|$ for $l$ between $1$ and
$20,$ and $20$ and $200,$ respectively, for and $k=1, 10$ and $100.$ It should
be recalled that there is a certain amount of arbitrariness in the definition
of this multiplier, resulting from the arbitrariness in the choice of
preconditioner, $G_0d_{S^2_1}^*$ in the present instance.

If the incoming tangential data are given by
\begin{equation}
  \bxi_t^{\In}=\sum_{l,m}\left[ p_{lm}\frac{d_{S^2_1}Y_l^m}{\sqrt{l(l+1)}}+
 q_{lm}\frac{\star_2d_{S^2_1}Y_l^m}{\sqrt{l(l+1)}}\right],
\end{equation}
(here we used the normalized basis elements) then
\begin{equation}
\begin{split}
  i_{\bn}\star_3\bEta^{\In}=\frac{\star_2d_{S^2_1}\bxi_t^{\In}}{ik}=\sum_{l,m} 
 q_{lm}\frac{\star_2d_{S^2_1}\star_2d_{S^2_1}Y_l^m}{ik\sqrt{l(l+1)}}\\
=-\frac{1}{ik}\sum_{l,m} \sqrt{l(l+1)}q_{lm}Y_l^m.
\end{split}
\end{equation}

To find the tangential data for the hybrid system we apply $G_0d_{S^2_1}^*$ to
$\bxi_t^{\In},$ obtaining:
\begin{equation}
  G_0d_{S^2_1}^*\bxi_t^{\In}=\sum_{l,m} \frac{\sqrt{l(l+1)}}{2l+1}p_{lm}Y_l^m.
\end{equation}
For the unit sphere, the hybrid equations are therefore:
\begin{equation}
  \begin{split} 
m_n(k,l)b_{lm}&=\frac{\sqrt{l(l+1)}q_{lm}}{ik}\\
m_t(k,l)a_{lm}&= \frac{\sqrt{l(l+1)}}{2l+1}p_{lm}.
\end{split}
\end{equation}
These equations display a mild sort of low frequency breakdown, in that the
coefficients of the normal data, $\{\sqrt{l(l+1)}q_{lm}\},$ must be uniformly
$O(\omega)$ in order for this system of equations to be stable. Of course the
incoming data $(\bxi_+^{\In},\bEta_+^{\In})$ is assumed to be a solution of the
\THME[$k$], so these estimates should automatically hold. Indeed if
$\bEta_+^{\In}$ is given, then there is no need to differentiate
$\bxi_+^{\In},$ and divide by $k$ to find the data for the normal equation.

\begin{figure}[H]
  \centering 
\subfigure[Contour plot of of $\log|m_t(k,1)|.$]
{\epsfig{file=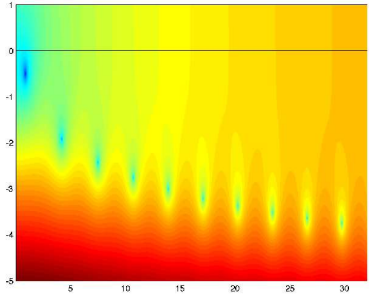, width=6cm}} 
\subfigure[Contour plot of of $\log|m_t(k,10)|.$]
  {\epsfig{file=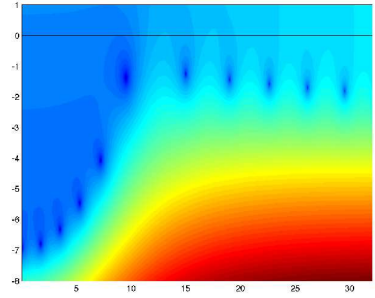, width=6cm}}
\subfigure[Contour plot of of $\log|m_t(k,25)|.$]
  {\epsfig{file=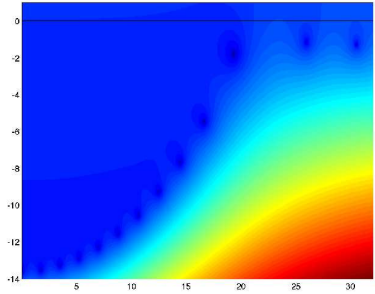, width=6cm}}
\caption{Plots of $\log|m_t(k,1)|,$ $\log|m_t(k,10)|,$ $\log|m_t(k,25)|.$ The
  black horizontal line indicates the $x$-axis. The zeros are located near the
  deep blue dots.}
\label{fig333}
\end{figure}

The use of $G_0$ in the preconditioner also leads to growth in the multiplier
$m_t(k,l)$ are $k$ increases for fixed $l$. Figure~\ref{fig334}(c) show
$|m_t(k,1)|, |m_t(k,10)|,$ and $|m_t(k,20)|,$ for real $k\in [0,100].$ In the
interval $0<k<l$ these functions oscillate around a small non-zero value. When
$k$ exceeds $l$ these functions show linear growth. Replacing $G_0,$ with
something like $G_{i|k|}$ should fix this problem.

\begin{figure}[H]
  \centering 
\subfigure[Small $l$]
{\epsfig{file=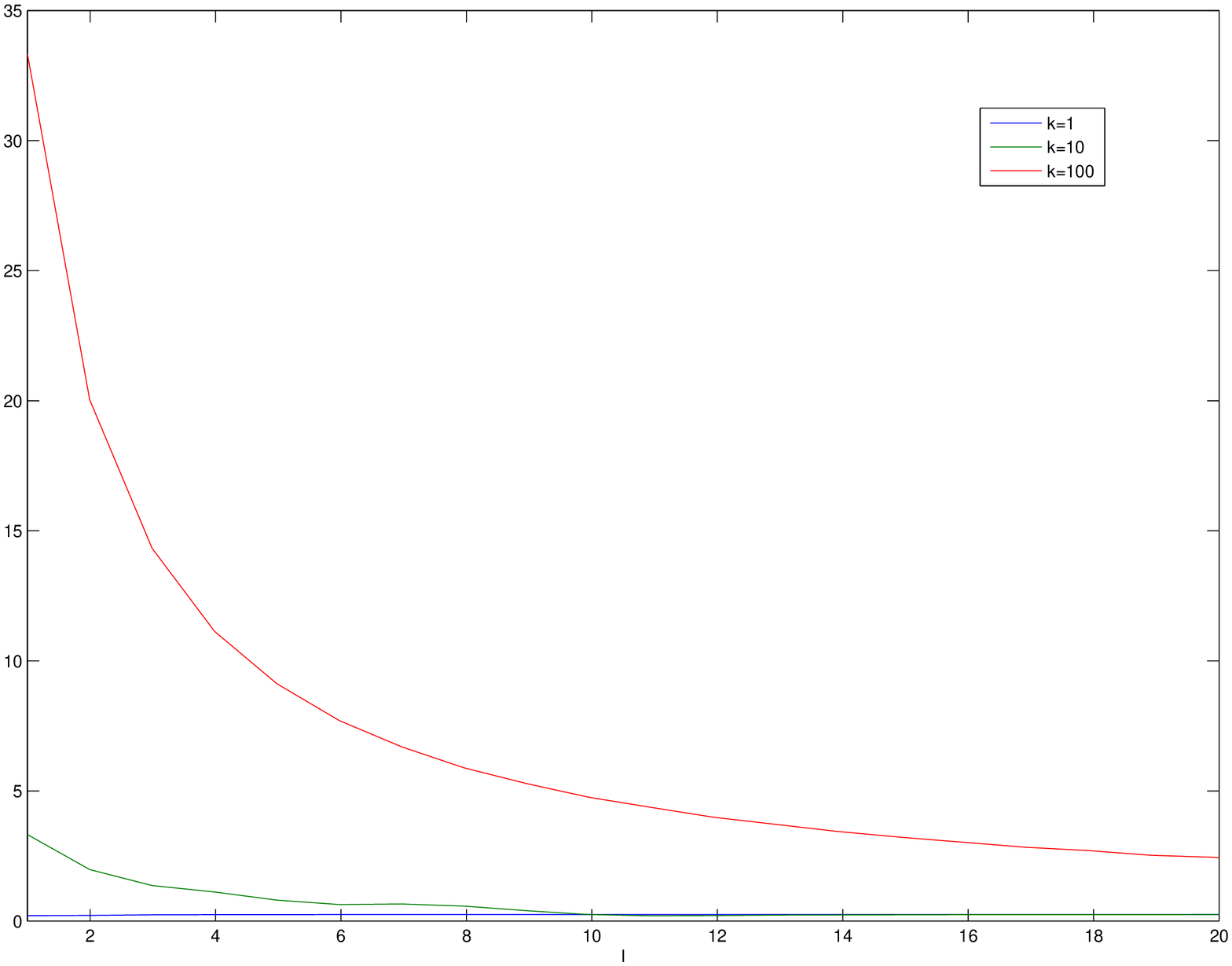, width=4.5cm}} 
\subfigure[Large $l$]
  {\epsfig{file=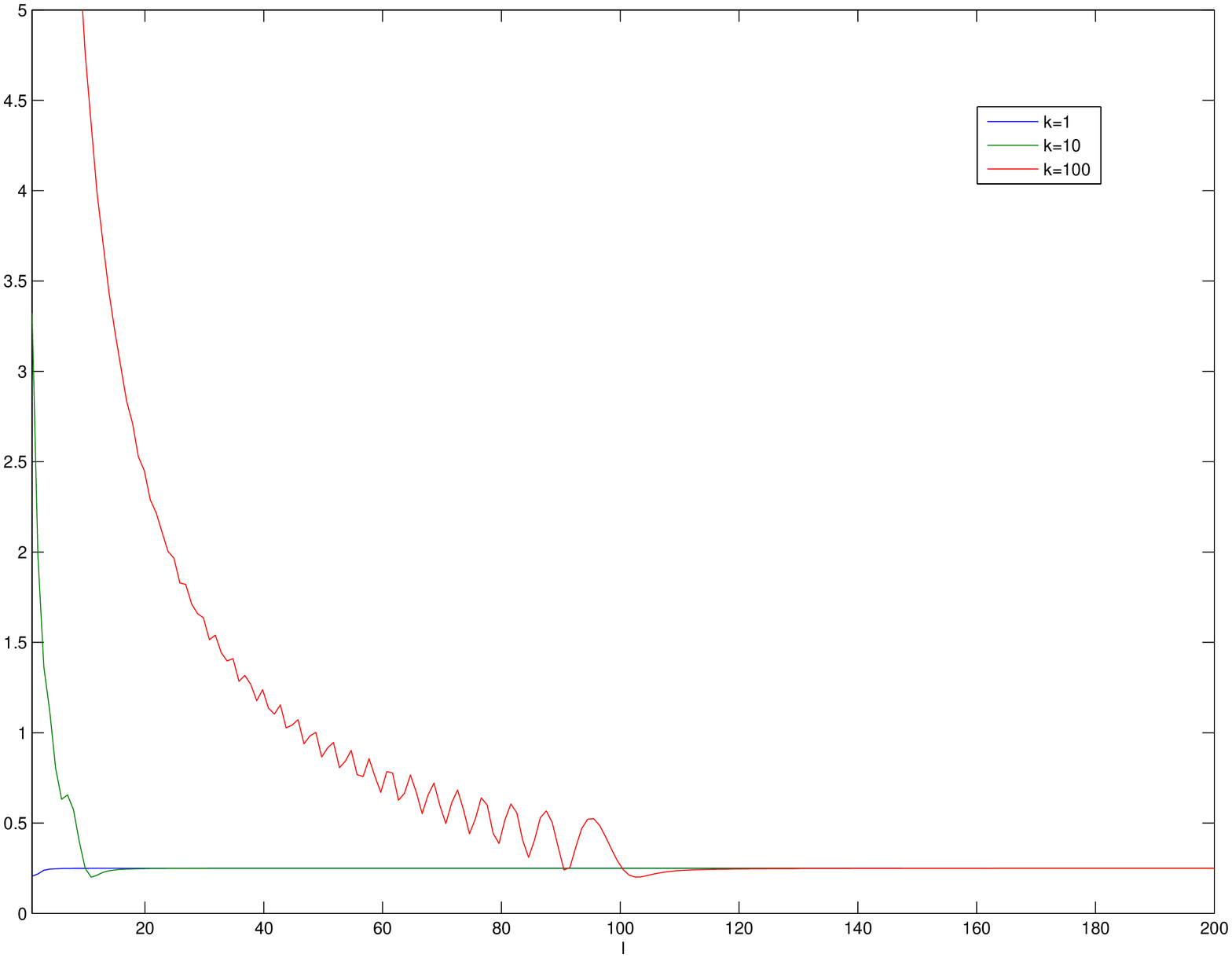, width=4.5cm}}
\subfigure[Fixed $l$]
  {\epsfig{file=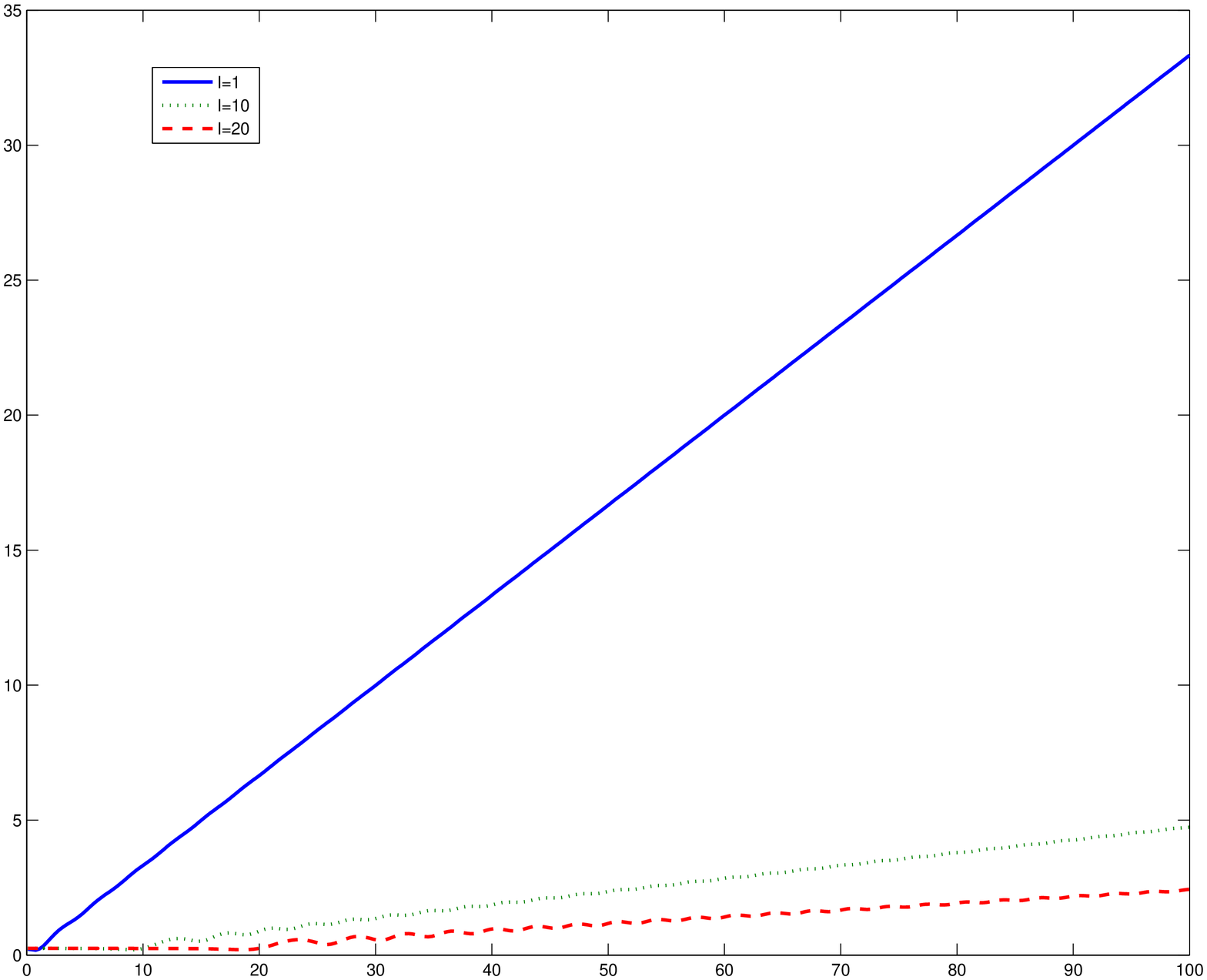, width=4.5cm}}
\caption{Plots of $|m_t(k,1)|,$ for a large range of $l$ and $k=1,10,100,$ and
  fixed $l=1,10,20$ with $k\in [0,100].$}
\label{fig334}
\end{figure}

\section{Conclusions}\label{conclusions}


In this paper, we have developed a new representation for solutions of the time
harmonic Maxwell equations, exterior to closed surfaces, based on two scalar
densities. In the zero frequency limit, these densities are uncoupled and
correspond to electric and magnetic charge.  At non-zero frequency, however,
they do not correspond directly to physical variables.  They are simply used to
construct electric and magnetic currents, after which the classical scalar and
vector potentials and anti-potentials are employed (in the usual Lorenz gauge).
Because of the close connection to the Lorenz-Debye-Mie formalism when the
analysis is restricted to the unit sphere, we refer to our unknowns as {\em
  generalized Debye sources}.  The natural boundary data for our unknowns are
the normal components of the electric and magnetic field and we have provided a
detailed uniqueness theory for this boundary value problem for boundary
surfaces of arbitrary genus 
(Theorems \ref{thm1}, \ref{mcnormalthm}, \ref{thm4.1}). In the course
of this analysis, we have given a new proof of the existence (in the non-simply
connected case) of families of nontrivial solutions with zero boundary data,
which we refer to as $k$-Neumann fields. They generalize, to non-zero wave
numbers, the classical harmonic Neumann fields (Theorem \ref{thm6.1}).

We have also introduced a new Fredholm integral equation of the second
kind for scattering from a perfect electrical conductor and have shown that it is 
invertible (in the simply connected case) for all wave numbers $k$ 
in the closed upper half plane.
There is a natural extension of the approach to the case of
a dielectric interface, which will be reported at a later date.

The work begun here gives rise to a new set of analytic and computational
issues.  In order to use the Debye sources as unknowns, one needs an efficient
and accurate method for inverting the surface Laplacian.  For surfaces $\Gamma$
of genus $g>0$, we also need to be able to efficiently construct a basis for
the harmonic forms $\cH^1(\Gamma).$ 
Finally, additional work is required to extend our
approach to open surfaces (see, for example, \cite{JiangRokhlin}).  
These arise as common and important idealizations
in the analysis of thin plates, cylindrical conductors and metallized surface
patches in radar, medical imaging, chip design and remote sensing applications.

\appendix
\vskip .25in
\noindent
{\LARGE{\bf Appendix}}

\section{Exterior Forms, Maxwell Equations and Vector
Spherical Harmonics}\label{app1}
In the traditional approach to electricity and magnetism Maxwell's equations are
expressed in terms of relationships between four vector fields $\bE, \bD$ and
$\bB, \bH$
defined on $\bbR^3\times\bbR:$
\begin{equation}
\begin{split}
\frac{\pa\bD}{\pa t}=c\nabla\times\bH-4\pi\bJ &\quad \frac{\pa\bB}{\pa
  t}=-c\nabla\times\bE\\
\nabla\cdot\bD=4\pi\rho &\quad \nabla\cdot\bB=0;
\end{split}
\label{vectmaxeq}
\end{equation}
$c$ is the speed of light.  Here $\bJ$ is the current density and $\rho$ is the
charge density, they satisfy the conservation of charge:
\begin{equation}
\frac{\pa\rho}{\pa t}=-\nabla\cdot\bJ.
\end{equation} 
The differential symmetries of this system of equations are rooted in the
exactness of the sequence:
\begin{equation}\label{clexctseq}
\CI(U)\overset{\nabla}{\longrightarrow}\CI(U;T\bbR^3)
\overset{\nabla\times}{\longrightarrow}\CI(U;T\bbR^3)
\overset{\nabla\cdot}{\longrightarrow}\CI(U),
\end{equation}
here $U\subset\bbR^3$ is an open set, and $\CI(U;T\bbR^3)$ are the smooth
vector fields defined in $U:$
\begin{equation}
\CI(U;T\bbR^3)=\{a_1(x)\pa_{x_1}+a_2(x)\pa_{x_2}+a_3(x)\pa_{x_3}:\: a_1, a_2,
 a_3\in\CI(U)\}.
\end{equation}
The exactness of the sequence is equivalent to the classical
identities $\nabla\times\nabla=0,$ and $\nabla\cdot\nabla\times=0.$

While this representation is traditional, physically and geometrically it makes
more sense to regard Maxwell's equations as a relationship amongst
differential, or exterior forms. In the second part of this paper we usually
work with the fields $\bE$ and $\bH.$ It turns out to be convenient to use a
1-form to represent $\bE$ and a 2-form to represent $\bH.$ We use the
correspondences
\begin{equation}
\begin{split}
\bH=h_1\pa_{x_1}+h_2\pa_{x_2}+h_3\pa_{x_3}&\leftrightarrow h_1dx_2\wedge
dx_3+h_2dx_3\wedge dx_1+h_3dx_1\wedge dx_2=\bEta\\
\bE=e_1\pa_{x_1}+e_2\pa_{x_2}+e_3\pa_{x_3}&\leftrightarrow e_1dx_1+e_2dx_2+e_3dx_3=\bxi
\end{split}
\label{2fv}
\end{equation}
Under this correspondence $\bxi$ is the metric dual of $\bE$ and $\star\bEta$
($\star$ is the Hodge star-operator defined by the metric on $\bbR^3$) is the
metric dual of $\bH.$ 

It is natural to think of the electric field as a 1-form, for the electric
potential difference is then obtained by integrating this 1-form:
\begin{equation}
\phi_P-\phi_Q=\int\limits_{\gamma}\bxi,
\end{equation}
with $\gamma$ a path from $P$ to $Q.$ Similarly, it is reasonable to think
of the magnetic field as a 2-form, for the flux of $\bH$ through a surface
$\Sigma$ is then obtained by integrating:
\begin{equation}
\text{Flux of $\bH$ through }\Sigma=\int\limits_{\Sigma}\bEta.
\end{equation}
While these are the most basic measurements associated to electric and magnetic
fields, there are times when it is natural to integrate the $\bE$-field over a
surface, or the $\bH$-field over a curve. This is done, in the form language,
by using the Hodge-star and interior product operations, $\star, i_{\bv},$
introduced below. A detailed exposition of this approach to Maxwell's equations
can be found in~\cite{Axelsson}.

For the sake of completeness we recall the definition of $d$ on forms defined
on $\bbR^3:$
\begin{equation}
\begin{split}
&0\text{-forms }\Lambda^0\bbR^3:
\quad da=\pa_{x_1}adx_1+\pa_{x_2}adx_2+\pa_{x_3}adx_3\\
&1\text{-forms }\Lambda^1\bbR^3:\quad d(a_1dx_1+a_2dx_2+a_3dx_3)=
(\pa_{x_1}a_2-\pa_{x_1}a_1)dx_1\wedge
dx_2\\&\phantom{ll}+
(\pa_{x_3}a_1-\pa_{x_1}a_3)dx_3\wedge dx_1+ (\pa_{x_2}a_3-\pa_{x_3}a_2)dx_2\wedge dx_3\\
&2\text{-forms }\Lambda^2\bbR^3:\quad d(a_1dx_1\wedge dx_2+a_2dx_3\wedge dx_1+
a_3dx_2\wedge dx_3)=\\
&\phantom{\text{ll}}(\pa_{x_1}a_1+\pa_{x_2}a_2+\pa_{x_3}a_3)dx_1\wedge dx_2\w dx_3\\
&3\text{-forms }\Lambda^3\bbR^3:\quad d(adx_1\w dx_2\w dx_3)=0.
\end{split}
\end{equation}

The sequence~\eqref{clexctseq} becomes:
\begin{equation}
\CI(U)\overset{d}{\longrightarrow}\CI(U;\Lambda^1\bbR^3)
\overset{d}{\longrightarrow}\CI(U;\Lambda^2\bbR^3)
\overset{d}{\longrightarrow}\CI(U;\Lambda^3\bbR^3).
\end{equation}
\emph{All} of the classical differential relations are simply $d^2=0.$

\subsection{Exterior forms on a manifold}
Generally we can define the smooth exterior $p$-forms on an $n$-dimensional
manifold $M,$ $\CI(M;\Lambda^pT^*M),$ as sections of the vector bundle
$\Lambda^pT^*M.$ The exterior derivative is a canonical map
\begin{equation}
  d:\CI(M;\Lambda^pT^*M)\to \CI(M;\Lambda^{p+1}T^*M).
\end{equation}
If $(x_1,\dots,x_n)$ are local coordinates, then a $p$-form can be expressed as
\begin{equation}
  \balpha=\sum_{I\in\cI_p}a_I(x)dx_{i_1}\w\cdots\w dx_{i_p},\quad a_I\in\CI.
\end{equation}
Here $\cI_p$ is the set of increasing $p$-multi-indices $1\leq
i_1<\cdots<i_p\leq n.$ In these local coordinates, the exterior derivative of a
function $f(x)$ is defined to be
\begin{equation}
  df=\sum_{j=1}^n\pa_{x_j}f(x)dx_j,
\end{equation}
and of a $p$-form
\begin{equation}
  d\balpha=\sum_{I\in\cL_p}da_I(x)\w dx_{i_1}\w\cdots\w dx_{i_p}.
\end{equation}
The remarkable fact is that this is invariantly defined; though it is really nothing
more than the chain rule. The fact that mixed partial derivatives commute
easily applies to show that $d^2=0.$
If $\balpha$ and $\bBeta$ are forms, then we have the Leibniz Formula:
\begin{equation}
d(\balpha\w\bBeta)=(d\balpha)\w\bBeta+(-1)^{\deg\balpha}\balpha\w d\bBeta.
\end{equation}

If $\balpha$ is a $k$-form defined in an open subset, $U$ of $M$ then we can
integrate it over any smooth, oriented compact submanifold $\Sigma^k\subset\!\subset U,$ of
dimension $k,$  with or without boundary. We denote this pairing by
\begin{equation}
\left<\balpha, [\Sigma]\right>=\int\limits_{\Sigma}\balpha.
\end{equation}
If $\balpha$ is an exact form, that is, $\balpha=d\bBeta,$ and $\Sigma$ is a
smooth, oriented submanifold with or without boundary, then Stokes' theorem states that
\begin{equation}
\int\limits_{\Sigma}d\bBeta=\int\limits_{\pa\Sigma}\bBeta.
\end{equation}
The boundary must be given the induced orientation.  Note, in particular, that
if $\pa\Sigma=\emptyset,$ then the integral of $d\bBeta$ over $\Sigma$ vanishes.  

There is a second natural operation on exterior forms that satisfies a Leibniz
formula.  If $\bv$ is a vector field, then the \emph{interior product} of $\bv$
with a $k$-form, $\omega,$ is a $(k-1)$-form, $i_{\bv}\omega,$ defined by:
\begin{equation}
i_{\bv}\omega(\bv_2,\dots,\bv_{k})\overset{d}{=}\omega(\bv,\bv_2,\dots,\bv_k).
\end{equation}
If $\omega$ and $\eta$ are exterior forms, then
\begin{equation}
i_{\bv}[\omega\wedge\eta]=[i_{\bv}\omega]\wedge\eta+(-1)^{\deg\omega}\omega\wedge[i_{\bv}\eta].
\end{equation}

Using forms simplifies calculations considerably because forms can be
automatically integrated over submanifolds of the ``correct'' dimension, keep
track of orientation, and all the differential relationships follow from the
fact that $d^2=0.$ Moreover, Stokes' theorem subsumes all the classical
integration by parts formul{\ae} is one simple package.

\subsection{Hodge star-operator}\label{A2}
To write the Maxwell equations we need one further operation, called
the \emph{Hodge star-operator}. This operation can be defined on an
oriented Riemannian manifold. Suppose that $\omega_1,\dots,\omega_n$ is an
local orthonormal basis of one forms, and
\begin{equation}
dV=\omega_1\wedge\cdots\wedge\omega_n,
\end{equation}
defines the orientation. If $1\leq i_1<\cdots<i_p\leq n,$ and
$j_1<\cdots<j_{n-p}$ are  complementary indices, then we define
\begin{equation}
\star[\omega_{i_1}\wedge\cdots\wedge\omega_{i_p}]=
(-1)^{\epsilon}\omega_{j_1}\wedge\cdots\wedge\omega_{j_{n-p}},
\end{equation}
with $\epsilon=0$ or $1,$ chosen so that
\begin{equation}
\omega_{i_1}\wedge\cdots\wedge\omega_{i_p}\wedge
\star[\omega_{i_1}\wedge\cdots\wedge\omega_{i_p}]=dV.
\end{equation}

For example, the Hodge operator is defined on the standard orthonormal basis of
exterior forms for $\bbR^3$ by setting:
\begin{equation}
\begin{split}
\star 1&=dx_1\wedge dx_2\wedge dx_3\\
\star dx_1=dx_2\wedge dx_3\quad \star dx_2&=dx_3\wedge dx_1\quad \star
dx_3=dx_1\wedge dx_2\\
\star dx_1\wedge dx_2 =dx_3\quad \star dx_3\wedge dx_1&= dx_2\quad \star
dx_2\wedge dx_3=dx_1\\
\star dx_1\wedge dx_2\wedge dx_3 &= 1
\end{split}
\label{star}
\end{equation}
 From these formul{\ae} it is clear that, in 3-dimensions,
$\star^2=I.$ Notice that applying the $\star$-operator exchanges the two
correspondences between vector fields and forms in~\eqref{2fv}.

The star-operator is simply related to the metric: If $\balpha,\bBeta$ are real
$p$-forms, then
\begin{equation}
  [\balpha\wedge\star\bBeta]_x=(\balpha,\bBeta)_xdV_x,
\label{starmetric}
\end{equation}
where $(\cdot,\cdot)$ is the (real) inner product defined by the metric on
$p$-forms. Generally, 
\begin{equation}
  \label{eq:str2}
  \star^2=(-1)^{p(n-p)},
\end{equation}
on $p$-forms defined on an $n$-dimensional manifold. Using this observation
and~\eqref{starmetric} we easily show that $\star$ is a pointwise isometry:
\begin{equation}
  \label{eq:striso}
  (\balpha,\bBeta)_x=(\star\balpha,\star\bBeta)_x
\end{equation}

A fundamental role of the Hodge $\star$-operator is to define a Hilbert space
inner product on forms.  If $\balpha$ and $\bBeta$ are (possibly complex) forms
of the same degree defined in $U,$ then $\balpha\w\star\overline{\bBeta}$ is a
$n$-form, which can therefore be integrated:
\begin{equation}
\begin{split}
\langle\balpha,\bBeta\rangle&=\int\limits_{U}\balpha\w\star\overline{\bBeta}.\\
&=\int\limits_{U}(\balpha,\bBeta)_xdV(x).
\end{split}
\label{inprd}
\end{equation}
We assume that $(\cdot,\cdot)$ is extended to define an Hermitian inner product
on complex valued forms. The extended metric continues to satisfy~\eqref{eq:striso}.

\subsection{Adjoints, Integration-by-parts and the Hodge Theorem}\label{adjintprt}
On an $n$-dimensional manifold the expression for the formal adjoint, with
respect to the pairing in~\eqref{inprd}, of the $d$-operator, acting  on a
$p$-form $\bBeta,$ is:
\begin{equation}
d^*\bBeta=\begin{cases}-\star d\star\bBeta&\text{ if }n\text{ is even}\\
(-1)^p\star d\star\bBeta&\text{ if }n\text{ is odd.}\end{cases}
\label{dadj}
\end{equation}
Let $G$ be a bounded domain with a smooth boundary. Let $r$ be a function that
is negative in $G$ and vanishes on $bG.$ Suppose moreover that $(dr,dr)_x\equiv
1$ for $x\in bG,$ and  $\bn$ is the outward pointing unit
normal along $bG.$ The basic integration by parts formul{\ae} for $d$ and $d^*$
can be expressed in terms of this inner product by
\begin{equation}
  \label{eq:intprts1}
  \begin{split}
\langle d\balpha,\bBeta\rangle_{G}&=\int\limits_{bG}(dr\wedge\balpha,\bBeta)_xdS(x)+
\langle \balpha,d^*\bBeta\rangle_{G}\\
\langle d^*\balpha,\bBeta\rangle_{G}&=-\int\limits_{bG} (i_{\bn}\balpha,\bBeta)_xdS(x)+
\langle \balpha,d\bBeta\rangle_{G}.
\end{split}
\end{equation}
Here we use $dS$ to denote surface measure on $bG.$
It is important to recall that, with respect to the pointwise inner product,
\begin{equation}
  (dr\wedge \balpha,\bBeta)_x=
(\balpha,i_{\bn}\bBeta)_x.
\end{equation}

The (positive) Laplace operator, acting  on any form degree is given by  formula
\begin{equation}
d d^*+d^* d.
\end{equation}
In $\bR^3$ this would give $-(\pa^2_{x_1}+\pa^2_{x_2}+\pa^2_{x_3}).$
To avoid confusion with standard usage in E\&M, we use $\Delta$ to denote the
negative operator $-(d d^*+d^* d).$ If $M$ is a compact manifold without
boundary, then the de Rham cohomology groups are defined, for $0\leq k\leq\dim M,$ as
\begin{multline}
  H_{\dR}^k(M)\\=\Ker\{d:\CI(M;\Lambda^kT^*M)\to\CI(M;\Lambda^{k+1}T^*M)\}/d\CI(M;\Lambda^{k-1}T^*M).
\end{multline}
It is a classical theorem that these abelian group are topological invariants,
see~\cite{Vick}.  We let $\cH^k(M)$ denote the nullspace of the Laplacian
acting on $\CI(M;\Lambda^kT^*M).$ Stokes' theorem shows that
\begin{equation}
  \langle\Delta\omega,\omega\rangle=\langle d\omega,d\omega\rangle+
\langle d^*\omega,d^*\omega\rangle.
\end{equation}
Thus $\cH^k(M)\simeq \Ker d\cap\Ker d^*.$ The Hodge theorem
states that
\begin{theorem}[Hodge] If $(M,g)$ is a compact Riemannian manifold, without
  boundary, then, for each $0\leq k\leq\dim M,$
  \begin{equation}
    \cH^k(M)\simeq H^k_{\dR}(M).
  \end{equation}
and, as an $L^2$-orthogonal direct sum, we have:
\begin{equation}
  \CI(M;\Lambda^kT^*M)=d \CI(M;\Lambda^{k-1}T^*M)\oplus d^*\CI(M;\Lambda^{k+1}T^*M)\oplus\cH^k(M).
\end{equation}
  \end{theorem}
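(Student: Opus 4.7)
The plan is to invoke the standard elliptic PDE approach: set up the Laplacian $\Delta=-(dd^*+d^*d)$ on each $\CI(M;\Lambda^kT^*M)$ as a formally self-adjoint elliptic operator, use Fredholm theory on the compact manifold $M$ to produce a Green's operator, and then read off both the orthogonal decomposition and the isomorphism with $H^k_{\dR}(M)$ as algebraic consequences. Before doing any analysis, I would verify the orthogonality of the three proposed summands using only integration by parts on the closed manifold (no boundary terms in~\eqref{eq:intprts1}). For $\alpha\in\CI(M;\Lambda^{k-1}T^*M)$, $\beta\in\CI(M;\Lambda^{k+1}T^*M)$, and $h\in\cH^k(M)$, one has $\langle d\alpha,d^*\beta\rangle=\langle d^2\alpha,\beta\rangle=0$, while $\langle\Delta h,h\rangle=\|dh\|^2+\|d^*h\|^2$ forces $dh=d^*h=0$, from which $\langle d\alpha,h\rangle=\langle\alpha,d^*h\rangle=0$ and similarly $\langle d^*\beta,h\rangle=0$.

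Next I would establish the analytic input. A direct local computation shows that the principal symbol of $\Delta$ on $\Lambda^kT^*M$ is $-|\xi|_g^2\cdot\operatorname{Id}$, so $\Delta$ is elliptic of order two; it is formally self-adjoint by~\eqref{dadj} and~\eqref{eq:intprts1} on a closed $M$. The main analytic step is to invoke the standard consequences of elliptic theory on a compact manifold without boundary: $\Delta$ extends to an unbounded self-adjoint operator on $L^2(M;\Lambda^kT^*M)$ with discrete spectrum, finite-dimensional eigenspaces (hence $\dim\cH^k(M)<\infty$), and closed range equal to $\cH^k(M)^{\perp}$. This yields a bounded Green's operator $G$ and an $L^2$-orthogonal projector $H$ onto $\cH^k(M)$ satisfying $\Delta G=G\Delta=\operatorname{Id}-H$; elliptic regularity guarantees that $G$ and $H$ preserve smoothness. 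Writing
\begin{equation}
\omega=d(d^*G\omega)+d^*(dG\omega)+H\omega
\end{equation}
for $\omega\in\CI(M;\Lambda^kT^*M)$ exhibits the Hodge decomposition, with orthogonality already verified.

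For the identification $\cH^k(M)\simeq H^k_{\dR}(M)$, I would use the map sending a harmonic form to its de Rham class. This is well defined because harmonic forms are closed. Injectivity: if $h\in\cH^k(M)$ is exact, $h=d\alpha$, then $\|h\|^2=\langle h,d\alpha\rangle=\langle d^*h,\alpha\rangle=0$. Surjectivity: given a closed $\omega$, decompose $\omega=d\beta+d^*\gamma+h$; then $0=d\omega=dd^*\gamma$, so $\|d^*\gamma\|^2=\langle\gamma,dd^*\gamma\rangle=0$, forcing $\omega=d\beta+h$ and $[\omega]=[h]$ in $H^k_{\dR}(M)$.

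The main obstacle is the analytic step: producing the Green's operator $G$ requires more than the algebraic manipulations above. One must either construct a pseudodifferential parametrix for $\Delta$ (using its ellipticity) and combine it with Rellich's compactness theorem to deduce the Fredholm alternative, or establish G\aa rding's inequality for $\Delta$ on Sobolev sections of $\Lambda^kT^*M$ and apply the spectral theorem for self-adjoint operators with compact resolvent. I would cite this standard machinery rather than redevelop it, since it is independent of the specific electromagnetic context and is treated in detail, for example, in Warner or de Rham.
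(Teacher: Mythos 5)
Your outline is the standard elliptic-theory proof and is correct; note that the paper does not prove this statement at all --- it quotes the Hodge theorem as classical background in its appendix, the only proof-adjacent remark there being the identity $\langle\Delta\omega,\omega\rangle=\|d\omega\|^2+\|d^*\omega\|^2$ identifying $\cH^k(M)$ with $\Ker d\cap\Ker d^*$, which you also use for orthogonality and injectivity, so deferring the Green's-operator construction to a parametrix/G\aa rding argument in Warner or de Rham is exactly the right level of detail. One cosmetic point: with the paper's sign convention $\Delta=-(dd^*+d^*d)$, the decomposition reads $\omega=-d(d^*G\omega)-d^*(dG\omega)+H\omega$; your displayed formula is the one for the positive Laplacian $dd^*+d^*d$.
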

As $\Delta$ is an elliptic operator, the Hodge theorem has the following very
useful corollary:
\
\begin{corollary}
  If $(M,g)$ is a compact Riemannian manifold, without
  boundary, then for each $0\leq k\leq\dim M,$ the $\dim H^k_{\dR}(M)$ is finite.
\end{corollary}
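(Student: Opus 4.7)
The plan is to combine the Hodge theorem, cited immediately above, with the standard fact that elliptic operators on a compact manifold without boundary have finite-dimensional kernel. Concretely, the theorem gives an isomorphism $H^k_{\dR}(M) \simeq \cH^k(M)$, so it suffices to prove that $\dim \cH^k(M) < \infty$. Since $\cH^k(M) = \ker\bigl(\Delta\restrictedto_{\CI(M;\Lambda^k T^*M)}\bigr)$, I will show that this kernel is finite-dimensional by invoking ellipticity of $\Delta$.

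First I would verify that $\Delta = -(dd^* + d^*d)$ is an elliptic second-order differential operator on $\CI(M;\Lambda^k T^*M)$. A quick local computation in normal coordinates (or using the fact that $d$ and $d^*$ are first-order with principal symbols $\xi \wedge \cdot$ and $-i_{\xi^\sharp}$ respectively) shows that the principal symbol of $\Delta$ at a covector $\xi \in T^*_xM$ is
\begin{equation}
\sigma_2(\Delta)(\xi) = -\bigl(\xi \wedge i_{\xi^\sharp} + i_{\xi^\sharp}(\xi \wedge \cdot)\bigr) = -|\xi|^2_g \,\Id_{\Lambda^k T^*_xM},
\end{equation}
which is invertible for every $\xi \neq 0$. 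Hence $\Delta$ is elliptic.

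Next I would apply the standard theory: on a compact Riemannian manifold without boundary, any elliptic operator $P:\CI(M;E)\to\CI(M;E)$ extends to a Fredholm operator between appropriate Sobolev spaces $H^{s+2}(M;E)\to H^s(M;E)$, and by elliptic regularity its kernel consists of smooth sections and is finite-dimensional. Applying this to $P=\Delta$ acting on $k$-forms yields $\dim \cH^k(M) < \infty$, and combined with the Hodge isomorphism this gives $\dim H^k_{\dR}(M) < \infty$.

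The only real content is the ellipticity of $\Delta$ and the Fredholm/regularity package for elliptic operators on compact manifolds; both are entirely standard, so there is no genuine obstacle. One could alternatively avoid quoting the general elliptic Fredholm theorem by using a parametrix argument directly: a pseudodifferential parametrix $Q$ for $\Delta$ satisfies $Q\Delta = \Id + K$ with $K$ smoothing, so on the kernel $\cH^k(M)$ one has $\omega = -K\omega$, exhibiting the identity on $\cH^k(M)$ as a compact operator on $L^2(M;\Lambda^k T^*M)$; compactness of the identity forces $\dim\cH^k(M)<\infty$.
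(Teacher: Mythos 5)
Your proposal is correct and follows exactly the route the paper intends: the Hodge isomorphism $\cH^k(M)\simeq H^k_{\dR}(M)$ together with finite-dimensionality of the kernel of the elliptic operator $\Delta$ on a compact manifold without boundary. The paper compresses this to the single remark ``As $\Delta$ is an elliptic operator\ldots''; you have merely filled in the standard symbol computation and the parametrix/Fredholm justification.
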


\subsection{Maxwell's Equations in terms of exterior forms}\label{A4}
With these preliminaries we can state the correspondences between the
differential operators, $\nabla, \nabla\times,$ and $\nabla\cdot$ and the
corresponding objects acting on forms. For a scalar function $\phi,$
$\nabla\phi$ corresponds to $d\phi.$ An elementary calculation shows that if
$\bE\leftrightarrow\bxi,$ a 1-form, then $\nabla\times\bE\leftrightarrow
d\bxi,$ and $\nabla\cdot\bE\leftrightarrow d^*\bxi.$ Moreover with
$\bH\leftrightarrow\bEta,$ a 2-form, we have $\nabla\times\bH\leftrightarrow
d^*\bEta,$ and $\nabla\cdot\bH\leftrightarrow d\bEta.$ The operator $d^*$ also
acts on $3$-forms.

If we let $\bE\leftrightarrow \bxi,$
$\bH\leftrightarrow\bEta,$ and $\bJ\leftrightarrow\bj$ (a 1-form)
as in~\eqref{2fv}, then Maxwell's equations in a vacuum become:
\begin{equation}
\begin{split}
\frac{\pa\bxi}{\pa t}=cd^*\bEta-4\pi\bj &\quad \frac{\pa\bEta}{\pa
  t}=-cd\bxi\\
d^*\bxi=4\pi\rho \quad d\bEta&=0\quad \frac{\pa\rho}{\pa t}=-d^*\bj.
\end{split}
\label{formmaxeq}
\end{equation}
If $\bxi$ and $\bEta$ are time harmonic with time dependence  $e^{-it\omega},$ then in the
absence of sources, we easily derive the Helmholtz equations:
\begin{equation}
c^2\Delta\bxi+\omega^2\bxi=0\quad c^2\Delta\bEta+\omega^2\bEta=0.
\label{helm}
\end{equation}

We let $D\subset\bbR^3$ denote a bounded set with smooth boundary and let
\begin{equation}
\Omega =\bbR^3\setminus \overline{D}\quad \Gamma=bD.
\end{equation}
In this paper $D$ is usually taken to be a perfect conductor, lying in a
bounded domain with smooth boundary, and $\Omega$ a dielectric.  We assume that
$\epsilon$ is the electrical permittivity, $\mu$ is the magnetic permeabilty
and $\sigma$ the electrical conductivity of $\Omega.$ As above, we identify the
$\bE$-field with a 1-form, $\bXi:$
\begin{equation}
e_1\pa_{x_1}+e_2\pa_{x_2}+e_3\pa_{x_3}=\bE\leftrightarrow\bXi=
e_1dx_1+e_2dx_2+e_3dx_3,
\end{equation}
and $\bH$ with a 2-form, $\bN:$ 
\begin{equation}h_1\pa_{x_1}+h_2\pa_{x_2}+h_3\pa{x_3}=\bH\leftrightarrow \bN=
h_1dx_2\wedge dx_3+h_2dx_3\wedge dx_1+h_3dx_1\wedge dx_2.
\end{equation}
In terms of exterior forms, we set
\begin{equation}
\bXi(x,t)=\left[\frac{\omega}{\omega\epsilon+i\sigma}\right]^{\frac
  12}\bxi(x)e^{-i\omega t}\quad
\bN(x,t)=\mu^{-\frac 12}\bEta(x)e^{-i\omega t};
\end{equation}
the time harmonic Maxwell equations become:
\begin{equation}
\begin{split}
&d\bxi=ik\bEta\quad d^*\bxi=0\\
&d^*\bEta=-ik\bxi\quad d\bEta=0.
\end{split}
\label{MxEqnk}
\end{equation}
Here $k$ is the square root of $\mu(\epsilon\omega^2+i\sigma\omega),$ with
non-negative imaginary part. 

With this choice of correspondence between the vector and form representations,
we can write the Maxwell equations in a very succinct and symmetric form:
\begin{equation}
(d+d^*)(\bxi+\bEta)=ik\Lambda(\bxi+\bEta);
\end{equation}
here $\Lambda$ is the operation defined on forms by
\begin{equation}
  \Lambda(\balpha)=(-1)^{\deg\balpha}\balpha.
\end{equation}
Simple calculations shows that
\begin{equation}
  (d+d^*)\Lambda=-\Lambda(d+d^*)\text{ and }\Lambda^2=I,
\end{equation}
implying that
\begin{equation}
  (d+d^*-ik\Lambda)^2=-(\Delta+k^2).
\end{equation}
Thus, acting on forms, the operator $\Delta+k^2$ has a local square root. Or,
put differently, $d+d^*-ik\Lambda$ is an operator of Dirac-type,
see~\cite{BBW}. Indeed, we could write the vacuum Maxwell equations in the form
\begin{equation}
  [c(d+d^*)+\Lambda\pa_t](\bxi+\bEta)=4\pi c\rho,
\end{equation}
noting that
\begin{equation}
  [c(d+d^*)+\Lambda\pa_t]^2=\pa_t^2-c^2\Delta.
\end{equation}

\subsection{Vector Spherical Harmonics}\label{appvsh}
The domains defined in $\bbR^3$ as complements of a round sphere are very
important in applications. They also provide a context where the integral
equations defined in the earlier sections can be diagonalized and solved
explicitly in terms of classical special functions. In this appendix we give a
brief treatment of the theory of vector spherical harmonics in the exterior
form representation. A classical treatment is given in sections 9.6-7
of~\cite{jackson}. But for the classical theory of (scalar) spherical harmonics
(which can also be found in Jackson), our discussion is essentially self contained.

We begin with the relationship between the (negative) Laplace operators in $\bbR^3$ and on
the unit sphere $S^2_1\subset\bbR^3.$ Recall that on any Riemannian manifold,
$(X,g),$ the Laplace operator on $k$-forms is given by
$\Delta^X_k=-(d^*d+dd^*),$ where $*$ is defined by $g.$ The following is simply
the usual change of variables formula for spherical polar coordinates.
\begin{proposition}
 Let $r^2=x_1^2+x_2^2+x_3^2,$ the scalar Laplace operator on $\bbR^3$ can be expressed as
  \begin{equation}
    \label{eq:B1}
    \Delta^{\bbR^3}_0=\frac{1}{r^2}\pa_r
    r^2\pa_r+\frac{1}{r^2}\Delta^{S^2_1}_0,
\text{ where } \pa_r=\frac{x_1\pa_{x_1}+x_2\pa_{x_2}+x_3\pa_{x_3}}{r}.
  \end{equation}
\end{proposition}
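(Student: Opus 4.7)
The plan is to verify the identity by a direct change of variables to spherical polar coordinates $(r,\theta,\phi)$, in which both sides of \eqref{eq:B1} can be written down explicitly and compared. The key point is that in these coordinates the Euclidean metric takes the block-diagonal form
\begin{equation}
g_{\bbR^3}=dr^2+r^2 g_{S^2_1},\qquad g_{S^2_1}=d\theta^2+\sin^2\theta\,d\phi^2,
\end{equation}
so the volume element is $\sqrt{|g|}=r^2\sin\theta$, and the inverse metric is diagonal with entries $(1, r^{-2}, r^{-2}\sin^{-2}\theta)$.

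First I would invoke the intrinsic formula for the Laplace--Beltrami operator on functions,
\begin{equation}
\Delta_0 f=\frac{1}{\sqrt{|g|}}\,\pa_i\bigl(\sqrt{|g|}\,g^{ij}\pa_j f\bigr),
\end{equation}
which is coordinate-invariant and independent of the orthonormal frame calculation. Applying this to $g_{\bbR^3}$ and separating out the $i=r$ term from the $i\in\{\theta,\phi\}$ terms gives
\begin{equation}
\Delta^{\bbR^3}_0 f=\frac{1}{r^2\sin\theta}\pa_r(r^2\sin\theta\,\pa_r f)+\frac{1}{r^2\sin\theta}\sum_{i,j\in\{\theta,\phi\}}\pa_i(r^2\sin\theta\,g_{S^2_1}^{ij}\pa_j f).
\end{equation}
The radial piece collapses to $r^{-2}\pa_r(r^2\pa_r f)$ since $\sin\theta$ does not depend on $r$. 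For the angular piece, the explicit factor $r^2$ from $\sqrt{|g|}$ cancels against the $r^{-2}$ in $g^{ij}$, leaving a single overall $r^{-2}$ in front. What remains inside is exactly the expression for the intrinsic Laplacian on $(S^2_1,g_{S^2_1})$ applied to $f(\theta,\phi)$ with $r$ held fixed.

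The only real verification is the identification $\pa_r=\frac{x_1\pa_{x_1}+x_2\pa_{x_2}+x_3\pa_{x_3}}{r}$, which follows from the chain rule applied to $x_i=x_i(r,\theta,\phi)$ together with the fact that $\sum_i x_i\pa_{x_i}$ is the Euler vector field scaling radially. I do not anticipate a genuine obstacle here: the proof is entirely a routine change-of-variables computation whose only subtlety is bookkeeping of the $\sqrt{|g|}$ factor so that the angular terms assemble into $\Delta^{S^2_1}_0$ rather than some rescaled operator. If one prefers to avoid invoking the general Laplace--Beltrami formula, an alternative is to compute $\pa_{x_i}^2$ directly in terms of $\pa_r$ and the tangential vector fields $r^{-1}(x_j\pa_{x_k}-x_k\pa_{x_j})$, and observe that the sum of squares of these tangential fields (up to a sign) is precisely the operator inducing $\Delta^{S^2_1}_0$ on functions homogeneous of degree zero in $\bx$.
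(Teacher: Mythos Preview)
Your proposal is correct and is precisely the ``usual change of variables formula for spherical polar coordinates'' that the paper invokes; the paper does not give an explicit proof of this proposition but simply states it as standard, so your argument supplies exactly the computation the authors take for granted.
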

We also need to relate the Laplacians on 1-forms. 
\begin{proposition} Let $\balpha$ be a 1-form on $\bbR^3$ such that
  $i_{\pa_r}\balpha=0,$ then 
\begin{equation}
  \label{eq:B2}
  \Delta_1^{\bbR^3}\balpha=
  \frac{1}{r^2}\Delta_1^{S^2_1}\balpha+L_r\balpha+
\frac{1}{r^2}\left(d_{S^2_1}^*\balpha\right) dr,
\end{equation}
where
\begin{equation}
  L_r\balpha=i_{r^2\pa_r}d(i_{r^{-2}\pa_r}d\balpha)+
\frac{2}{r^2}\balpha. 
\end{equation}
\end{proposition}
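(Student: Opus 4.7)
The plan is to compute $\Delta_1^{\bbR^3}\balpha=-(dd^*+d^*d)\balpha$ directly in spherical polar coordinates, exploiting the warped-product structure $\bbR^3\setminus\{0\}\cong(0,\infty)_r\times S^2_1$ with metric $dr^2+r^2 g_{S^2_1}$. The hypothesis $i_{\pa_r}\balpha=0$ says $\balpha$ is horizontal; in an orthonormal coframe $\{dr,r\omega_1,r\omega_2\},$ with $\omega_1,\omega_2$ a local orthonormal coframe on $S^2_1,$ we can write $\balpha=a_1\omega_1+a_2\omega_2$ with $r$-dependent coefficients.

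The two structural identities that drive everything are (i) $\star_3\balpha=(\star_2\balpha)\wedge dr$ for horizontal 1-forms, and (ii) $d\balpha=dr\wedge\pa_r\balpha+d_{S^2_1}\balpha$, where $d_{S^2_1}$ differentiates only in the spherical directions. Using (i) and $d^*=-\star_3 d\star_3,$ a short computation yields $d^*\balpha=\frac{1}{r^2}d^*_{S^2_1}\balpha.$ Applying $d$ to this by the Leibniz rule produces $dd^*\balpha$ as the sum of a horizontal piece $\frac{1}{r^2}d_{S^2_1}d^*_{S^2_1}\balpha$ together with a $dr$-valued piece carrying the $\pa_r(r^{-2})=-2r^{-3}$ correction and a $\pa_r d^*_{S^2_1}\balpha$ contribution. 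The computation of $d^*d\balpha$ splits along identity (ii): the $d_{S^2_1}\balpha$ piece contributes $\frac{1}{r^2}d^*_{S^2_1}d_{S^2_1}\balpha$ together with a coupling term in the $dr$ direction, while the $dr\wedge\pa_r\balpha$ piece contributes purely radial second-derivative terms.

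Summing, the horizontal pieces $d_{S^2_1}^*d_{S^2_1}+d_{S^2_1}d_{S^2_1}^*$ assemble into $\frac{1}{r^2}\Delta_1^{S^2_1}\balpha,$ the residual $dr$-valued pieces collect to give $\frac{1}{r^2}(d^*_{S^2_1}\balpha)\,dr,$ and the remaining purely horizontal radial-derivative terms form $L_r\balpha.$ To match the invariant expression for $L_r,$ I use Cartan's magic formula: $i_{\pa_r}\balpha=0$ gives $i_{\pa_r}d\balpha=\cL_{\pa_r}\balpha=\pa_r\balpha,$ so $i_{r^{-2}\pa_r}d\balpha=r^{-2}\pa_r\balpha;$ expanding $d(r^{-2}\pa_r\balpha)$ by Leibniz and then applying $i_{r^2\pa_r}$ (which kills every horizontal form and extracts a factor $r^2$ from any $dr$ component) produces $\pa_r^2\balpha-\frac{2}{r}\pa_r\balpha,$ so adding $\frac{2}{r^2}\balpha$ recovers the claimed radial combination.

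The main obstacle is bookkeeping. One must track numerous powers of $r$ arising when switching between the orthonormal coframe $\{dr,r\omega_a\}$ and the ``product'' coframe $\{dr,\omega_a\}$ that is natural for writing $d_{S^2_1}$ and $d_{S^2_1}^*,$ and one must handle the $\pa_r$-derivatives that fall on both the coefficients of $\balpha$ and on the explicit $r$-factors in $\star_3.$ A helpful conceptual check is that the explicit $\frac{2}{r^2}\balpha$ summand in $L_r$ can be traced to the extrinsic geometry of the round spheres $S^2_r\hookrightarrow\bbR^3$: their shape operator is $\frac{1}{r}I$ and its contraction is what contributes this constant-order term to the radial part of the 1-form Laplacian on the warped product, distinguishing it from the simpler scalar formula~\eqref{eq:B1}.
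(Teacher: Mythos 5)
Your strategy --- computing $-(dd^*+d^*d)$ in the coframe $\{dr,r\omega_1,r\omega_2\}$ of the warped product --- is exactly the ``calculation using a local co-frame field'' that the paper invokes, and the structural identities you cite are all correct: $\star_3\balpha=(\star_2\balpha)\wedge dr$ for horizontal $\balpha$, the splitting $d\balpha=dr\wedge\pa_r\balpha+d_{S^2_1}\balpha$, the consequence $d^*\balpha=r^{-2}d^*_{S^2_1}\balpha$, and the Cartan-formula evaluation $L_r\balpha=\pa_r^2\balpha-\tfrac{2}{r}\pa_r\balpha+\tfrac{2}{r^2}\balpha$ in the $\omega_a$-coefficients. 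The gap is precisely the step you defer to ``bookkeeping'': the assertion that the residual terms \emph{collect into} the stated right-hand side. If you finish the computation you set up, you find
\begin{equation*}
dd^*\balpha=\frac{1}{r^2}d_{S^2_1}d^*_{S^2_1}\balpha+\Big[\frac{\pa_rd^*_{S^2_1}\balpha}{r^2}-\frac{2\,d^*_{S^2_1}\balpha}{r^3}\Big]dr,\qquad
d^*d\balpha=\frac{1}{r^2}d^*_{S^2_1}d_{S^2_1}\balpha-\pa_r^2\balpha-\frac{d^*_{S^2_1}\pa_r\balpha}{r^2}\,dr,
\end{equation*}
and since $\pa_r$ commutes with $d^*_{S^2_1}$ in this coframe the two first-order $dr$-terms cancel, leaving
\begin{equation*}
\Delta_1^{\bbR^3}\balpha=\frac{1}{r^2}\Delta_1^{S^2_1}\balpha+\pa_r^2\balpha+\frac{2}{r^3}\big(d^*_{S^2_1}\balpha\big)\,dr.
\end{equation*}
The purely radial horizontal part is $\pa_r^2\balpha$, \emph{not} $L_r\balpha$ (they differ by $\tfrac{2}{r}\pa_r\balpha-\tfrac{2}{r^2}\balpha$, which vanishes only when the $\omega_a$-coefficients are homogeneous of degree one, i.e.\ the Cartesian coefficients homogeneous of degree zero), and the $dr$-component carries $2/r^3$, not $1/r^2$. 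Your appeal to the shape operator of $S^2_r$ to account for the $\tfrac{2}{r^2}\balpha$ summand is a heuristic, not a derivation, and here it points the wrong way: no zeroth-order term arises from the Laplacian in this coframe --- that term belongs to the definition of $L_r$ and must be compensated, which your outline does not do.

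The mismatch is not a matter of conventions: the displayed identity, read literally, fails on a test form. Take $\balpha=d(x_3/r)$, which is horizontal. Then $d\balpha=0$ and $\Delta_1^{\bbR^3}\balpha=d\Delta_0^{\bbR^3}(x_3/r)=-2r^{-3}dx_3+6x_3r^{-4}dr$, whose horizontal part is $-\tfrac{2}{r^2}d_{S^2_1}(x_3/r)\neq 0$; but the right-hand side of the proposition evaluates to $\tfrac{1}{r^2}(-2\balpha)+\tfrac{2}{r^2}\balpha+\tfrac{2x_3}{r^3}dr=\tfrac{2x_3}{r^3}dr$, which is purely radial. The identity as printed is valid only under the normalization in which the paper actually applies it (equation~\eqref{eq:B6}): the Cartesian coefficients of $\balpha$ extended homogeneous of degree zero, where $L_r\balpha=\pa_r^2\balpha$, and moreover $d^*_{S^2_1}\balpha=0$, which kills the discrepant $dr$-term. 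A complete proof must either carry out the coframe computation and record the formula with the corrected coefficients above, or state and use the homogeneity hypothesis explicitly; the outline as written cannot be closed to yield the statement for an arbitrary horizontal 1-form.
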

This formula follows by a calculation using a local co-frame field. If we
express $\balpha=\sum \alpha_j dx_j,$ then
\begin{equation}
  L_r\balpha=\sum_{j=1}^3(\pa_r^2\alpha_j) dx_j.
\end{equation}
While~\eqref{eq:B2} is a good deal more complicated than~\eqref{eq:B1}, it
allows for a careful analysis of the eigenforms of $\Delta_1^{S^2_1}.$

We begin with the standard description of the eigenspaces of
$\Delta_0^{S^2_1}.$ Let $\cE^0_l$ denote the
linear space of scalar eigenfunctions on $S^2_1$ satisfying:
\begin{equation}
  \label{eq:B3}
  \Delta_0^{S^2_1}f=-l(l+1)f.
\end{equation}
These spaces are represented in terms of classical spherical harmonics by
\begin{equation}
  \cE^0_l=\Span\{Y_{l}^m:\: m=-l,\dots, l\}
\end{equation}
A basis of eigenforms of $\Delta_2^{S^2_1},$ with eigenvalue $l(l+1),$ is given by
$$\{\star_2Y_{l}^m=Y_{l}^mdA:\: m=-l\dots l\}.$$ 
The fact that $H^1(S^2;\bbR)=0$ and the Hodge theorem imply that the 1-forms on
$S^2_1$ are the $L^2$-orthogonal direct sum
  \begin{equation}
\CI(S^2_1;\Lambda^1)=d_{S^2_1}\CI(S^2_1;\Lambda^0)\oplus 
d_{S^2_1}^*\CI(S^2_1;\Lambda^2).
\end{equation}
This observation coupled with the fact that $d$ and $d^*$ commute with $\Delta$
imply that the eigenspaces of $\Delta_1^{S^2_1}$ are given, for $l\in\bbN,$ by
\begin{equation}
    \label{eq:B5}
    \cE^1_l=\Span\big[\{d_{S^2_1}Y_l^m:\: m=-l,\dots,l\}\oplus 
\{\star_2d_{S^2_1}Y_l^m:\: m=-l,\dots,l\}\big],
  \end{equation}
  with eigenvalue $-l(l+1).$ From this representation, the classical
  orthogonality relations are quite easy:
\begin{equation}
\begin{split}
  &\langle d_{S^2_1}Y_l^m,d_{S^2_1}Y_{l'}^{m'}\rangle=\langle
  d_{S^2_1}^*d_{S^2_1}Y_l^m,Y_{l'}^{m'}\rangle=\delta_{ll'}\delta^{mm'}l(l+1)\\
 &\langle \star_2d_{S^2_1}Y_l^m,\star_2d_{S^2_1}Y_{l'}^{m'}\rangle=\langle
  d_{S^2_1}Y_l^m,d_{S^2_1}Y_{l'}^{m'}\rangle=\delta_{ll'}\delta^{mm'}l(l+1)\\
 &\langle d_{S^2_1}Y_l^m,\star_2d_{S^2_1}Y_{l'}^{m'}\rangle=\langle 
Y_l^m,d_{S^2_1}^2Y_{l'}^{m'}\rangle=0.
\end{split}
\end{equation}
The second line from the fact that $\star_2$ is an orthogonal transformation,
and the last relation follows from Stokes theorem.

The eigenforms $\star_2d_{S^2_1}Y_l^m$ are divergence free. If we extend them
to $\bbR^3$ so they annihilate $\pa_r,$ and express them in the form
$\star_2dY_l^m=a_1dx_1+a_2dx_2+a_3dx_3,$ where the coefficients are extended to
be homogeneous of degree $0,$ then it follows from~\eqref{eq:B2} and the
equation
\begin{equation}
  \Delta_1^{\bbR^3}(a_1dx_1+a_2dx_2+a_3dx_3)= (\Delta_0^{\bbR^3}a_1)dx_1+
(\Delta_0^{\bbR^3}a_2)dx_2+(\Delta_0^{\bbR^3}a_3)dx_3 
\end{equation}
that is, where $r=1,$ we have
\begin{equation}
  \label{eq:B6}
  \Delta_0^{S^2_1} a_j=-l(l+1) a_j.
\end{equation}
In other words the coefficients of $\star_2d_{S^2_1}Y_l^m$ lie in $\cE^0_l.$
These eigenforms correspond to the classical eigenfields of the form
$\{\br\times\nabla Y_l^m\}.$  

The members of the other family, $\{d_{S^2_1} Y_l^m\},$ which corresponds to
$\{\br\times(\br\times \nabla Y_l^m)\},$ are not divergence free and their
coefficients with respect to $dx_j$ lie in $\cE^0_{l-1}\oplus\cE^0_{l+1}.$
These coefficients are easily found; if we think of $Y_l^m$ as a homogeneous
function of degree $0$ on $\bbR^3,$ then $i_{\pa_r}d_{\bbR^3}Y_l^m=0,$ 
\begin{equation}
  d_{S^2_1}Y_l^m=\sum_{j=1}^3\frac{\pa Y_l^m}{\pa x_j}dx_j\restrictedto_{S^2_1}.
\end{equation}
In order to determine the action of the Green's function on the coefficients of
these forms, we need to represent them in terms of spherical harmonics.
Let $U_{l}^m=r^lY_l^m.$ This is a homogeneous harmonic polynomial of degree
$l.$ We see that
\begin{equation}
  \label{eq:B7}
  \frac{\pa U_l^m}{\pa x_j}=r^l\frac{\pa Y_l^m}{\pa x_j}+l\frac{x_j}{r^2}U_l^m.
\end{equation}
We apply the Laplace operator to $x_jU_l^m$ to obtain
\begin{equation}
  \label{eq:B8}
  \Delta^{\bbR^3}_0(x_jU_l^m)=2\frac{\pa U_l^m}{\pa x_j},
\end{equation}
and therefore 
\begin{equation}
  x_jU_l^m=u_{l+}^{mj}+r^2u_{l-}^{mj}.
\end{equation}
Here $u_{l+}^{mj}, u_{l-}^{mj}$ are homogeneous harmonic polynomials of
degrees $l+1$ and $l-1,$ respectively. Once again applying the Laplace operator
to this relation, we see that
\begin{equation}
  u_{l-}^{mj}=\frac{1}{2l+1}\frac{\pa U_l^m}{\pa x_j},
\end{equation} and therefore
\begin{equation}
  \label{eq:6.27.1}
  \sum_{j=1}^3u_{l+}^{mj}dx_j=U_l^mrdr-\frac{r^2}{2l+1}dU_l^m.
\end{equation}
Using the homogeneity we also see that
\begin{equation}
  \label{eq:B9}
  \sum_{j=1}^3x_ju_{l+}^{mj}=\frac{l+1}{2l+1}r^2U_l^m.
\end{equation}
Restricting to $r=1,$ gives
\begin{equation}
  \label{eq:B10}
  \frac{\pa Y_l^m}{\pa x_j}\restrictedto_{r=1}=\frac{l+1}{2l+1}\frac{\pa U_l^m}{\pa
  x_j}\restrictedto_{r=1}- lu_{l+}^{mj}\restrictedto_{r=1}.
\end{equation}
The functions on the right hand side belong $\cE^0_{l-1}$ and $\cE^0_{l+1}$
respectively. 
Employing these relations, we can work out the action of outgoing Green's function on
$\cE^1_l.$

The outgoing Green's function for frequency $k,$ with $\Im k\geq 0,$ is given
by
\begin{equation}
  \label{eq:B11}
  g_k(\bx,\by)=\frac{e^{ik|\bx-\by|}}{4\pi|\bx-\by|}.
\end{equation}
If $r=|\bx|>1$ and $|\by|=1,$ then we can expand $g$ as
\begin{equation}
  \label{eq:B12}
  g_k(\bx,\by)=ik\sum_{l=0}^{\infty}j_l(k)\hone_l(kr)\bP_l, 
\end{equation}
here $\bP_l$ is the orthogonal projection onto $\cE^0_l,$ and
\begin{equation}
  j_l(z)=\sqrt{\frac{\pi}{2z}}J_{l+\frac 12}(z)\text{ and }
\hone_l(z)=\sqrt{\frac{\pi}{2z}}H^{(1)}_{l+\frac 12}(z),
\end{equation}
see~\cite{jackson}. When $k=0$ formula~\eqref{eq:B12} reduces to
\begin{equation}
  \label{eq:B122}
  g_0(\bx,\by)=\sum_{l=0}^{\infty}\frac{\bP_l}{(2l+1)r^{l+1}}.
\end{equation}
 We let
\begin{equation}
  G_kf(\bx)=\int\limits_{S^2_1}g_k(\bx,\by)f(\by)dA(\by).
\end{equation}

If $\balpha$ is a 1-form on $S^2_1,$ then it has a unique extension to
$T\bbR^3\restrictedto_{S^2_1}$ that annihilates $\pa_r,$ which we denote by
$\balpha\cdot d\bx.$ The extended form has a well defined representation along
$S^2_1$ as
\begin{equation}
  \balpha\cdot d\bx=\sum_{j=1}^3\alpha_jdx_j.
\end{equation}
If we extend the coefficients to be homogeneous functions of degree zero, then
$i_{\pa_r}\balpha\cdot d\bx=0$ implies that
\begin{equation}\label{eq6.28.1}
  \sum_{j=1}^3x_jd\alpha_j=-\balpha\cdot d\bx,
\end{equation}
which will prove useful below.

\begin{proposition}\label{propB8} If $|\by|=1$ and $r=|\bx|>1,$ then, applied component-wise,
  the action of $G_k$ is given by:
  \begin{equation}
    \label{eq:B13}
    \begin{split}
&G_kY_l^m=ikj_l(k)\hone_l(kr)Y_l^m\\
      &G_k\left[(\star_2d_{S^2_1}Y_l^m)\cdot d\bx\right]=ikj_l(k)\hone_l(kr)
(\star_2d_{S^2_1}Y_l^m)\cdot d\bx\\
      &G_k\left[(d_{S^2_1}Y_l^m)\cdot
        d\bx\right]=ik\Bigg[\left(\frac{dU_l^m}{2l+1}\right)
\frac{\big[(l+1)j_{l-1}(k)\hone_{l-1}(kr)+lj_{l+1}(k)\hone_{l+1}(kr)\big]}{r^{l-1}}
\\
&\phantom{mmmmmmmmm}-U_l^mdr\frac{lj_{l+1}(k)\hone_{l+1}(kr)}{r^l}\Bigg].
    \end{split}
      \end{equation}
      On the right hand sides of~\eqref{eq:B13}, $Y_l^m$ is homogeneous of
      degree zero, as are the coefficients of $d_{S^2_1}Y_l^m\cdot d\bx,$ and
      $\star_2d_{S^2_1}Y_l^m\cdot d\bx.$ As above, $U_l^m$ 
      is the homogeneous harmonic polynomial of degree
      $l,$ defined by $Y_l^m.$

Along the unit sphere the normal components are given by
 \begin{equation}
    \label{eq:B14}
    \begin{split}
&i_{\pa_r} dG_k Y_{l}^m=ik^2j_l(k)\pa_k\hone_l(k)Y_{l}^m\\
      &i_{\pa_r}G_k\left[(\star_2d_{S^2_1}Y_l^m)\cdot d\bx\right]=0\\
      &i_{\pa_r}G_k\left[(d_{S^2_1}Y_l^m)\cdot
        d\bx\right]=ik\left(\frac{l(l+1)}{2l+1}\right)
\left[j_{l-1}(k)\hone_{l-1}(k)-j_{l+1}(k)\hone_{l+1}(k)\right]Y_l^m.
\end{split}
      \end{equation}
Along the unit sphere the tangential components are given by
 \begin{equation}
    \label{eq:B144}
    \begin{split}
&[dG_k Y_{l}^m]\restrictedto_{TS^2_1}=ikj_l(k)\hone_l(k)d_{S^2_1}Y_{l}^m\\
     &\left[G_k[(\star_2d_{S^2_1}Y_l^m)\cdot d\bx]\right]\restrictedto_{TS^2_1}=
ikj_l(k)\hone_l(k)\star_2d_{S^2_1}Y_l^m\\
      &\left[G_k[(d_{S^2_1}Y_l^m)\cdot
        d\bx]\right]\restrictedto_{TS^2_1}=\\
&\phantom{mmm}\left(\frac{ik}{2l+1}\right)
\bigg[(l+1)j_{l-1}(k)\hone_{l-1}(k)+lj_{l+1}(k)\hone_{l+1}(k)\bigg]d_{S^2_1}Y_l^m.
\end{split}
      \end{equation}
\end{proposition}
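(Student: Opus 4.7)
The plan is to reduce every identity in~\eqref{eq:B13}, \eqref{eq:B14}, and \eqref{eq:B144} to the scalar multipole expansion~\eqref{eq:B12} by writing all forms in the basis $\{dx_j\}$ with coefficients extended to be homogeneous, and then applying $G_k$ component-wise. The scalar identity $G_kY_l^m=ikj_l(k)\hone_l(kr)Y_l^m$ is immediate from~\eqref{eq:B12} and orthogonality of the projectors $\bP_l$. More generally, if $f$ is a homogeneous harmonic polynomial of degree $p$, then $f|_{S^2_1}\in\cE^0_p$, so
\[
G_k\bigl[f|_{S^2_1}\bigr]=ikj_p(k)\hone_p(kr)\,\frac{f(\bx)}{r^p}.
\]
For $(\star_2 d_{S^2_1}Y_l^m)\cdot d\bx$, equation~\eqref{eq:B6} tells me that each of its coefficients $a_j$, extended homogeneously of degree zero, lies in $\cE^0_l$; applying the scalar identity coefficient-wise factors out the common multiplier $ikj_l(k)\hone_l(kr)$, and the condition $i_{\pa_r}=0$ is preserved since the scalar multiplier respects $\sum x_ja_j=0$.

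The main obstacle is the third line of~\eqref{eq:B13}, since the coefficients $\pa_{x_j}Y_l^m|_{S^2_1}$ of $(d_{S^2_1}Y_l^m)\cdot d\bx$ straddle two adjacent eigenspaces. My plan is to use~\eqref{eq:B10},
\[
\pa_{x_j}Y_l^m\big|_{S^2_1}=\frac{l+1}{2l+1}\,\pa_{x_j}U_l^m\big|_{S^2_1}-l\,u_{l+}^{mj}\big|_{S^2_1},
\]
splitting the coefficient into a boundary value of a homogeneous harmonic polynomial of degree $l-1$ (giving Bessel factors of order $l-1$ after $G_k$) and one of degree $l+1$. Summed against $dx_j$, the first piece assembles into $dU_l^m/r^{l-1}$ up to the prefactor, while for the second I substitute the key identity~\eqref{eq:6.27.1}, $\sum_j u_{l+}^{mj}dx_j=U_l^mr\,dr-\tfrac{r^2}{2l+1}dU_l^m$. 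Collecting the two contributions to $dU_l^m$ produces exactly the bracketed combination $(l+1)j_{l-1}\hone_{l-1}+l j_{l+1}\hone_{l+1}$ of~\eqref{eq:B13}, and the leftover $U_l^m dr$ piece gives the second term. Bookkeeping is the only real difficulty; no new analytic input is needed beyond~\eqref{eq:B10} and~\eqref{eq:6.27.1}.

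Finally, \eqref{eq:B14} and~\eqref{eq:B144} are obtained by contracting the three expressions with $\pa_r$ at $r=1$ or restricting them to $TS^2_1$. The tools are $i_{\pa_r}dr=1$, $i_{\pa_r}d_{S^2_1}f=0$ for $f$ homogeneous of degree zero, $\pa_r\hone_l(kr)|_{r=1}=k\pa_k\hone_l(k)$, Euler's identity $i_{\pa_r}dU_l^m=lU_l^m/r$ (since $U_l^m$ is homogeneous of degree $l$), and $U_l^m|_{r=1}=Y_l^m$. In the $d_{S^2_1}Y_l^m$ normal component the coefficient of $j_{l+1}\hone_{l+1}$ collapses via the arithmetic identity $(l+1)+l-(2l+1)=0$, leaving the clean prefactor $\tfrac{l(l+1)}{2l+1}[j_{l-1}\hone_{l-1}-j_{l+1}\hone_{l+1}]$. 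The tangential restrictions are easier still: $dr|_{TS^2_1}=0$ kills the $U_l^m dr$ term and $dU_l^m|_{TS^2_1}=d_{S^2_1}Y_l^m$, which yields the stated formulas after collecting terms.
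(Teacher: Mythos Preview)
Your proposal is correct and follows exactly the route the paper indicates: the paper's proof consists of a single sentence pointing to \eqref{eq:B6}, \eqref{eq:6.27.1}, \eqref{eq:B10}, \eqref{eq:B9}, \eqref{eq:B12}, and the orthogonality of the $\bP_l$, and you have carried out precisely those elementary computations. One minor quibble: in the normal component of the $d_{S^2_1}Y_l^m$ piece the coefficient of $j_{l+1}\hone_{l+1}$ does not ``collapse'' to zero but rather to $-\tfrac{l(l+1)}{2l+1}$ (via $l^2-l(2l+1)=-l(l+1)$), which is what gives the symmetric prefactor you correctly state.
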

\noindent
We also need to compute the effect of $\star_3d$ on
these eigenforms.
\begin{proposition}\label{propB9}
  Along the unit sphere we have:
 \begin{equation}
    \label{eq:B15}
    \begin{split}
&i_{\pa_r}\star_3dG_k\left[(\star_2d_{S^2_1}Y_l^m)\cdot d\bx\right]
=-ikl(l+1)j_l(k)\hone_l(k)Y_l^m\\
 &i_{\pa_r}\star_3dG_k\left[(d_{S^2_1}Y_l^m)\cdot
 d\bx\right]=0.
\end{split}
      \end{equation}
\end{proposition}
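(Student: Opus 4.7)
The plan is to reduce both identities to a single fact about the operator
$i_{\pa_r}\star_3 d$ acting on 1-forms along the unit sphere, and then to read
off the answer from the tangential restrictions supplied by
Proposition~\ref{propB8}. The key identity I will establish is that, for any
smooth 1-form $\bxi$ defined in a neighborhood of $S^2_1,$
\begin{equation}\label{eq:keyidB9}
  i_{\pa_r}\star_3 d\bxi\restrictedto_{r=1}
  =\star_2 d_{S^2_1}\bigl[\bxi\restrictedto_{TS^2_1}\bigr].
\end{equation}

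To prove~\eqref{eq:keyidB9}, I work in the adapted co-frame
$(\omega_1,\omega_2,\nu=dr)$ of Remark~\ref{adptfrm} along $S^2_1$ and
decompose $d\bxi\restrictedto_{r=1}=dr\w\mu+c\,\omega_1\w\omega_2,$ where $\mu$
is a 1-form tangent to $S^2_1$ and $c$ is a scalar. The star-operator
relations of that remark give $\star_3(dr\w\mu)=\star_2\mu,$ which is again
tangent to $S^2_1,$ and $\star_3(\omega_1\w\omega_2)=dr;$ contracting with
$\pa_r$ therefore produces $i_{\pa_r}\star_3 d\bxi\restrictedto_{r=1}=c.$ On
the other hand, naturality of the exterior derivative under pullback to the
submanifold $S^2_1$ identifies the tangential restriction of $d\bxi$ with
$d_{S^2_1}[\bxi\restrictedto_{TS^2_1}],$ so $c\,dA=d_{S^2_1}[\bxi\restrictedto_{TS^2_1}]$
and~\eqref{eq:keyidB9} follows upon applying $\star_2.$

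With~\eqref{eq:keyidB9} in hand both halves of the proposition are immediate.
For the first, Proposition~\ref{propB8} identifies the tangential restriction
of $G_k[(\star_2 d_{S^2_1}Y_l^m)\cdot d\bx]$ on $S^2_1$ as
$ikj_l(k)\hone_l(k)\star_2 d_{S^2_1}Y_l^m,$ and the identity
$d\star_2\balpha=-(d^*\balpha)\,dA$ on a Riemannian 2-manifold, combined with
$d_{S^2_1}^*d_{S^2_1}Y_l^m=l(l+1)Y_l^m,$ yields $d_{S^2_1}\bigl[\star_2
d_{S^2_1}Y_l^m\bigr]=-l(l+1)Y_l^m\,dA;$ substituting into~\eqref{eq:keyidB9}
gives exactly $-ikl(l+1)j_l(k)\hone_l(k)Y_l^m.$ For the second identity, the
formula in Proposition~\ref{propB8} for $G_k[(d_{S^2_1}Y_l^m)\cdot d\bx]$ is a
linear combination of $dU_l^m$ and $U_l^m\,dr$ with scalar radial coefficients;
the $U_l^m\,dr$ term restricts to zero on $TS^2_1,$ while
$dU_l^m\restrictedto_{TS^2_1}=d_{S^2_1}Y_l^m.$ Hence the tangential
restriction is a scalar multiple of $d_{S^2_1}Y_l^m$ and its
$d_{S^2_1}$-derivative vanishes because $d_{S^2_1}\circ d_{S^2_1}=0.$

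The only non-routine ingredient is~\eqref{eq:keyidB9} itself, and the sole
obstacle there is sign bookkeeping: one must use the precise star-operator
relations of Remark~\ref{adptfrm} to obtain $\star_3(dr\w\mu)=\star_2\mu$
rather than its negative, and track the sign in the relation
$d\star_2=-(d^*)\,dA$ on 1-forms on a 2-manifold. Once these signs are aligned,
the proposition follows by direct substitution of the tangential restrictions
from Proposition~\ref{propB8}.
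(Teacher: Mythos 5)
Your proof is correct, and it takes a cleaner route than the paper, which offers no detailed argument for this proposition: the authors simply remark at the end of Appendix~A that the formul{\ae} in Propositions~\ref{propB8}--\ref{propB9} are ``elementary calculations'' following from the explicit three-dimensional expressions in~\eqref{eq:B13} (together with \eqref{eq:B6}, \eqref{eq:6.27.1}, \eqref{eq:B10}, \eqref{eq:B9}, \eqref{eq:B12} and~\eqref{eqn212}), i.e.\ one is expected to differentiate the full radial expansions, apply $\star_3$, and contract with $\pa_r$. Your key identity $i_{\pa_r}\star_3 d\bxi\restrictedto_{r=1}=\star_2 d_{S^2_1}\bigl[\bxi\restrictedto_{TS^2_1}\bigr]$ — which is just the statement $i_{\bn}\star_3\bBeta=\star_2[\bBeta\restrictedto_{T\Gamma}]$ for the 2-form $\bBeta=d\bxi$ combined with naturality of $d$ under pullback, and is the companion of the paper's own identity~\eqref{eqn212} — short-circuits all of that: it reduces the computation to the \emph{tangential} restrictions already recorded in~\eqref{eq:B144}, so no radial derivatives of the Bessel factors ever appear. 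The second identity then becomes $d_{S^2_1}\circ d_{S^2_1}=0$ applied to a constant multiple of $d_{S^2_1}Y_l^m$, and the first follows from $d_{S^2_1}[\star_2 d_{S^2_1}Y_l^m]=-(d^*_{S^2_1}d_{S^2_1}Y_l^m)\,dA=-l(l+1)Y_l^m\,dA$, whose sign you have tracked correctly against the conventions of Remark~\ref{adptfrm} and the paper's convention $\Delta_0^{S^2_1}Y_l^m=-l(l+1)Y_l^m$. What your approach buys is transparency and sign-robustness (only two star identities and $d^2=0$ are needed); what the paper's direct computation buys is uniformity, since the same machinery also produces the tangential formul{\ae} of the third proposition in the appendix, for which your shortcut does not apply. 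The one dependency worth flagging is that your argument presupposes the tangential restrictions~\eqref{eq:B144} of Proposition~\ref{propB8}, so it cannot be used to replace that proposition's proof, only to derive Proposition~\ref{propB9} from it.
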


\noindent
Finally we need to calculate the tangential components of these forms; the
identity satisfied by a 2-form
\begin{equation}\label{eqn212}
  \star_3\balpha\restrictedto_{TS^2_1}=\star_2[i_{\pa_r}\balpha\restrictedto_{TS^2_1}],
\end{equation}
along with~\eqref{eq6.28.1} facilitate these computations.
\begin{proposition}
  Along the unit sphere we have:
 \begin{equation}
    \label{eq:B155}
    \begin{split}
&\left[\star_3dG_k[(\star_2d_{S^2_1}Y_l^m)\cdot d\bx]\right]\restrictedto_{TS^2_1}
=-ik d_{s^2_1}Y_l^mj_l(k)[\hone_l(k)+k\pa_k\hone_l(k)]\\
 &\left[\star_3dG_k[(d_{S^2_1}Y_l^m)\cdot
 d\bx]\right]\restrictedto_{TS^2_1}=
\left(\frac{ik\star_2d_{s^2_1}Y_l^m}{2l+1}
\right)\Bigg[l(l+2)j_{l+1}(k)\hone_{l+1}(k)\\
&\phantom{mmmm}-(l-1)(l+1)j_{l-1}(k)\hone_{l-1}(k)+k[(l+1)j_{l-1}(k)\pa_k\hone_{l-1}(k)\\
&\phantom{mmmm}+lj_{l+1}(k)\pa_k\hone_{l+1}(k)]\Bigg].
\end{split}
\end{equation}
\end{proposition}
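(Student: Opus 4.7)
\medskip

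\noindent\textbf{Proof plan.} The entire statement reduces to a computation of tangential components of the 1-form $\star_3 dG_k[\bj]$ along $S^2_1$, for the two choices $\bj=(\star_2 d_{S^2_1}Y_l^m)\cdot d\bx$ and $\bj=(d_{S^2_1}Y_l^m)\cdot d\bx$. The plan is to invoke the identity~\eqref{eqn212} with the 2-form $\balpha=dG_k[\bj]$, namely
\begin{equation*}
  [\star_3 dG_k\bj]\restrictedto_{TS^2_1}=\star_2\bigl[\,i_{\pa_r}dG_k[\bj]\restrictedto_{TS^2_1}\,\bigr],
\end{equation*}
so that only an interior product and a surface star remain to be carried out once the explicit formulas for $G_k[\bj]$ from Proposition~\ref{propB8} are substituted.

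For Case 1, write $G_k[\bj]=F(r)\omega$ with $F(r)=ikj_l(k)\hone_l(kr)$ and $\omega=(\star_2 d_{S^2_1}Y_l^m)\cdot d\bx$ extended with degree-zero homogeneous coefficients. Since $i_{\pa_r}\omega=0$, one has $d(F\omega)=F'(r)dr\wedge\omega+F(r)d\omega$ and hence
\begin{equation*}
  i_{\pa_r}d(F\omega)=F'(r)\omega+F(r)\,i_{\pa_r}d\omega.
\end{equation*}
The crucial sub-computation is $i_{\pa_r}d\omega$: applying Cartan's formula, $i_{\pa_r}d\omega=L_{\pa_r}\omega$ (using $i_{\pa_r}\omega=0$), and a direct local coordinate calculation with $L_{\pa_r}(dx_j)=d(x_j/r)$ together with $\sum a_jx_j=0$ (the annihilation condition) and $\pa_r a_j=0$ (degree-zero homogeneity) gives $L_{\pa_r}\omega=\omega/r$. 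At $r=1$ this yields $[F'(1)+F(1)]\omega=ikj_l(k)[\hone_l(k)+k\pa_k\hone_l(k)]\omega$; restricting to $TS^2_1$ and using $\star_2^2=-\Id$ on 1-forms produces the stated formula.

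For Case 2, write $G_k[\bj]=ik\bigl[\tfrac{A(r)}{2l+1}dU_l^m-B(r)U_l^m\,dr\bigr]$ with $U_l^m=r^lY_l^m$ and $A(r),B(r)$ the displayed radial functions. Because $d^2U_l^m=0$ and $dr\wedge dr=0$, the exterior derivative collapses to
\begin{equation*}
  dG_k[\bj]=ik\Bigl[\tfrac{A'(r)}{2l+1}+B(r)\Bigr]dr\wedge dU_l^m.
\end{equation*}
Using $dU_l^m=lr^{l-1}Y_l^m\,dr+r^ldY_l^m$ gives $i_{\pa_r}(dr\wedge dU_l^m)=r^ldY_l^m$, which at $r=1$ restricts on $TS^2_1$ to $d_{S^2_1}Y_l^m$. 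Applying $\star_2$, the tangential component equals $ik[\tfrac{A'(1)}{2l+1}+B(1)]\,\star_2 d_{S^2_1}Y_l^m$. Substituting $A'(1)=a'(1)-(l-1)a(1)$ with $a(r)=(l+1)j_{l-1}(k)\hone_{l-1}(kr)+lj_{l+1}(k)\hone_{l+1}(kr)$ and $B(1)=lj_{l+1}(k)\hone_{l+1}(k)$, the algebraic regrouping $-l(l-1)+l(2l+1)=l(l+2)$ produces exactly the displayed combination.

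The main obstacle is the Case~1 identity $L_{\pa_r}\omega=\omega/r$; without it one would miss the $F(1)$ contribution and obtain only the $k\pa_k\hone_l(k)$ term. Getting this right requires carefully combining the two conventions under which $\omega$ is being handled (coefficients homogeneous of degree zero, yet annihilating $\pa_r$), so that the na\"ive cancellation $L_{\pa_r}\omega=0$ is corrected by the Lie derivative of the basis 1-forms $dx_j$. Case~2 is algebraically longer but essentially a bookkeeping exercise once $d^2=0$ and $dr\wedge dr=0$ eliminate the off-diagonal contributions.
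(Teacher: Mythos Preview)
Your proof is correct and follows essentially the approach the paper intends: the paper merely remarks that the formul{\ae} follow from elementary calculations using~\eqref{eqn212} and~\eqref{eq6.28.1} together with the explicit expressions in Proposition~\ref{propB8}, and you have carried these out in full. Your Cartan-formula computation $i_{\pa_r}d\omega=L_{\pa_r}\omega=\omega/r$ is exactly what~\eqref{eq6.28.1} encodes (differentiating $\sum_j x_j\alpha_j=0$ gives $\sum_j x_jd\alpha_j=-\omega$, and contracting $d\omega=\sum_jd\alpha_j\wedge dx_j$ with $\pa_r$ recovers $\omega/r$), so the two routes coincide.
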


Given~\eqref{eq:B6},~\eqref{eq:6.27.1},~\eqref{eq:B10},~\eqref{eq:B9},~\eqref{eq:B12},
and~\eqref{eqn212} the formul{\ae} in these propositions are elementary
calculations, which follow from the fact that
\begin{equation}
  \bP_l\restrictedto_{\cE_l^0}=I_{\cE_l^0}\text{ and }
\bP_l\restrictedto_{\cE_k^0}=0\text{ if }k\neq l.
\end{equation}

{\small \bibliographystyle{siam}
{\bibliography{debye6d3}}}

\begin{thebibliography}{1}

\bibitem{Adams}
{\sc R.J.~Adams}, {\em Combined Field Integral Equation Formulation for 
Electromagnetic Scattering from Convex Geometries},
IEEE Trans. Antennas Propag., 52 (2004), pp. 1294--1303.

\bibitem{ABL}
{\sc F.~Alouges, S.~Borel and D.P.~Levadoux}, {\em A Stable Well-Conditioned
Integral Equation for Electromagnetic Scattering},
J. Comput. Appl. Math. 204 (2007), pp.~440--451.

\bibitem{Axelsson} {\sc A.~Axelsson}, {\em Transmission Problems for Maxwell's
  equations with weakly Lipschitz interfaces}, Math. Methods in the
Appl. Sciences, 29(2006), pp.~665-714.

\bibitem{BFG}
{\sc A.~Blank, K.~Friedrichs, and H.~Grad}, {\em Theory of {M}axwell's
  equations without displacement current}, 1957.
\newblock Institute of Mathematical Sciences, New York University, NYO-6486.

\bibitem{BBW}
{\sc B.~Booss-Bavnbek and K.~P. Wojciechowsi}, 
{\em Elliptic Boundary Problems for the Dirac Operator}, 
Birkh\"auser, Boston, 1996.

\bibitem{BouwkampCasimir}
{\sc C.J.~Bouwkamp and H.B.G.~Casimir}, 
{\em On Multipole Expansions in the Theory of Electromagnetic Radiation},
Physica 20 (1954), pp.~539--554.

\bibitem{ChristiansenNedelec}
{\sc S.H.~Christiansen and J.-C.~N\'ed\'elec}, 
{\em A Preconditioner for the 
Electric Field Integral Equation Basedon on Calderon Formulas},
SIAM J. Numer. Anal., 40 (2002), pp.~1100--1134.

\bibitem{ColtonKress}
{\sc D.~Colton and R.~Kress}, 
{\em Integral Equation Methods in Scattering Theory}, 
Krieger Publishing Co., Malabar, Florida, reprint~ed., 1992.

\bibitem{CDEORVW}
{\sc H.~Contopanagos, B.~Dembart, M.~Epton, J.J.~Ottusch, V.~Rokhlin,
J.L.~Visher and S.M.~Wandzura}, 
{\em Well-Conditioned Boundary Integral
Equations for Three-Dimensional Electromagnetic Scattering},
IEEE Trans. Antennas Propag. 50 (2002), pp.~1824--1830.

\bibitem{Debye}
{\sc P.~Debye}, {\em Der Lichtdruck auf Kugeln von beliebigen Material},
Ann. Phys. (Leipzig), 30 (1909), p. 57.

\bibitem{GilkeyInvThry}
{\sc P.~B. Gilkey}, {\em Invariance Theory, the Heat Equation and the
  Atiyah-Singer Index Theorem}, Publish or Perish, 1984.

\bibitem{Gulzow}
{\sc V. G\"{u}lzow}, 
{\em An Integral Equation Method for the Time-Harmonic Maxwell Equations 
with Boundary Conditions for the Normal Components}, 
J. Int. Equations and Appl., 1 (1988), pp. 365--384.

\bibitem{jackson}
{\sc J.~D. Jackson}, {\em Classical Electrodynamics, 3rd ed.}, Wiley, New York, 1999.

\bibitem{JiangRokhlin}
{\sc S.~Jiang and  V.~Rokhlin},
{\em Second Kind Integral Equations for the Classical Potential
Theory on Open Surfaces II},
J. Comput. Phys. 195 (2004), pp.~1--16.

\bibitem{Kress1}
{\sc R. Kress}, {\em On an Exterior Boundary-Value Problem 
for the Time-harmonic Maxwell Equations with Boundary Conditions 
for the Normal Components of the Electric and Magnetic Field}, 
Math. Meth. in Appl. Sci., 8 (1986), pp. 77--92.

\bibitem{Lorenz}
{\sc L. V. Lorenz}, 
{\em Lysvevxgelsen i og uden for en af plane
lysbolger belyst kugle},  
K. Dan. Vidensk. Selsk. Forh., 6 (1890), pp.  1--62.

\bibitem{Maue}
{\sc A.W.~Maue}, {\em On the Formulation of a General Scattering Problem
by Means of an Integral Equation}, 
Z. Phys., 126 (1949), pp. 610--618.

\bibitem{MautzHarrington}
{\sc J.R.~Mautz and R.F.~Harrington},
{\em A Combined-Source Solution for Radiation and Scattering from a Perfectly
Conducting Body}, 
IEEE Trans. Antennas Propag. 27 (1979), pp.~445--454.

\bibitem{Mie}
{\sc G.~Mie}, {\em Beitr{\"a}ge zur Optik tr{\"u}ber Medien, 
speziell kolloidaler Metall{\"o}sungen}, 
Ann. Phys. (Leipzig), 25 (1908), p. 377.

\bibitem{MKM}
{\sc N.~Morita, N.~Kumagai and J.R.~Mautz}, 
{\em Integral Equation Methods for Electromagnetics},
Artech House, Norwood, MA, 1990.

\bibitem{Nedelec}
{\sc J.-C.~N\'ed\'elec}, {\em Acoustic and Electromagnetic Equations}, 
vol.~144 of Applied Mathematical Sciences, Springer, New York, 2001.

\bibitem{Papas}
{\sc C.H.~Papas}, {\em Theory of Electromagnetic Wave Propagation}, 
Dover, New York, 1988.

\bibitem{Picard1}
{\sc Rainer Picard}, {\em Zur L{\"o}sungstheorie der zeitunabh{\"a}ngigen
  Maxwellschen Gleichungen mit der Randbedingung $n\cdot B=n\cdot D=0$ in
  anisotropen, inhomogenen Medien}, Manuscripta Math., 13 (1974), pp.~37--52.

\bibitem{Picard2}
{\sc Rainer Picard}, {\em Ein Randwertproblem f\"ur die zeitunabh\"angigen
  Maxwellschen Gleichungen mit der Randbedingung $n\cdot\varepsilon E=n\cdot\mu
  H=0$ in beschr\"ankten Gebieten beliebigen Zusammenhangs}, Applicable Anal.,
6  (1976/77), pp.~207--221.

\bibitem{PoggioMiller}
{\sc A.J.~Poggio and E.K.~Miller}, {\em Integral Equation Solutions of 
Three-Dimensional Scattering Problems},
in Computer Techniques for Electromagnetics, R.~Mittra (ed.), 1973,
pp.~159--264, Pergamon Press.

\bibitem{Scharstein}
{\sc R.W.~Scharstein}, {\em Helmholtz Decomposition of Surface Electric
Current in Electromagnetic Scattering Problems},
IEEE Proc. 23rd Symp. System theory, Mar. 1991, pp.~424--426.

\bibitem{TaskinenYla}
{\sc M.~Taskinen and P.~Yla-Oijala}, {\em Current and Charge
Integral Equation Formulation},
IEEE Trans. Antennas Propag. 54 (2006), pp.~58--67.

\bibitem{Taylor6}
{\sc M.~E. Taylor}, {\em Partial Differential Equations, Vol. 2}, vol.~116 of
  Applied Mathematical Sciences, Springer, New York, 1996.

\bibitem{Vick}
{\sc J.~W. Vick}, {\em Homology Theory}, Academic Press, New York, 1973.

\bibitem{Werner66}
{\sc P.~Werner}, 
{\em On an integral equation in electromagnetic diffraction theory}, 
Jour. Math. Anal. and Appl., 14 (1966), pp.~445--462.

\bibitem{WiltonGlisson}
{\sc D. R.~Wilton and A.W.~Glisson}, {\em On Improving the Electric 
Field Integral Equation at Low Frequencies}, 
in URSI Radio Science Meeting Digest, Los Angeles, CA, 1981, p. 24.

\bibitem{WGK}
{\sc W.~Wu, A.W.~Glisson, and D.~Kajfez}, {\em A Study of Two Numerical 
Solution Procedures for the Electric Field Integral Equation at Low Frequency},
Appl. Computational Electromagn. Soc. J., 10 (1995), pp. 69--80.

\bibitem{Yaghjian}
{\sc A.D.Yaghjian}, 
{\em Augmented Electric- and Magnetic-Field Integral Equations},
Radio Science, 16 (1981), pp.~987--1001.

\bibitem{Yee1}
{\sc K.~S. Yee}, {\em Uniqueness theorems for an exterior electromagnetic
  field}, SIAM Journal on Applied Mathematics, 18 (1970), pp.~77--83.

\bibitem{ZCCZ}
{\sc Y.~Zhang, T.J.~Cui, W.C.~Chew and J.-S.~Zhao},
{\em Magnetic Field Integral Equation at Very Low Frequencies},
IEEE Trans. Antennas Propag., 51 (2003), pp.~1864--1871.

\bibitem{ZC}
{\sc J.-S.~Zhao and W.C.~Chew}, 
{\em Integral Equation Solution of Maxwell's
Equations from Zero Frequency to Microwave Frequencies},
IEEE Trans. Antennas Propag., 48 (2000), pp.~1635--1645.

\end{thebibliography}

\end{document}